\newcommand{\Match}{\operatorname{Match}}
\newcommand{\cross}{\operatorname{cross}}
\newcommand{\Mod}{\operatorname{mod}}
\newcommand{\rep}{\operatorname{rep}}
\newtheorem{theorem}{Theorem}[section]
\newtheorem{example}[theorem]{Example}
\newtheorem{proposition}[theorem]{Proposition}
\newtheorem{lemma}[theorem]{Lemma}
\newtheorem{corollary}[theorem]{Corollary}
\theoremstyle{definition}
\newtheorem{definition}[theorem]{Definition}
\theoremstyle{remark}
\newtheorem{remark}[theorem]{Remark}
\title{Skein relations on punctured surfaces}
\author{Michael Tsironis}
\date{December 2025}
\begin{document}

\maketitle

\frontmatter

\tableofcontents

\mainmatter

\chapter{Introduction}

\pagenumbering{arabic}

The largest part of this thesis concerns introducing \emph{skein relations} for cluster algebras from punctured surfaces. These are identities in terms of cluster variables in a cluster algebra which showcase how certain variables can be expressed in terms of variables which contain some nice properties. Namely, cluster variables corresponding to arcs, which are considered to be \emph{incompatible}, on a surface, can be written as cluster variables corresponding to compatible arcs. Examples of incompatible arcs include intersecting arcs, self-intersecting arcs and closed curves, as well as arcs with opposite tagging at a puncture. In order to prove these important identities, we first construct a bridge between some combinatorial objects (graphs) and some algebraic representations. Our final aim of this thesis, in which skein relations play a crucial role, is to prove the existence of some bases on \emph{surface cluster algebras} that satisfy some nice properties. This expands on the existing bibliography by dealing with a larger collection of surface cluster algebras, by adding punctures (marked points) to the interior of the surface.\\

Cluster algebras were introduced by Fomin and Zelevinsky \cite{Fomin-Zelevinsky1} \cite{Fomin-Zelevinsky2} \cite{Fomin-Zelevinsky3} in 2001 with the initial aim of developing a combinatorial framework for the understanding of the Lustig's dual canonical bases \cite{Lusztig} and total positivity in algebraic groups. However, it gained a lot of interest on its own immediately after \cite{Caldero2006-mh}\cite{BMRRT}\cite{BrustleZhang}, as well as attention from various other fields such as Teichm{\"u}ller theory and higher rank geometry \cite{FockGon}, algebraic geometry, mirror symmetry \cite{GHKK}, mathematical physics, as it can be seen through their appearance in the Kontsevich-Soibelman \emph{wall crossing formula} for Donaldson-Thomas invariants \cite{KontsevichSoibelman} and many more \cite{Yuji},\cite{Davison2018-of}).\\

Cluster algebras are commutative rings which are generated through a distinct set of generators called \emph{cluster variables}, which are grouped into a set called \emph{clusters}. Relating back to the original motivation, explicitly stating the elements of these dual canonical bases is a very hard problem, but their initial conjecture was that all the monomials appearing in these cluster variables, belong to the dual canonical basis. This fundamental connection of cluster algebras and dual canonical bases, poses the natural question of finding and constructing bases which have some desired properties. One of these properties would be to require the basis to have some positivity properties, which is closely related to the initial \emph{Positivity Conjecture} of Fomin and Zelevinsky, which states that every cluster variable can be expressed as a Laurent polynomial in the initial variables with positive coefficients. This conjecture has been proven for specific cases by several authors (\cite{Fomin-Zelevinsky2},\cite{CalderoKeller}, \cite{Nakajima}), while in Lee and Schiffler \cite{LeeSchifler}  gave a purely combinatorial proof for all skew-symmetrizable cluster algebras. Moreover later Gross, Hacking, Keel and Kontsevich \cite{GHKK} established positivity, and much more, for cluster algebras of \emph{geometric type} via the construction of \emph{canonical theta basis construction}. This was a very deep result, since they constructed the theta basis for certain cluster varieties using scattering diagrams and the positivity was a simple corollary of their work, which is a prime example of the hidden role that cluster algebras play in various settings.\\

An important class of \emph{surface cluster algebras} was introduced by Fomin, Shapiro and Thurston (\cite{FST}, \cite{FominThurston}), where they gave a geometric construction associating a cluster algebra to a marked surface. These algebras are very interesting for a variant of reasons. First of all they have a topological interpretation as they provide coordinate charts for decorated Teichm{\"u}ller spaces, giving a combinatorial atlas of the moduli space \cite{Penner87}. Additionally, they are closely connected to representation theory and categories, since they represent combinatorially abstract notions. 

One of these first connections is by Buan, Marsh, Reineke, Reiten and Todorov from \cite{BMRRT} where they introduced the notion of the \emph{cluster category}, which served as a nice model for the combinatorics of a class of cluster algebras. However it was Amiot \cite{Amiot2009} in her fundamental work who contributed vastly in the categorization of cluster algebras by constructing \emph{generalized cluster categories}, extending the original cluster categories to a much broader setting. Relevant to this thesis, Br{\"u}stle and Zhang \cite{BrustleZhang} and Labardini-Fragoso \cite{labardinifragoso2009quiverspotentialsassociatedtriangulated},\cite{Labardini-Fragoso2009-xb}, finalized the formalization of the representation-theoretic model for surface cluster algebras. To give some more details regarding this connection, certain objects called \emph{cluster tilted objects} of a cluster category correspond to \emph{tagged triangulations} of the surface \cite{BrustleZhang},\cite{Schiffler2008-nd}.  This result provides another bridge between surface cluster algebras and representation theory, providing tools for structural results in both direction. This in turn, influenced a lot of work in representation theory such as  \cite{Assem2010-fb},\cite{Adachi_Iyama_Reiten_2014},\cite{canakcci2021lattice}. \\

The initial inspiration of this thesis stems from the work of Musiker, Schiffler and Williams \cite{Musiker_Schiffler_Williams_2013}, where they constructed two bases for cluster algebras coming from a triangulated surface without \emph{punctures}. This result relied heavily on one of their previous papers \cite{MSWpositivity}, in which they gave combinatorial formulas for the cluster variables in the cluster algebra, using the so-called \emph{snake graphs}. To each \emph{arc} (simple curve) on the surface, they associated a planar snake graph and subsequently the formula was given as a weighted sum over perfect matchings of the snake graph. Additionally they expanded this result to \emph{band graphs} that correspond to closed curves on the surface. One of the key points in their proof of the basis result, was the fact that \emph{skein relations} of intersecting curves on this setup had a nice property.\\
\medskip

Skein relations are algebraic identities which express how curves can be transformed through local intersections and smoothing operations. In their specific setup, a product of crossing arcs could be expressed in terms of non intersecting arcs and loops which have a unique term on the right hand side of the equation with no coefficient variables. In the same work they also conjectured that a similar result should be true in the case of \emph{punctured surfaces}, i.e. surfaces where marked points in the interior of the surface are also allowed. One should basically prove the equivalent skein relations in this setup. However the existing machinery was not enough to prove such relations at that point.\\

This brings us to our second major inspiration for this thesis. In the setting of punctured surfaces, plain arcs are not enough in order to capture all the cluster variables of the associated cluster algebra. In order to do so, a second type of arcs must be introduced, the so-called \emph{tagged arcs} \cite{FST}. These are arcs that are allowed to have a special \emph{tagging} on their endpoints which are adjacent to \emph{punctures} (marked points in the interior). Expansion formulas were already know for such arcs \cite{MSWpositivity}, alas larger graphs and a different more complicated version of perfect matching had been considered, making it complicated to study skein relations in this setup. However, Wilson \cite{wilson2020surface} introduced the notion of \emph{loop graphs}; these are simple graphs that can be associated to a tagged arcs. Using these loop graphs, Wilson gave an alternative expansion formula for the tagged arcs. \\

One can then ask whether these graphs can be used to show skein relations, and a natural approach is to attempt this question combinatorially in the spirit of the work by Çanakçı and Schiffler. In a series of papers (\cite{canakci3}),\cite{CANAKCI2013240}, \cite{canakci2}), they introduced abstract snake graphs, and gave alternative proofs for the skein relations occurring in the setting of surfaces, by constructing bijections between the sets of perfect matchings of the graphs appearing in both sides of the relations. This technique showcases the importance of these combinatorial objects, and hints on how one could work using loop graphs in order to prove skein relations.

In this thesis we initially explore skein relations on punctured surfaces.\\ As stated earlier, in \cite{Musiker_Schiffler_Williams_2013} the authors conjectured that one can extend the given bases of unpunctured surfaces, to the punctured setup. They explicitly stated 15 different cases of skein relations that had to be resolved in order for one to tackle the more general problem of bases on general surfaces. In order for us to prove these relations, we use both directly and indirectly the loop graphs introduced by Wilson. Although helpful in some sense, these loop graphs contain too much information, which complicates things when one explicitly tries to prove things using them. \\
However one of the more understood and easy to work with objects are quiver representations. In the classical setting of snake graphs Çanakçı and Schroll \cite{canakcci2021lattice} introduced abstract string modules and constructed an explicit bijection between the submodule lattice of an abstract string module and the perfect matching lattice of the corresponding abstract snake graph. This correspondence comprises the last key component of our approach. We thus expand this framework to a connection between loop graphs and what we call loop modules and loop strings. In Remark 7.11 of \cite{wilson2020surface} the author already indicates how one can do this association, however we make this connection explicit. More explicitly, we first make precise the existence of an isomorphism between the perfect matching lattice of a graph $\mathcal{G}$ and the submodule lattice of the associated module $M_\mathcal{G}$ as stated in the following Theorem~\ref{submodule lattice bijective to perfect matchings lattice}.\\

\begin{theorem}
Let $A=kQ^\bowtie/I$ and $M(w)$ be a loop module over $A$ with associated loop graph $\mathcal{G}^\bowtie$. Then $\mathcal{L}(\mathcal{G})$, which denotes the perfect matching lattice of $\mathcal{G}$ is in bijection with the canonical submodule lattice $\mathcal{L}(M(w))$.
\end{theorem}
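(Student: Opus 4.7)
The plan is to construct the bijection explicitly by extending the Çanakçı--Schroll correspondence between perfect matchings of abstract snake graphs and canonical submodules of abstract string modules to the loop setting. First I would recall that, by Remark 7.11 of \cite{wilson2020surface}, to a loop graph $\mathcal{G}^\bowtie$ one can associate an underlying string-like word $w$ and, dually, the loop graph arises from a snake graph by a gluing that encodes the puncture/tagging data. Thus the natural starting point is to read $\mathcal{L}(M(w))$ in terms of substrings of $w$, and to read $\mathcal{L}(\mathcal{G})$ in terms of the combinatorial data at tiles of $\mathcal{G}^\bowtie$ that fix the matched edges locally.

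Second, I would define the candidate map at the level of sets. Given a perfect matching $P$ of $\mathcal{G}$, one associates the submodule of $M(w)$ spanned by those basis vectors indexed by tiles on one fixed side of the cut determined by $P$ — this is precisely the generalization of the Çanakçı--Schroll rule, where minimal and maximal matchings correspond to the zero and full submodule respectively. The well-definedness step amounts to checking that the compatibility condition imposed by the loop identification on $\mathcal{G}$ translates exactly into the condition that the resulting subspace is closed under the action of $A=kQ^\bowtie/I$, in particular respects the relations in $I$ induced by the puncture.

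Third, I would verify that the map is a lattice isomorphism rather than just a bijection. Here I would work with covering relations: a covering in $\mathcal{L}(\mathcal{G})$ corresponds to a local twist at a single tile, and this should match the addition of a single simple composition factor in $\mathcal{L}(M(w))$. Once this is established on coverings, the meet and join structures follow formally, since both lattices are distributive and finite.

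The main obstacle, in my view, will be the loop identification on the module side. Unlike the snake/string case, where the correspondence is transparent because the string is linear, in the loop case one must take care that the composition factors introduced by traversing the identified segment are counted exactly once and that the canonical submodule structure is compatible with the relations in $I$ coming from $Q^\bowtie$. Making this bookkeeping precise, and verifying that the natural candidate map respects it on both sides of the equation, is where essentially all the content of the theorem lies; once the gluing conventions are aligned, the lattice-isomorphism statement reduces to the Çanakçı--Schroll result applied locally.
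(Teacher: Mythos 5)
Your plan is in the same general spirit as the paper's proof---both extend the \c{C}anak\c{c}\i--Schroll correspondence by starting from the symmetric difference $P\ominus P_{min}$ and then verifying that covering relations on the two sides match---but you have misidentified where the real technical content lies, and the vague ``one fixed side of the cut'' description hides the step that actually requires work.

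The well-definedness issue is not primarily about compatibility with the admissible ideal $I$: the paper's argument essentially never invokes $I$. The genuine difficulty, treated in Proposition~\ref{Ptomod}, is showing that the subspace picked out by the tiles of each connected component $H_i$ of $P\ominus P_{min}$ is closed under the action of $Q^\bowtie$---concretely, that no arrow of $Q^\bowtie$ points out of an endpoint tile of $H_i$ into an adjacent tile of $\mathcal{G}^\bowtie$ lying outside $H_i$. Establishing this requires the characterization that a tile has two of its boundary edges in $P_{min}$ exactly when it corresponds to a socle (Lemma~\ref{eqofpmatch}), the dual statement for tops (Lemma~\ref{dualofsocle}), and the structural Lemma~\ref{pmatchinloop} about how perfect matchings behave along the zig-zag forming the hook of the loop, all combined with the twist parity condition. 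These are precisely where the loop identification changes the combinatorics relative to the snake-graph case, and none of them appear in your sketch; your worry about ``composition factors counted exactly once'' is closer to the truth, but it is resolved by the graph-side lemmas rather than by bookkeeping on relations in $I$. You also need a separate inductive argument (Proposition~\ref{modtoP}) for the converse direction, constructing a perfect matching from a canonical submodule; this is not merely the set-theoretic inverse of the first map and must be proved independently. Your final step via covering relations does match the paper's Hasse-diagram argument.
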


The above theorem is the first stepping tool. However we additionally introduce the notion of \emph{loopstrings}, which is a generalization of the notion of strings in classical representation theory. The idea is to substitute loop graphs with what we call \emph{loop modules} and then in turn define these modules using word combinatorics, which entail the most important properties of the module. \\

Çanakçı and Schiffler in \cite{CANAKCI2013240} (and the continuation of this paper introduced snake graph calculus in order to give an alternative proof of skein relations for unpunctured surfaces. They proved this by constructing an explicit bijection between the perfect matchings associated to the initial arcs and the perfect matchings of the arcs generated by the ``resolution" of the initial crossing. This provided our first idea of trying to generalize this construction by using the newly introduced loop graphs. However such a construction should not be considered trivial for a multitude of reasons, the first one being that in the case of punctured surfaces the skein relations are not known. The second and most important reason on why such a construction in the new setting would not be ideal, is the fact that the cases that need to be investigated for unpunctured surfaces are more complicated and working out the combinatorics through snake graph, or in our case loop graph calculus would require extreme time and effort, just for setting up each case.\\
However, a subsequent train of thought would be to take advantage of the bijection of the loop graphs and the newly introduced loopstrings and try to use this new tool as a means of simplifying some procedures. It should be noted here that the exact reason that makes loopstrings easier to work with, is the same reason on why these by themselves do not imply that the skein relations straightforwardly. The problem is, that string modules store much less information than snake graphs, which is extremely important when proving that the elements of the cluster algebra in both parts of the resolution coincide. Thankfully one can get away with this lose of information by realizing that there is a clever way of associating the correct monomial in the cluster algebra to some nice modules. We make this construction clear in Definition \ref{monomial associated to loopstring} and heavily rely on that and Lemma \ref{xvariablesagree} to show that we can recover the loss of information that happened when we changed the set up from the loop graphs to the loop modules. \\

By relying on the key ingredients listed above, we are able to prove skein relations for every punctured surface apart from some extreme cases, which are basically the extreme cases for which loop graphs are not defined. The following theorem sums up Theorem~\ref{skein relations regular crossing}, Theorem~\ref{skein for loop and arc}, Theorem~\ref{skein relation for single incompatibility at a puncture}, Theorem~\ref{Skein relations for double incompatibilities at two punctures} and  and Theorem~\ref{skein relation for generalized tagged self crossing arcs.}.\\

\begin{theorem}\label{skein relations}
    Let $(S,M,P,T)$ be a triangulated punctured surface and $\mathcal{A}$ the cluster algebra associated to it. Let $\gamma_1$ and $\gamma_2$ be two arcs, which are incompatible. Then there are multicurves $c_1$ and $c_2$ such that: 
        \[
        x_{\gamma_1} x_{\gamma_2} = Y^{-}x_{c_1}  + Y^{+} x_{c_2}
        \]
        where $Y^{-}, Y^{+}$ are monomials in $y_i$ coefficients and satisfy the condition that one of the two is equal to 1.
\end{theorem}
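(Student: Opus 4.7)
The plan is a case analysis driven by the type of incompatibility between $\gamma_1$ and $\gamma_2$. Following the classification of \cite{Musiker_Schiffler_Williams_2013}, the incompatibilities on a punctured surface split into: a transverse crossing in the interior of $S$, a crossing that involves a closed loop, a single-tag incompatibility at one puncture, a double-tag incompatibility at two punctures, and a self-crossing of a generalized tagged arc. The stated theorem is the amalgamation of Theorem~\ref{skein relations regular crossing}, Theorem~\ref{skein for loop and arc}, Theorem~\ref{skein relation for single incompatibility at a puncture}, Theorem~\ref{Skein relations for double incompatibilities at two punctures} and Theorem~\ref{skein relation for generalized tagged self crossing arcs.}, so proving it amounts to isolating each case and attacking it separately.

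For each case the strategy is uniform. First, I would replace each arc $\gamma_i$ by its associated loop graph $\mathcal{G}_i$ of Wilson, so that $x_{\gamma_i}$ is expanded as a weighted sum over $\mathcal{L}(\mathcal{G}_i)$; then, by Theorem~\ref{submodule lattice bijective to perfect matchings lattice}, transport the problem to the submodule lattice $\mathcal{L}(M(w_i))$ of the corresponding loop module, where $w_i$ is the loopstring of $\gamma_i$. The skein resolution on the right produces a pair of multicurves $c_1, c_2$ whose own loopstrings $w_1', w_2'$ arise from $w_1, w_2$ by a local word-surgery at the letter that records the offending crossing or incompatible tag. The heart of the proof is then to build an explicit bijection
\[
\mathcal{L}(M(w_1)) \times \mathcal{L}(M(w_2)) \;\longleftrightarrow\; \mathcal{L}(M(w_1')) \,\sqcup\, \mathcal{L}(M(w_2'))
\]
which, pulled back through Theorem~\ref{submodule lattice bijective to perfect matchings lattice}, identifies the two sides of the desired skein relation term-by-term. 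The advantage of working with loopstrings is that this bijection becomes a transparent symbolic cut-and-paste at the distinguished letter, sidestepping the delicate planar manipulations that made the snake-graph approach of \cite{CANAKCI2013240} combinatorially costly.

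The main obstacle, and the reason for the extra machinery introduced earlier in the thesis, is to recover the correct monomial in $\mathcal{A}$. Loopstrings record strictly less information than loop graphs, so once a pair of submodules on the left is sent to a submodule on the right, one must still check that the $x$-variable contributions match on the nose, and that the attached $y$-monomials $Y^{-}$ and $Y^{+}$ come out correctly, with one of them equal to $1$. This is precisely why Definition~\ref{monomial associated to loopstring} re-encodes the missing data as a carefully chosen monomial attached to each loopstring, and why Lemma~\ref{xvariablesagree} is invoked: together they guarantee that the assigned monomials transform correctly under the word-surgery, and that the normalization of one of $Y^{\pm}$ to $1$ is automatic. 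The residual per-case work is then to identify the correct surgery (an interior cut for transverse crossings, its band analogue when some $\gamma_i$ is a loop, and a \emph{tag-flip} at the puncture in the tagged cases) and to verify, using the explicit description of the loopstring $w_i$ of $\gamma_i$, that this surgery is well-defined, excluding precisely the degenerate configurations for which loop graphs themselves are not defined.
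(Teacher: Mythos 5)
Your proposal reproduces the paper's strategy faithfully: a case-by-case analysis organized by incompatibility type, with each case reduced via Theorem~\ref{submodule lattice bijective to perfect matchings lattice} to a word-level resolution of loopstrings, an explicit lattice bijection $\Psi$ matching submodules of $w_1\oplus w_2$ against those of the resolved loopstrings, and a monomial-matching argument built on Definition~\ref{monomial associated to loopstring}, the various $x$-variable lemmas, and Remark~\ref{removetoppreservesx}. The one detail left tacit is that the paper establishes surjectivity of each $\Psi$ not by direct construction but by first proving injectivity and then invoking a cardinality count (Remark~\ref{termsincoeffreecase}) that rests on the already-known coefficient-free skein relations.
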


During the writing of this thesis we became aware that Banaian, Kang and Kelley were working independently on the same problem, using a different approach \cite{banaian2024skeinrelationspuncturedsurfaces}. We would like to thank them for their transparency and for the helpful communication.\\

The structure of this thesis, goes as follows:\\

In Section 2 we recall some well-known results, mainly on cluster algebras and more specifically on cluster algebras associated to triangulated surfaces, which will be extensively used in the rest of the thesis. We also introduce the notion of loopstrings and loop modules. \\

In Section 3, we construct an explicit bijection between the perfect matching lattice $\mathcal{L}(\mathcal{G})$ of  a given loop graph $\mathcal{G}$ and the associated canonical submodule lattice $\mathcal{L}(w_\mathcal{G})$. This result, although suggested by Wilson, is important to be dealt with with care, since the rest of the thesis uses directly and indirectly this construction in virtually every proof and thus making it worthy enough to be given some more attention.\\

In Section 3, which comprises the main part of the new results of this thesis, we prove skein relations for every possible configuration between two arcs on a given punctured surface building on the previously proven bijection. We do a case by case study, while we also group some of the cases when possible. In total these cases add up to 15 cases that were indicated in \cite{Musiker_Schiffler_Williams_2013}. \\

\chapter{Preliminaries}

\section{Cluster Algebras}
We start by recollecting the definition of a \emph{cluster algebra} which was first introduced by Fomin and Zelevinsky \cite{Fomin-Zelevinsky1}. We will not give the general definition of a cluster algebra, but rather restrict ourselves to the so-called \textit{skew-symmetric cluster algebras with principal coefficients}, which are closely related to bordered surfaces, the main interest of this thesis. \\
To define a cluster algebra $\mathcal{A}$ we need to start by fixing its ground ring. We will be dealing with cluster algebras of \textit{geometric type}, that is, the \emph{coefficients} of the cluster algebra are elements of a so-called \emph{semifield}.\\
Let $(\mathds{P},\cdot)$ be a free abelian group on the variables $\{y_1,\dots,y_n\}$. We define an addition on $\mathds{P}$ as follows:
\[
\prod_{j}{y_j^{a_j}} \oplus \prod_{j}{y_j^{b_j}} = \prod_{j}{y_j^{\min\{a_j,b_j\}}}.
\]
This makes $(\mathds{P},\cdot,\oplus)$ a semifield, i.e., an abelian multiplicative group endowed with a binary operation $\oplus$ which is commutative, associative and distributive with respect to the multiplication $\cdot$ in $\mathds{P}$. The group ring $\mathds{Z}\mathds{P}$, i.e., the ring of Laurent polynomials in $y_1,\dots, y_n$ will be the \textit{ground ring} of the cluster algebra $\mathcal{A}$.\\
Let also $\mathds{Q}\mathds{P}$ be the field of fractions of $\mathds{Z}\mathds{P}$, and $\mathcal{F}= \mathds{Q}\mathds{P}(x_1,\dots,x_n)$ the field of rational functions in the variables $x_1,\dots,x_n$ and coefficients in $\mathds{Q}\mathds{P}$. The field $\mathcal{F}$ will be the \textit{ambient field} of the cluster algebra $\mathcal{A}$.\\

So far, we have talked about where the cluster algebra ``lives", but not how it is generated. To define a cluster algebra, we need to determine an \textit{initial seed} ($\mathbf{x}$, $\mathbf{y}$, $Q$), which consists of the following:
\begin{itemize}
    \item $Q$ is a quiver (i.e., a directed graph) (see chapter~\ref{Quiver representations and loop modules}) without loops and 2-cycles and with $n$ vertices,
    \item $\mathbf{x}= (x_1,\dots,x_n)$ is the $n$-tuple from $\mathcal{F}$, which is called the \textit{initial cluster},
    \item $\mathbf{y}= (y_1,\dots,y_n)$ is the $n$-tuple of generators of $\mathds{P}$, which is called the \textit{initial coefficients}.
\end{itemize}

The next vital procedure in the construction of a cluster algebra is the so-called \textit{mutation}. The idea is to take a seed  $(\mathbf{x}, \mathbf{y}, Q)$ and transform it into a new one $(\mathbf{x}', \mathbf{y}', Q')$. Since there are $n$ variables in the initial cluster (or equivalently $n$ vertices in the quiver $Q$) we want to transform it in $n$ different directions (ways).

\begin{definition}\label{mutaion}
    Let $(\mathbf{x}, \mathbf{y}, Q)$ be a seed of a cluster algebra $\mathcal{A}$ and $1\leq k\leq n$. The \emph{seed mutation} $\mu_k$, in direction $k$, transforms the seed $(\mathbf{x}, \mathbf{y}, Q)$ into the seed $\mu_k(\mathbf{x}, \mathbf{y}, Q)= (\mathbf{x}', \mathbf{y}', Q')$ where:
    \begin{itemize}
        \item The quiver $Q'$ is obtained from $Q$ following the next steps:
        \begin{enumerate}
            \item for every path $i\longrightarrow k \longrightarrow j$, add one arrow $i\longrightarrow j$ to $Q'$, 
            \item reverse all the arrows adjacent to $k$,
            \item delete all the 2-cycles that were created in the previous steps.
        \end{enumerate}
        \item The new \emph{cluster} is $\mathbf{x}' = \mathbf{x}\setminus \{x_k\}\bigcup \{x_k'\}$ where the new \emph{cluster variable} $x_k'\in \mathcal{F}$ is given by the following \emph{exchange relation}:
        \[
        x_k'= \frac{y_k\prod_{i\rightarrow k}{x_i} + {\prod_{i\leftarrow k}{x_i}}}{x_k(y_k\oplus 1)},
        \]
        where the first product runs over all the vertices that are sources to arrows leading to the vertex $k$, while the second one runs over all the vertices that are targets from arrows coming from the vertex $k$.
        \item The new coefficient tuple is $\mathbf{y}'= \{y_1',\dots, y_n'\}$ where:
        \begin{equation*}
             y_j' = \begin{cases}
                 y_k^{-1} & \text{if } j=k,\\
                 y_j \prod_{k\rightarrow j}{y_k(y_k\oplus 1)^{-1}} \prod_{k\leftarrow j}{(y_k\oplus 1)} & \text{if } j\neq k.\\
             \end{cases}
         \end{equation*}
    \end{itemize}
\end{definition}


These cluster variables are the generators of the cluster algebra. Even though mutations are involutions, meaning that applying a mutation twice in a row in the same direction takes you back to the initial seed, by mutating in different direction each time, we recursively create new cluster variables. Doing this procedure possibly infinite times gives rise to possibly infinitely many cluster variables, which are set to be the generators of the cluster algebra.

\begin{definition}
    Let $(\mathbf{x}, \mathbf{y}, Q)$ be an initial seed and $\mathcal{X}$ the set of all cluster variables obtained by all possible mutations starting from the seed $(\mathbf{x}, \mathbf{y}, Q)$. The cluster algebra $\mathcal{A}=\mathcal{A}(\mathbf{x}, \mathbf{y}, Q)$ is the algebra $\mathds{Z}\mathds{P}[\mathcal{X}]$, i.e., the $\mathds{Z}\mathds{P}$-subalgebra of $\mathcal{F}$, generated by $\mathcal{X}$.
\end{definition}

\begin{example}
    Let $(\mathbf{x}, \mathbf{y}, Q)= ((x_1,x_2),(1,1), 1\rightarrow 2)$ be the initial seed of the cluster algebra $\mathcal{A}$. Since the initial coefficients are trivial, they coefficients will remain trivial in any seed, so we do not need to keep track fo them. In Figure~\ref{fig:Example Cluster Algebra A_2} we can see the so-called \emph{exchange graph}, where the edges correspond to the mutations, and the vertices correspond to the cluster variables. Mutating the seed $((\frac{x_1+1}{x_2},x_1),1\rightarrow2)$ in direction $\mu_1$ produces the seed $((x_2,x_1),1\leftarrow 2)$. If we keep on mutating, we will not produce any new cluster variables. Therefore, the cluster algebra $\mathcal{A}$ is said to be of  finite type and is generated by the set:
    \[
    \mathcal{X}=\{x_1,x_2,\frac{x_2+1}{x_1}, \frac{x_1+x_2+1}{x_1x_2}, \frac{x_1+1}{x_2}\}.
    \]
\end{example}

\begin{figure}
    \centering
    \includegraphics[scale=0.6]{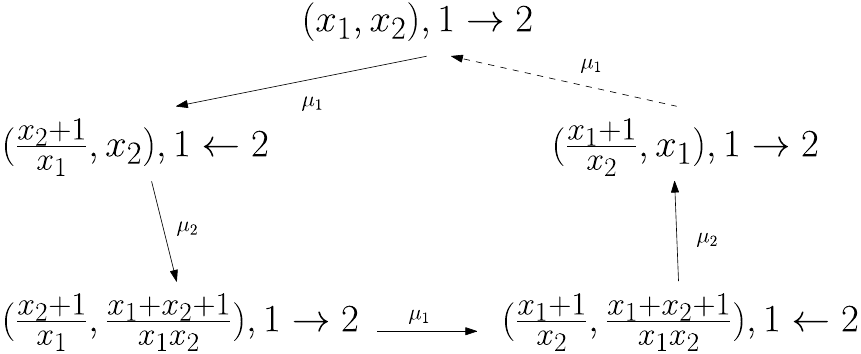}
    \caption{The exchange graph of the cluster algebra of type $A_2$.}
    \label{fig:Example Cluster Algebra A_2}
\end{figure}

\section{Punctured surfaces}

In this section we recall the notion of \textit{tagged arcs} on a \emph{punctured surface}. Our goal is to describe the lattice structure of a loop graph associated to a tagged arc, which in turn will help us introduce the skein relations when tagged arcs are involved. Therefore, we will restrict our attention to such \textit{loop graphs}. Fomin, Shapiro, and Thurston \cite{FST} established a cluster structure for triangulated oriented surfaces. Our motivation stems from the interplay between cluster algebras, bordered triangulated surfaces, and cluster categories. \\

Let $S$ be a compact oriented Riemann surface with boundary $\partial S$. Fix $M\subset \partial S$ to be a finite set of \textit{marked points} on $\partial S$, with at least one marked point on each connected component of the boundary. Furthermore, fix $P\subset S$ to be a finite set of marked points in the interior of the surface, which we will call \textit{punctures}. We refer to the triplet $(S,M,P)$ as a \textit{punctured surface} if $P\neq \emptyset$ and \textit{unpunctured  surface} otherwise. For technical reasons, we exclude the cases where $(S,M,P)$ is an unpunctured or once-punctured monogon, a digon, a triangle, or a once-, twice- or thrice-punctured sphere.

\begin{definition}\label{arc}
    An \textit{arc} of $(S,M,P)$ is a simple curve, up to isotopy class, in S connecting two marked points of $M$, which is not isotopic to a boundary segment or a marked point. A \textit{tagged arc} $\gamma$ is an arc whose endpoints have been ``tagged'' in one of two possible ways: \textit{plain} or \textit{notched}. This tagging must also satisfy the following conditions:
    \begin{itemize}
        \item If the beginning and end points of $\gamma$ are the same point, then the tagging at this point must be the same for both the beginning and the end.
        \item An endpoint of $\gamma$ lying on the boundary $\partial S$ of the surface must have a plain tagging.
        \end{itemize}    
\end{definition}

When an arc $\gamma$ is notched at least at one of its endpoints, we will denote it by $\gamma^\bowtie$, as a reminder of this tagging, and refer to it as a \emph{tagged arc}. An arc with plain tagging will be denoted simply by $\gamma$ and will be called an \emph{untagged arc}. 

\begin{remark}
    In Definition~\ref{arc}, when referring to a simple curve up to isotopy classes, we mean that two curves on the surface $S$ are considered the same if you can ``stretch'' them, without passing ``over'' a puncture.\\
    Additionally, given a tagged arc $\gamma^\bowtie$, we will call the same arc with plain tagging in both of its endpoints, the \textit{untagged version} of it and denote it by $\gamma$. The arcs $\gamma^\bowtie$ and $\gamma$ then belong to different isotopy classes.
\end{remark}

In order to associate a cluster structure to a given surface, one needs the notion of a \textit{tagged triangulation}, which is closely related to the initial seed of the associated cluster algebra. In order to define these triangulations, we first have to define when two arcs on the surface are considered to be \textit{compatible}. 

\begin{definition}\label{compatible arcs}
    Let $(S, M, P)$ be a punctured surface and $\gamma_1$ and $\gamma_2$ be two arcs on the surface. We will say that the arcs $\gamma_1$ and $\gamma_2$ are \textit{compatible} when the following conditions are met:
    \begin{itemize}
        \item There exist representatives in the isotopy classes of $\gamma_1$ and $\gamma_2$ that do not intersect on the surface.
        \item If the arcs $\gamma_1$ and $\gamma_2$ share an endpoint, and their untagged versions are different, then this endpoint must be tagged in the same way.
        \item If the untagged versions of $\gamma_1$ and $\gamma_2$ are the same (or equivalently opposite), then they must have the same tagging in exactly one of their endpoints. 
    \end{itemize}
   A \textit{tagged triangulation} $T$ is a maximal collection of pairwise compatible arcs and the $4$-tuple $(S,M,P,T)$ is called a \textit{triangulated surface}.
\end{definition}


\begin{remark}
    Instead of considering only singular arcs or closed curves on a surface, we can also consider collections of those, which will be called \emph{multicurves}. Additionally, a multicurve $c$ will be called \emph{compatible} if any pair of distinct arcs or closed curves of $c$ are compatible and there are no closed curves which are contractible or contractible to a single puncture of the surface.
\end{remark}

\begin{remark}\label{self folded to tagged}
    Following definition~\ref{compatible arcs} one can notice that a tagged triangulation ``cuts" the surface $S$ into triangles, with two possible exceptions.\\ 
    The sides of a triangle may not be distinct,  resulting in what is called a \textit{self-folded triangle}. This situation is precisely the reason why the tagged arcs were introduced in the first place . 
    The second exception occurs when both the tagged and the untagged versions of a an arc are part of the triangulation.\\
    The aforementioned exceptions are, in fact, two sides of the same coin, as there is a unique way to transition from one to the other, as illustrated in Figure~\ref{fig:self folded triangle}. Let $p\in P$ and $q\in M\bigcup P$. Suppose that $l$ is a plainly tagged \emph{loop},i.e. a closed curve with both endpoints at $q$ that goes around $p$ and cuts out a \emph{once-punctured monogon} with side $r$, connecting the points $p$ and $q$. Then, we can replace the loop $l$ with the arc $r^\bowtie$, which is isotopic to $r$ and is tagged notched at $p$, setting:
    \[
    x_l= x_r x_{r^\bowtie},
    \]
    in the cluster algebra.
\end{remark}

\begin{remark}\label{tagged to plain triangulation}
A tagged triangulation that does not contain any tagged arcs is called an \textit{ideal triangulation}. Substituting the loop $l$ with the arc $r^\bowtie$, as described in Remark~\ref{self folded to tagged}, is precisely how we assign a tagged triangulation to a given ideal triangulation. Additionally, by replacing all the tagged arcs with plain arcs where possible and reversing the above procedure, we can transitions from a tagged triangulation back to an ideal triangulation. 
\end{remark}

\begin{figure}
    \centering
    \includegraphics[scale=0.7]{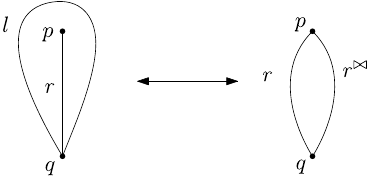}
    \caption{Self folded triangle $rlr$ and the associated tagged arc $r^\bowtie$, where $ x_l= x_r x_{r^\bowtie}$ in the cluster algebra.}
    \label{fig:self folded triangle}
\end{figure}

As mentioned earlier, a cluster algebra can be associated to a given triangulated surface $(S,M,P,T)$. We are now ready to show how a cluster algebra can be defined by a choice of an initial triangulation $T$ of a punctured surface $(S,M,P)$. For this, we are going to define a quiver $Q_T$ without loops or $2$-cycles, associated to $T$. 

\begin{definition}
    Let $(S,M,P)$ be a punctured surface and let $T=\{\tau_1, \tau_2, \dots, \tau_n\}$ be a tagged triangulation of the surface. We define the quiver $Q_T$ associated with the triangulation $T$ as follows:\\
 \underline{Vertices}: For each arc $\tau_i$ of the triangulation $T$, there exists a vertex of the quiver $Q_T$, which we denote by $i$.\\
 \underline{Arrows}: For every triangle in the triangulation $T$, there exists an arrow $i\rightarrow j$ whenever:
 \begin{itemize}
     \item The side $\tau_i$ follows the side $\tau_j$ in clockwise order, if the triangle is not self-folded.
     \item The arc $\tau_i$ is the radius of a self-folded triangle enclosed by a loop $l$, and $l$ follows the side $\tau_j$ in clockwise order within a triangle.
     \item The arc $\tau_j$ is the radius of a self-folded triangle enclosed by a loop $l$, and $\tau_i$ follows the side $l$ in clockwise order within a triangle.     
 \end{itemize}
If the triangulation $T$ contains tagged arcs that can be substituted through the procedure described in Remark~\ref{self folded to tagged}, we first associate a loop to each such arc and then proceed with the construction of the the quiver $Q_T$.
\end{definition}

To define a cluster algebra, we need to determine an initial seed ($\mathbf{x}$, $\mathbf{y}$, $Q$). If $(S, M, P, T)$ is a triangulated surface, we associate a cluster variable $x_i$ and an initial coefficient $y_i$ to each arc $\tau_i$ of $T$. \\
The cluster algebra $\mathcal{A}= \mathcal{A}(S,M,P,T)$ determined by the initial seed ($\mathbf{x}_T$, $\mathbf{y}_T$, $Q_T$), where $\mathbf{x}_T= \{x_1, \dots, x_n\}$ and $\mathbf{y}_T= \{y_1, \dots, y_n\}$, is called the \textit{cluster algebra associated with the triangulated surface $(S, M, P, T)$}.\\

\begin{figure}
    \centering
    \includegraphics[scale=0.5]{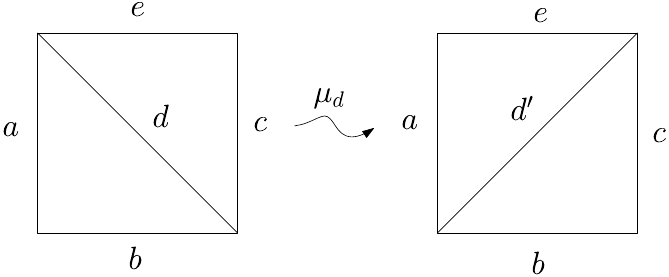}
    \caption{Flipping of the diagonal of a quadrilateral.}
    \label{fig:triangle flip}
\end{figure}

The following dictionary of notions between a triangulated surface $(S,M,P,T)$ and the corresponding cluster algebra $\mathcal{A}(S,M,P,T)$ contains some of the vital elements of this correspondence.
\begin{theorem}
There exist the following bijections of notions between a surface and the associated cluster algebra:
   \begin{align*}
    (S,M,P) & \ \ \ \ \ \ \ \ \  \mathcal{A}(S,M,P,T)\\
    \{\textit{Non-crossing tagged arcs}\} &\longleftrightarrow \{\textit{Cluster variables}\}\\
    \{\textit{Tagged triangulations}\} &  \longleftrightarrow  \{\textit{Clusters}\}\\
    \{\textit{Flips of tagged arcs}\} &  \longleftrightarrow \{\textit{Mutation of seeds}\}
    \end{align*}
\end{theorem}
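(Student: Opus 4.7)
The plan is to prove the three correspondences simultaneously by induction along the tagged flip graph, starting from the fixed initial triangulation $T$. The base of the induction is built into the construction of $\mathcal{A}(S,M,P,T)$ itself: the initial seed $(\mathbf{x}_T,\mathbf{y}_T,Q_T)$ assigns a cluster variable $x_i$ to each arc $\tau_i\in T$, so $T$ maps to the initial cluster and each of its arcs to the corresponding initial variable. For the inductive step I would first verify that flipping a single arc $\tau_k\in T$ to obtain $T'$ corresponds exactly to quiver mutation at vertex $k$, that is $Q_{T'}=\mu_k(Q_T)$. This is a local check inside the (at most two) triangles containing $\tau_k$: the three rules in Definition~\ref{mutaion} for the mutated quiver mirror the change of clockwise adjacencies under the flip, while the extra conventions in the definition of $Q_T$ handle the case when $\tau_k$ is (or neighbours) the loop of a self-folded triangle.

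Next I would match the Ptolemy-type identity on the surface, which expresses $x_{\tau_k} x_{\tau_k'}$ in terms of the four sides of the quadrilateral bounded by $\tau_k\cup\tau_k'$, with the cluster algebra exchange relation in Definition~\ref{mutaion}. The two monomials $\prod_{i\to k} x_i$ and $\prod_{i\leftarrow k} x_i$ correspond to the products of cluster variables along the two pairs of opposite sides of this quadrilateral, and the arrow orientations in $Q_T$ record which pair carries the coefficient $y_k$. Together with the flip-versus-mutation identification, this shows that a single flip on the surface is intertwined with a single seed mutation on the algebra. Since the tagged flip graph is connected \cite{FST}, iterating this step propagates the assignment to every tagged arc, every tagged triangulation and every flip, simultaneously yielding all three claimed bijections.

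The main obstacle is that the flip of the radius of a self-folded triangle has no classical geometric counterpart, and tagged arcs are introduced precisely to bypass this. For these configurations I would use Remark~\ref{self folded to tagged}, replacing a plain loop $l$ around a puncture $p$ by the notched arc $r^\bowtie$ together with the identity $x_l=x_r x_{r^\bowtie}$. This converts the missing flip into a legitimate flip at a notched endpoint, whose Ptolemy-type relation then becomes a bona fide cluster exchange. The subtle point is to show that the cluster variable attached to $r^\bowtie$ is independent of the sequence of flips used to reach it; this reduces, via Remark~\ref{tagged to plain triangulation}, to the consistency of the construction at the level of ideal triangulations plus the defining multiplicativity $x_l=x_r x_{r^\bowtie}$. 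Finally, for injectivity of the map tagged arc $\mapsto$ cluster variable I would invoke the Laurent expansion: the denominator (equivalently the $g$-vector) of a cluster variable records the intersection pattern of the corresponding arc with any fixed reference triangulation, so distinct arcs yield distinct variables, and surjectivity follows because every cluster variable sits in some cluster, which is then a tagged triangulation.
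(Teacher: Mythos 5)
The paper states this theorem without proof: it appears in the Preliminaries chapter as a recollection of the Fomin--Shapiro--Thurston result on cluster algebras from surfaces (\cite{FST}, \cite{FominThurston}), so there is no in-paper argument to compare your proposal against. Your sketch does track the genuine FST proof strategy — inductive propagation along the flip graph, flip $\leftrightarrow$ mutation at the level of quivers, Ptolemy-type exchange relations, tagged arcs to repair the non-flippability of self-folded radii, and distinguishing cluster variables by intersection data — so the broad outline is sound.

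A few points in your write-up are looser than the actual argument. First, denominator vectors and $g$-vectors are not ``equivalent''; they are distinct parametrizations, and for tagged arcs the relevant statement in FST is that the denominator vector with respect to a fixed ideal triangulation is given by the \emph{tagged} intersection number, a modified count needed precisely because an arc and its notched companion have the same underlying curve. You would need this modified notion, not ordinary geometric intersection, for injectivity. Second, your surjectivity argument (``every cluster variable sits in some cluster, which is then a tagged triangulation'') is circular as written: that every cluster is a tagged triangulation is part of what you are proving. The correct route is to observe that the map from the tagged-flip graph to the exchange graph commutes with flip/mutation and hits the initial seed, and then use the fact that the exchange graph is connected by definition, so the image is everything. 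Third, the coefficient-matching step is not a purely combinatorial bookkeeping of arrows; in FST/FT it is carried out via lambda lengths on a lifted decorated Teichm\"uller space, and the $h$-length/shear-coordinate machinery is what guarantees the $Y$-monomials in the exchange relation behave as required under all flips, including tagged ones. Finally, well-definedness of $x_{r^\bowtie}$ (independence of the flip sequence) is itself a nontrivial theorem in FST — it does not simply ``reduce'' to the multiplicativity $x_l = x_r x_{r^\bowtie}$ plus consistency at the ideal level; one must show the whole assignment is coherent across the tagged flip graph, which is where the connectivity result is really used. None of these gaps is conceptually off course, but each is more than a remark and would need to be spelled out to constitute a complete proof.
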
    

Before proceeding, we would like to elaborate further on how flips of tagged arcs work on a surface. First of all, it is well known that in unpunctured surfaces, each arc $d$ of a triangulation $T$ is the diagonal of a quadrilateral in the surface. We can then flip this arc $d$ by substituting it with the unique other diagonal $d'$ of the corresponding quadrilateral, as shown in Figure~\ref{fig:triangle flip}. \\
In punctured surfaces, if the new diagonal of the quadrilateral shares an endpoint with an arc of $T$ that is tagged ``notched" at that endpoint, then the new diagonal must also be tagged ``notched" there, in order to satisfy the second condition of Definition~\ref{compatible arcs}.\\
Additionally, as mentioned in Remark~\ref{self folded to tagged}, in punctured surfaces and in the presence of self-folded triangles, some arcs cannot be flipped. For example, in Figure~\ref{fig:self folded triangle} the arc $r$ cannot be flipped. This is precisely why tagged arcs were introduced: by associating a tagged arc (see Remark~\ref{tagged to plain triangulation}) to the loop that forms one side of the self-folded triangle in question, we are able to flip the arc, as illustrated in Figure~\ref{fig:flip of self folded triangle}.

\begin{figure}
    \centering
    \includegraphics[scale=0.7]{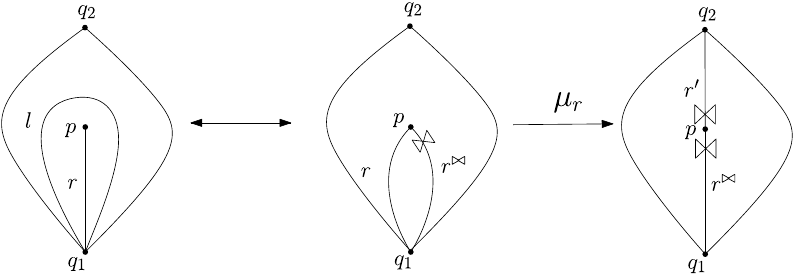}
    \caption{Flipping of a radius of a self-folded triangle.}
    \label{fig:flip of self folded triangle}
\end{figure}

\section{Loop graphs}

Having explained the basic dictionary between surfaces and cluster algebras one question that may arise is the following: Given an arc $\gamma$ on a triangulated surface $(S,M,P,T)$, which may possibly be self-crossing or even a closed curve, is there an element of the cluster algebra that is associated to it? It turns out that such arcs are actually elements of the cluster algebra, which also raises the next question: What is the element in the cluster algebra associated to that arc? This question is answered by the existence of a combinatorial formula for the Laurent expansion of the cluster element associated to the arc $\gamma$. This Laurent expansion is parametrized by the perfect matchings of a suitable graph, depending on the type of the arc $\gamma$.\\
There are three types of arcs on a surface, resulting in three types of graphs. If $\gamma$ is an untagged arc, then we can associate the so-called \emph{snake graph}. If $\gamma^{o}$ is a closed curve on the surface, then the associated graph is called a \textit{band graph}. Finally, if $\gamma^\bowtie$ is a tagged arc (assuming that at least one of its endpoints is tagged notched), then the associated graph is called a \textit{loop graph}. \\

We will start by defining what an abstract snake graph is, as well as giving the notion of a zig-zag in a snake graph which will turn out to be useful in some of the proofs that will follow.  

\begin{definition}\label{snake graph}
    A \textit{snake graph} $\mathcal{G}$ is a connected planar graph consisting of a finite sequence of tiles, which are squares (graphs with four vertices and four edges) in the plane:  $G_1,\dots, G_n,$ with $n\geq 1$,  which are glued together in the following way: two subsequent tiles $G_i$ and $G_{i+1}$ share exactly one edge, which can either be the North edge of $G_i$ and the South edge of $G_{i+1}$, or the East side of $G_i$ and the West edge of $G_{i+1}$.
\end{definition}

\begin{definition}
Let $\mathcal{G}= \{G_1, \dots, G_d\}$ a snake graph.
  \begin{itemize}
    \item A consecutive sequence of tiles $(G_i, G_{i+1}, \dots, G_j), 1\leq i\leq j\leq d$ is called a \textit{subgraph} of $\mathcal{G}$.
    \item Let $\mathcal{G}'$ be a subgraph of $\mathcal{G}$. We will say that $\mathcal{G}'$ is a \textit{zig-zag} if for every tile $G_{i+1}$ of $\mathcal{G}'$ not both of its South and North edges, or West and East edges, are glued to the tiles $G_i$ and $G_{i+2}$ respectively.
    \item We will also say that $\mathcal{G}'$ is the \emph{maximal zig-zag following} (\emph{maximal zig-zag preceding}) \emph{a tile} $G_i$ if adding any more tiles of $\mathcal{G}$ at the end (start) of $\mathcal{G}'$, results in it not being a zig-zag anymore. 
  \end{itemize}    
\end{definition}

We can also define a special function on a given snake graph $\mathcal{G}$, which is called \textit{sign function}. This is a map $f\colon\{$edges of $\mathcal{G}\}\to \{+, -\}$, which satisfies the following properties on every tile $G_i, 1\leq i\leq n$ of $\mathcal{G}$:
\begin{itemize}
    \item the North and West sides of $G_i$ have the same sign,
    \item the South and East side of $G_i$ have the same sign,
    \item the North and South sides of $G_1$ have the opposite sign,
\end{itemize}
as illustrated in Figure~\ref{fig:sign function}.\\
The above sign function can help us define the next category of graphs, the so-called ``band graphs".\\

\begin{figure}
    \centering
    \includegraphics[scale=1]{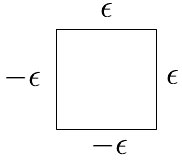}
    \caption{Sign function on a tile of a snake graph where $\epsilon= \pm$.}
    \label{fig:sign function}
\end{figure}

\begin{definition}
    A \textit{band graph} is a snake graph, with the additional property that the North or West side of the tile $G_n$ is identified with the South or East side of the tile $G_1$, provided they have the same sign. We will denote band graphs as $\mathcal{G}^{o}$.
\end{definition}

Lastly, we have yet to define what a \emph{loop graph} is. Loop graphs are, roughly speaking, created by gluing the first or last tile of a snake graph $\mathcal{G}$ to an interior tile of $\mathcal{G}$. The following procedure describing how two edges are ``glued'' together differs slightly from the original given in \cite{wilson2020surface}, since we are concerned with loop graphs associated to tagged arcs on punctured surfaces. Additionally, the definition of loop strings, which will be introduced later (see Definition~\ref{loopstring}), would not work on the general setting.\\

\begin{definition}\label{loop graph}
    Let $\mathcal{G}=\{G_1, \dots, G_d\}, d\geq 3$ be a snake graph and $c$ the edge of the tile $G_d$ which is not adjacent to any vertex of the tile $G_{d-1}$. Let also $x$ be the North-East vertex of $G_d$ and $y$ the remaining vertex adjacent to $c$. Let $\mathcal{G}'=\{G_{i+1}, \dots, G_d\}, i\geq 1$  be the maximal zig-zag preceding the tile $G_d$. We let $c'$ denote the North/East edge of the tile $G_i$ that is a boundary edge of $\mathcal{G}$. If $c'$ is the North edge of $G_i$ we let $y'$ denote the North-East vertex of $G_i$ and $x'$ the remaining vertex adjacent to $c'$.\\
    \indent A \textit{loop graph} $\mathcal{G}^\bowtie$ is obtained from $\mathcal{G}$ by identifying the vertices $x$ and $y$ to the vertices $x'$ and $y'$ respectively, and subsequently the edge $c$ to $c'$. We call the subgraph $\mathcal{G}'$, the \emph{right hook} of the loop graph $\mathcal{G}^\bowtie$. \\
    \indent Dually, let $\mathcal{G}''=\{G_{1}, \dots, G_j\}, j\leq d$  be the maximal zig-zag following the tile $G_1$. By identifying two vertices of the tile $G_1$ with two vertices of the tile $G_{j+1}$ we obtain a \emph{loop graph} $\mathcal{G}'^\bowtie$ and call the subgraph $\mathcal{G}''$ the \emph{left hook} of $\mathcal{G}'^\bowtie$.\\
    \indent Finally, we can do the above procedures at the same time creating a loop graph that has both a left and a right hook.
\end{definition}

\begin{example}\label{Isomorphic loop graphs}
    In Figure~\ref{fig:Loop graphs} we give two examples of graphs $\mathcal{G}^\bowtie_1$ and $\mathcal{G}^\bowtie_2$ with five tiles. For the loop graph $\mathcal{G}^\bowtie_1$ we have the right hook $\mathcal{G}'_1=\{G_3,G_4,G_5\}$, while the right hook of the loop graph $\mathcal{G}^\bowtie_2$ is $\mathcal{G}'_2=\{G_5,G_4,G_3\}$. Additionally, the loop graph $\mathcal{G}^\bowtie_1$ has the boundary edge $c$ on the North side of $G_5$, while the loop graph $\mathcal{G}^\bowtie_2$ has it on the East side of $G_5$. However, one can notice that in fact these two graphs are isomorphic.   
\end{example}

\begin{figure}
    \centering
    \includegraphics[scale=0.8]{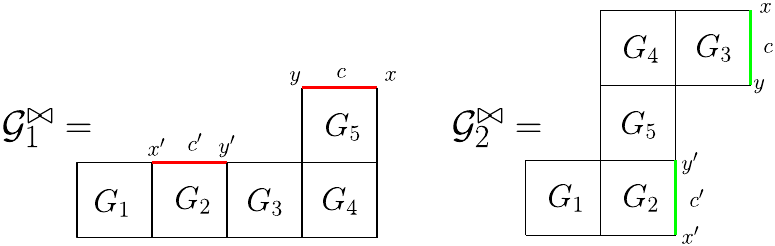}
    \caption{Example of isomorphic loop graphs.}
    \label{fig:Loop graphs}
\end{figure}

Having gone through the definitions of abstract snake, band and loop graphs, we are now ready to explain how one can associate the suitable graph to each arc on a triangulated surface. We will first associate a snake graph to a plain arc of the surface and then proceed to associate a loop graph to a given tagged arc.

\begin{definition}\label{snake graph associated to regular arc}
    Let $T$ be an ideal triangulation of a punctured surface $(S,M,P)$ and $\gamma$ a plain arc in the surface which is not in $T$. We choose an orientation of $\gamma$ and set $s\in M$ to be the starting point and $t\in M$ the ending point of the arc $\gamma$. Let $p_1, p_2, \dots, p_d$ be the intersection points of $\gamma$ with $T$ ordered by the sequence that $\gamma$ intersects $T$, starting from $s$ and finishing at $t$. Let $\tau_{i_j}$ be the arc containing the point $p_j$ for every $1\leq j\leq d$.\\
    \indent Assume first that $\tau_{i_j}$ is not the self-folded side of a self-folded triangle. Then $\tau_{i_j}$ is the diagonal of a quadrilateral $Q_{i_j}$ in $(S,M,P,T)$ and let $\Delta_j$ denote the first triangle in one side of $Q_{i_j}$ that $\gamma$ crosses, and $\Delta_{j+1}$ the other triangle. For every such intersection point $p_j$, we associate a tile $G_j$ which is formed by deleting the diagonal $\tau_{i_j}$ and embedding $Q_{i_j}$ in the plane so that the triangle $\Delta_j$ forms the lower left half of $G_j$.\\
    \indent Assume now that $\tau_{i_j}$ is the self-folded side of a self-folded triangle. Then we associate a tile $G_j$ to $p_j$ by gluing two copies of the self folded triangle such that the labels of the North and West (equivalently South and East) edges of $G_j$ are equal.\\
    The \textit{snake graph $\mathcal{G}_\gamma= \{G_1, \dots, G_d \}$ associated to $\gamma$} is formed by gluing the common edges of $G_j$ and $G_{j+1}$ if both tiles were not formed by an intersection in a self-folded triangle, while they are glued as shown in Figure~\ref{fig:pass through self folded triangle} otherwise.
\end{definition}

\begin{figure}
    \centering
    \includegraphics[scale=0.7]{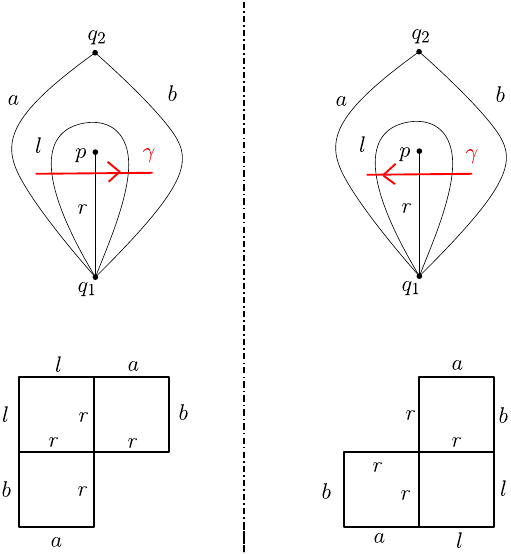}
    \caption{Local configuration of a snake graph, for arcs passing through a self-folded triangle.}
    \label{fig:pass through self folded triangle}
\end{figure}

The definition of a loop graph associated to a tagged arc requires the notion of a \textit{hook} which was used by Labardini-Fragoso and Cerulli Irelli (\cite{labardinifragoso2009quiverspotentialsassociatedtriangulated}, \cite{Cerulli_Irelli_2012})  in their work on representations of quivers with potential.\\

Let $\gamma^\bowtie$ be a tagged arc on a triangulated surface $(S,M,P,T)$ which is tagged ``notched" at its endpoint adjacent to the puncture $p$. We can then associate a new \textit{hooked arc} $\gamma^\bowtie_h$ which is obtained from $\gamma$ by replacing the endpoint of $\gamma^\bowtie$ that is tagged ``notched" with a \textit{hook} which travels around $p$ either clockwise or anti-clockwise intersecting each incident arc of the triangulation $T$ at $p$ once, as illustrated in Figure~\ref{fig:hooked}\\
If both of the endpoints of $\gamma^\bowtie$ are tagged notched, then we do the previous construction in both ends of $\gamma^\bowtie$, in order to obtain a doubly hooked arc $\gamma^\bowtie_h$.

\begin{figure}[h! tbp]
    \centering
    \includegraphics[scale=0.6]{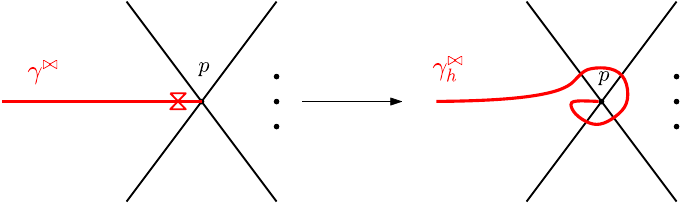}
    \caption{Creating a hooked arc $\gamma^\bowtie_h$ out of a tagged arc $\gamma^\bowtie$.}
    \label{fig:hooked}
\end{figure}

\begin{figure}
    \centering
    \includegraphics[scale=1]{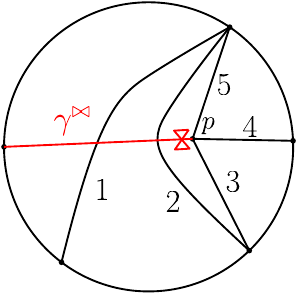}
    \caption{Tagged arc on a surface.}
    \label{fig:tagged arc}
\end{figure}

\begin{definition}\label{loop graph associated to tagged arc}
    Let $T$ be an ideal triangulation of a punctured surface $(S,M,P)$ and $\gamma^\bowtie$ a tagged arc in the surface, such that the underlying plain arc $\gamma$ is not in $T$, and the tagging is not adjacent to a puncture enclosed by a self-folded triangle. Consider $\mathcal{G}_{\gamma^\bowtie_h}= \{G_{h_1}, \dots, G_{h_n}, G_1,\dots G_d\}$ to be the snake graph associated to the hooked arc $\gamma^\bowtie_h$, where $\mathcal{G}_\gamma= \{G_1,\dots G_d\}$ is the snake graph associated to the plain arc $\gamma$.\\ 
    \indent We define $\mathcal{G}_{\gamma^\bowtie}$ to be the \textit{loop graph associated to the tagged arc $\gamma^\bowtie$}, which is obtained from $\mathcal{G}_{\gamma^\bowtie_h}$ by creating a loop at $G_{h_1}$ and $G_1$. \\
    If $\mathcal{G}_{\gamma^\bowtie_h}= \{G_1,\dots G_d, G_{h'_1}, \dots, G_{h'_n}\}$ was the snake graph associated to the hooked arc $\gamma^\bowtie_h$, then we define $\mathcal{G}_{\gamma^\bowtie}$ to be the \textit{loop graph associated to the tagged arc $\gamma^\bowtie$}, which is obtained from $\mathcal{G}_{\gamma^\bowtie_h}$ by creating a loop at $G_{d}$ and $G_{h'_d}$.\\
    Lastly if there is a tagging in both endpoints of the arc $\gamma^\bowtie$, then we have $\mathcal{G}_{\gamma^\bowtie_h}= \{G_{h_1}, \dots, G_{h_n},G_1,\dots G_d, G_{h'_1}, \dots, G_{h'_n}\}$ to be the snake graph associated to the hooked arc $\gamma^\bowtie_h$, and we define $\mathcal{G}_{\gamma^\bowtie}$ to be the \textit{loop graph associated to the tagged arc $\gamma^\bowtie$},  obtained from $\mathcal{G}_{\gamma^\bowtie_h}$ by creating a loop at $G_{h_1}$ and $G_1$ and a loop at $G_{d}$ and $G_{h_d}$.
\end{definition}

\begin{remark}
    One can notice that, by the definition of the hook around a puncture, we have two choices: either we can go clockwise or anti-clockwise around the puncture. This means that there are two different loop graphs associated to a singly tagged arc. However, this is not a problem, since up to isomorphism, the resulting loop graphs are equal, as can be seen in Figure~\ref{fig:Loop graphs}.
\end{remark}

\begin{example}
    In Figure~\ref{fig:tagged arc} the tagged arc $\gamma^\bowtie$ gives rise to two possible loop graphs depending on which direction we follow around the puncture $p$. If we follow the anti-clockwise orientation then the resulting graph is the graph $\mathcal{G}_1$ of Figure~\ref{fig:Loop graphs}, while if we follow the clockwise orientation we take the graph $\mathcal{G}_2$ of the same figure. It is easy to see that these two graphs are isomorphic, as expected. \\

\end{example}

\begin{example}
    In Figure~\ref{fig:tagged arc passing through a self folded triangle}  the tagged arc $\gamma^\bowtie$ is passing through a self-folded triangle (the one defined by the arcs $8$ and $9$). Therefore, in the construction of the loop graph $\mathcal{G}_{\gamma^\bowtie}$ we need to follow the local configuration appearing on the left hand side of Figure~\ref{fig:pass through self folded triangle}.
\end{example}

\begin{figure}
    \centering
    \includegraphics[scale=0.8]{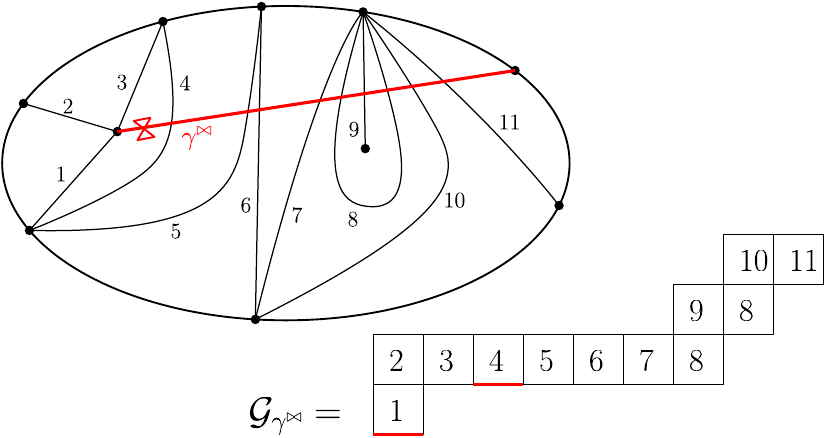}
    \caption{Tagged arc passing through a self folded triangle.}
    \label{fig:tagged arc passing through a self folded triangle}
\end{figure}

\section{Expansion formula of tagged arcs}

In the previous section we associated a loop graph to each tagged arc of a surface. The importance of this association becomes evident in this section, as initially Musiker, Schiffler and Williams \cite{Musiker_Schiffler_Williams_2013} and later Wilson \cite{wilson2020surface} provided an expansion formula for the associated cluster variable of an arc in the surface using these graphs. We will include only Wilson's result at the end of this section, as it is a generalization that encompasses all possible cases of a given arc in a surface.\\
We will start by explaining what a perfect matching of a graph is and how to associate a monomial to every perfect matching of a snake or loop graph.\\
A \textit{perfect matching} of a graph $\mathcal{G}$ is a subset $P$ of the edges of $\mathcal{G}$ such that each vertex of $\mathcal{G}$ is incident to exactly one of the edges in $P$. We define $\Match{\mathcal{G}}$ to be the collection of all perfect matchings of $\mathcal{G}$.\\

Each snake or loop graph has precisely two perfect matchings that contain only boundary edges. We will discuss the matchings for snake graphs and loops graphs separately.\\
If $\mathcal{G}$ is a snake graph, then we will denote by $P_{+}$, and call it the \textit{maximal perfect matching of $\mathcal{G}$}, the perfect matching that contains only boundary edges and includes the West side of the first tile of $\mathcal{G}$. Dually, we will denote by $P_{-}$, and call it the \textit{minimal perfect matching of $\mathcal{G}$}, the perfect matching that contains only boundary edges and includes the South side of the first tile of $\mathcal{G}$.\\
If $\mathcal{G}^\bowtie$ is a loop graph, the maximal perfect matching $P_+$ of $\mathcal{G}^\bowtie$ is defined to be the perfect matching that contains only boundary edges and is a subset of the maximal perfect matching associated to the plain snake graph $\mathcal{G}$. The minimal perfect matching of a loop graph is defined dually.\\

\begin{remark}
    A loop graph $\mathcal{G}^\bowtie$ can be represented in more than one ways as it was pointed out in Example~\ref{Isomorphic loop graphs}. Therefore, defining the maximal perfect matching of $\mathcal{G}^\bowtie$, relying on what we call the plain snake graph $\mathcal{G}$, could potentially create a problem. Namely, if $\mathcal{G}_1^\bowtie$ and $\mathcal{G}_2^\bowtie$ are isomorphic loop graphs, we must have that $P_{\mathcal{G}_{1,+}^\bowtie}=P_{\mathcal{G}_{2,+}^\bowtie}$. \\
    
\end{remark}

\begin{figure}
    \centering
    \includegraphics[scale=0.8]{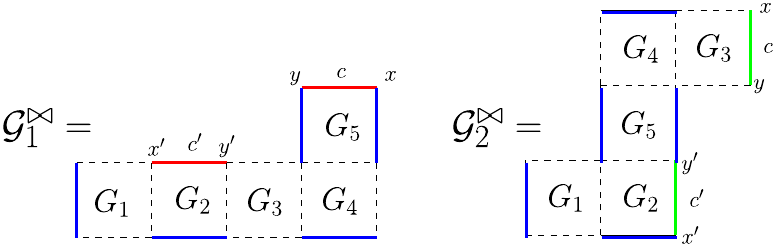}
    \caption{Maximal perfect matching of two isomorphic loop graphs.}
    \label{fig:Max perfect matching of loop graphs}
\end{figure}

\indent Let $P_{-}\ominus P= (P_{-}\cup P) \setminus (P_{-}\cap P)$ denote the \textit{symmetric difference} of an arbitrary perfect matching of a graph $G$ with the minimal perfect matching $P_{-}$. Notice that $P_{-}\ominus P$ is a set of boundary edges of a subgraph $G_P$ of $G$, which consists of a subset of tiles $G_i$ of the graph $G$. 

\begin{definition}
    Let $P\in\Match{G}$. The \textit{height monomial of $P$} is defined by
    \[
    y(P)= \prod_{G_i \in G_P}{y_i}.
    \]
    The \textit{weight monomial of $P$} is defined by
    \[
    x(P)= \prod_{\tau_i\in P}{x_{\tau_i}}
    \]
\end{definition}

\begin{theorem}
    Let $(S,M,P)$ be a punctured surface and $T$ an ideal triangulation of the surface. Let  $\mathcal{C}= \mathcal{C}(S,M,P,T)$ be the associated cluster algebra with principal coefficients, with respect to $T$. Suppose $\gamma^\bowtie$ is a (tagged) arc, which is not tagged notched at a puncture enclosed by a self folded triangle in $T$. Then the cluster variable $x_{\gamma^\bowtie}$ is equal to 
    \[
    x_{\gamma^\bowtie} = \frac{1}{\cross(\gamma, T)} \sum_{P\in \Match{G_{\gamma^\bowtie}}} {x(P)y(P)}
    \]
    where $x(P)$ is the weight of the perfect matching $P$, $y(P)$ is the height of $P$ and $\cross(\gamma^\bowtie)= \prod_{j=1}^{d}{x_{\tau_{i_j}}}$ is the \textit{crossing monomial of $\gamma^\bowtie$}. 
\end{theorem}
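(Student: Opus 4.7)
The plan is to split into two regimes according to whether $\gamma^\bowtie$ carries any notch, reducing the tagged case to the plain one via the hook construction. For a plain arc ($\gamma^\bowtie=\gamma$), $\mathcal{G}_{\gamma^\bowtie}$ is the snake graph of Definition~\ref{snake graph associated to regular arc} and the claim is the Musiker--Schiffler--Williams snake-graph expansion. I would prove it by induction on $\cross(\gamma,T)$. The base case $\cross(\gamma,T)=1$ amounts to the Fomin--Zelevinsky exchange relation: $\gamma$ is then a diagonal of a quadrilateral in $T$, and the three perfect matchings of the resulting single-tile graph reproduce exactly the numerator of the exchange formula. For the inductive step, pick an arc $\tau_k\in T$ crossed by $\gamma$, flip it to obtain $T'$ with $\cross(\gamma,T')<\cross(\gamma,T)$, and combine the inductive expansion in $T'$ with the exchange relation expressing $x_{\tau_k'}$ in the original cluster. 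The combinatorial heart of this step is a weight- and height-preserving bijection between $\Match{\mathcal{G}_\gamma^{T'}}$ and the two subclasses of $\Match{\mathcal{G}_\gamma^{T}}$ distinguished by how the tile corresponding to $\tau_k$ is covered.

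For a tagged arc with a notched endpoint at a puncture $p$, the strategy is to apply the plain-arc formula to the hooked arc $\gamma^\bowtie_h$ and then push it down through the defining identification of the loop graph. The loop graph $\mathcal{G}_{\gamma^\bowtie}$ of Definition~\ref{loop graph associated to tagged arc} is obtained from $\mathcal{G}_{\gamma^\bowtie_h}$ by identifying a boundary edge of the hook with a boundary edge of the underlying snake graph along a maximal zig-zag (Definition~\ref{loop graph}). Each matching of $\mathcal{G}_{\gamma^\bowtie}$ lifts canonically to a matching of $\mathcal{G}_{\gamma^\bowtie_h}$ that is consistent on both sides of the identified edge; I would partition $\Match{\mathcal{G}_{\gamma^\bowtie_h}}$ into these \emph{consistent} matchings and the remaining \emph{inconsistent} ones, and construct an explicit pairing on the inconsistent set so that each pair contributes exactly the excess factor $\cross(\gamma^\bowtie_h,T)/\cross(\gamma,T)$, namely the product of $x_{\tau_i}$ over the arcs $\tau_i\in T$ incident to $p$. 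Combined with the algebraic identity relating $x_{\gamma^\bowtie_h}$ to $x_{\gamma^\bowtie}$ via the loop around $p$ (in the spirit of Remark~\ref{self folded to tagged}), this produces the desired expansion; the doubly notched case is handled by performing the same reduction at both endpoints.

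The main obstacle I anticipate is the explicit construction of the pairing on inconsistent matchings of $\mathcal{G}_{\gamma^\bowtie_h}$, together with the careful bookkeeping of the height monomials. Since $y(P)$ records the tiles lying in the symmetric difference $P_{-}\ominus P$, the pairing must swap hook tiles in and out of this subgraph in a controlled way while respecting the sign-function conventions governing which boundary edges of the loop graph may appear together. The hypothesis excluding notches at punctures enclosed by a self-folded triangle is precisely what makes the loop graph of Definition~\ref{loop graph} well-defined (otherwise the hook would collapse onto the self-folded side and the maximal zig-zag would degenerate), and it is exactly these excluded cases that would have to be treated separately using the substitution $x_l=x_r x_{r^\bowtie}$ of Remark~\ref{self folded to tagged}.
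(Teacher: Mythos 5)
The paper does not give a proof of this theorem at all: it is stated and cited as prior work, attributed to Musiker--Schiffler--Williams \cite{MSWpositivity} for the expansion of tagged arcs and, in the loop-graph form used here, to Wilson \cite{wilson2020surface} (the paper explicitly says ``We will include only Wilson's result at the end of this section''). So there is no internal proof to compare against; your sketch is a proposal for a result the thesis takes as a black box.

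On its own merits, the plain-arc half of your outline is sound and is essentially the Musiker--Schiffler--Williams induction on $\cross(\gamma,T)$ via flips, so no complaint there. The tagged-arc half, however, has a real gap in the reduction from the hooked-arc snake graph $\mathcal{G}_{\gamma^\bowtie_h}$ to the loop graph $\mathcal{G}_{\gamma^\bowtie}$. You propose that each perfect matching of $\mathcal{G}_{\gamma^\bowtie}$ ``lifts canonically to a matching of $\mathcal{G}_{\gamma^\bowtie_h}$ that is consistent on both sides of the identified edge,'' and that the remaining (``inconsistent'') matchings of $\mathcal{G}_{\gamma^\bowtie_h}$ can be paired off. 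This is not how the identification behaves. In Definition~\ref{loop graph} the vertices $x\sim x'$ and $y\sim y'$ become a single vertex each, so a perfect matching of $\mathcal{G}_{\gamma^\bowtie}$ covers that vertex exactly once; lifting to $\mathcal{G}_{\gamma^\bowtie_h}$ (where the two preimages are distinct) leaves one of $x,x'$ (and one of $y,y'$) uncovered, so the lift is \emph{not} a perfect matching of $\mathcal{G}_{\gamma^\bowtie_h}$. Conversely, a perfect matching of $\mathcal{G}_{\gamma^\bowtie_h}$ covers both $x$ and $x'$, and after identification the image typically covers the glued vertex twice, hence is not a matching of the loop graph either, except in the special case where $x$ and $x'$ are both matched by the edges $c$, $c'$ that get identified. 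So the set $\Match(\mathcal{G}_{\gamma^\bowtie})$ does not embed into $\Match(\mathcal{G}_{\gamma^\bowtie_h})$, and the ``consistent vs.\ inconsistent'' dichotomy you build the argument on is not available. The algebraic side is also more delicate than the sketch suggests: $\gamma^\bowtie_h$ is a self-intersecting generalized arc (it spirals around $p$), not a cluster variable, and the identity $x_\ell=x_r x_{r^\bowtie}$ of Remark~\ref{self folded to tagged} applies to the loop bounding a once-punctured monogon, not to the hooked curve, so there is no direct divisibility relating $x_{\gamma^\bowtie_h}$ to $x_{\gamma^\bowtie}$ to plug into. This is precisely why the literature either works with $\gamma$-symmetric matchings of a doubled snake graph (MSW) or proves the loop-graph expansion by a separate argument (Wilson), rather than by a lift-and-pair reduction of the kind you describe.
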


\begin{remark}
    As it is the case with unpunctured surfaces, we can also assign a cluster algebra element to each multicurve $c$ by defining $x_c=\prod_{\gamma\in c} x_\gamma$, where $\gamma$ runs over all arcs, tagged arcs and closed curves of $c$.
\end{remark}

\section{Quivers associated to loop graphs}

In this subsection we first recall from \cite{MSWpositivity} how we can associate a quiver to a given snake graph and later recall from \cite{wilson2020surface}, the generalized construction of the quiver associated to a loop graph.

\begin{definition}\cite{MSWpositivity} \label{quiver of snake graph}
Let $\mathcal{G}=(G_1,...,G_n)$ be a snake graph. The \textit{quiver associated to the graph $\mathcal{G}$} is defined to be the quiver $Q_\mathcal{G}$  with vertices ${1,...,n}$ and whose arrows are determined by the following rules:
\begin{itemize}
    \item [(i)] 
    there is an arrow $i\to i+1$ in $Q_\mathcal{G}$ if  $i$ is odd and $G_{i+1}$ is on the right of  $G_i$, or $i$ is even and $G_{i+1}$ is on the top of $G_i$,
    \item [(ii)]
    there is an arrow $i\to i-1$ in $G_\mathcal{G}$ if $i$ is odd and $G_{i}$ is on the right of  $G_{i-1}$, or $i$ is even and $G_{i}$ is on the top of $G_{i-1}$.
\end{itemize}
\end{definition}

Following Wilson, we can now define a quiver to a given loop graph.

\begin{definition}\cite{wilson2020surface}\label{quiver of a loop graph}
Let $\mathcal{G}^\bowtie=(G_1,\dots,G_n)^\bowtie$ be a loop graph, with underlying snake graph $\mathcal{G}=(G_1,\dots,G_n)$. The \textit{quiver associated to the loop graph $G^\bowtie$} is defined to be the quiver $Q_{\mathcal{G}^\bowtie}$ which is the same as the quiver $Q_\mathcal{G}$ with an additional arrow for each loop of $\mathcal{G}^\bowtie$, where the additional arrow is defined as follows:
\begin{itemize}
    \item [(i)]
    if there is a loop with respect to $G_1$ and $k\in \{2,\dots,n\}$ then the arrow $1 \to k$ (resp. $k \to 1$) is in $Q_{\mathcal{G}\bowtie}$ if $k$ is odd (resp. even) and $S(G_k)$ is the cut edge, or $k$ is even (resp. odd) and $W(G_k)$ is the cut edge,
    \item[(ii)]
    if there is a loop with respect to $G_n$ and $k\in \{1,\dots,n-1\}$ then the arrow $n \to k$ (resp. $k \to n$) is in $Q_{\mathcal{G}\bowtie}$ if and only if $k$ is odd (resp. even) and $N(G_k)$ is the cut edge, or $k$ is even (resp. odd) and $E(G_k)$ is the cut edge.
\end{itemize}
\end{definition}

One thing that one may notice, is that a loop graph has equivalent representations which could possibly lead to different quivers associated to each representation. However Definition~\ref{quiver of a loop graph} is consistent in the sense that the quiver that is associated to a loop graph is independent of the choice of the representation of the loop graph as the following propositions indicates.\\

\begin{proposition}\label{quiver associated to loop garph is unique}
    Let $\mathcal{G}_1^{\bowtie}$ and $\mathcal{G}_2^{\bowtie}$ be two different representations of the same loop graph $\mathcal{G}^{\bowtie}$. Then, the associated quivers $Q_{\mathcal{G}_1^{\bowtie}}$ and $Q_{\mathcal{G}_2^{\bowtie}}$ of the graphs $\mathcal{G}_1^{\bowtie}$ and $\mathcal{G}_2^{\bowtie}$ respectively are the same quiver.
    \begin{proof}
        We will assume that there is only one loop at the start of the loop graph $\mathcal{G}^{\bowtie}$, since the arguments for a possibly second loop at the end of the graph would be similar.\\
        Let $\mathcal{G}_1^{\bowtie}=\{G_1,\dots G_{d-1}, G_d,\dots G_n\}, 2\leq d\leq n$ where there is a loop with resect to the tiles $G_1$ and $G_d$. Therefore since there is only one loop, there are two possible representations of the loop graph and therefore it must be $\mathcal{G}_2^\bowtie= \{G_{d-1},\dots G_1, G_d,\dots G_n\}$. \\
        Let w.l.o.g. assume that $d$ is even and that the tile $G_2$ is on the right of the tile $G_1$ at the loop graph $\mathcal{G}_1^{\bowtie}$. We will now construct the quivers $Q_{\mathcal{G}_1^{\bowtie}}$ and $Q_{\mathcal{G}_2^{\bowtie}}$ separately.\\
        \indent Starting with $Q_{\mathcal{G}_1^{\bowtie}}$ we first construct the quiver $Q_{\mathcal{G}_1}$ associated to the underlying snake graph $\mathcal{G}_1$. The local configuration induced by the tiles $\{G_d,\dots,G_n\}$ at the quiver $Q_{\mathcal{G}_1}$ is the same as the local configuration at the quiver $Q_{\mathcal{G}_2}$, so we need to focus only on the arrows of the quiver which are induced from the first part of the loops graphs, up to the tile $G_d$.\\
        Since $G_2$ is on the right of $G_1$ following Definition~\ref{quiver of snake graph} we have an arrow $1\rightarrow2$ in $Q_{\mathcal{G}_1}$. Since $\{G_1,\dots G_{d-1}\}$ is a zig-zag we must have the following configuration locally in $Q_{\mathcal{G}_1}$: $1\rightarrow 2\rightarrow \dots\rightarrow d-1$. Additionally the zig-zag $\{G_1,\dots G_{d-1}\}$ is also maximal and therefor we have the arrow: $d-1\leftarrow d$. Lastly we need to add an arrow that is induced by the loop at the tiles $G_1$ and $G_d$. Since $d$ is an even number the tile $G_d$ is over the tile $G_{d-1}$ and therefore $W(G_d)$ is the cut edge in the loop graph $\mathcal{G}_1^{\bowtie}$. Following Definition~\ref{quiver of a loop graph}(i) we have that the quiver $Q_{\mathcal{G}_1^{\bowtie}}$ is the same as the quiver $Q_{\mathcal{G}_1}$ with an additional arrow $1\rightarrow d$.\\
        \indent We will now construct the quiver $Q_{\mathcal{G}_2^\bowtie}$ associated to the loop graph $\mathcal{G}_2^\bowtie$. $\mathcal{G}_2^\bowtie= \{G_{d-1},\dots G_1, G_d,\dots G_n\}$ and as stated earlier we need to only investigate the configuration of arrows in $Q_{\mathcal{G}_2^\bowtie}$ induced by the first part $\{G_{d-1},\dots G_1, G_d\}$.\\
        To begin with, the tile $G_1$ is on the left of the tile $G_d$ in the loop graph $\mathcal{G}_2^{\bowtie}$ since the cut edge in $\mathcal{G}_1^{\bowtie}$ is $W(G_d)$. Notice also that $G_1$ is an odd tile $\mathcal{G}_2$ and therefore by Definition~\ref{quiver of snake graph} (i) there is an arrow $1\to d$ in $Q_{\mathcal{G}_2}$. Since $\{G_{d-1},\dots G_1\}$ is a maximal zig-zag in $\mathcal{G}_2$ we must also have the following local configuration in $Q_{\mathcal{G}_2}$: ${d-1}\leftarrow \dots\leftarrow 1$.\\
        Lastly we need to add an extra arrow induced by the loop with respect to the tiles $G_{d-1}$ and $G_{d}$ in order to complete the quiver $Q_{\mathcal{G}^\bowtie_2}$. The cut edge in $\mathcal{G}^\bowtie_1$ was $W(G_d)$ which implies that the cut edge in $\mathcal{G}^\bowtie_2$ is $S(G_d)$. The cut edge being $S(G_d)$ and $d$ being even implies by Definition~\ref{quiver of a loop graph} that in $Q_{\mathcal{G}^\bowtie_2}$ we have additionally the arrow $d\rightarrow {d-1}$.\\
        Noticing that the quivers constructed $Q_{\mathcal{G}^\bowtie_1}$ and $Q_{\mathcal{G}^\bowtie_2}$ are the same completes the proof.
        
    \end{proof}
\end{proposition}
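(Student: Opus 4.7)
The plan is to reduce, as suggested by the symmetry of the two hook constructions in Definition~\ref{loop graph}, to the case where the loop graph has a single loop, say at the start (i.e.\ a right hook in the author's terminology); the argument for a left hook, or for both, is analogous up to reversing the labels. Once this reduction is made, the two representations $\mathcal{G}_1^\bowtie$ and $\mathcal{G}_2^\bowtie$ of the same underlying loop graph $\mathcal{G}^\bowtie$ differ only in how the maximal zig-zag that forms the right hook is listed: if $\mathcal{G}_1^\bowtie = \{G_1,\dots,G_{d-1},G_d,\dots,G_n\}$ with loop at $\{G_1,G_d\}$, then $\mathcal{G}_2^\bowtie$ is obtained by reversing the order of the tiles of the zig-zag $\{G_1,\dots,G_{d-1}\}$ and relabelling accordingly, so $\mathcal{G}_2^\bowtie = \{G_{d-1},\dots,G_1,G_d,\dots,G_n\}$.

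Next, I would observe that on the common tail $\{G_d,\dots,G_n\}$ the two representations agree as snake graphs, so by Definition~\ref{quiver of snake graph} they contribute identical arrows to both $Q_{\mathcal{G}_1^\bowtie}$ and $Q_{\mathcal{G}_2^\bowtie}$. Hence it suffices to check that the arrows produced on the zig-zag segment, together with the single loop arrow prescribed by Definition~\ref{quiver of a loop graph}, agree in both representations. Since the segment is a maximal zig-zag, the arrows along it form a directed path $1\to 2\to\cdots\to d-1$ in one representation (as in the author's chosen case, with $d$ even and $G_2$ east of $G_1$) and a directed path $d-1\leftarrow\cdots\leftarrow 1$ in the other; because the indices of the \emph{same} abstract tiles are preserved in both cases (only the traversal order differs), these two path descriptions specify exactly the same set of arrows on the same vertex set.

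The crux is then to verify that the loop arrow also matches. Here I would use the key geometric consistency: the cut edge at $G_d$ is a fixed edge of the planar embedding, so if it is, say, the west edge of $G_d$ in $\mathcal{G}_1^\bowtie$, then in $\mathcal{G}_2^\bowtie$, where the gluing partner $G_{d-1}$ replaces $G_1$, the same cut edge must now be read as a different compass direction (the south edge of $G_d$, say), and the roles of $G_1$ and $G_{d-1}$ are swapped. Plugging both pieces of data into Definition~\ref{quiver of a loop graph}(i) and taking account of the parity flip (the tile that was indexed $1$ and odd is now replaced by a tile indexed $d-1$ of the opposite parity when $d$ is even), one checks directly that the unique additional arrow produced is the same in both cases: in the author's example, one gets $1\to d$ from $\mathcal{G}_1^\bowtie$ and $d\to d-1$ from $\mathcal{G}_2^\bowtie$, which are the same arrow once one identifies the endpoints under the two labellings.

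The main obstacle I anticipate is a clean bookkeeping of cases: one must track simultaneously the parity of $d$, which compass side of $G_d$ carries the cut edge (determined by whether the zig-zag ends by stepping up or to the side), and which clause (i) or (ii) of Definition~\ref{quiver of a loop graph} applies in each representation. A tidy way to package this is to note that under the reversal of the zig-zag the parity of the tile glued to $G_d$ via the loop always flips, and simultaneously the compass label of the cut edge rotates within the pair $\{N,W\}$ or $\{S,E\}$ so that the same clause of the definition fires with opposite parity, producing the same arrow. Verifying this once with the author's explicit choice and noting that all remaining cases are symmetric completes the proof.
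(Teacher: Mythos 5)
Your reduction to a single loop, the observation that the common tail $\{G_d,\dots,G_n\}$ contributes the same arrows, and the remark that the arrows internal to the zig-zag $\{G_1,\dots,G_{d-1}\}$ coincide (since only the traversal order changes) all match the structure of the paper's proof. However, there is a genuine gap at the step you yourself call the crux.

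You never account for the snake-graph arrow at the interface between the zig-zag and $G_d$, and this arrow is \emph{not} the same in the two representations. In $\mathcal{G}_1^\bowtie$ the tile adjacent to $G_d$ along the snake is $G_{d-1}$, so Definition~\ref{quiver of snake graph} produces the arrow $d\to d-1$; in $\mathcal{G}_2^\bowtie = \{G_{d-1},\dots,G_1,G_d,\dots\}$ the tile adjacent to $G_d$ is instead $G_1$, and Definition~\ref{quiver of snake graph} produces the arrow $1\to d$. You treat the tail and the internal zig-zag arrows, then jump straight to the loop arrow, silently assuming the interface arrows agree. Once this is taken into account, your final claim that ``one gets $1\to d$ from $\mathcal{G}_1^\bowtie$ and $d\to d-1$ from $\mathcal{G}_2^\bowtie$, which are the same arrow once one identifies the endpoints under the two labellings'' is seen to be false: these are arrows with different sources and targets, and since the vertices of both quivers are already identified with the same abstract tiles $G_1,\dots,G_n$, there is no relabelling available to conflate them. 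The correct reconciliation, which the paper's proof carries out explicitly, is that the interface snake-arrow and the loop arrow swap roles between the two representations: $Q_{\mathcal{G}_1^\bowtie}$ contains $d\to d-1$ from the snake part and $1\to d$ from the loop, while $Q_{\mathcal{G}_2^\bowtie}$ contains $1\to d$ from the snake part and $d\to d-1$ from the loop, so the two \emph{sets} of arrows coincide. Your argument as written would also ``prove'' agreement in cases where it fails, because it does not confront the mismatch in the interface arrows at all.
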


A nice visualization of the above proof can be done by looking at the Figure~\ref{fig:quivers associated to loop graph} where there is a loop at the start of the loop graph, ignoring the second loop at the end of it.

\begin{example}\label{quivers of loop graphs}
    In Figure~\ref{fig:quivers associated to loop graph} the loop graph on the left hand side has two loops, one at the start of the graph which ``connects" the tiles $1$ and $4$ and one at the end which ``connects" the tiles $6$ and $10$. Following Definition~\ref{quiver of snake graph} and viewing the loop graph as a regular snake graph we take the quiver:
    \[
    1\rightarrow 2 \rightarrow 3 \leftarrow 4\rightarrow 5 \rightarrow 6 \leftarrow 7 \rightarrow 8 \rightarrow 9 \rightarrow 10.
    \]
    In order to construct the quiver of the loop graph, following the two rules in Definition~\ref{quiver of a loop graph} we add the arrow $1\rightarrow 4$ and the arrow $6\rightarrow 10$ to the quiver resulting in the final quiver that can be viewed in the same figure. \\
    Working on the loop graph on the right hand side of Figure~\ref{fig:quivers associated to loop graph}, the quiver associated on the plain snake graph of that loop graph is the following:
    \[
    3\leftarrow 2\leftarrow 1\rightarrow 4\rightarrow 5\rightarrow 6 \rightarrow 10 \leftarrow 9 \leftarrow 8 \leftarrow 7.
    \]
    Looking at the two loops at the start and at the end of the loop graph we also add the arrows $3\leftarrow 4$ and $6\leftarrow 7$ respectively.\\ 
    It is easy to see that the quivers associated to both loop graphs (which are equivalent) are the same.
\end{example}

\begin{figure}
    \centering
    \includegraphics[scale=0.7]{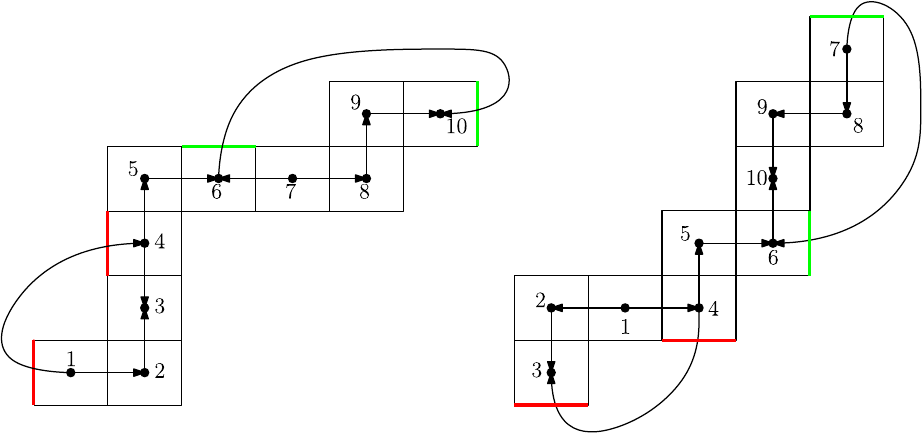}
    \caption{Quiver associated to loop graph.}
    \label{fig:quivers associated to loop graph}
\end{figure}

\section{Abstract strings and loopstrings}
Abstract strings are a well-established tool which has been used as a means of simplifying the information of a snake graph by associating a string to a given graph and by extension to a given arc on a surface. In this chapter we introduce a generalization of such abstract strings, which we call loopstrings. The definition of a loopstring captures the combinatorial structure of a loop graph and is similar to the notion of an abstract string associated with a snake graph.


Let $\{\rightarrow,\leftarrow\}$ be a set of two letters where the first one is called direct arrow and the second one inverse arrow. An \textit{abstract string} is a finite word in this alphabet or is the additional word denoted by $\emptyset$, which stands for the empty word. We will generalize this construction by adding two new letters in this alphabet and by imposing additional properties for these letters, which will correspond to the loops that appear in loop graphs. 

\begin{definition}\label{loopstring}
    Let $\{\hookleftarrow,\hookrightarrow\}$ be a set of two letters where the first one is called \emph{inverse loop} and the second one \emph{direct loop}.
    We create a new alphabet from the set $\{\rightarrow,\leftarrow,\hookleftarrow,\hookrightarrow\}$. An \textit{abstract loopstring} $l$ is either an abstract string, or an abstract string with maximum two additional letters from $\{\hookleftarrow, \hookrightarrow\}$ subject to the following:
    \begin{itemize}
       \item A direct or inverse loop cannot be the first or last letter of the string $l$. 
       \item A direct (respectively inverse) loop can be placed in a position only if all the following arrows up to the end of the $l$ or the preceding arrows up to   the start of $l$ are inverse (respectively direct) arrows.
    \end{itemize}

\end{definition}

To make the above Definition~\ref{loopstring} clear we will present some examples of loopstrings and some examples of non-loopstrings, pointing out the rules that are violated in each case.

\begin{example}
    The following sequences are loopstrings: 
    \begin{enumerate}
    \item[(i)] $\leftarrow \leftarrow \hookrightarrow \rightarrow \rightarrow \leftarrow \hookrightarrow \leftarrow \leftarrow \leftarrow$,
    \item[(ii)] $\rightarrow \rightarrow \rightarrow \hookleftarrow \leftarrow \rightarrow$,
    \item[(iii)] $\leftarrow \rightarrow \hookleftarrow \rightarrow$,
    \item[(iv)] $\rightarrow \rightarrow \rightarrow \hookleftarrow \rightarrow \rightarrow$,
    \item[(v)]$\leftarrow \hookrightarrow \hookrightarrow \leftarrow \leftarrow$.
    \end{enumerate}
    The following sequences are not loopstrings:
    \begin{enumerate}
        \item[(vi)] $\hookleftarrow \leftarrow \rightarrow$,
        \item[(vii)] $\leftarrow \leftarrow \rightarrow \hookrightarrow \leftarrow \rightarrow$ ,
        \item[(viii)] $\leftarrow \hookleftarrow \rightarrow \leftarrow$,
        \item[(ix)] $\rightarrow \hookleftarrow \hookleftarrow \hookrightarrow \leftarrow \leftarrow$.
    \end{enumerate}
The sequences (i) and (v) have exactly two loops. These loops satisfy the conditions of Definition~\ref{loopstring} since the first loops encountered in each case follows arrows of opposite direction, while the second loop in both cases precedes arrows of opposite direction. The sequences (ii) and (iii) satisfy all conditions since the inverse loop in (ii) follows a sequence of three consecutive direct arrows, while the inverse loop in (iii) precedes a sequence of one direct arrow. Regarding the sequence (iv) there is one inverse loop which both precedes and follows a sequence of direct arrows, which is obviously more than enough.\\The fact that it can be considered either as the start or the end translates to being associated to two different tagged arcs on a punctured surface.\\
The sequence (ix) has three loops and therefore cannot be a loopstring. The sequence (vi) is not a loopstring, since the loop is the first letter of the sequence. The sequence (vii) is not a loopstring, since on the left of the direct loop there is a direct arrow, while on the right of the loop there may exist an inverse arrow but this sequence of direct arrows (in this case, a sequence of one arrow) is not continuing up the end of the string. Lastly the sequence (viii) is not a loopstring since the loop follows a sequence of inverse arrows, but it should have been a direct loop for (viii) to be considered a loopstring.
\end{example}

\begin{remark}
    The reader can notice a repeating pattern on the second condition of Definition~\ref{loopstring}. We require the direct or inverse loop to follow or precede a sequence of inverse or direct arrows respectively. This is a parallel to the notion of maximal zig-zag that follows or precedes a tile as it was encountered in Definition~\ref{loop graph}. This comes as no surprise since in the classical case of associating a string to a snake graph, zig-zag tiles give rise to a sequence of direct or inverse arrows.
\end{remark}

\begin{remark}
    Our Definition~\ref{loopstring} is not unique in the sense that we could have taken a different root. In the case of loop graphs, one defines at first a snake graph and then identifies some edges on that graph. We could try to define a loopstring, starting with a string and then adding the two new letters of the alphabet in the suitable spots. However we decided to stick to our approach since now loopstrings are defined more abstractly instead of depending on a preexisting string, resembling in some way the way that abstract snake graphs were defined in the first place. 
\end{remark}

We finish this chapter by remarking that although abstract loopstrings are given as words on an alphabet, in practice when we are dealing with loopstrings associated to loop graphs or tagged arcs, we can view them as a special kind of graphs, by adding numbers in between the arrows and the loops, which correspond to the vertices of a graph. Another interesting observation is the fact if $(S,M,P,T)$ is a triangulated surface, then loopstrings associated to tagged arcs, as it is also the case with regular strings associated to regular arcs, have a close connection to the quiver associated to the triangulation $T$ of the surface $(S,M,P)$. This connection will be better understood through examples later, after we introduce the connections of loops graphs and loopstrings.

\section{Construction of the loop graph of an abstract loopstring}

In this section we will describe how one can go from an abstract loopstring to a loop graph. In practice, we will be usually working the other way around, building a loopstring out of a loop graph as it is described in the next section, but still it is important to point out that this procedure is bijective.\\
We start by defining the plain string associated to a loopstring which will simplify the construction.

\begin{definition}\label{plain string associated to loopstring}
   Let $w = a_1 \dots a_n$ be an abstract loopstring where $a_{h_1}, a_{h_2}\in\{\rightarrow,\leftarrow,\hookleftarrow,\hookrightarrow\}, 2\leq h_1, h_2\leq n-1$ and $a_i\in \{\rightarrow,\leftarrow\}$ for every $1\leq i\leq n$ with $i\neq h_1, h_2$. We set $a'_{h_1}$ to be equal to $\rightarrow$ (resp. $\leftarrow$) if  $a_{h_1}$ is equal to $\rightarrow$ or $\hookrightarrow$ (resp. $\leftarrow$ or $\hookleftarrow$). Similarly we set $a'_{h_2}$ to be equal $\rightarrow$ or $\leftarrow$ based on what $a_{h_2}$ is. 
   We call $w'= a_1 \dots a'_{h_1}\dots a'_{h_2}\dots a_n$ to be the \emph{plain string associated to the loopstring} $w$.
\end{definition}

We will now construct the loop graph of a loop string by combining a classic construction of Çanakçı-Schroll and some of the tools that we have already introduced in previous chapters.

\begin{definition}\label{loop garph associated to a loopstring}
Let $w= a_1 \dots a_{h_1}\dots a_{h_2}\dots a_n$ be a loopstring where $a_{h_1}, a_{h_2}\in\{\rightarrow,\leftarrow,\hookleftarrow,\hookrightarrow\}$. Let also $w'$ be the plain string associated to $w$. Let $G_{w'}=\{G_1,\dots G_{h_1}\dots G_{h_2}\dots G_n\}$ be the snake graph associated to the $w'$ as it is described in \cite{canakcci2021lattice}. \\
We call the loop graph $G^\bowtie_{w}$ the \emph{loop graph associated to the loopstring} $w$, where $G^\bowtie_{w}$ is the loop graph obtained from the snake graph $G_{w'}$ by identifying the edges of the tiles $G_1$ and $G_{h_1}$ (resp. $G_{h_2}$ and $G_n$) if $a_{h_1}\in \{\hookleftarrow,\hookrightarrow\}$ (resp. $a_{h_2}\in \{\hookleftarrow,\hookrightarrow\}$) as described in Definition~\ref{loop graph}.
    
\end{definition}

\section{Construction of the loopstring of a loop graph}

Given a loop graph $\mathcal{G}$, we want to associate a loopstring $w$. We will do this by combining a variety of construction that have already been introduces. Of course this is not the only way that one can define it, but we chose this approach, since it builds on previous results.\\

\begin{definition}

Let $\mathcal{G}^\bowtie = (G_1,\dots,G_n)$ be a loop graph and $\mathcal{G}=(G_1,\dots,G_n)$ the underlying snake graph associated to $\mathcal{G}^\bowtie$. Let $Q_\mathcal{G}$ be the quiver of the underlying snake graph $G=(G_1,\dots,G_n)$ (which is unique based on Remark~\ref{quiver associated to loop garph is unique}). We define $w' = a_1 \dots a_n$ where:
\begin{itemize}
    \item $a_i = \rightarrow$ if there is an arrow $i\to i+1$ in $Q_\mathcal{G}$,
    \item $a_i = \leftarrow$ if there is an arrow $i+1\to i$ in $Q_\mathcal{G}$.
\end{itemize}
We now associate possibly two more letters from $\{\hookleftarrow, \hookrightarrow\}$ in the following cases:
\begin{itemize}
    \item if there is a loop with respect to $G_1$ and $G_{h_1}, 1<h_1<n$ then we define $a_{h_1} = \hookleftarrow$ (resp. $a_{h_1} = \hookrightarrow$) if the arrow $1\to h_1$ (resp. $h_1\to 1$) is in $Q_{\mathcal{G}\bowtie}$,
    \item if there is a loop with respect to $G_{h_2},1<h_2<n$ $G_d$ then we define $a_{h_2} = \hookleftarrow$ (resp. $a_{h_2} = \hookrightarrow$) if the arrow $h_2\to n$ (resp. $n\to h_2$) is in $Q_{\mathcal{G}\bowtie}$.
\end{itemize}
The loopstring $w$ is then defined as $w= a_1 \dots a_{h_1}\dots a_{h_2}\dots a_n$ and is called \emph{the loopstring associated to the loop graph} $\mathcal{G}^\bowtie$.

\end{definition}

The next remark is a well known fact about snake graphs, but it can be easily seen that it also holds true for loop graphs. We mention it here for the sake of completion and since it will be widely used later in the proofs of the main results.

\begin{remark}
    Based on the definition of a loopstring associated to a loop graph we can notice two things:
    \begin{itemize}
        \item if three consecutive tiles of the loop graph form a zig-zag then the two arrows associated to these tiles have the same direction.
        \item if three consecutive tiles of the loop graph form a straight piece then the two arrows associated to these tile have alternating directions.
    \end{itemize}
\end{remark}

\begin{remark}\label{planarreprofloopgraph}
   Notice that equivalent planar representations of a loop graph will produce two different loopstrings. We will introduce an equivalence relation for loopstrings which recovers equivalent planar representations of a loop graph. 
\end{remark}

\begin{definition}\label{eqloopstrings}
    Let $w=a_1\dots a_d \dots a_n$ and $w' = b_1\dots b_d \dots b_n$ be two different loopstrings, for which $a_d, b_d\in\{\hookleftarrow,\hookrightarrow\}$ and $a_i, b_i\in\{\leftarrow,\rightarrow\}$ for every $1\leq i\leq n$ and $i\neq d$. Then we will say that the two loopstrings are \textit{left equivalent} if $w = w'$ or the following are true:
    \begin{itemize}
        \item[(i)] $a_d\neq b_d$,
        \item[(ii)] if $\{a_1, \dots, a_{d-1}\}$ are direct arrows (resp. inverse arrows), then we must have that $\{b_1, \dots, b_{d-1}\}$ are inverse arrows (resp. direct arrows),
        \item[(iii)] $a_i = b_i$ for each $d+1 < i \leq n$.
    \end{itemize}
    We can define similarly \emph{right equivalent} loopstrings when there is a direct or inverse loop only at the end part of the loopstrings. Lastly we call two loopstrings $w, w'$ \emph{equivalent} if they are left and right equivalent and we will write $w\sim w'$.

\end{definition}

Notice that when two loopstrings $w_1$ and $w_2$ are left equivalent, then the arrow $a_{d+1}$ has opposite direction from the arrow $b_{d+1}$.\\

\begin{example}
    Looking at Figure~\ref{fig:quivers associated to loop graph} we have that the loopstring $w$ associated to the loop graph on the left hand side is the following:
    \[
    w= \rightarrow \rightarrow \hookleftarrow \rightarrow \rightarrow \hookleftarrow \rightarrow \rightarrow \rightarrow,
    \]
    while the loopstring $w'$ associated to the graph on the right hand side of the figure is:
    \[
    w'= \leftarrow \leftarrow \hookrightarrow \rightarrow \rightarrow \hookrightarrow \leftarrow \leftarrow \leftarrow.
    \]
    Following Definition~\ref{eqloopstrings} it is easy to see that $w\sim w'$ which is what we should hope for in order for our constructions to be consistent. This is exactly what we also prove in the next Lemma~\ref{eqofloopgraphs}.
\end{example}

The fact that the above defined relation is indeed an equivalence relation is easy to see, since all properties can be trivially checked. However, what is more important is that equivalent loopstrings, correspond to isomorphic loop graphs. We will prove this for the case that the two loopstrings are left equivalent, but similar arguments can be used for the other cases.

\begin{lemma}\label{eqofloopgraphs}
    Suppose that $w = a_0\dots a_n$ and $w' = b_0\dots b_n$ are two loopstrings, which have a direct or inverse loop at the beginning and no direct or inverse loop at the end. Then, the associated loop graphs $\mathcal{G}_w^\bowtie$ and $\mathcal{G}_{w'}^\bowtie$ are isomorphic if and only $w$ and $w'$ are left equivalent.
\end{lemma}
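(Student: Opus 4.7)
The plan is to prove both directions separately, exploiting the fact that a loop graph with a single loop at the beginning admits at most two planar representations, related by reversing the orientation of the zig-zag hook, and that each representation yields a unique loopstring. Throughout, I would let $d$ denote the position of the loop symbol in $w$ (and separately argue that the position must match in $w'$ by comparing lengths of zig-zags, which is determined by the graph).

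For the direction $(\Leftarrow)$, I would assume $w \sim_{\text{left}} w'$. If $w = w'$ then $\mathcal{G}_w^\bowtie = \mathcal{G}_{w'}^\bowtie$ and we are done. Otherwise, conditions (i)--(iii) of Definition~\ref{eqloopstrings} hold. I would construct an explicit bijection $\phi$ on tiles sending $G_i \mapsto G_{d-i}$ for $1 \leq i \leq d-1$ and fixing $G_j$ for $j \geq d$. Since $\{a_1,\dots,a_{d-1}\}$ are all in one direction, the corresponding first $d-1$ tiles of $\mathcal{G}_w^\bowtie$ form a maximal zig-zag. Reversing the order of a zig-zag produces an isomorphic zig-zag in which every arrow in the loopstring switches direction, which is exactly condition (ii). The loop in $\mathcal{G}_w^\bowtie$ identifies boundary edges of $G_1$ with boundary edges of the tile adjacent to the end of the zig-zag; under $\phi$, this identification is sent to the corresponding pair in $\mathcal{G}_{w'}^\bowtie$, and the change in loop symbol matches condition (i). Finally, the arrow at position $d+1$ flips because the tile $G_d$ is now attached on the opposite side after reversal (the Remark following Definition~\ref{eqloopstrings} records this), while arrows at positions $d+2$ through $n$ are untouched, giving condition (iii). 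One verifies that $\phi$ sends shared edges to shared edges and hence extends to a graph isomorphism.

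For the direction $(\Rightarrow)$, I would assume an isomorphism $\Psi \colon \mathcal{G}_w^\bowtie \to \mathcal{G}_{w'}^\bowtie$. The loop is a distinguished feature (the unique non-snake-graph edge identification), so $\Psi$ must send the loop of $\mathcal{G}_w^\bowtie$ to the loop of $\mathcal{G}_{w'}^\bowtie$. This fixes the two tiles incident to the loop, and since neither loopstring has a loop at the end, the ``tail'' consisting of tiles after the loop forms a plain snake graph whose planar embedding is rigid once the loop position is fixed. Hence $\Psi$ acts as the identity on tiles $G_d,\dots,G_n$, giving $a_i = b_i$ for $i \geq d+2$. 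For the first $d-1$ tiles, which form the maximal zig-zag forced by the loop, $\Psi$ restricts to an isomorphism of a zig-zag onto itself, which has exactly two possibilities: the identity (yielding $w = w'$) or the reversal (yielding conditions (i)--(iii) as in the $(\Leftarrow)$ analysis).

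The main obstacle will be to rigorously verify, in the $(\Leftarrow)$ direction, that the tile reversal preserves all edge identifications, both within the snake graph and across the loop. This amounts to careful bookkeeping of which of the North/South/East/West edges of each tile are glued, and checking that the two possible edges of the hooked tile that can serve as the ``cut edge'' correspond exactly under the two conventions of the loop symbols $\hookleftarrow$ and $\hookrightarrow$ in Definition~\ref{loop garph associated to a loopstring}. The zig-zag hypothesis provides the symmetry that makes this bookkeeping work uniformly; this is where the maximality of the zig-zag preceding the hook in Definition~\ref{loop graph} is crucially used.
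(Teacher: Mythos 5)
Your proof takes essentially the same approach as the paper: in the $(\Leftarrow)$ direction both construct the explicit tile-reversing isomorphism on the zig-zag hook $\{G_1,\dots,G_{d-1}\}$ while fixing $G_d,\dots,G_n$, and in the $(\Rightarrow)$ direction both exploit the fact that a loop graph admits only two planar representations (Remark~\ref{planarreprofloopgraph}), with the hooked tile and the tail rigid and the zig-zag admitting only the identity or the reversal. The bookkeeping you flag as the main obstacle (tracking which North/South/East/West edges are glued and that the cut edge matches the $\hookleftarrow$/$\hookrightarrow$ convention) is precisely the case analysis the paper carries out.
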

\begin{proof}
    If $w = w'$ there is nothing to prove. So suppose from now on, that $w \neq w'$.
    We will first prove the direct implication.\\
    Suppose that the loop graphs $\mathcal{G}_w^\bowtie = \{G_1, \dots, G_k,\dots, G_n\}^\bowtie$ and \\ $\mathcal{G}_{w'}^\bowtie = \{G_1', \dots, G_l',\dots, G_n'\}^\bowtie$ are isomorphic, where $G_k$ and $G_l'$ are the tiles in which one of their edges is identified with on edge of the first tile of the respective loop graph. Since $w \neq w'$, the underlying snake graphs $\mathcal{G}_w$ and $\mathcal{G}_{w'}$ without the identification of the two respective edges are not equal. Additionally, since the two graphs $\mathcal{G}_w^\bowtie$ and $\mathcal{G}_{w'}^\bowtie$ are isomorphic we conclude that $k=l$. \\
    W.l.o.g. assume that $G_2$ is on the right of $G_1$.  Since the two graphs are isomorphic but not equal when viewed as plain snake graphs, we conclude that $G_2'$ must be on top of $G_1'$, since there are only two possible different planar representations of a loop graph (Remark \ref{planarreprofloopgraph}).
    Therefore, we can deduce that $a_0 = \hookleftarrow$ and $b_0 = \hookrightarrow$. This, in turn implies that $a_i = \rightarrow$ and $b_i = \leftarrow$ for each $2\leq i < k-1$, by definition of loopstrings. Lastly, since the two graphs are isomorphic, the subgraphs $\mathcal{G}_v^\bowtie = \{ G_k,\dots, G_n\}^\bowtie$ and $\mathcal{G}_{v'}^\bowtie = \{ G_l',\dots, G_n'\}^\bowtie$ are equal, and therefore we have that $a_j = b_j$ for each $k\leq j\leq n$. Therefore, the loopstrings $w$ and $w'$ are left equivalent.\\
    
    Let us prove now the other direction. Suppose that the loopstrings $w$ and $w'$ are left equivalent. Suppose w.l.o.g that $a_0 = \hookleftarrow$. Since they are not equal, we also have that $b_0 = \hookrightarrow$ from (i) of Definition \ref{eqloopstrings}. Suppose also that $a_1,\dots, a_k$ is the maximum subcollection of consecutive tiles of $w$ which are direct arrows. Therefore, due to (ii) of Definition \ref{eqloopstrings}, $b_1,\dots, b_k$ is the maximum subcollection of consecutive tiles of $w'$ which are inverse arrows. we will treat the case that $k$ is even, since the other case is symmetrical.\\
    Sine $k$ is even, the tile $G_{k+1}$ of the graph $\mathcal{G}^\bowtie_w$ is on the right of the tile $G_k$ and the south edge of this tile $S(G_{k+1})$ is identified with $W(G_1)$, by the construction of the loop graph by a given loopstring. Similarly, regarding the graph $\mathcal{G}^\bowtie_{w'}$, we can see that $G'_{k+1}$ is on top of $G'_k$ and that the edge $W(G'_{k+1})$ is identified with the edge $S(G'_1)$. By identifying each tile $G_i$ of the graph $\mathcal{G}^\bowtie_w$ with the tile $G'_{k+1-i}$ of the graph $\mathcal{G}^\bowtie_{w'}$, for every $1\leq i\leq k$, and each tile $G_j$ with $G'_j$ for every $k+1\leq j\leq n$. We can construct an isomorphism $\Phi\colon \mathcal{G}^\bowtie_w\to\mathcal{G}^\bowtie_{w'}$ which sends each vertex and edge of the graph  $\mathcal{G}^\bowtie_w$ to the appropriate edge and vertex of $\mathcal{G}^\bowtie_{w'}$ respectively, keeping in mind the aforementioned identification of tiles. Therefore, two left equivalent loopstrings, give rise to two different planar representations of the same loopgraph.
\end{proof}

\begin{remark}
    Using the above definition we can see that two different planar representations of the same loop graph produce two equivalent loopstrings. So working under equivalence of loop graphs, the construction of loopstrings coming from a loop graph is well defined. From now on when we are talking about loopstrings we will always mean it up to equivalence of loopstrings.
\end{remark}

\begin{remark}
    Lemma \ref{eqofloopgraphs} is essential for the arguments that will follow. A lot of the arguments regarding loop graphs can be reduced to the case where the second tile of the graph $G$ is on the right of the first tile, since as we can see by the above Lemma, if the second tile was on top of the first one, we could just consider the isomorphic planar representation of $G$. Of course symmetric arguments can be applied when the loop is at the end of the graph.
\end{remark}

\section{Quiver representations and loop modules}\label{Quiver representations and loop modules}

In this chapter we introduce the notion of \emph{loop modules} which generalize the now classical notion of \emph{string modules}. These are modules over a path algebra that correspond to loopstrings, in the same way that string modules correspond to abstract strings.\\
We begin by reviewing some basic background on quiver representations and their morphisms. Quivers and their representations are extremely useful tools for studying the structure of a group or algebra through combinatorial objects such as graphs and easily understood linear maps. We then discuss the definition of a path algebra and by extension, that of a string algebra. The importance of string modules is evident from the fact that the finite-dimensional modules over a string algebra are precisely the finite-dimensional string and band modules.

Let us formally define what a quiver is. A \emph{quiver} $\mathcal{Q}=(\mathcal{Q}_0, \mathcal{Q}_1, s, t)$ consists of a set of vertices $\mathcal{Q}_0$, a set of arrows $\mathcal{Q}_1$, a map $s\colon\mathcal{Q}_1\to \mathcal{Q}_0$ assigning to its each arrow its source, and a map $t\colon\mathcal{Q}_1\to \mathcal{Q}_0$ assigning to each arrow its target. Let $k$ be an algebraically closed field. \\
\indent A \emph{representation} $M= (M_i,\phi_\alpha)_{i\in\mathcal{Q}_0,\alpha\in\mathcal{Q}_1}$ of a quiver $\mathcal{Q}$ is a collection of $k$-vector spaces $M_i$ together with a collection of $k$-linear maps $\phi_\alpha\colon M_{s(\alpha)}\to M_{t(\alpha)}$. \\
Quivers are mathematically convenient objects to work with, as they can be regarded simply as directed graphs. A quiver representation then assigns a vector space to each vertex and a linear map to each arrow.

\begin{example}\label{examples of representations}
    Let $\mathcal{Q}$ be the quiver $1\leftarrow 2\rightarrow 3$. Then, the following are representations of $\mathcal{Q}$:
    \begin{align*}
    &M_1: k\xleftarrow {\begin{pmatrix}
                       1  & 0 & 0
                      \end{pmatrix} } 
        k^3 \xrightarrow{ \begin{pmatrix}
                       1 & 1 & 0\\
                       0 & 1 & 1\\
                       0 & 0 & 1
                       \end{pmatrix}} 
        k^3\\
    &M_2: k\xleftarrow {\begin{pmatrix}
                       1 & 0
                      \end{pmatrix} } 
        k^2 \xrightarrow{ \begin{pmatrix}
                       1 &  0\\
                       0 &  1\\
                       0 &  1
                       \end{pmatrix}} 
        k^3\\
    &M_3: k\xleftarrow {\begin{pmatrix}
                       1 & 0 
                      \end{pmatrix} } 
        k^2 \xrightarrow{ \begin{pmatrix}
                       1 & 1 \\
                       0 & 1 \\
                       0 & 0 
                       \end{pmatrix}} 
        k^3\\
    &M_4: k\xleftarrow {\begin{pmatrix}
                       1 & 0
                      \end{pmatrix} } 
        k^2 \xrightarrow{ \begin{pmatrix}
                       1 & 1\\
                       0 & 1
                       \end{pmatrix}} 
        k^2\\
    &M_5: k\xleftarrow{\mathmakebox[0.7cm][c]{1}} 
        k \xrightarrow{ \begin{pmatrix}
                       1 \\
                       0
                       \end{pmatrix}} 
        k^2\\
    \end{align*}
\end{example}

If $\mathcal{Q}$ is a quiver and $M=(M_i,\phi_\alpha)$ and $M'=(M_i',\phi_\alpha')$ are two representations of $\mathcal{Q}$, then a \emph{morphism of representations} $f\colon M\to M'$ is a collection of $k$-linear maps $(f_i)_{i\in \mathcal{Q_0}}$, such that for each arrow $\alpha: i\to j$ we have $\phi_\alpha'\circ f_i=f_j\circ\phi_\alpha$, i.e. the linear maps must be compatible with the maps $\phi_\alpha$.

\begin{example}\label{morphism/injections of representations} 
    If $\mathcal{Q}: 1\leftarrow 2\rightarrow 3$ and $M_1, M_2$ and $M_3$  are the representations appearing in Example~\ref{examples of representations} then the following diagrams commute:
    \[
    \begin{tikzcd}[ampersand replacement=\&, column sep = 7em, row sep=5em]
        M_1 \& k 
        \& k^3 \arrow[l, "{\begin{pmatrix} 1 & 0 & 0 \end{pmatrix}}"'] \arrow[r, "{\begin{pmatrix} 1 & 1 & 0 \\ 0 & 1 & 1 \\ 0 & 0 & 1 \end{pmatrix}}"] 
        \& k^3\\
        M_2 \arrow[u, "f"] \& k \arrow[u,bend left, "1"'] \arrow[ur, phantom, "\circlearrowright"]
        \& k^2 \arrow[l, "{\begin{pmatrix} 1 & 0  \end{pmatrix}}"'] \arrow[u,bend left, "{\begin{pmatrix} 1 & 0 \\ 0 & 0 \\ 0 & 1 \end{pmatrix}}"'] \arrow[r, "{\begin{pmatrix} 1 & 0 \\ 0 & 1 \\ 0 & 1 \end{pmatrix}}"'] \arrow[ur, phantom, "\circlearrowright"]
        \& k^3 \arrow[u,bend left, "{\begin{pmatrix} 1 & 0 & 0 \\ 0 & 1 & 0 \\ 0 & 0 & 1 \end{pmatrix}}"']
    \end{tikzcd}
    \]
    \[
    \begin{tikzcd}[ampersand replacement=\&, column sep = 7em, row sep=5em]
        M_1 \& k 
        \& k^3 \arrow[l, "{\begin{pmatrix} 1 & 0 & 0 \end{pmatrix}}"'] \arrow[r, "{\begin{pmatrix} 1 & 1 & 0 \\ 0 & 1 & 1 \\ 0 & 0 & 1 \end{pmatrix}}"] 
        \& k^3\\
        M_3 \arrow[u, "f'"] \& k \arrow[u,bend left, "1"'] \arrow[ur, phantom, "\circlearrowright"]
        \& k^2 \arrow[l, "{\begin{pmatrix} 1 & 0  \end{pmatrix}}"'] \arrow[u,bend left, "{\begin{pmatrix} 1 & 0 \\ 0 & 1 \\ 0 & 0 \end{pmatrix}}"'] \arrow[r, "{\begin{pmatrix} 1 & 1 \\ 0 & 1 \\ 0 & 0 \end{pmatrix}}"'] \arrow[ur, phantom, "\circlearrowright"]
        \& k^3 \arrow[u,bend left, "{\begin{pmatrix} 1 & 0 & 0 \\ 0 & 1 & 0 \\ 0 & 0 & 1 \end{pmatrix}}"']
    \end{tikzcd}
    \]
    Notice additionally that $f_i, f_i', 1\leq i\leq 3$ are injections, so $f\colon M_2\to M_1$ and $f'\colon M_3\to M_1$ are injective morphisms of representations.
\end{example}

\begin{remark}
    If $Q$ is a quiver, one can consider the category of quiver representations $\rep{Q}$. The objects of this category are the representations of $Q$, and the morphisms are morphisms between representations. Composition of morphisms is given by the composition of the corresponding linear maps $\phi_a$. Even though we have not explicitly discussed the notions of \emph{indecomposable representations} and \emph{direct sums of representations}, the equivalence of categories stated in Theorem~\ref{from modules to representations}, illustrates the deep connection between modules and representations. In practice we will use these notions interchangeably.
\end{remark}

\begin{remark}\label{canonical subrepresentattions}
  If $N$ is a \emph{subrepresentation} of $M$, then we refer to the \textit{canonical embedding} of $N$ into $M$ as the injective map $f\colon N\hookrightarrow M$ where each component $f_i\colon N_i\to M_i$ is the linear map induced by the identity on the non-zero components of $N$. The injective maps $f$ and $f'$ from Example~\ref{morphism/injections of representations} are both canonical embeddings of the representations $M_2$ and $M_3$ into the representation $M_1$.
\end{remark}

Having introduced the category of representations $\rep{Q}$, this is a good moment to revisit one of the first natural questions: ``why do we care about representations of quivers?". The answer lies in two fundamental results (Theorems~\ref{from algebras to path algebras} and \ref{from modules to representations}), which together show that the study of finite-dimensional algebras can, in many cases, be reduced to the study of representations of \emph{bound quivers}.\\

Let $Q$ be a quiver. One can associate to it the so-called \emph{path algebra} $kQ$. This is a $k$-algebra whose basis consists of all finite \emph{paths} in $Q$-that is, finite sequences of  consecutive arrows from one vertex to another. The multiplication of two basis elements $c_1\cdot c_2$ is defined as the \emph{concatenation} of the paths if the endpoint of $c_1$ coincides with the starting point of $c_2$; otherwise, the product is defined to be zero. This multiplication is extends linearly to all of $kQ$, making $kQ$ a (non-commutative) $k$-algebra. In particularly, it can be viewed as a graded $k$-algebra, with grading given by path length.

\begin{remark}
Let $Q=(Q_0, Q_1)$ be a quiver, and let $c$ be a path in $Q$. We can canonically associate a string $w_c$, by assigning an arrow in the string $w_c$ for each arrow in the path $c$, as illustrated in Example~\ref{strings on quivers}. This correspondence provides a convenient alternative representation of strings associated with quivers and will be used extensively throughout the remainder of this thesis.
\end{remark}

\begin{example}\label{strings on quivers}
    Let $Q$ be the quiver $1\xleftarrow{\alpha} 2 \xrightarrow{\beta} 3$, and consider the path 
    \[
    c=(1|\alpha, \beta, \beta, \beta, \beta, \beta|3).
    \]
    We associate to this path the string $w_c$ which is given by:
    \[
    w_c= 1\leftarrow2\rightarrow3\leftarrow2\rightarrow3\leftarrow2\rightarrow3.
    \]
    This presentation of the string $w_c$, differs from the one given in Def~\ref{loopstring}, but its properties remain the same.  
\end{example}

\indent If $kQ$ is the path algebra of a quiver $Q$, it is not necessarily finite-dimensional algebra. In particular, if $Q$ contains oriented cycles, then $kQ$ is infinite-dimensional. Therefore, to study finite dimensional algebras, we must consider \emph{quotients } of path algebras by suitable ideals. One such suitable ideal is the so-called \emph{admissible ideal}. This is, roughly speaking, an \emph{arrow ideal}-that is, a two-sided ideal generated by paths of length at least two. If $I$ is an admissible ideal of the path algebra $kQ$, then the pair $(Q,I)$ is called a \emph{bound quiver}, and the quotient algebra $kQ/I$ is called a \emph{bound quiver algebra}.

Having introduced the notion of a bound quiver algebra, we are now in a position to address a very natural question that arises when studying quiver representations: ``Why do we study them in the first place?"

\begin{theorem}\label{from algebras to path algebras}
    If $\mathcal{A}$ is a basic finite dimensional $k$-algebra, then there exists a quiver $Q$ and an admissible ideal $I$ such that $\mathcal{A}\cong kQ/I$.
\end{theorem}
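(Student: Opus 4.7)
The plan is to realize $\mathcal{A}$ as a quotient of a path algebra by explicitly constructing both the quiver $Q$ and a surjection $\varphi : kQ \to \mathcal{A}$ whose kernel turns out to be admissible. The input is that $\mathcal{A}$ is basic and finite-dimensional over an algebraically closed $k$, which means $\mathcal{A}/\operatorname{rad}(\mathcal{A}) \cong k \times \cdots \times k$.

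First I would fix a complete set of primitive orthogonal idempotents $\{e_1, \dots, e_n\}$ with $1 = e_1 + \cdots + e_n$; by basicness these are unique up to permutation and the simples $S_i = \mathcal{A} e_i / \operatorname{rad}(\mathcal{A}) e_i$ are pairwise non-isomorphic. Write $R = \operatorname{rad}(\mathcal{A})$. The vertex set of $Q$ will be $Q_0 = \{1, \dots, n\}$. To define the arrows, I would look at the finite-dimensional $k$-vector space $R/R^2$, which is naturally bimodule over $\mathcal{A}/R \cong k^n$, and set the number of arrows $i \to j$ equal to $\dim_k \bigl( e_j (R/R^2) e_i \bigr)$. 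Concretely, for each pair $(i,j)$ I choose elements $a_1^{(i,j)}, \dots, a_{r_{ij}}^{(i,j)} \in e_j R e_i$ whose images form a basis of $e_j (R/R^2) e_i$, and declare one arrow $\alpha_\ell^{(i,j)} : i \to j$ in $Q_1$ for each chosen lift.

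Next I would define the $k$-algebra homomorphism
\[
\varphi : kQ \to \mathcal{A}, \qquad \varepsilon_i \mapsto e_i, \qquad \alpha_\ell^{(i,j)} \mapsto a_\ell^{(i,j)},
\]
where $\varepsilon_i$ is the trivial path at vertex $i$. Surjectivity of $\varphi$ is the first real check: the image contains all $e_i$, so it contains $\mathcal{A}/R$, and it contains a lift of a $k$-basis of $R/R^2$; by Nakayama's lemma applied to the nilpotent ideal $R$ (note $R^m = 0$ for some $m$ since $\mathcal{A}$ is finite-dimensional), these lifts generate $R$ as a two-sided ideal, and together with the $e_i$ they generate all of $\mathcal{A}$. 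Set $I := \ker \varphi$, so $\mathcal{A} \cong kQ/I$.

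The main obstacle, and the heart of the argument, is verifying that $I$ is admissible, i.e.\ $R_Q^m \subseteq I \subseteq R_Q^2$ for some $m \geq 2$, where $R_Q$ denotes the arrow ideal of $kQ$. The inclusion $R_Q^m \subseteq I$ follows from nilpotency: $\varphi(R_Q) \subseteq R$ because arrows were chosen inside $R$, hence $\varphi(R_Q^m) \subseteq R^m = 0$ for $m$ large enough. The more delicate inclusion $I \subseteq R_Q^2$ is where the basis choice pays off: any element of $kQ$ decomposes as a scalar combination of the $\varepsilon_i$, a scalar combination of arrows, and a tail in $R_Q^2$. If such an element lies in $\ker \varphi$, projecting to $\mathcal{A}/R$ forces the scalar-combination of idempotents to vanish (since the $e_i$ are linearly independent mod $R$), and then projecting to $R/R^2$ forces the arrow-coefficients to vanish (since the $a_\ell^{(i,j)}$ were chosen to be linearly independent mod $R^2$). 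What remains must lie in $R_Q^2$, proving $I \subseteq R_Q^2$ and completing the proof.
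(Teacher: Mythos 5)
The paper states this result (Gabriel's theorem on bound quiver algebras) as recalled background without a proof, so there is no argument in the paper to compare against; your proposal is the standard textbook proof and is essentially correct. The construction of $Q$ from $\operatorname{rad}(\mathcal{A})/\operatorname{rad}(\mathcal{A})^2$, the map $\varphi$, and both halves of admissibility (nilpotency gives $R_Q^N\subseteq I$; linear independence of the $e_i$ modulo $R$ and of the chosen lifts $a_\ell^{(i,j)}$ modulo $R^2$ gives $I\subseteq R_Q^2$) are all in order.

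The one slightly loose spot is the surjectivity step. From ``the lifts generate $R$ as a two-sided ideal'' it does not formally follow that ``together with the $e_i$ they generate all of $\mathcal{A}$'' in the sense you need: the image of $\varphi$ is the $k$-subalgebra of $\mathcal{A}$ generated by the $e_i$ and the $a_\ell^{(i,j)}$, whereas the two-sided ideal of $\mathcal{A}$ they generate allows arbitrary elements of $\mathcal{A}$ to be inserted into products, and you have not yet shown those lie in the image. The clean way to close this is the usual filtration induction: since the $e_i$ span $\mathcal{A}/R$ and the lifts span $R/R^2$, show $R^m\subseteq\varphi(kQ)+R^{m+1}$ for all $m$ by writing $R^m=R^{m-1}\cdot R$ and peeling off a product of arrows; then $R^N=0$ gives $\mathcal{A}=\varphi(kQ)+R\subseteq\cdots\subseteq\varphi(kQ)+R^N=\varphi(kQ)$. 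With that small repair the argument is complete.
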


\begin{remark}
    The assumption in Theorem~\ref{from algebras to path algebras} that $\mathcal{A}$ is a basic algebra is not restrictive. Indeed, the module category of any finite dimensional algebra is equivalent to the module category of a basic finite-dimensional algebra. Therefore, the study of basic algebras is sufficient, from a representation theoretic perspective point of view.
\end{remark}

The result of Theorem~\ref{from algebras to path algebras} is fundamental, as it reduces the study of basic finite-dimensional algebras to the study of the bound quiver algebras. The next fundamental results builds upon this by further reducing the study of finitely generated right modules to the study of the representations of a quiver.

\begin{theorem}\label{from modules to representations}
    Let $\mathcal{A}=kQ/I$ be a bound quiver algebra where Q is a finite connected quiver. Then, the following equivalence of categories if true:
    \[
    \Mod{kQ} \cong \rep{(Q,I)}.
    \]
\end{theorem}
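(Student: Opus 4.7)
The plan is to establish the equivalence by constructing an explicit quasi-inverse pair of functors and then carefully checking that the admissible ideal $I$ is handled coherently on both sides. The setup is classical for the path algebra case, but the bound-quiver version requires an extra bookkeeping step.

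First I would define a functor $F\colon \Mod(kQ/I) \to \rep(Q,I)$. Recall that the trivial paths $\{e_i\}_{i \in Q_0}$ form a complete set of pairwise orthogonal idempotents of $kQ$ (and hence of $kQ/I$, since $I$ is admissible and therefore lies in the arrow ideal), so any right $kQ/I$-module $M$ decomposes canonically as a $k$-vector space into $M = \bigoplus_{i \in Q_0} Me_i$. Set $M_i := Me_i$, and for each arrow $\alpha\colon i \to j$ in $Q_1$ define the linear map $\phi_\alpha\colon M_i \to M_j$ by $m \mapsto m\alpha$, which is well-defined and lands in $M_j$ because $e_i \alpha e_j = \alpha$. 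For any relation $\rho = \sum_k \lambda_k c_k \in I$ with each $c_k$ a path from some $i$ to some $j$, the identity $\rho = 0$ in $kQ/I$ forces $\sum_k \lambda_k \phi_{c_k} = 0$ on $M_i$, so the assignment really produces a representation of the bound quiver $(Q,I)$. On morphisms, any $kQ/I$-linear map $f\colon M \to N$ restricts componentwise to $f_i\colon Me_i \to Ne_i$, and these intertwine the $\phi_\alpha$ because $f$ commutes with right multiplication by $\alpha$.

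Next I would define a functor $G\colon \rep(Q,I) \to \Mod(kQ/I)$ in the reverse direction. Given a representation $(M_i, \phi_\alpha)$, set $N := \bigoplus_{i \in Q_0} M_i$ and equip it with a right $kQ$-action in which $e_i$ acts as the projection onto the summand $M_i$, each arrow $\alpha\colon i \to j$ acts as the composition of projection onto $M_i$, the map $\phi_\alpha$, and the inclusion into $N$, and this rule is extended multiplicatively to all paths and linearly to $kQ$. The hypothesis that $(M_i, \phi_\alpha)$ satisfies the relations of $I$ is precisely what is required for the $kQ$-action to descend to a $kQ/I$-action, so $N$ becomes a $kQ/I$-module. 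A family of linear maps $(f_i)$ compatible with the $\phi_\alpha$ assembles into a $kQ/I$-linear map on the direct sum, giving functoriality.

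Finally I would verify that $F$ and $G$ are mutually quasi-inverse. The natural isomorphism $GF \cong \mathrm{id}$ is realised by the canonical decomposition $M = \bigoplus_i Me_i$, together with the observation that the reconstructed action on the direct sum coincides with the original multiplication, since an element $m \alpha$ with $m \in Me_i$ and $\alpha\colon i \to j$ lies in $Me_j$ and is computed by $\phi_\alpha$. The natural isomorphism $FG \cong \mathrm{id}$ is essentially tautological from the definitions. The delicate point that I expect to be the main obstacle is checking well-definedness with respect to $I$: one must verify that a relation $\rho = \sum_k \lambda_k c_k \in I$ vanishes in $kQ/I$ if and only if the associated linear combination of compositions of the $\phi_\alpha$ along the paths $c_k$ is the zero map. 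This equivalence follows by identifying the action of each path on $M_i$ with the composition of the corresponding $\phi_\alpha$, but it must be handled uniformly for relations whose summands share a common source and target (as is typical for admissible ideals given by linear combinations of parallel paths).
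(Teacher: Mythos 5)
Your proof sketch is correct, and it is the standard quasi-inverse-functors argument (decompose via the orthogonal idempotents $\{e_i\}$, reassemble via the direct sum, check that relations pass through on both sides). The paper itself states this theorem without proof, treating it as classical background material (it is the Gabriel-type equivalence found in, e.g., Assem--Simson--Skowro\'{n}ski), so there is no in-paper argument to compare against; your outline is precisely the canonical one. Two small remarks: the theorem statement in the paper contains a typo ($\Mod{kQ}$ should read $\Mod{kQ/I}$ or $\Mod{\mathcal{A}}$), which you correctly read through, and the finiteness of $Q_0$ is what guarantees that $\sum_i e_i = 1$ and hence that the decomposition $M=\bigoplus_i Me_i$ exhausts $M$, a point worth making explicit in a full write-up.
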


\subsection{String modules}

We have already introduced abstract strings and loopstrings in Der~\ref{loopstring}. However, another way to view these objects is as modules over an algebra. In the remainder of this chapter, we will describe the classical construction of a \emph{string module} from an abstract string, and subsequently, the construction of what we will call a \emph{loopstring module} from a given abstract loopstring. This correspondence between modules and (loop)strings is important, as it allows us to use these notions interchangeably in the remainder of the thesis.\\

Let $kQ/I$ be a bound quiver algebra and let $c$ be a path in the quiver. Consider the string 
\[
w_c=a_0 u_1a_1\dots a_{n-1}u_na_n,
\]
where $a_i\in{Q_0}$ for every $0\leq i\leq n$, and $u_j\in\{\leftarrow
,\rightarrow\}$ for every $1\leq j\leq n$. Define the index set $\mathcal{I}_a:= \{i| a_i=a\}$. Then the \emph{string module} $M(w_c)$ corresponding to the string $w_c$ is defined as follows:
\begin{itemize}
    \item At each vertex $x\in Q_0$ we assign the vector space $M_x=\bigoplus_{i\in\mathcal{I}_x}K_i$ where $K_i=k$ for every $i\in \mathcal{I}_x$.
    \item For each arrow $a\in Q_1$, where $x\xrightarrow{\alpha}y$, we assign a linear map $M_a\colon \bigoplus_{j\in\mathcal{I}_x}K_j \to \bigoplus_{i\in\mathcal{I}_y}K_i$ given by the matrix:
    \[
    (M_a)_{i,j} = \begin{cases}
                 Id_k & \text{if $|i-j|=1$},\\
                 0 & \text{otherwise}.\\
             \end{cases}
    \]
\end{itemize}

\begin{example}\label{Example of string module associated to a string}
    Let $Q= 1\xleftarrow{\alpha} 2 \xrightarrow{\beta} 3$ and $w_c=1\leftarrow2\rightarrow3\leftarrow2\rightarrow3\leftarrow2\rightarrow3$ as in Example~\ref{strings on quivers}. We will now construct the corresponding string module $M(w_c)$.\\
    First we compute the index sets:
    \[
    \mathcal{I}_1=\{0\},  \quad \mathcal{I}_2=\{1,3,5\},  \quad  \mathcal{I}_3=\{2,4,6\},
    \]
    which indicate the positions in the string $w_c$ where each vertex of the quiver $Q$ appears. Accordingly, the vector spaces assigned to each vertex are: 
    \[
    M_1= k, \quad M_2=k^3, \quad M_3=k^3.
    \]
    Finally, we describe the linear maps associated to the arrows $\alpha$ and $\beta$. These are given by the matrices:
    \[
    M_\alpha= \begin{pmatrix}
                       Id_k  & 0 & 0
                      \end{pmatrix}, \quad M_\beta = \begin{pmatrix}
                       Id_k & Id_k & 0\\
                       0 & Id_k & Id_k\\
                       0 & 0 & Id_k
                       \end{pmatrix}.\\
    \]
    Therefore the string module $M(w_c)$ associated to the string $w_c$ is as follows:
    \[
    M(w_c): k\xleftarrow {\begin{pmatrix}
                       1  & 0 & 0
                      \end{pmatrix} } 
        k^3 \xrightarrow{ \begin{pmatrix}
                       1 & 1 & 0\\
                       0 & 1 & 1\\
                       0 & 0 & 1
                       \end{pmatrix}} 
        k^3.
    \]
    We can also depict the string module $M(w_c)$ in the following ways:
    \[
    M(w_c)=
    \begin{tikzcd} 
        & k \arrow[dl, "\alpha", "1_k"'] \arrow[dr, "\beta", "1_k"']& & k \arrow[dl, "\beta", "1_k"'] \arrow[dr, "\beta", "1_k"'] & & k \arrow[dl, "\beta", "1_k"'] \arrow[dr, "\beta", "1_k"']\\
        k & & k & & k & & k
    \end{tikzcd},
    \]
    \[
    M(w_c)=
    \begin{tikzcd} 
        & 2 \arrow[dl] \arrow[dr]& & 2 \arrow[dl] \arrow[dr] & & 2 \arrow[dl] \arrow[dr]\\
        1 & & 3 & & 3 & & 3
    \end{tikzcd},
    \]
   where the second presentation is called the \emph{string presentation} of the module $M(w_c)$.
\end{example}

\begin{remark}
    Notice that the module $M(w_c)$ appearing in Example~\ref{Example of string module associated to a string} is the same as the module $M_1$ in Example~\ref{examples of representations}. The string presentation of the modules $M_2$ and $M_3$ of Example~\ref{examples of representations} is the following:
    \[
    M_2=
    \begin{tikzcd} 
        & 2 \arrow[dl] \arrow[dr]& \\
        1 & & 3 
    \end{tikzcd}
    \oplus
    \begin{tikzcd} 
        & 2 \arrow[dl] \arrow[dr]& \\
        3 & & 3 
    \end{tikzcd},
    \]
    \[
    M_3=
    \begin{tikzcd} 
        & 2 \arrow[dl] \arrow[dr]& & 2 \arrow[dl] \arrow[dr] & & \\
        1 & & 3 & & 3
    \end{tikzcd}
    \oplus 3.
    \]
    It is clear that $M_2$ and $M_3$ are maximal submodules of $M_1$, as in each case we remove a copy of the top vertex $2$ from $M_1$. However, the resulting submodules are not isomorphic, since the top vertex is removed from ``different positions" in the structure of $M_1$. This is also reflected in the fact that the inclusion maps from $M_2$ and $M_3$ to $M_1$ are distinct, as can be seen in Example~\ref{morphism/injections of representations}.
    In this thesis, the idea of removing a top from a module to produce a submodule will appear frequently. For brevity, whenever we refer to a \emph{canonical submodule} $N$, we will  implicitly mean the pair $(N,f)$, where $f\colon N\hookrightarrow M$ is the associated canonical embedding, as described in Remark~\ref{canonical subrepresentattions}. 
\end{remark}

\begin{remark}
    String modules are of particular interest because they appear in the classification of indecomposable representations of certain algebras. Specifically, these are the so-called \emph{finite-dimensional string algebras} for which the indecomposable modules are completely classified by string and band modules. In this thesis, we will not explicitly construct the band module associated to bands  as their construction is analogous to that of string modules described in this chapter and can be found in detail in the relevant literature (\cite{Butler1987AuslanderreitenSW}, \cite{Laking2016}).
\end{remark}

\subsection{Loop modules}

In this chapter we associate a loop module to any given loopstring. This association is particularly important, as it allows us to interchangeably work with loopstrings and their corresponding modules, especially when studying the submodule structure of a loop module. The construction closely parallels that of string modules, but is expanded by adding a non-zero values to the matrix entries corresponding to loops. Since our focus will be on loopstrings associated to tagged arcs on a punctured surface, we will restrict our attention to such cases.\\

Let $(S,M,P,T)$ be a punctured surface and $Q_T$ the quiver associated to the triangulation $T$. Let $I$ be an admissible ideal of the quiver $Q_T$ and define the bound quiver algebra $kQ_T/I$. Let $\gamma^\bowtie$ be a doubly notched arc on the surface and consider $c_{\gamma^\bowtie}$ to be the path on the quiver $Q_T$ which follows the intersection points of $\gamma^\bowtie$ with the triangulation $T$. Consider the loop string
\[
w:=w_{c_{\gamma^\bowtie}}= a_0u_1\dots a_{k-1}u_ka_{k+1}\dots a_{l-1}u_la_{l+1}\dots a_{n-1}u_na_n,
\]
where $a_i\in Q_T$ for every $0\leq i\leq n$, $u_j\in\{\leftarrow,\rightarrow\}$ for every $1\leq j\leq n$ with $j\neq k,l$ and $u_k, u_l\in\{ \hookleftarrow,\hookrightarrow\}$. Define the index set $\mathcal{I}_a:= \{i| a_i=a\}$. Then the \emph{loop module} $M(w)$ corresponding to the string $w$ is defined as follows:
\begin{itemize}
    \item At each vertex $x\in Q_0$ we assign the vector space $M_x=\bigoplus_{i\in\mathcal{I}_x}K_i$ where $K_i=k$ for every $i\in \mathcal{I}_x$.
    \item For each arrow $a\in Q_1$, where $x\xrightarrow{\alpha}y$ we assign a linear map $M_a\colon \bigoplus_{j\in\mathcal{I}_x}K_j \to \bigoplus_{i\in\mathcal{I}_y}K_i$ given by the matrix:
    \[
    (M_a)_{i,j} = \begin{cases}
                 Id_k & \text{if $|i-j|=1$},\\
                 Id_k & \text{if $i=0$ and $j=k$, or $i=k$ and $j=0$},\\
                 Id_k & \text{if $i=l-1$ and $j=n$, or $i=n$ and $j=l-1$},\\
                 0 & \text{otherwise}.\\
             \end{cases}
    \]
\end{itemize}

We would like now to clarify two points regarding the construction of the loop module.
First, since there is a hook at positions $k$ and $l$ of the loopstring $w$ we can deduce that there must exist an edge connecting the vertices $a_0$ and $a_k$, as well as an edge connecting $a_{l-1}$ and $a_n$ in the quiver $Q_T$. This ensures the existence of two matrices, denoted $M_\alpha$ and $M_\beta$, which are assigned to these edges. The existence of these matrices implies that exactly one of the two conditions  $i=0$ and $j=k$, or $i=k$ and $j=0$  (resp. $i=l-1$ and $j=n$, or $i=n$ and $j=l-1$) must hold. This gives rise to exactly two non-zero entries in the matrices, as intended.\\
Additionally, although the construction was given for the case where the loopstring has two loops, it naturally reduced to the case with a single loop.\\

\begin{example}\label{loopstring presentation}
    Let $(S,M,P,T)$ be the triangulated surface, and $\gamma^\bowtie$ the tagged arc, appearing in Figure~\ref{fig:tagged arc}. We can then associate the following equivalent loopstrings to $\gamma^\bowtie$:
    \[
    w_{\gamma^\bowtie} = 1\leftarrow 2 \hookrightarrow 3 \leftarrow 4 \leftarrow 5,
    \]
    \[
    w'_{\gamma^\bowtie} = 1\leftarrow 2 \hookleftarrow 3 \rightarrow4 \rightarrow 5.
    \]
    The loop modules $M(w_{\gamma^\bowtie})$ and $M(w'_{\gamma^\bowtie})$ associated to the loopstrings $w_{\gamma^\bowtie}$ and $w'_{\gamma^\bowtie}$ are isomorphic and equal to the following:
    \[
    M(w_{\gamma^\bowtie}) = M(w'_{\gamma^\bowtie}) =
    \begin{tikzcd}
    & & k \arrow[ld,"1"] \arrow[rd,"1"]& \\
    & k \arrow[ld,"1"] \arrow[rd,"1"] & & k \arrow[ld,"1"]\\
    k & & k & 
    \end{tikzcd}.
    \]
    A different way of depicting the module $M(w_\gamma^\bowtie)$ which resembles closely the loopstring structure is the following:
    \[
    M(w_{\gamma^\bowtie}) =
    \begin{tikzcd}
    & & & & 5 \arrow[ld] \arrow[llld, bend right]\\
    & 2 \arrow[ld] \arrow[rd] & & 4 \arrow[ld]\\
    1 & & 3 & 
    \end{tikzcd},
    \]
    while the equivalent way of representing $M(w'_{\gamma^\bowtie})$ is:
    \[
    M(w'_{\gamma^\bowtie}) =
    \begin{tikzcd}
    & & 5 \arrow[ld] \arrow[rd] & &\\
    & 2 \arrow[ld] \arrow[rrrd, bend right] & & 4 \arrow[rd] & \\
    1 & & & & 3
    \end{tikzcd}.
    \]    
    We will refer to the above presentations as the \emph{loopstring presentation} of the module. In practice, this will be the primary way we visualize loopstrings or loop modules from now on, as this depiction makes it easier to identify maximal submodules. By locating a top and removing it we can directly read off submodule structures.
\end{example}

In Example~\ref{loopstring presentation}, notice that equivalent loopstrings give rise to isomorphic modules. The natural question that arise then is the following: ``Is the construction of the loop modules compatible with the different loopstrings that may be associated to the same tagged arc $\gamma^\bowtie$?". We should expect to associate the same loop module $M(w_{c_{\gamma^\bowtie}})$ to a given tagged arc, as seen in Example~\ref{loopstring presentation}, and this is indeed the case, as the next proposition indicates. 

\begin{proposition}
    Let $\gamma^\bowtie$ a tagged arc on the triangulated surface $(S,M,P,T)$ and $w_{c_{\gamma^\bowtie}}$ and $w'_{c_{\gamma^\bowtie}}$ two loopstrings associated to $\gamma^\bowtie$. Then the modules $M(w_{c_{\gamma^\bowtie}})$ and $M(w'_{c_{\gamma^\bowtie}})$ are the same.
    \begin{proof}
        Let us assume that $\gamma^\bowtie$ has only one tagging. Let 
        \[
        w_{c_{\gamma^\bowtie}}=a_0u_1\dots a_{k-1}u_ka_ku_{k+1}\dots u_na_n,
        \]
        \[
        w'_{c_{\gamma^\bowtie}}=a_{k-1}u_1'\dots a_0u_ka_ku_{k+1}\dots u_na_n,
        \]
        be the two loopstrings. Notice that the sequence of vertices $(a_0,\dots,a_{k-1})$ in the loopstring $w_{c_{\gamma^\bowtie}}$, appears in the opposite order in the loopstring $w'_{c_{\gamma^\bowtie}}$.\\
        By the construction of the loop modules we have non-zero entries in the matrices when there are consecutive vertices $a_i, a_{i+1}$ (i.e. when $|i-j|=1$). Additionally, since there is only one loop starting from the left, we have an additional non-zero entry (when $i=0$ and $j=k$, or $i=k$ and $j=0$).\\
        Therefore, the consecutive vertices $a_i,a_{i+1}, 0\leq i\leq n-1$ in the string $w_{c_{\gamma^\bowtie}}$ give rise to a non-zero entry. If $k\leq i\leq n-1$, then the vertices $a_i,a_{i+1}$ appear in that order in the loopstring $w'_{c_{\gamma^\bowtie}}$, and so they give rise to a non-zero entry in the equivalent matrix. If $0\leq i\leq k-2$, then the vertices $a_i,a_{i+1}$ appear in the opposite order in the loopstring $w'_{c_{\gamma^\bowtie}}$, and so they still give rise to a non-zero entry in the equivalent matrix.\\
        Lastly, let us assume that $i=k-1$. Looking at the string $w'_{c_{\gamma^\bowtie}}$ we can notice that the vertex $a_{k-1}$ is the first vertex and the vertex $a_k$ is the first vertex after the hook $u_k$. We can therefore rewrite:
        \[
        w'_{c_{\gamma^\bowtie}}=b_0u_1'\dots b_{k-1}u_kb_ku_{k+1}\dots u_nb_n,
        \]
        where $b_0=a_{k-1}$.. Notice that then, we must have a non-zero entry in the corresponding matrix, (following the construction of the matrix $(M_\alpha)_{i,j}$,) since in this case $i=0$ and $j=k$.\\
        Therefore, the corresponding matrices assigned to the arrows in the quiver representations $M(w_{c_{\gamma^\bowtie}})$ and $M(w'_{c_{\gamma^\bowtie}})$ must be the same, and subsequently the modules are isomorphic.
    \end{proof}
\end{proposition}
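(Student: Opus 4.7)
The plan is to verify that the matrix data defining the two loop modules coincide, by reducing to the singly tagged case and then performing a case-by-case check on the non-zero entries of each arrow's matrix. By Lemma~\ref{eqofloopgraphs}, any two loopstrings representing the same singly tagged arc $\gamma^\bowtie$ must be equivalent in the sense of Definition~\ref{eqloopstrings}: they agree on the segment after the loop symbol, the loop symbol flips direction, and the segment before the loop has its vertex order reversed with all arrow directions inverted. The doubly tagged case then follows by applying this argument independently at each of the two loops.

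Writing the two loopstrings as
\[
w = a_0 u_1 a_1 \cdots u_{k-1} a_{k-1} u_k a_k u_{k+1} \cdots u_n a_n,
\]
\[
w' = a_{k-1} u'_1 \cdots a_1 u'_{k-1} a_0 u_k a_k u_{k+1} \cdots u_n a_n,
\]
the vertex-labeled index sets $\mathcal{I}_x$ are the same for each $x \in Q_0$, since the reversal of the initial segment merely permutes positions within the same multiset of labels. Consequently the vector spaces $M_x$ attached to each vertex are identical in both constructions, and what remains is the comparison of the linear maps. I would split this verification into three ranges of indices: consecutive pairs $(a_i,a_{i+1})$ with $i \geq k$ occur identically in both strings and contribute the same non-zero blocks; consecutive pairs with $0 \leq i \leq k-2$ appear in reversed order in $w'$, but the rule $|i-j|=1$ is symmetric, so the corresponding matrix entry is still non-zero in a matching block of the same arrow's matrix; and the boundary pair $(a_{k-1},a_k)$ is consecutive in $w$ (giving an entry via $|i-j|=1$), while in $w'$ the label $a_{k-1}$ sits at position $0$ and $a_k$ at position $k$, so the matching non-zero entry is produced instead by the loop clause ($i=0, j=k$ or $i=k, j=0$) of the matrix definition.

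The main obstacle is the bookkeeping in the third case: one must check carefully that, after the reversal of positions $0,1,\dots,k-1$, the arrow of the quiver carrying the loop non-zero entry in $M(w')$ is literally the same arrow of $Q_T$ as the one carrying the ordinary consecutive non-zero entry in $M(w)$, and that these entries live in the corresponding block of that arrow's matrix. This is precisely the point at which the loop clause in the definition of $(M_a)_{i,j}$ does the work required to make the construction invariant under the change of planar representation. Once this identification is confirmed and one checks that neither construction produces any spurious non-zero entry absent from the other, the two modules agree as quiver representations on the nose, and the proposition follows.
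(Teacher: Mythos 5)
Your proposal matches the paper's proof essentially line for line: the reduction to the singly tagged case, the three-way split on the index $i$ (after the loop, strictly before position $k-1$, and the boundary pair $(a_{k-1},a_k)$), and the observation that the boundary pair contributes via the $|i-j|=1$ clause in $w$ but via the loop clause $i=0,j=k$ in $w'$. Your added remark that the index sets $\mathcal{I}_x$ coincide and hence the underlying vector spaces $M_x$ agree is a small preliminary step the paper leaves implicit, but otherwise the argument is the same.
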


\begin{remark}
    In the case of the strings algebras, we mentioned in the previous chapter that the string and band modules give a complete classification of the indecomposable modules of those algebras. \\
    It is not too difficult to show that loop modules are indecomposable modules, by combining the fact that string modules are indecomposable and exploring what happens with the extra non empty entries to the suitable matrices. Therefore, a natural question, would be to ask if the loop modules, together with some other possible modules (e.g. string and band modules), could fully classify the indecomposable representations of some algebras. However, when working with path algebras, we usually require the representations to satisfy the conditions of the admissible ideal $I$, which not necessarily happens in our definition of loop modules.
\end{remark}

\chapter{Bijection of loop graphs and loopstrings}

In this section we aim to prove that there is a bijection between the lattice of a loop graph and he submodule lattice of the associated loopstring. At first we will expand on some results on perfect matchings and generalize some results from \cite{canakcci2021lattice} which will be needed for the final proof of Theorem~\ref{submodule lattice bijective to perfect matchings lattice}.

\section{Minimal and maximal perfect matchings}

In this section we will define two special perfect matchings of a loop graph $Q_{\mathcal{G}^\bowtie}$ that will be of a great importance later. Our definition is dual to the one given by Wilson \cite{wilson2020surface}. However, we will also prove in Lemma~\ref{eqofpmatch} that if the given graph $G$ has at least two tiles, that definition is equivalent to the one given by Çanakçı-Schroll \cite{canakcci2021lattice}. 

\begin{remark}
  Based on the definition of maximal and minimal perfect matchings of a loop graph $\mathcal{G}^\bowtie=(G_1,\dots,G_n)^\bowtie$ we can notice the following:
  \begin{itemize}
      \item if $G_2$ is on the right of $G_1$ then $W(G_1)$ (resp. $N(G_1)$ or $S(G_1)$) belong to the minimal (resp. maximal) perfect matching, if they are boundary edges,
      \item if $G_2$ is on top of $G_1$ then $W(G_1)$ or $S(G_1)$ (resp. $S(G_1)$) belong to the minimal (resp. maximal) perfect matching, if they are boundary edges.
  \end{itemize}
\end{remark}

We will denote the minimum perfect matching by $P_{min}$ and the maximal perfect matching by $P_{max}$.\\
Also if $\mathcal{G}$ is a snake or loop graph we will denote by $E^{bdry}(\mathcal{G})$ the collection of boundary edges of $\mathcal{G}$. We also note that if $\mathcal{G}$ is a loop graph then the edges that are identified with each other are not boundary.\\
If $P$ and $P'$ are perfect matchings of a graph $\mathcal{G}$, then we will denote by $P\ominus P'$ their \textit{symmetric difference}, namely the collection of all edges that belong either only to $P$ or only to $P'$.

\begin{remark}
    Suppose that $G$ is a snake graph and that  $H = \{G_1,\dots,G_k\}$ is a subgraph of $G$ where the tiles $G_1,\dots,G_k$ form a maximal zig-zag piece. Suppose $P = P_{max}$ or $P = P_{min}$. Then the following are true:
    \begin{itemize}
        \item each tile $G_2,\dots, G_{k-1}$ has exactly one of its edges in $P_{|H}$,
        \item if the tile $G_1$ has two of its edges in $P_{|H}$, then none of the edges of $G_k$ is in $P_{|H}$,
        \item if the tile $G_1$ has none of its edges in $P_{|H}$, then two of the edges of $G_k$ are in $P_{|H}$.
    \end{itemize}
\end{remark}

\begin{lemma}\label{pmatchinloop}
Suppose that $\mathcal{G}^\bowtie=(G_1,\dots, G_k, \dots,G_n)^\bowtie$ is a loop graph in which edges of the tiles $G_1$ and $G_k$ are identified, $G_2$ is on the right of $G_1$ and $P$ is a perfect matching of $G^\bowtie$. Then $E^{bdry}(G_j)\in P\ominus P_{min}$, $1\leq j < k-1$,  if and only if $E^{bdry}(G_i)\in P\ominus P_{min}$ for all $j<i\leq k-1$.
    \begin{proof}
    Suppose that $E^{bdry}(G_j)\in P\ominus P_{min}$ for some $1\leq j < k-1$. We will first show that $E^{bdry}(G_{k-1})\in P\ominus P_{min}$. By the definition of $P_{min}$, no edges of $G_1$ are in $P_{min}$. Therefore $G_{k-1}$, which is the last tile of the zig-zag, must have two of its edges in $P_{min}$ and all the other tiles $G_2,\dots, G_{k-2}$ have exactly one of their edges in $P_{min}$.\\
    Notice that each perfect matching is connected to the minimal perfect matching by a series of flips that satisfy the twist parity condition. Therefore, since $E^{bdry}(G_j)\in P\ominus P_{min}$, it follows that we must first flip the edges of the tile $G_{k-1}$. Thus, $E^{bdry}(G_{k-1})\in P\ominus P_{min}$.\\
    Notice, that after flipping the edges of $G_{k-1}$ we can only flip the edges of $G_{k-2}$, since it is the only other tile with two of its edges in $P$. Continuing inductively, the second part of the Lemma follows.      
    \end{proof}
\end{lemma}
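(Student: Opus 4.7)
My plan is to combine an analysis of $P_{min}$ restricted to the zig-zag piece $\mathcal{Z}=\{G_1,\ldots,G_{k-1}\}$ with the standard description of perfect matchings as reached from $P_{min}$ by sequences of local tile flips. First I would pin down $P_{min}$ on $\mathcal{Z}$: the loop identification of $G_1$ with $G_k$, combined with the assumption that $G_2$ lies on the right of $G_1$, forces all four vertices of $G_1$ to be already covered through the identified edge and through $E(G_1)=W(G_2)$; consequently, no edge of $G_1$ itself lies in $P_{min}$. Walking along the zig-zag then imposes that each of $G_2,\ldots,G_{k-2}$ contributes exactly one edge to $P_{min}$, while $G_{k-1}$, being the last tile of $\mathcal{Z}$, contributes two.

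\textbf{Core step.} I would then invoke that any perfect matching $P$ is obtained from $P_{min}$ by a sequence of admissible tile flips, where a tile may be flipped only when its two current matching-edges form one of its two local perfect matchings. Within $\mathcal{Z}$, the unique tile admissible for a flip at $P_{min}$ is $G_{k-1}$, as it is the only tile of $\mathcal{Z}$ with two edges in $P_{min}$. Performing this flip toggles the shared edge between $G_{k-1}$ and $G_{k-2}$, turning $G_{k-2}$ into the unique newly admissible tile in $\mathcal{Z}$; iterating, to reach a flip at $G_j$ one must first flip $G_{k-1}, G_{k-2}, \ldots, G_{j+1}$ in that order. Since each such flip toggles every boundary edge of its tile, this yields $E^{bdry}(G_i)\in P\ominus P_{min}$ for every $j<i\leq k-1$, proving the forward implication.

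\textbf{Reverse direction and main obstacle.} For the converse, the same propagation analysis shows that the set of flipped tiles within $\mathcal{Z}$ must always be a contiguous suffix $\{G_m,\ldots,G_{k-1}\}$: if one tried to stop the block of flips at $G_{j+1}$ without flipping $G_j$, the toggled shared edge between $G_j$ and $G_{j+1}$ would leave a vertex of $G_j$ uncovered, contradicting that $P$ is a perfect matching. Hence, once $G_{j+1},\ldots,G_{k-1}$ are all flipped, $G_j$ must also be flipped. The main obstacle I anticipate is rigorously verifying that the admissible-flip propagation stays inside $\mathcal{Z}$ at each intermediate stage --- that no tile outside the zig-zag becomes simultaneously admissible in a way that would allow an escape route --- which relies essentially on $\mathcal{Z}$ being a \emph{maximal} zig-zag, so the geometry just beyond the zig-zag differs and prevents flips from leaking out during propagation.
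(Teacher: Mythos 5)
Your forward implication follows the paper's own proof nearly verbatim: pin down $P_{min}$ on the zig-zag $\{G_1,\ldots,G_{k-1}\}$ (no edges of $G_1$ in $P_{min}$, hence two on $G_{k-1}$ and one on each of $G_2,\ldots,G_{k-2}$), then trace the sequence of admissible flips from $P_{min}$ to $P$, noting that a flip at $G_j$ can only occur after $G_{k-1},\ldots,G_{j+1}$ have each been flipped. That part is fine and is the same argument the paper gives.

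Your converse argument, however, contains a genuine error. You claim that halting the block of flips at $G_{j+1}$ without flipping $G_j$ would ``leave a vertex of $G_j$ uncovered.'' This cannot happen: flipping a tile replaces its two matching edges by the complementary pair, so every vertex of that tile remains covered by exactly one edge, and the global configuration remains a perfect matching after every single flip. In particular, flipping $G_{k-1},\ldots,G_{j+1}$ and then stopping produces a bona fide perfect matching $P$ in which the boundary edges of each $G_i$, $j<i\leq k-1$, lie in $P\ominus P_{min}$ but those of $G_j$ do not. So while you are right that the set of flipped zig-zag tiles forms a contiguous suffix $\{G_m,\ldots,G_{k-1}\}$, nothing forces that suffix to reach down to $G_j$; taking $m=j+1$ is a direct counterexample to ``once $G_{j+1},\ldots,G_{k-1}$ are all flipped, $G_j$ must also be flipped.'' The paper's own proof establishes only the forward implication (the direction used downstream in the connectivity corollary); the converse cannot be obtained by the route you propose, and the vertex-uncovering contradiction you invoke does not exist.

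A smaller remark on your ``main obstacle'': flips performed outside the zig-zag only toggle interior edges shared with $G_k$ or with the identified edge, never the boundary edges of $G_1,\ldots,G_{k-1}$; so the real input you need is not confinement of the flip process to $\mathcal{Z}$, but simply the $P_{min}$-edge counts $(0,1,\ldots,1,2)$ along the zig-zag, which already force the strictly decreasing propagation order $G_{k-1},G_{k-2},\ldots$ within $\mathcal{Z}$.
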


\begin{corollary}\label{welldefinedconloopstring}
Suppose that $\mathcal{G}^\bowtie=(G_1,\dots, G_k, \dots,G_n)^\bowtie$ is a loop graph in which edges of the tiles $G_1$ and $G_k$ are identified, $G_2$ is on the right of $G_1$ and $P$ is a perfect matching of $G^\bowtie$. If $H_i = \{T_{i_1},\dots,T_{i_n}\}$ is a connected subgraph of $\mathcal{G}^\bowtie$, then the corresponding loopstring $w_i$ of $H_i$ is equivalent to a "connected" loopstring $w_i'= a_{j_1},\dots,a_{j_n}$.

\begin{proof}
The proof is a direct consequence of Lemma \ref{eqloopstrings} and Lemma \ref{pmatchinloop}.    
\end{proof}

\end{corollary}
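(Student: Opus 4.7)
The plan is to split the argument by whether the connected subgraph $H_i$ actually crosses the loop identification or sits on one side of it, and then combine Lemma~\ref{pmatchinloop} with the equivalence of loopstrings from Lemma~\ref{eqofloopgraphs} to extract the desired connected word.

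First I would dispatch the easy case. If the tile indices of $H_i$ form a consecutive block contained entirely in $\{G_1,\dots,G_{k-1}\}$ or entirely in $\{G_k,\dots,G_n\}$, then the subword of the loopstring of $\mathcal{G}^\bowtie$ obtained by reading off the tiles of $H_i$ is already a connected string $a_{j_1},\dots,a_{j_n}$ with consecutive indices, since no identified edge is involved. In that case $w_i = w_i'$ and the statement is immediate.

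The substantive case is when $H_i$ genuinely crosses the loop, i.e. contains some tile $G_j$ with $j<k-1$ together with a tile $G_m$ with $m\geq k$. Here Lemma~\ref{pmatchinloop} is the main input: since $H_i$ arises as the subgraph supported on $P\ominus P_{min}$, the lemma forces $E^{bdry}(G_i)\in P\ominus P_{min}$ for every $j<i\leq k-1$, so the entire zig-zag tail $G_j,G_{j+1},\dots,G_{k-1}$ is contained in $H_i$. Because $G_1$ and $G_k$ are identified in $\mathcal{G}^\bowtie$, this pins down the shape of $H_i$ as a zig-zag segment ending at $G_{k-1}$, glued (through the loop) to a connected tail starting at $G_k$.

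With the shape of $H_i$ understood, I would invoke Lemma~\ref{eqofloopgraphs} to replace the inherited loopstring $w_i$ by its left-equivalent partner $w_i'$ in which the planar orientation at the loop is flipped: the zig-zag block is re-read in the opposite direction, swapping each $\rightarrow$ with $\leftarrow$ and swapping the loop letter $\hookleftarrow$ with $\hookrightarrow$ according to condition (i)-(iii) of Definition~\ref{eqloopstrings}. Under this equivalence the zig-zag letters slide into a position immediately preceding the loop letter at $G_k$, so that the resulting word forms a single consecutive block $a_{j_1},\dots,a_{j_n}$, which is the required connected loopstring.

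The main obstacle is not conceptual but bookkeeping: one must verify that after the equivalence move the loop letter ends up adjacent, on the correct side, to the block of arrows originating from the far side of the identification, and that the result is still a loopstring in the sense of Definition~\ref{loopstring}. This reduces to a case analysis on the parity of $k$ and on whether $G_2$ sits to the right of or above $G_1$, which is essentially the same analysis already carried out inside the proof of Lemma~\ref{eqofloopgraphs}, so no new ingredient beyond the two cited lemmas is needed.
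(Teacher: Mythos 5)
Your proposal is correct and follows essentially the same route as the paper: the paper's one-line proof cites Lemma~\ref{pmatchinloop} together with the equivalence of loopstrings, and your case split (subgraph entirely on one side of the identification vs.\ genuinely crossing it, with Lemma~\ref{pmatchinloop} forcing the full zig-zag tail into $H_i$ and Definition~\ref{eqloopstrings}/Lemma~\ref{eqofloopgraphs} supplying the re-reading) is exactly the argument that the paper leaves implicit. You have in effect unpacked the cited ``direct consequence'' in detail; no new ingredient is used and none is missing.
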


\section{Perfect matching lattices and submodule lattices}

Our aim in this section is to make the bijection in Remark 7.11 \cite{wilson2020surface} explicit.\\


In this section, when referring to a loop graph $\mathcal{G}^\bowtie=(G_1,\dots, G_k, \dots,G_n)^\bowtie$ we will mean a loop graph with a loop at the beginning of the graph in which edges of  the tiles $G_1$ and $G_k$ are identified. The results in this section can be naturally generalised in the dual case where we have one loop at the end of the graph or in the case that there are two loops, one at the beginning and one at the end of the graph.



  
        

\begin{lemma}\label{eqofpmatchsnake}
Suppose that $G=\{G_1,\dots,G_n\}$ is a snake graph without loops. Then, the tile $G_i$ of $G$ has two of its boundary edges in $P_{min}$ if and only if it corresponds to a socle.

\begin{proof}
    We will use induction on the number of tiles of the graph.
    \begin{itemize}
        \item \textit{base case}\\
        If the graph $G$ has only one tile, then obviously that tile corresponds to a socle and it has two of its edges in $P_{min}$.
        \item \textit{induction step}\\
        Suppose now, that our assumption is true for every graph with $k$ tiles. We will prove that this is also true for a graph $G=\{G_1,\dots, G_k,G_{k+1}\}$ with $k+1$ tiles. \\
        There are two cases. Either the tile $G_{k+1}$ is on the right of $G_k$ or it is on top of it. We will deal with the first case, since the other is completely symmetrical.\\
        \begin{enumerate}
            \item  Assume that $E(G_{k+1})$ is in $P_{min}$. We will prove then, that $G_{k+1}$ does not correspond to a socle. We will consider cases on the configuration of the tile $G_{k-1},G_k$ and $G_{k+1}$.\\
            If these three tiles form a straight piece, then we can deduce that $N(G_k)$ and $S(G_k)$ are also in $P_{min}$. Therefore, when we consider the subgraph $G'=\{G_1,\dots, G_k\}$ and the induced $P_{min}'$ on that graph, using our induction hypothesis, we obtain that $G_k$ correspond to a socle. Subsequently, $G_{k+1}$ cannot correspond to a socle.\\
            If the tiles $G_{k-1},G_k$ and $G_{k+1}$ form a zig-zag, suppose that $G_l$ is the fist tile of the maximal zig-zag piece. Since $E(G_{k+1})$ is in $P_{min}$, we deduce that $N(G_k)$ and $S(G_k)$ are also in $P_{min}$. Therefore, when we consider the subgraph $G'=\{G_1,\dots, G_k\}$ and the induced $P_{min}'$ on that graph, using our induction hypothesis, we obtain that $G_l$ correspond to a socle. Subsequently, there is a local configuration of arrows \\ $l\leftarrow \dots \leftarrow k\leftarrow k+1$, which means that $G_{k+1}$ does not correspond to a socle.
            \item Assume now that $N(G_{k+1})$ and $S(G_{k+1})$ are in $P_{min}$. We will prove then, that $G_{k+1}$ corresponds to a socle. Again, we will consider cases on the configuration of the tile $G_{k-1},G_k$ and $G_{k+1}$.\\
            If these three tiles form a straight piece, then we can deduce that none of the boundary edges of $G_k$ are in $P_{min}$. Therefore, when we consider the subgraph $G'=\{G_1,\dots, G_k\}$ and the induced $P_{min}'$ on that graph, which contains only the east boundary edge of $G_k$, using our induction hypothesis, we obtain that $G_k$ does not correspond to a socle. Subsequently, $G_{k+1}$ must correspond to a socle.\\
            If the tiles $G_{k-1},G_k$ and $G_{k+1}$ form a zig-zag, then we can deduce that the induced $P_{min}'$ of the subgraph $G'=\{G_1,\dots, G_k\}$, contains $W(G_k)$ and $E(G_k)$. Therefore, using our induction hypothesis on $G'$,  we obtain that $G_k$ corresponds to a socle of the quiver which is generated by the subgraph $G'$. Subsequently, there is a local configuration of arrows $k-1 \rightarrow k$. Since, the tiles $G_{k-1},G_k$ and $G_{k+1}$ form a zig-zag this local configurations of arrows extends to the following configuration: $k-1\rightarrow k\rightarrow k+1$. This means that $G_{k+1}$ corresponds to a socle, which completes the proof.
        \end{enumerate}
    \end{itemize}
\end{proof}

\end{lemma}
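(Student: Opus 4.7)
The plan is to argue by induction on the number $n$ of tiles, following the structure suggested by Lemma~\ref{pmatchinloop}. The base case $n = 1$ is immediate: the unique tile $G_1$ corresponds to the entire string module, hence is its own socle, and any perfect matching of the square $G_1$ consists of two of its (all boundary) edges, so in particular $P_{min}$ does.

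For the inductive step with $G = \{G_1, \ldots, G_{k+1}\}$, I would first exploit the symmetry between the ``right of'' and ``on top of'' gluings (together with the parity condition in the definition of the string module) to reduce to the case where $G_{k+1}$ sits to the right of $G_k$. Write $G' = \{G_1, \ldots, G_k\}$ and let $P_{min}'$ be the minimal perfect matching of $G'$. The goal is to push the information about $P_{min}$ on $G$ down to $P_{min}'$ on $G'$ so that the inductive hypothesis applies at the tile $G_k$, and then to translate the socle/non-socle status of $G_k$ into the corresponding statement at $G_{k+1}$.

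The core argument splits along two independent binary choices: whether $E(G_{k+1}) \in P_{min}$ or instead $N(G_{k+1}), S(G_{k+1}) \in P_{min}$ (the only two options at the final tile), and whether the last three tiles $(G_{k-1}, G_k, G_{k+1})$ form a straight piece or a zig-zag. In each of the four resulting sub-cases, the choice at $G_{k+1}$ forces which boundary edges of $G_k$ lie in $P_{min}$, and one checks that the restriction of $P_{min}$ to $G'$ coincides with $P_{min}'$. The inductive hypothesis then decides whether $G_k$ is a socle of the string module of $G'$; this in turn pins down the local arrow configuration on vertices $k-1, k, k+1$ in the string module of $G$. Finally, the straight-versus-zig-zag dichotomy determines whether $G_{k+1}$ is itself a sink, yielding the required equivalence. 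In the zig-zag sub-cases one must additionally walk back from $G_k$ to the first tile $G_l$ of the maximal zig-zag piece, since there the inductive hypothesis applies most cleanly and the resulting chain of arrows $l \to \cdots \to k \to k+1$ (or its reverse) reads off directly.

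The main obstacle is the bookkeeping across the four sub-cases, in particular ensuring that the restriction of $P_{min}$ to $G'$ really is $P_{min}'$ and not some other matching built only from boundary edges; the twist-parity discipline encoded in Lemma~\ref{pmatchinloop} is precisely what guarantees this identification. A secondary delicacy is the translation between the geometric data of how tiles glue (straight vs.\ zig-zag) and the combinatorial data of arrow directions in the associated string module via Definition~\ref{quiver of snake graph}; this dictionary is standard but must be invoked carefully in each sub-case, especially to conclude that a socle at $G_k$ combined with a straight continuation produces the arrow $k \to k+1$ (so $G_{k+1}$ is not a sink), while a zig-zag continuation propagates the socle along the zig-zag.
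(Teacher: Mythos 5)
Your plan matches the paper's proof essentially step for step: induction on the number of tiles, reduction by symmetry to $G_{k+1}$ lying to the right of $G_k$, the same fourfold case split on whether $E(G_{k+1})$ or $\{N(G_{k+1}),S(G_{k+1})\}$ lies in $P_{min}$ and whether $(G_{k-1},G_k,G_{k+1})$ is straight or a zig-zag, and the walk back to the first tile $G_l$ of the maximal zig-zag in the zig-zag sub-cases. The one point where your justification drifts is the appeal to Lemma~\ref{pmatchinloop} for identifying the restriction of $P_{min}$ with $P_{min}'$: that lemma describes the symmetric difference $P\ominus P_{min}$ in loop graphs and is not what does the work here; the identification needed is simply that $P_{min}'$ is the unique all-boundary-edge matching of $G'$ with the prescribed behaviour at $G_1$, and the restriction of $P_{min}$ (extended by $E(G_k)$ when $G_k$ becomes a bare vertex pair in $G'$) visibly has this property.
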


\begin{remark}
The previous Lemma \ref{eqofpmatchsnake}, shows that our definition of minimal and maximal perfect matchings is equivalent to the definition of Çanakçı-Schroll. This also builds a bridge between the definition of Wilson's minimal and maximal perfect matchings, since it shows that a tile $G_i$ of the snake graph has two of its edges in $P_{max}$ if and only if it corresponds to a socle. 
\end{remark}

The dual of the previous Lemma is also true. The only difference is that we need to exclude the first and the last tile, since when they correspond to a top, one of their boundary edges is part of $P_{min}$. We state it explicitly in the next Lemma, omitting the proof.

\begin{lemma}\label{dualofsocle}
 Suppose that $G=\{G_1,\dots,G_n\}$ is a snake graph. Then, the tile $G_i$, for $1<i<n$, of the snake graph has none of its boundary edges in $P_{min}$ if and only if it corresponds to a top.   
\end{lemma}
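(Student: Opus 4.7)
The proof follows the structure of Lemma~\ref{eqofpmatchsnake}, adapted to characterize when an interior tile has no boundary edge in $P_{min}$ rather than two. We proceed by induction on the number of tiles $n$ of the snake graph, starting from the smallest case $n=3$ admitting an interior tile.

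\emph{Base case.} The only interior tile is $G_2$. We split on whether $G_1,G_2,G_3$ form a straight piece or a zig-zag. In the zig-zag case the two boundary edges of $G_2$ share the concave corner of the configuration; since $P_{min}$ consists only of boundary edges and the two interior edges of $G_2$ are never in $P_{min}$, at least one of these boundary edges must lie in $P_{min}$ to cover that corner, so the left-hand side of the equivalence is false. On the other side, the arrow rules of Definition~\ref{quiver of snake graph} show that in a zig-zag the two arrows incident to vertex $2$ of $Q_G$ are parallel, so $G_2$ is neither a top nor a socle and the right-hand side is also false. In the straight-piece case, a direct computation of $P_{min}$ using the remark on minimal matchings together with the arrow rules of Definition~\ref{quiver of snake graph} yields the equivalence according to the parity and orientation of the configuration.

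\emph{Inductive step.} Assume the statement for all snake graphs with at most $k$ tiles, and consider $G=\{G_1,\dots,G_{k+1}\}$. By the inductive hypothesis applied to the subgraph $\{G_1,\dots,G_k\}$, only the tile $G_k$, which has just become interior, needs fresh analysis. We split on the configuration of $G_{k-1},G_k,G_{k+1}$. The zig-zag case is handled exactly as in the base case. In the straight-piece case, suppose first that $G_k$ has no boundary edge in $P_{min}$; since the two interior edges of $G_k$ cannot lie in $P_{min}$, the four vertices of $G_k$ must be covered entirely by boundary edges of $G_{k-1}$ and $G_{k+1}$ lying on the sides facing $G_k$. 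This forces each of $G_{k-1}$ and $G_{k+1}$ to contribute two of its boundary edges to $P_{min}$. Passing to the subgraph $G'=\{G_1,\dots,G_{k-1}\}$, the matching induced from $P_{min}(G)$ gives $G_{k-1}$ with two of its boundary edges included, so Lemma~\ref{eqofpmatchsnake} applied to $G'$ tells us that $G_{k-1}$ corresponds to a socle in $Q_{G'}$; combined with the arrow rules and the straight-piece configuration, this forces the arrow between $k-1$ and $k$ in $Q_G$ to point from $k$ to $k-1$. A symmetric argument on the subgraph $\{G_{k+1},\dots,G_{k+1}\}$ extended appropriately yields the arrow from $k$ to $k+1$ in $Q_G$, so $G_k$ is a top. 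The converse is obtained by reversing the chain of deductions: if $G_k$ is a top, the arrow rules together with the straight-piece configuration force $G_{k-1}$ and $G_{k+1}$ to be socles, and Lemma~\ref{eqofpmatchsnake} applied to the appropriate subgraphs then yields that two boundary edges of each lie in $P_{min}$, covering the four vertices of $G_k$ and leaving no room for a boundary edge of $G_k$ itself.

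\emph{Main obstacle.} The principal technical subtlety is the bookkeeping required when restricting $P_{min}$ from $G$ to a subgraph: the edge shared between $G_k$ and $G_{k+1}$ is interior in $G$ but boundary in $\{G_1,\dots,G_k\}$, so the restriction of $P_{min}(G)$ must be verified to give the correct minimal-type matching on the subgraph before the inductive hypothesis or Lemma~\ref{eqofpmatchsnake} can be invoked. This is exactly the same point that is implicit in the proof of Lemma~\ref{eqofpmatchsnake} and is dispatched in the same way, by tracking how the forced boundary edges near $G_k$ translate back to boundary data of the shorter graph.
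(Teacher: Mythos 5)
The paper explicitly omits a proof of this lemma, so your proposal fills a genuine gap, and the inductive architecture mirroring Lemma~\ref{eqofpmatchsnake} is a sensible dualization. There is, however, a real error in the converse direction of your inductive step. You assert that if $G_k$ is a top in a straight-piece configuration $G_{k-1},G_k,G_{k+1}$, then the arrow rules force both $G_{k-1}$ and $G_{k+1}$ to be socles. Only $G_{k+1}$ is guaranteed: being the last tile, it has a single incident arrow, namely $k\to k+1$, so it is a sink. For $G_{k-1}$ the arrow $k\to k-1$ is incoming, but the other arrow, between $k-2$ and $k-1$, is governed by whether $G_{k-2},G_{k-1},G_k$ forms a straight piece or a zig-zag, and in the latter case $G_{k-1}$ is neither a top nor a socle. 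Concretely, take $G_2$ on top of $G_1$, $G_3$ to the right of $G_2$, $G_4$ to the right of $G_3$: then $G_3$ is a top (arrows $3\to 2$ and $3\to 4$), yet $G_2$ has arrows $2\to 1$ and $3\to 2$, hence is a middle vertex, not a socle.

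The argument can be repaired because $G_{k+1}$ alone suffices. Since it is a socle, Lemma~\ref{eqofpmatchsnake} applied directly to $G$ (no passage to subgraphs is required here, since that lemma holds for all snake graphs) gives that two of its boundary edges lie in $P_{min}$, and these must be $N(G_{k+1})$ and $S(G_{k+1})$: if instead $E(G_{k+1})\in P_{min}$, then the shared vertices $NE(G_k)$ and $SE(G_k)$ could only be covered by $N(G_k)$ and $S(G_k)$, forcing $G_k$ to be a socle by Lemma~\ref{eqofpmatchsnake} and contradicting the assumption that $G_k$ is a top. With $N(G_{k+1})$ and $S(G_{k+1})$ covering $NE(G_k)$ and $SE(G_k)$, neither $N(G_k)$ nor $S(G_k)$ can belong to $P_{min}$, so $G_k$ has no boundary edge in $P_{min}$. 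Whether $G_{k-1}$ is a socle is then irrelevant; its only role is that $N(G_{k-1})$ is forced into $P_{min}$ to cover $NW(G_k)$. A minor wording slip: in your zig-zag base case the two boundary edges of $G_2$ share the outer, convex corner of the bend rather than the concave one, though the argument there is unaffected.
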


We would like to have a similar result also for the case of loop graphs, since this is the main interest of this report. It turns out that the same statement is also true for loop graphs, and the next Lemma is going to be one of our main tools for the proofs that will follow.

\begin{lemma}\label{eqofpmatch}
    Suppose that $G^\bowtie$ is a loop graph with at least one loop at the beginning or at the end of the graph. Then, the tile $G_i$ of $G^\bowtie$ has two of its boundary edges in $P_{min}$ if and only if it corresponds to a socle.
\begin{proof}
    Consider the plain snake graph $G$ associated to the loop graph $G^\bowtie$. Then, by definition, the minimal perfect matching $P_{min}$ of $G^\bowtie$ can be extended to the minimal perfect matching $P_{min}'$ of $G$. Suppose that $Q$ is the quiver associated to $G^\bowtie$ and $Q'$ is the quiver associated to $G'$. Observe now the following: \\
    \begin{itemize}
        \item[(i)] By construction, the vertex of $Q$ associated to the last tile of the loop graph, is a socle \textit{if and only if} it is a socle in $Q'$.
        \item[(ii)] If $G_k$, $1<k<n$, is a tile in which at least one of its boundary edges has been identified with another edge of the graph, then, by construction the vertex $k$ of the quiver $Q$ never corresponds to a socle nor a top.
    \end{itemize}
    Continuing now with the proof, suppose that there is a loop at the end of the graph $G^\bowtie$, and let $G_k$ be the tile in which we have identified one of its boundary edges with an edge of the tile $G_n$. Fist of all, it can be easily seen that if $1<=i<n$ and $i\neq k$, the local configuration around the vertex $i$ on the quivers $Q$ and $Q'$ is the same, so the tile corresponds to a socle in $Q$ \textit{if and only if} it corresponds to a socle in $Q'$ and so the result follows from Lemma \ref{eqofpmatchsnake}.\\
    Regarding the tile $G_k$, as noted above in (ii), it cannot correspond to a socle in $Q$. However, we can also notice that it has only one boundary edge, so it can't have two boundary edges in $P_{min}$.\\
    Lastly, regarding the tile $G_n$, as noted above in (i), it corresponds to a socle in $Q$ \textit{if and only if} it corresponds to a socle in $Q'$, and so the result follows by Lemma \ref{eqofpmatchsnake}. This completes the proof, since the case that there is a loop at the beginning of the graph follows in exactly the same way.
\end{proof}
    
\end{lemma}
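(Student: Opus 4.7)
The plan is to reduce the lemma to the already-established snake graph case (Lemma~\ref{eqofpmatchsnake}) by passing to the underlying plain snake graph $G$ of $G^\bowtie$ and comparing the respective minimal matchings and quivers tile by tile. The first step is to observe that the minimal perfect matching $P_{min}$ of $G^\bowtie$ extends canonically to a perfect matching $P_{min}'$ of $G$: the definition of $P_{min}$ of a loop graph was designed precisely so that $P_{min} \subset P_{min}'$ in the obvious sense (after unglueing the identified edge), and since the identified edge is never a boundary edge of $G^\bowtie$, the boundary edges of $P_{min}$ coincide with the corresponding boundary edges of $P_{min}'$ on every tile of $G^\bowtie$ that is not involved in the identification.

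Next, I would split the tiles of $G^\bowtie$ into three categories and handle them separately, using the two observations (i) and (ii) already isolated in the statement's lead-in. Let $G_k$ be the tile whose boundary edge is identified with an edge of $G_n$ (treating the case of a loop at the end; the other case is dual and, as remarked earlier, up to equivalence of planar representations one can always assume this configuration). For any tile $G_i$ with $1 < i < n$ and $i \neq k$, the local configuration of arrows around vertex $i$ in $Q$ agrees with the local configuration in $Q'$, because the only arrows added or removed by the loop identification are incident to the vertices $1$ (or $k$) and $n$. Consequently, $G_i$ is a socle in $Q$ if and only if it is a socle in $Q'$, and the boundary edges of $G_i$ that lie in $P_{min}$ coincide with those that lie in $P_{min}'$; so the equivalence follows immediately from Lemma~\ref{eqofpmatchsnake} applied to $G$.

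The boundary tile $G_n$ is handled by observation (i): its socle status in $Q$ is equivalent to its socle status in $Q'$, because the single extra arrow incident to $n$ coming from the loop identification is outgoing, hence does not affect whether $n$ is a sink. Again its boundary edges in $P_{min}$ match those in $P_{min}'$, so Lemma~\ref{eqofpmatchsnake} closes this case. Finally, the "glued" tile $G_k$ must be treated directly: by observation (ii) it is never a socle in $Q$ (one of its would-be outgoing arrows has been replaced by an incoming arrow from the loop, so $k$ has both an incoming and an outgoing arrow); and since one of its edges has been identified, it has strictly fewer than two boundary edges available, so it cannot have two boundary edges in $P_{min}$. Both sides of the biconditional therefore fail simultaneously at $G_k$.

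The only real obstacle I foresee is bookkeeping: one has to carefully verify that (a) the minimal matching of the loop graph really does restrict to the minimal matching of the underlying snake graph on every non-glued tile, and (b) the new arrow introduced by Definition~\ref{quiver of a loop graph} goes in the direction that prevents $G_k$ from being a socle in $Q$ while preserving the socle status of $G_n$. Both of these are straightforward case analyses on whether the cut edge is a North/East/South/West edge and on the parity of $k$, but they must be checked in each of the four sub-cases allowed by Definition~\ref{quiver of a loop graph} to confirm observations (i) and (ii) in full generality. Once that verification is in place, the three-case partition above yields the lemma immediately from Lemma~\ref{eqofpmatchsnake}.
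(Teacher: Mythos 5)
Your proposal matches the paper's proof in every structural respect: both pass to the underlying plain snake graph by extending $P_{\min}$ to $P_{\min}'$, compare the quivers $Q$ and $Q'$, isolate the same observations (i) and (ii), split the tiles into three classes (generic interior tiles, the glued tile $G_k$, and the endpoint tile $G_n$), and reduce each of the first two to Lemma~\ref{eqofpmatchsnake} while handling $G_k$ by a direct edge count. One slip in your aside justifying observation (i): you say the extra arrow incident to $n$ is \emph{outgoing} and therefore does not affect sink status, but an outgoing arrow from $n$ would precisely destroy its sink (socle) status if $n$ had been a sink in $Q'$. The actual parity check you rightly flag as pending shows a correlation: when the maximal zig-zag makes $n$ a sink in $Q'$, Definition~\ref{quiver of a loop graph} forces the extra arrow to be $k\to n$ (incoming to $n$), and when $n$ is a source the extra arrow is $n\to k$; in either case the socle status of $n$ is unchanged, which is what observation (i) needs. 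The paper elides this verification as ``by construction,'' so your level of detail matches the paper's, but the directional claim as written should be corrected.
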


We will now proceed with the first main step on proving that there is a bijection between the perfect matching lattice of a loop graph and the canonical submodule lattice the associated loop word. For this, we will prove that each perfect matching of a loop graph can be canonically associated to a submodule of the corresponding loop word.

\begin{proposition}\label{Ptomod}
    
Let $A = kQ^\bowtie/ I$, where $I$ is an admissible ideal in $ kQ^\bowtie$. Let $w$ be a loopstring, and $M(w)$ be the associated loop module over $A$. Suppose $P$ is a perfect matching of $\mathcal{G}^\bowtie$. Then $P\ominus P_{min}$ gives rise to a canonical submodule $M(P)$ of $M(w)$.

\begin{proof}
    We have that $P\ominus P_{min} = \bigcup {H_i}$ where each $H_i$ is a connected subgraph of $\mathcal{G}(w)$ and for each $i$ there is a canonical embedding of graphs $\phi_i\colon H_i\to\mathcal{G}(w)$. We will show that each $H_i$ gives rise to a canonical submodule $M(w_i)$ and these in turn will give rise to the canonical submodule $M(P) = \bigoplus M(w_i)$.\\
    Let $T_{i_1}, \dots, T_{i_n}$ be the tiles of $H_i$. Then we can consider the sub-loopstring $w_i = a_{i_1} \dots a_{i_n-1}$ (by Lemma \ref{welldefinedconloopstring}) of the loopstring $w$ which is defined to be the restriction of $w$ to the elements of the set $\{\rightarrow,\leftarrow,\hookleftarrow,\hookrightarrow\}$ which are corresponding to the tiles of $H_i$. 
    First of all, assume that $H_i$ contains only one tile $T_{i_1}$. Using Lemma \ref{eqofpmatch}, we see that then $T_{i_1}$ corresponds to a socle and then obviously $M(w_i)$ is a submodule of $M(w)$. \\
    Suppose, from now on that $H_i$ contains at least two tiles.\\
    
    We know that each subloopstring $w_i$ gives rise to a representation of $A$. 
    Since each $H_i$ is connected, the representation $M(w_i)$ associated to $w_i$ is irreducible. Therefore $M(w_i)$ would not be a subrepresentation, and subsequently a submodule of $M(w)$, if and only if none of the following is true:
    \begin{itemize}
        \item there exists an arrow $i_1\to i_1-1$ in the quiver $Q^\bowtie$,
        \item there exists an arrow $i_n\to i_n+1$ in the quiver $Q^\bowtie$
    \end{itemize}
    We will show that none of the above is true for $M(w_i)$ focusing only on the first case, since the second is  similar.\\
    First, suppose that $T_j = G_j$ for each $1\leq j\leq k$, namely that $H_i$ contains the whole zig-zag of the loop. Then $i_1 = 1$ so there is not a vertex $i_1 - 1$ and subsequently no arrow $i_1\to i_1-1$ in the quiver $Q^\bowtie$.\\
    Suppose now that $H_j$ contains a strict subset of the tiles $\{G_1, \dots, G_k\}$. Using Lemma \ref{pmatchinloop}, we see that $T_{i_1} = G_j$ for some $1< j\leq k-1$ and since the tiles $G_{j-1}$ and $G_j$ are part of the zig-zag, we have that there exists an arrow $j-1\to j$, and so there is no arrow $i_1\to i_1-1$ in the quiver $Q^\bowtie$.\\
    Lastly suppose that $T_{i_1}$ is not part of the loop of the loop graph. Assume that there is an arrow $i_1\to i_1-1$ in the quiver $Q^\bowtie$. We consider the following cases:\\
    Suppose that there is an arrow $i_1\to i_2$. Then $T_{i_1}$ corresponds to a top and using Lemma \ref{dualofsocle} we take that none of the edges of $T_{i_1}$ belong to $P_{min}$. Therefore, due to the twist parity condition, $T_{i_1}\in P\ominus P_{min}$ if and only if we have flipped the edges of both the tiles $T_{i_1-1}$ and $T_{i_2}$. But then we would also have that the tile $T_{i_1-1}$ belongs to $H_i$ which is not true.\\
    Suppose now that there is an arrow $i_2\to i_1$. Then there must exist a zig-zag of tiles \\
    $T_{i_1-1}, T_{i_1}, T_{i_2},\dots T_{i_l}$, $2\leq l\leq n$, where $T_l$ corresponds to a top. The tiles $T_{i_1-1}, T_{i_1}, T_{i_2},\dots T_{{i_l}-1}$ have exactly one of their edges in $P_{min}$. Additionally, since $T_l$ is a top, it has none of its edges in $P_{min}$. Therefore, since we have that $T_l\in P\ominus P_{min}$ we see that due to the twist parity condition, we first need to flip the edges of $T_{{i_l}-1}$ and inductively the edges of $T_{i_1-1}$. However, in that case we would also have that the tile $T_{i_1-1}$ belongs to $H_i$ which is not true.\\

    Lastly, we need to show now that there is a canonical embedding $M(P)\hookrightarrow M(w)$. Each canonical embedding of graphs $\phi_i\colon H_i\to\mathcal{G}(w)$ determines a tile $T_{j_i}$ which is the first tile of $H_i$, when we consider it as a subgraph of $\mathcal{G}(w)$. This tile corresponds to a vertex of the quiver $Q^\bowtie$ and some vertex of $w_i$. Identifying these two vertices gives rise to a canonical embedding $M(w_i)\hookrightarrow M(w)$ which in turn gives rise to a canonical embedding $M(P)\hookrightarrow M(w)$.

\end{proof}

\end{proposition}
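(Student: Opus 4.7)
The plan is to decompose $P \ominus P_{min}$ into its connected components, associate to each component a sub-loopstring of $w$, and verify that the resulting module is genuinely closed under the action of $A$. Concretely, I would write $P \ominus P_{min} = \bigcup_i H_i$ as a disjoint union of maximal connected subgraphs. By Corollary \ref{welldefinedconloopstring}, each $H_i$ corresponds (up to equivalence) to a connected sub-loopstring $w_i$ of $w$, giving a module $M(w_i)$ via the construction of the previous chapter. I would then set $M(P) := \bigoplus_i M(w_i)$ and construct the canonical embedding $M(P) \hookrightarrow M(w)$ by sending, for each $i$, each basis vector of $M(w_i)$ indexed by a tile of $H_i$ to the basis vector of $M(w)$ at the same vertex of $Q^\bowtie$.

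The heart of the argument is showing that each $M(w_i)$ is actually a subrepresentation and not merely a subspace. The danger is that the matrix entries in $M(w)$ associated to an arrow incident to an endpoint of $H_i$ could carry a vector of $M(w_i)$ outside of $M(w_i)$. Equivalently, I must rule out the existence of arrows $i_1 \to i_1 - 1$ and $i_n \to i_n + 1$ in $Q^\bowtie$ pointing out of $H_i = \{T_{i_1}, \dots, T_{i_n}\}$; by symmetry I would focus on the left endpoint. The base case $|H_i| = 1$ is immediate from Lemma~\ref{eqofpmatch}: a singleton component of $P \ominus P_{min}$ must have both boundary edges in $P_{min}$, hence corresponds to a socle and is simple.

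For the inductive/general case I would break into three subcases. First, if $T_{i_1} = G_1$ (the first tile of the loop zig-zag) then there is no vertex $i_1 - 1$ and we are done; if instead $H_i$ contains a strict sub-collection of the zig-zag tiles $\{G_1,\dots,G_k\}$, Lemma~\ref{pmatchinloop} forces the boundary of $H_i$ inside the zig-zag to have $T_{i_1} = G_j$ with $1 < j \le k-1$, and the quiver orientation from Definition~\ref{quiver of a loop graph} then gives an arrow $j-1 \to j$, the correct direction. Finally, if $T_{i_1}$ lies outside the loop portion, I argue by contradiction: supposing $i_1 \to i_1 - 1$ exists, either $i_1 \to i_2$ also holds, making $T_{i_1}$ a top so by Lemma~\ref{dualofsocle} none of its boundary edges lies in $P_{min}$ and the twist-parity condition forces $T_{i_1 - 1} \in H_i$; or $i_2 \to i_1$ holds and a zig-zag starting at $T_{i_1 - 1}$ ending at a top $T_{i_l}$ propagates a flip back through $T_{i_1 - 1}$ by the same twist-parity reasoning. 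Either option contradicts the maximality of the connected component $H_i$.

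The step I expect to be the main obstacle is the bookkeeping around the \emph{junction tile} $G_k$ where a boundary edge of $G_1$ has been identified. This tile contributes an extra arrow in $Q^\bowtie$ (Definition~\ref{quiver of a loop graph}) that could connect $H_i$ to a vertex outside of it, and it is neither a top nor a socle by the observations in the proof of Lemma~\ref{eqofpmatch}. I would handle this by carefully applying Lemma~\ref{pmatchinloop}, which guarantees that whenever $T_{i_1}$ sits adjacent to the zig-zag portion of the loop inside $H_i$, the flipping propagates through the entire zig-zag, so either the extra loop-arrow points into $H_i$ or the companion end $G_1$ already lies in $H_i$; in either case no arrow escapes $M(w_i)$. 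Once the three subcases and their duals at $T_{i_n}$ are checked, the embedding $M(P) \hookrightarrow M(w)$ described above is well-defined, and $M(P)$ is a canonical submodule of $M(w)$, completing the proof.
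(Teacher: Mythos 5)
Your proof takes essentially the same approach as the paper: decompose $P \ominus P_{min}$ into connected components, use Corollary~\ref{welldefinedconloopstring} to extract sub-loopstrings, handle the singleton case via Lemma~\ref{eqofpmatch}, and rule out escaping arrows at the endpoints via the same three-way case split (whole loop zig-zag contained, strict sub-collection forcing $T_{i_1}=G_j$ with $1<j\le k-1$ by Lemma~\ref{pmatchinloop}, and the off-loop contradiction argument using Lemma~\ref{dualofsocle} and twist parity). Your extra paragraph flagging the loop arrow at the junction tile $G_k$ is a reasonable addition the paper treats only implicitly, but it does not change the underlying structure of the argument.
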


Conversely, given a canonical submodule $N = N(w_1)\oplus \dots\oplus N(w_n)$ of a loop module $M(w)$, with canonical embedding $\phi$, we want to associate a perfect matching $P_N$ of the loop graph $\mathcal{G}_w$. Suppose $H_i$ is the loop subgraph of $\mathcal{G}_w$ which corresponds to the embedding $\phi$.
We define:
\[
P^{c}(H_i) = E^{bdry}(H_i)\backslash P_{min}|_{H_i}, 
\]
\[
P_N = (\bigcup{P^c(H_i)}) \bigcup P_{min}|_{\mathcal{G}^\bowtie \backslash \bigcup{H_i}}.
\]

\begin{remark}
    Note that $P^c(H_i) = P_{max}(H_i)$ when the subgraph $H_i$ contains at least two tiles. However, if $H_i$ consists of only one tile then $P^c(H_i)$ is equal to either $P_{max}(H_i)$ or $P_{min}(H_i)$.
\end{remark}

\begin{proposition}\label{modtoP}
The set of edges $P_N$ as defined above is a perfect matching of $\mathcal{G}^\bowtie$ and $N = M(P_N)$.

\begin{proof}
In order to prove that this is indeed a perfect matching, we just have to prove that $P^c(H_i)$ is a perfect matching of the subgraph $H_i$.\\
We will prove this by induction on the number of tiles of $H_i$.

\begin{itemize}

\item \textit{base case:}\\
If $H_i$ contains only one tile then this tile corresponds to a socle and according to Lemma \ref{eqofpmatch}. this tile has two of its edges in $P_{min}$. But then, this means that $P^c(H_i)$ is also a perfect matching of $H_i$.\\

\item \textit{induction step:}\\
Suppose that, for each subgraph $H$ of $\mathcal{G}_w$ which contains less than $k$ tiles, $P^c(H)$ is a perfect matching of $H$. We will prove that this is also true when $H$ is a subgraph of $\mathcal{G}_w$ with $k$ tiles.\\
Suppose that $H_i = \{T_{i_1}, \dots, T_{i_k} \}$ with $i_k>1$. Suppose that the module $N(w_i')$ is a maximal submodule of $N(w_i)$. Then, $N(w_i')$ is either a direct sum of two submodules or is an indecomposable module.. \\
Suppose that $N(w_i')$ is indecomposable. This means that the subgraph $H_i'$, which is the graph corresponding to $N(w_i')$, is connected. Therefore, $H_i' = \{T_{i_1}, \dots, T_{i_{k-1}} \}$ or $H_i' = \{T_{i_{2}}, \dots, T_{i_k} \}$. W.l.o.g. suppose that \\ $H_i' = \{T_{i_1}, \dots, T_{i_{k-1}} \}$ and that $T_{i_k}$ is on the right of $T_{i_{k-1}}$, meaning that $E(T_{i_{k-1}})$ is an interior edge. Using the induction hypothesis, $P^c(H_i')$ is a perfect matching of $H_i'$. Since $E(T_{i_{k-1}})$ is an interior edge, it cannot belong to $P^c(H_i')$. However, in a perfect matching, all edges must be matched, so $N(T_{i_{k-1}})$ and $S(T_{i_{k-1}})$ are in $P_{min}$. Therefore, in order to prove that $P^c(H_i)$ is a perfect matching of $H_i$ we just need to prove that $E(T_{i_k})$ is in $P_{min}$, since then we will also have that all the edges of the tile $T_{i_k}$ are matched. Suppose that  $E(T_{i_k})$ is not in $P_{min}$.
However, this can happen only if there is an additional tile on the right or above of $T_{i_k}$. This means that the tile $T_{i_{k+1}}$ is a tile of $\mathcal{G}_w$ and that the simple corresponding to $T_{i_k}$ is a top of $M(w)$, since none of it's edges belong in $P_{min}$ (this is the dual statement of Lemma  \ref{eqofpmatch}). But then, the maximal submodule of $N(w_i)$ which does not contain the simple corresponding to $T_{i_k}$ would be a direct sum of two submodules, which is a contradiction. \\
Suppose now that  $N(w_i') = N_1 \oplus N_2$, where $N_1$, $N_2$ are submodules of $N_w$. Then we know that $H_1 = \{T_{i_1}, \dots, T_{i_{d-1}}\}$ and $H_2 = \{T_{i_{d+1}}, \dots, T_{i_k}\}$ are subgraphs corresponding to $N_1$ and $N_2$ respectively for some $1 < d < k$. Therefore, the simple module corresponding to $T_d$ is a top of both $N_1$ and $N_2$. So the tiles $T_{d-1}, T_d$ and $T_{d+1}$ form a straight piece and w.l.o.g assume that this is a horizontal piece. Therefore, $E(T_{d-1})$ and $W(T_{d+1})$ are interior edges. Also, by the induction hypothesis we have that $P^c(H_1)$ and $P^c(H_2)$ are perfect matchings. Since $E(T_{d-1})$ and $W(T_{d+1})$ are interior edges, they cannot belong to $P_{min}(H)$ and so by the definition of $P^c$, we have that these  two edges are part of the perfect matchings $P^c(H_1)$ and $P^c(H_2)$ respectively. But then this means that also $P^c(H_i)$ is a perfect matching of $H_i$, since we can flip the tiles of $T_d$ and this will still be a perfect matching.

\end{itemize}

To prove the last statement, by definition of $P_g(H_i)$ we have that:
\[
M(P^c(H_i)  \bigcup P_{min}|_{\mathcal{G}^\bowtie \backslash {H_i}} = N(w_i),
\]
and so we take that $M(P_N) = N$.
    
\end{proof}

\end{proposition}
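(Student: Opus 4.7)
The plan is to verify the two assertions ``$P_N$ is a perfect matching'' and ``$N = M(P_N)$'' in turn, with most of the work concentrated in the first. The cleanest decomposition is to prove that for each connected piece $H_i$ of $\bigcup H_i$, the local set $P^c(H_i) = E^{bdry}(H_i)\setminus P_{min}|_{H_i}$ is a perfect matching of the subgraph $H_i$; the rest of $P_N$ is tautologically a perfect matching on $\mathcal{G}^\bowtie\setminus\bigcup H_i$ because $P_{min}$ is. The compatibility at the interface between $H_i$ and its complement should follow once one observes that the interior edges of $\mathcal{G}^\bowtie$ lying on the boundary of $H_i$ belong to $P_{min}|_{H_i}$ precisely when they belong to $P_{min}|_{\mathcal{G}^\bowtie\setminus H_i}$, so they are uniformly excluded from $P_N$.

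To prove that $P^c(H_i)$ is a perfect matching of $H_i$, I would argue by induction on the number of tiles of $H_i$. For the base case of a single tile, since $N(w_i)$ is an indecomposable submodule of $M(w)$, the corresponding vertex of $Q^\bowtie$ is a socle, and Lemma~\ref{eqofpmatch} (together with Lemma~\ref{eqofpmatchsnake}) forces two of the four boundary edges of this tile into $P_{min}|_{H_i}$; the remaining two edges then constitute a perfect matching. For the inductive step, I would pick a maximal submodule $N(w_i')$ of $N(w_i)$ by removing a top from the loopstring presentation, and distinguish two geometric scenarios: either $N(w_i')$ is still indecomposable, in which case $H_i'$ is obtained from $H_i$ by shaving off a tile at one end; or $N(w_i')$ decomposes as $N_1\oplus N_2$, in which case $H_i$ breaks into two connected pieces $H_1$ and $H_2$ meeting at a tile $T_d$ whose associated simple is a top of both $N_1$ and $N_2$. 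In the first scenario, the induction hypothesis gives a matching of $H_i'$, and the shaved-off tile contributes its one or two remaining boundary edges via Lemma~\ref{dualofsocle} so that everything reassembles correctly. In the second scenario, $T_d$ lies on a straight piece between its neighbours, so the two interior edges on either side of $T_d$ are forced out of $P_{min}$, the flipped-tile argument closes the matching through $T_d$, and the two sub-matchings on $H_1$ and $H_2$ (supplied by the induction hypothesis) glue together across $T_d$.

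The identity $N = M(P_N)$ should then come almost for free from the construction of the map $P\mapsto M(P)$ in Proposition~\ref{Ptomod}. Indeed, by design we have $P_N \ominus P_{min} = \bigcup_i \bigl(P^c(H_i) \cup (P_{min}|_{H_i}\setminus P_N)\bigr)$, which is precisely the collection of boundary edges of the subgraphs $H_i$. Since these are exactly the connected subgraphs that Proposition~\ref{Ptomod} reads off from a perfect matching to build its associated canonical submodule as a direct sum of the $M(w_i)$, and since the canonical embedding is determined by the first tile of each $H_i$ inside $\mathcal{G}(w)$, we recover $M(P_N) = \bigoplus M(w_i) = N$.

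The main obstacle I anticipate is the bookkeeping in the indecomposable inductive case: one has to argue carefully that removing a single end tile $T$ of $H_i$ corresponds to removing a top of $N(w_i)$ and not merely any simple, since only a top removal yields a submodule, and one must then verify that the edge of $T$ shared with $H_i'$ behaves correctly with respect to $P_{min}|_{H_i}$ versus $P_{min}|_{H_i'}$. This is where the interplay between the combinatorial and representation-theoretic content of Lemmas~\ref{eqofpmatch} and~\ref{dualofsocle} becomes essential, and any slip in identifying which tiles correspond to tops or socles at the extremity of $H_i$ would break the induction.
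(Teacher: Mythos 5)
Your proposal follows essentially the same route as the paper's proof: reduce to showing $P^c(H_i)$ is a perfect matching of $H_i$, induct on the number of tiles, split the inductive step according to whether a maximal submodule of $N(w_i)$ is indecomposable (shave off an end tile) or decomposes as $N_1\oplus N_2$ (split at a tile $T_d$ on a straight piece), invoking Lemmas~\ref{eqofpmatch} and~\ref{dualofsocle} at exactly the same points, and then reading off $N = M(P_N)$ from the construction in Proposition~\ref{Ptomod}. The obstacle you flag in the indecomposable case is precisely where the paper's argument does the work, via a contradiction showing $E(T_{i_k})\in P_{min}$, so you have correctly identified both the structure and the sensitive step.
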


\section{Bijection between perfect matching lattices and canonical submodule lattices}

We are now ready to prove the main theorem of this section. Namely, we prove the existence of a bijection between the perfect matching lattice and the submodule lattice of the associated module.

\begin{theorem}\label{submodule lattice bijective to perfect matchings lattice}
Let $A=kQ^\bowtie/I$ and $M(w)$ be a loop module over $A$ with associated loop graph $\mathcal{G}^\bowtie$. Then $\mathcal{L}(\mathcal{G})$, which denotes the perfect matching lattice of $\mathcal{G}$ is in bijection with the canonical submodule lattice $\mathcal{L}(M(w))$.

\begin{proof}

By propositions \ref{Ptomod} and \ref{modtoP} we have that the two lattices are equal as sets. Suppose that $H(\mathcal{G})$ and $H(M)$ are the Hasse diagrams of $\mathcal{L}(\mathcal{G})$ and $\mathcal{L}(M(w))$. We need to show that there exists an edge between two vertices in $H(\mathcal{G})$ if and only if there exists an edge between the two corresponding vertices of $H(M)$.\\

Suppose that $P$ and $P'$ are two perfect matchings of $\mathcal{L}(\mathcal{G})$ which are connected with an edge in $\mathcal{L}(\mathcal{G})$. Let $P\ominus P_{min} = \bigcup{H_i}$ and $P'\ominus P_{min} = \bigcup{H_i'}$, and $\phi\colon \bigcup{H_i}\hookrightarrow \mathcal{G}^\bowtie$ and $\phi'\colon \bigcup{H_i'}\hookrightarrow \mathcal{G}^\bowtie$  the corresponding canonical embedding of graphs.
Since $P$ and $P'$ are connected with an edge, we have that they agree everywhere apart from one tile $T_l$ in which they have opposite matchings. W.l.o.g. suppose that the tile $T_l$ is in $P\ominus P_{min}$. \\
We can associate to $P'$ a submodule $N$ of $M(w)$. Since $P$ and $P'$ agree in all other tiles apart from $T_l$ by our construction in Prop.\ref{Ptomod} this submodule is exactly $M(w_P)$ removing the simple corresponding to $T_l$. However, this means that these two modules $N$ and $M(w_P)$ are connected with an edge in $H(M)$. Following Prop.\ref{Ptomod} we also have two induced canonical embeddings of modules $\Phi'\colon N\hookrightarrow M(w)$ and $\Phi\colon M(w_P)\hookrightarrow M(w)$. Since these embeddings are induced by the canonical embeddings of graphs $\phi$ and $\phi'$, which agree everywhere apart from the tile $T_l$, we see that $\Phi|_N = \Phi'$.\\
Conversely suppose that  $(M(w_i), \phi_i)$ and $(M(w_j), \phi_j)$ are two canonical submodules of $M(w)$ connected by an edge in $H(M)$. Then $w_i$ and $w_j$ agree in every vertex apart from one vertex $l$. W.l.o.g. assume that $l$ belongs to $w_i$. Following our construction in Prop.\ref{modtoP} there are two perfect matchings $P_{M(w_i)}$ and $P_{M(w_j)}$ corresponding to $M(w_i)$ and $M(w_j)$ respectively. Then we have that :
\[
(P_{M(w_i)}\ominus P_{min}) \bigcup (P_{M(w_j)}\ominus P_{min}) = T,
\]
where T is a tile of $G^\bowtie$. But this means that $P_{M(w_i)}$ and $P_{M(w_i)}$ agree on all tiles apart from $T$ and so by the definition of the perfect matching lattice of a loop graph there must be an edge connecting the two perfect matchings.

\end{proof}

\end{theorem}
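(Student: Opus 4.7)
The plan is to leverage Propositions~\ref{Ptomod} and \ref{modtoP} to establish the bijection as sets, then verify that the covering relations of the two lattices correspond under this bijection. More precisely, writing $\Psi\colon \mathcal{L}(\mathcal{G})\to\mathcal{L}(M(w))$ for the map $P\mapsto M(P)$ of Proposition~\ref{Ptomod} and $\Phi\colon \mathcal{L}(M(w))\to\mathcal{L}(\mathcal{G})$ for the map $N\mapsto P_N$ of Proposition~\ref{modtoP}, the two constructions are manifestly inverse to each other on the level of sets: decomposing $P\ominus P_{min}$ into connected components $\bigcup H_i$ and then taking $\bigcup P^c(H_i)\cup P_{min}|_{\mathcal{G}^\bowtie\setminus\bigcup H_i}$ recovers $P$, because each $P^c(H_i)$ is by definition $E^{bdry}(H_i)\setminus P_{min}|_{H_i}$, which is exactly the part of $P$ sitting in $H_i$.

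Next I would show that $\Psi$ and $\Phi$ are order-preserving on covering relations. Suppose $P$ and $P'$ are connected by an edge in $H(\mathcal{G})$, so that $P$ and $P'$ agree on every tile except one tile $T_l$ where they carry opposite matchings; without loss of generality $T_l\in P\ominus P_{min}$ and $T_l\notin P'\ominus P_{min}$. Then the connected components of $P'\ominus P_{min}$ are obtained from those of $P\ominus P_{min}$ by removing the single tile $T_l$. By the construction in Proposition~\ref{Ptomod}, $M(P')$ is obtained from $M(P)$ by deleting exactly the simple summand indexed by $T_l$ from the appropriate subloopstring, which is a maximal submodule of $M(P)$, hence a cover in $\mathcal{L}(M(w))$. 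The induced canonical embeddings $\Phi\colon M(P')\hookrightarrow M(P)\hookrightarrow M(w)$ are compatible because they are read off from the same canonical embedding of graphs outside of $T_l$.

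For the reverse direction, suppose $N,N'$ are canonical submodules of $M(w)$ connected by an edge in $H(M)$, so $N'\subset N$ with $N/N'$ simple, corresponding to a single vertex $l$ of the loopstring of some component $w_i$ of $N$. The construction in Proposition~\ref{modtoP} produces $P_N, P_{N'}$ from the graphs $\bigcup H_i$ and $\bigcup H_i'$ associated to $N$ and $N'$; these two decompositions differ by exactly the tile $T_l$ corresponding to the vertex $l$. Since by Lemma~\ref{eqofpmatch} and its dual (Lemma~\ref{dualofsocle}) the removal of a top simple from a canonical submodule corresponds to flipping the matching on exactly the tile of that top, the perfect matchings $P_N$ and $P_{N'}$ agree on every tile of $\mathcal{G}^\bowtie$ except $T_l$, giving an edge in $H(\mathcal{G})$.

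The main technical obstacle will be verifying that the process of deleting a maximal top from a canonical submodule really does translate to flipping matching on a single tile; this requires careful bookkeeping across the three cases that the top tile can sit at (interior of a zig-zag, at the free end of the loopstring, or at a straight-piece joint), and in the loop case one must additionally check that the loop edges of $\mathcal{G}^\bowtie$ (which are not boundary edges) never appear as the tile that is flipped, since canonical submodules never correspond to removing one of the identified loop tiles in isolation—this is precisely the content of Lemma~\ref{pmatchinloop} and Corollary~\ref{welldefinedconloopstring}, which guarantee that the connected components $H_i$ of $P\ominus P_{min}$ always descend to connected subloopstrings. With these tools in hand, the edge-to-edge correspondence is routine and the bijection $\Psi$ promotes to an isomorphism of Hasse diagrams, hence of lattices.
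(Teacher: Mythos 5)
Your proposal is correct and takes essentially the same approach as the paper's own proof: both use Propositions~\ref{Ptomod} and \ref{modtoP} to identify the two lattices as sets, then verify the covering relations match by observing that two perfect matchings connected by an edge differ by a flip on a single tile $T_l$, which on the module side corresponds to removing the single simple indexed by $T_l$, and conversely. Your version is slightly more explicit about the supporting lemmas (\ref{eqofpmatch}, \ref{dualofsocle}, \ref{pmatchinloop}, Corollary~\ref{welldefinedconloopstring}) that underpin the single-tile/single-simple correspondence, and it flags the loop-edge subtlety that the paper leaves implicit in its theorem proof; this is a useful clarification but not a structurally different argument.
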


\chapter{Skein relations}

\section{ Skein relations for crossing arcs}
Our goal in this section is to explore and prove skein relation in the cases that involve:
\begin{itemize}
    \item [(i)] a plain arc and a singly notched arc, which cross each other,
    \item [(ii)] a plain arc and a doubly notched arc, which cross each other,
    \item [(iii)] two singly notched arcs, which cross each other,
    \item [(iv)] a singly notched arc and a doubly notched arc, which cross each other,
    \item [(v)] two doubly notched arcs which cross each other.
\end{itemize}
These are the first five cases which are listed in the Appendix of \cite{Musiker_Schiffler_Williams_2013}.\\
The following theorem addresses these cases:

\begin{theorem}\label{skein relations regular crossing}
    Let $(S,M,P,T)$ be a triangulated punctured surface and $\mathcal{A}$ the cluster algebra associated to it. Let $\gamma_1$ and $\gamma_2$ two arcs, which cross on the surface and let $(\gamma_3,\gamma_4)$ and $(\gamma_5,\gamma_6)$ be the two pairs of arcs obtained by smoothing the crossing. Then : 
        \[
        x_{\gamma_1} x_{\gamma_2} = Y^{-}x_{\gamma_3} x_{\gamma_4}  + Y^{+} x_{\gamma_5} x_{\gamma_6},
        \]
        where $x_{\gamma_i}$ denotes the cluster algebra element associated to the arc $\gamma_i$ and $Y^{-}, Y^{+}$ are monomials in $y_i$ coefficients and satisfy the condition that one of the two is equal to 1.
\end{theorem}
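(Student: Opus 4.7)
The plan is to prove the identity by reducing each side of the equation to a sum over perfect matchings of the associated (loop) graphs via the expansion formula, and then constructing an explicit bijection between the indexing sets. Rather than working directly with the loop graphs, I will pass to loopstrings and loop modules via Theorem~\ref{submodule lattice bijective to perfect matchings lattice}, since the submodule lattice of a loop module is far more transparent than the perfect matching lattice of its loop graph; the cost is that some data (the crossing monomial, the $y$-weights) is no longer visible directly, and must be reinstated through Definition~\ref{monomial associated to loopstring} together with Lemma~\ref{xvariablesagree}.

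First I would associate to the crossing arcs $\gamma_1,\gamma_2$ their loop graphs $\mathcal{G}_1,\mathcal{G}_2$ (or snake graphs, if the relevant arc is plain) and locate the tiles $G_1^{\star}\in\mathcal{G}_1$ and $G_2^{\star}\in\mathcal{G}_2$ corresponding to the crossing point $p=\gamma_1\cap\gamma_2$, which traverses the arc $\tau_{i_p}\in T$. Up to isomorphism of planar representations, $G_1^\star$ and $G_2^\star$ share a common labelled edge. Next, the product $x_{\gamma_1}x_{\gamma_2}$ expands, by the expansion formula, as
\[
x_{\gamma_1}x_{\gamma_2} \;=\; \frac{1}{\cross(\gamma_1,T)\cross(\gamma_2,T)}\sum_{(P_1,P_2)\in\Match(\mathcal{G}_1)\times\Match(\mathcal{G}_2)}x(P_1)x(P_2)\,y(P_1)y(P_2).
\]
The two smoothings of the crossing produce pairs $(\gamma_3,\gamma_4)$ and $(\gamma_5,\gamma_6)$ whose loop graphs $\mathcal{G}_3\sqcup\mathcal{G}_4$ and $\mathcal{G}_5\sqcup\mathcal{G}_6$ are obtained, combinatorially, by cutting $\mathcal{G}_1$ and $\mathcal{G}_2$ along the common edge of $G_1^\star$ and $G_2^\star$ and re-gluing the two possible ways.

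The key step is the construction of a bijection
\[
\Match(\mathcal{G}_1)\times\Match(\mathcal{G}_2)\;\longleftrightarrow\;\bigl(\Match(\mathcal{G}_3)\times\Match(\mathcal{G}_4)\bigr)\sqcup\bigl(\Match(\mathcal{G}_5)\times\Match(\mathcal{G}_6)\bigr).
\]
Here I would translate both sides through Theorem~\ref{submodule lattice bijective to perfect matchings lattice} into a statement about canonical submodules of the loop modules $M(w_1)\oplus M(w_2)$ versus $M(w_3)\oplus M(w_4)$ and $M(w_5)\oplus M(w_6)$. Because the crossing point distinguishes the tile $G_i^\star$, each pair $(N_1,N_2)$ of canonical submodules is classified by whether or not it contains the simple corresponding to $G_i^\star$ on each side, and this binary data is exactly what selects which of the two resolutions it lands in. The bijection is then read off by concatenating/splitting the associated loopstrings at the crossing letter, an operation that preserves the submodule structure away from the crossing tile. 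The fact that this assignment is well defined and invertible mirrors the snake graph calculus of \c{C}anak\c{c}{\i}--Schiffler but is cleaner, since loopstrings record only the essential incidence data.

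Finally, I would check that the weight and height monomials match under the bijection, so that after multiplying by the appropriate crossing monomials, the right hand side of the expansion rearranges into $Y^{-}x_{\gamma_3}x_{\gamma_4}+Y^{+}x_{\gamma_5}x_{\gamma_6}$. The weight side follows directly from the fact that the bijection preserves labelled edges; for the height side, one uses Definition~\ref{monomial associated to loopstring} and Lemma~\ref{xvariablesagree} to recover the discrepancy between $y(P_1)y(P_2)$ and $y(P_3)y(P_4)$ (respectively $y(P_5)y(P_6)$) as a global factor $Y^{-}$ (respectively $Y^{+}$) depending only on the local geometry at the crossing. The normalisation condition that one of $Y^{-},Y^{+}$ is $1$ is forced by choosing the resolution in which the minimal perfect matchings are preserved, i.e.\ the one for which no tile changes its matching from $P_{\min}$. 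The main obstacle will be the case analysis induced by notchings: when $\gamma_1$ or $\gamma_2$ is tagged, the crossing tile may lie inside the hook part of the loop graph, and one must verify that the splitting of loopstrings behaves correctly with respect to the hook, so that the resolved arcs again give valid loop graphs (and in particular so that $P_{\min}$ on each side is consistent with the bijection on submodule lattices). This verification, although combinatorial, is where the work really lies.
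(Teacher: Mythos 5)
Your overall architecture tracks the paper's: pass from matchings to submodule lattices via Theorem~\ref{submodule lattice bijective to perfect matchings lattice}, build a bijection at the level of loop modules, and reinstate the crossing and weight/height data using Definition~\ref{monomial associated to loopstring} and Lemma~\ref{xvariablesagree}. But two of your central claims have genuine gaps. First, you localise the crossing to a single pair of tiles $G_1^\star,G_2^\star$ and propose that the partition of $\Match(\mathcal{G}_1)\times\Match(\mathcal{G}_2)$ into the two resolutions is read off from whether each matching ``contains the simple corresponding to $G_i^\star$.'' This is not how the decomposition works: the relevant object is an entire overlap $m$, which is a maximal common substring of the two loopstrings (Definition~\ref{resofloopstrings}), typically spanning many tiles, and the partition is governed by the behaviour on \emph{all} of $m$ together with the twist-parity constraints that propagate along zig-zags (see Lemma~\ref{pmatchinloop}). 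Whether a given pair of submodules belongs to $SM(w_3\oplus w_4)$ or $SM(w_5\oplus w_6)$ is decided in the paper by showing that you cannot reach a submodule of $w_5\oplus w_6$-type by removing tops from $w_3\oplus w_4$ without first stripping the whole of one copy of $m$, which is impossible by the shape of $w_3,w_4$; this is the content of Proposition~\ref{removertopwelldefined} and the injectivity argument of Proposition~\ref{injofPsi}, and it is a global statement about $m$, not a binary condition at one tile.

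Second, and more importantly, you implicitly assume you will \emph{construct} a bijection directly. The paper never proves surjectivity by construction: it establishes injectivity of $\Psi$ and then concludes bijectivity by comparing cardinalities, using the fact (Remark~\ref{termsincoeffreecase}) that skein relations in the coefficient-free case are already known from \cite{MSWpositivity}, which forces $|SM(w_3\oplus w_4)|+|SM(w_5\oplus w_6)|=|SM(w_1\oplus w_2)|$. Without this external input your plan has no mechanism to show the map is onto, and the ``concatenate/split at the crossing letter'' operation you describe is not well defined when $m$ is longer than one tile. Finally, Theorem~\ref{skein relations regular crossing} covers five crossing configurations, and the paper splits the argument into the non-empty-overlap case (Theorem~\ref{bijofmod}) and the empty-overlap/grafting case (Definition~\ref{grafofloopstrings}, Theorem~\ref{bijofmodgrafting}); your proposal addresses only the first and would need a separate grafting construction, where the resolution formulas involve maximal sequences of direct/inverse arrows $r,r'$ rather than an overlap $m$.
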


\subsection{ Resolution of crossing between two arcs, with non empty overlap}
Suppose that $(S,M,P)$ is a punctured surface and $T$ is a triangulation. Then, one can associate a Cluster Algebra $\mathcal{A}= \mathcal{A}(x_T,y_T,Q_T)$ and a Jacobian algebra to that surface and that triangulation. We have also showcased, how one can associate a loopstring, given a loop graph.

Suppose that  there is a crossing between two arcs $\gamma_1$ and $\gamma_2$, where both arcs can have one or both of their endpoints tagged notched in their endpoints. We can then associate two loopstrings to them, $w_2$ and $w_1$ respectively .\\
In \cite{canakcci2021lattice}, the authors interpreted the notion of crossing of snake graphs and their resolutions, as these were defined in \cite{CANAKCI2013240}, in terms of string combinatorics. This interpretation will prove very useful to us, since when there is a crossing between two arcs which follows the restrictions of this section (i.e. a plain arc and a singly notched arc crossing), on the surface $S$, the resolution of the crossing does not depend significantly on the  existence of punctures on the surface. \\
The following definition, is a generalization of the first part of Definition 4.1 in \cite{canakcci2021lattice}.

\usetikzlibrary { decorations.pathmorphing, decorations.pathreplacing, decorations.shapes}
\usetikzlibrary{intersections,positioning,backgrounds,fit,calc}
\usetikzlibrary{through}

\begin{definition}\label{resofloopstrings}
    Suppose that $w_1$ and $w_2$ are two abstract loopstrings which possibly contain a hook at their start or their end. Assume that $w_1= h_1u_1ambv_1h_2$ and $w_2= h_3u_2cmdv_2h_4$ where $u_i, v_i, m$ are substrings, $a,d$ are direct arrows, $b,c$ are inverse arrows, and $h_i$ are hooks. We then say that $w_1$ and $w_2$ \textit{cross} in $m$.\\

\begin{figure}[h!tbp]
\[
\resizebox{.8\textwidth}{!}{
\begin{tikzpicture}
\node at (-1,0){$\begin{tikzpicture}[scale=1.5, every node/.style={transform shape},node distance=1cm and 1.5cm]
\coordinate[label=left:{}] (1);
\coordinate[right=1cm of 1] (2);
\coordinate[right=1cm of 2] (3);
\coordinate[below right=.8cm of 3] (4);
\coordinate[right=1cm of 4] (5);
\coordinate[above right=.8cm of 5] (6);
\coordinate[right=1cm of 6] (7);
\coordinate[right=1cm of 7](8);
\coordinate[above=.5cm of 8](9);
\coordinate[left=.2cm of 8](10);

\coordinate[left=1cm of 2](11);
\coordinate[above=.5cm of 11](12);

\coordinate[right=-1cm of 1,label=right:{$w_1=$}] (1');
\coordinate[right=1 of 12,label=right:{$h_1$}](2');

\draw[thick,decorate,decoration={snake,amplitude=.4mm,segment length=2mm}]
(2)-- node [anchor=north,scale=.9]{$u_1$} (3)
(6)-- node [anchor=north,scale=.9]{$v_1$} (7);

\draw[thick]
(4)-- node [anchor=north,scale=.9]{$m$} (5);

\draw[thick,->] (3)--node [anchor=south west,scale=.9]{$a$}(4);
\draw[thick,->] (6)--node [anchor=south east,scale=.9]{$b$}(5);

\draw [thick,->] (7) to [out=0,in=-90,looseness=0.8] (9) to[out=90,in=90,looseness=2] (7);

\draw [thick,->] (2) to [out=180,in=-90,looseness=0.8] (12) to[out=90,in=90,looseness=2] (2);

\node[] at (10) {$h_2$};

\end{tikzpicture}$};

\node at (10,-1){$\begin{tikzpicture}[scale=1.5, every node/.style={transform shape},node distance=1cm and 1.5cm]
\coordinate[label=left:{}] (1);
\coordinate[right=1cm of 1] (2);
\coordinate[above right=.8cm of 2,label=right:{}] (3);
\coordinate[right=1cm of 3] (4);
\coordinate[below right=.8cm of 4] (5);
\coordinate[right=1cm of 5] (6);

\coordinate[right=1cm of 6](7);
\coordinate[above=.5cm of 7](8);
\coordinate[left=1cm of 1](9);

\coordinate[right=-2cm of 1,label=right:{$w_2=$}] (1');
\coordinate[right=.6cm of 6,label=right:{$h_4$}] (2');
\coordinate[right=1cm of 10,label=right:{$h_3$}] (3');

\draw[thick,decorate,decoration={snake,amplitude=.4mm,segment length=2mm}]
(1)-- node [anchor=north,scale=.9]{$u_2$} (2)
(5)-- node [anchor=north,scale=.9]{$v_2$} (6);

\draw[thick]
(3)-- node [anchor=south,scale=.9]{$m$} (4);

\draw[thick,->] (3)--node [anchor=south east,scale=.9]{$c$}(2);
\draw[thick,->] (4)--node [anchor=south west,scale=.9]{$d$}(5);

\draw [thick,->] (6) to [out=0,in=-90,looseness=0.8] (8) to[out=90,in=90,looseness=2] (6);
\draw [thick,->] (1) to [out=180,in=-90,looseness=0.8] (10) to[out=90,in=90,looseness=2] (1);

\end{tikzpicture}$};

\end{tikzpicture}
}
\]
\end{figure}

The \textit{resolution of the crossing of $w_1$ and $w_2$ with respect to $m$} are the loopstrings $w_1, w_2, w_3$ and $w_4$ which are defined as follows:
    \begin{enumerate}[(i)]
        \item  $w_3 = h_1u_1amdv_2h_4$.
        \item  $w_4 = h_3u_2cmbv_1h_2$.
        \item 
         \begin{equation*}
             w_5 = \begin{cases}
                 h_1u_1eu_2^{-1}h_3 & \text{if $u_1, u_2 \neq \emptyset$ and where $e= \leftarrow$}.\\
                 h_1u_2'h_3 & \text{if $u_1 = \emptyset$ and $u_2\neq \emptyset$}.\\
                 h_1u_1'h_3 & \text{if $u_2 = \emptyset$ and $u_1\neq \emptyset$}.
             \end{cases}
         \end{equation*}
         where $u_2'$ is such that $u_2= u_2'r''$ where $r''$ is a maximal sequence of direct arrows and $u_1'$ is such that $u_1= u_1'r'$ where $r'$ is a maximal sequence of inverse arrows.
        \item
         \begin{equation*}
             w_6 = \begin{cases}
                 h_2v_1^{-1}fv_2h_4 & \text{if $v_1, v_2 \neq \emptyset$ and where $f= \rightarrow$}.\\
                 h_2v_2'h_4 & \text{if $v_1 = \emptyset$ and $v_2\neq \emptyset$}.\\
                 h_2v_1'h_4 & \text{if $v_2 = \emptyset$ and $v_1\neq \emptyset$}.
             \end{cases}
         \end{equation*}
         where $v_2'$ is such that $v_2=r''v_2'$, where $r''$ is a maximal sequence of inverse arrows and $v_1'$ is such that $v_1=r''v_1'$, where $r''$ is a maximal sequence of direct arrows
    \end{enumerate}

    The resolution of a crossing of $w_1$ and $w_2$ is illustrated in Figure (\ref{Fig:res of w1,w2})
    
\end{definition}

\begin{remark}
    Looking at definition \ref{resofloopstrings}, one can notice that it is almost identical to the definition given in \cite{canakcci2021lattice}. The main difference is the inclusion of the hooks $h_1, h_2, h_3$ and $h_4$. This should come as no surprise, since the resolution of the crossing between two arcs, should behave in the same way, even if the arcs are tagged notched on their endpoints. We could give the previous definition in two steps, by first reducing the notched arcs to their plain versions, applying definition 4.1,  and then adding the tagging at the appropriate arcs. \\

\end{remark}

\begin{remark}
    Since we are dealing with all possible five cases listed at the start of the section, we are allowing endpoints of the arcs to be untagged to their endings. In this case the definition remains the  same by just setting $h_i =\emptyset$ to the appropriate hooks. 
\end{remark}

At this point we need to make some clarifications regarding the notation of a hook and how it "interacts" with other strings.

\begin{remark}
Suppose that $(S,M.P,T)$ is a triangulated surface, $p$ is a puncture and $\alpha_1, \dots, \alpha_n$ are the arcs of the triangulation $T$ that are adjacent to the puncture $p$ written in a clockwise order. Then, the collection  of these arc is called a hook around $p$ and we write $h_p = \alpha_1\dots\alpha_n$. Note that the labeling of the arcs is arbitrary and there is no clear choice of which adjacent arc to $p$ we could name $a_i$. However, if $\beta_1, \dots, \beta_n$ is a different labeling, which satisfies the rule that  the arcs are labeled in the clockwise order, $h_p'=\beta_1 \dots \beta_n$ is equivalent to $h_p$.\\
We need to also note that you cannot define a loopstring $w = h$.\\
In practice, during the proofs and the examples we are allowed to change equivalent representations of $h$ without explicitly mentioning it every time. The necessity of it, is showcased in the next example \ref{examplehooks}.  
\end{remark}

\begin{example}\label{examplehooks}
    Example including picture where I am explaining the notation of $w=uh$.
\end{example}

Now that we have explained what a hook is, we need to make one more convention regarding the notation of the resolution. 

\begin{remark}
    Let $w_i$, $1\leq i\leq6$ as they were defined in \ref{resofloopstrings}. Suppose that $v_1 = a_1\dots a_n$ and $h = a_{n-k}\dots a_n a_{n+1}\dots a_m$, where $k< n$. Then \\ $v_1h = a_1\dots a_n a_{n+1}\dots a_m$, instead of $a_1\dots a_n a_{n-k}\dots a_n a_{n+1}\dots a_m$ which does not correspond to a module in the Jacobian algebra.
\end{remark}

In our next remark, we will illustrate a key ingredient for some of the proofs that will follow. The key idea is that if $w$ is a string module, and $a$ is a vertex corresponding to a top $w$, then removing $a$ we take a maximal submodule $w' = w\setminus a$ of $w$. Therefore in principle, we could "reach" each submodule of $w$ by sequentially removing tops $a_i$, $1\leq i\leq n$, from $w$.

\begin{remark}\label{backtrackinginlattice}
    Suppose that $w$ is a loopstring (or equivalently a module). If $m$ is a submodule of $w$ then there exists a sequence $(a_1, \dots, a_n)$ such that $m$ can be "reached" from $w$ by removing one simple belonging to the top of each submodule. However, this sequence is not necessarily unique. For example if we consider the quiver $Q: 1\longrightarrow 2\longleftarrow 3\longleftarrow 4$ and the submodule $0\longrightarrow k\longleftarrow 0\longleftarrow 0$ of $k\longrightarrow k\longleftarrow k \longleftarrow k$, then three possible sequences would be $(1,4,3), (4,1,3)$ or $(4,3,1)$.\\
    An additional thing that one can notice, is that if we are given the end of a sequence $\{b_k, \dots, b_n\}$ leading to a submodule $m$ of $w$, and another sequence \\ $\{a_1, \dots ,a_{k-1}, a_k, \dots, a_n\}$, where $\{a_k, \dots, a_n\}$ is a rearrangement of the sequence $\{b_k, \dots, b_n\}$, then we also have that $\{a_1, \dots ,a_{k-1}, b_k, \dots, b_n\}$ is a sequence from $w$ to $m$.\\
    For example, if we were given in the previous quiver $Q$, and there exists sequences $(\dots, 1, 3)$ and $(4,3,1)$ leading from $k\longrightarrow k\longleftarrow k \longleftarrow k$ to $0\longrightarrow k\longleftarrow 0\longleftarrow 0$, we could deduce that there exists also the sequence $(4,1,3)$ leading from $k\longrightarrow k\longleftarrow k \longleftarrow k$ to $0\longrightarrow k\longleftarrow 0\longleftarrow 0$.
    
\end{remark}

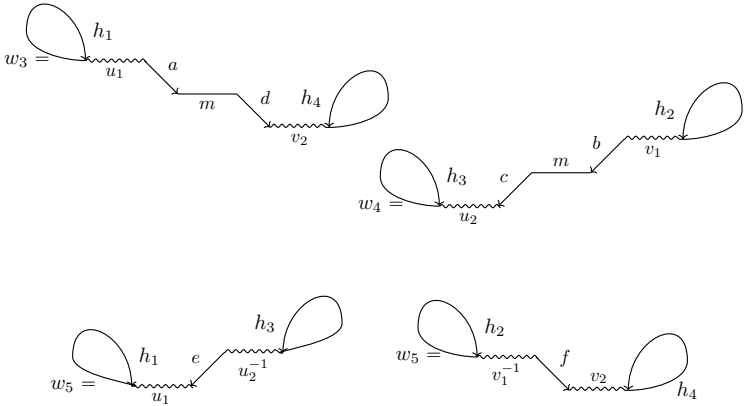
\begin{figure}
\[
\resizebox{.8\textwidth}{!}{
\begin{tikzpicture}
\node at (0,0){$\begin{tikzpicture}[scale=1.5, every node/.style={transform shape},node distance=1cm and 1.5cm]
\coordinate[label=left:{}] (1);
\coordinate[right=1cm of 1] (2);
\coordinate[right=1cm of 2] (3);
\coordinate[below right=.8cm of 3] (4);
\coordinate[right=1cm of 4] (5);
\coordinate[below right=.8cm of 5] (6);
\coordinate[right=1cm of 6] (7);
\coordinate[right=1cm of 7](8);
\coordinate[above=.5cm of 8](9);
\coordinate[left=.2cm of 8](10);

\coordinate[left=1cm of 2](11);
\coordinate[above=.5cm of 11](12);
\coordinate[right=1cm of 7](13);
\coordinate[above=.5cm of 13](14);

\coordinate[right=-.5cm of 1,label=right:{$w_3=$}] (1');
\coordinate[right=1cm of 12,label=right:{$h_1$}] (2');
\coordinate[left=1cm of 14,label=left:{$h_4$}] (3');

\draw[thick,decorate,decoration={snake,amplitude=.4mm,segment length=2mm}]
(2)-- node [anchor=north,scale=.9]{$u_1$} (3)
(6)-- node [anchor=north,scale=.9]{$v_2$} (7);

\draw[thick]
(4)-- node [anchor=north,scale=.9]{$m$} (5);

\draw[thick,->] (3)--node [anchor=south west,scale=.9]{$a$}(4);
\draw[thick,->] (5)--node [anchor=south west,scale=.9]{$d$}(6);

\draw [thick,->] (7) to [out=0,in=-90,looseness=0.8] (14) to[out=90,in=90,looseness=2] (7);
\draw [thick,->] (2) to [out=180,in=-90,looseness=0.8] (12) to[out=90,in=90,looseness=2] (2);

\end{tikzpicture}$};

\node at (9,-2){$\begin{tikzpicture}[scale=1.5, every node/.style={transform shape},node distance=1cm and 1.5cm]
\coordinate[label=left:{}] (1);
\coordinate[right=1cm of 1] (2);
\coordinate[above right=.8cm of 2,label=right:{}] (3);
\coordinate[right=1cm of 3] (4);
\coordinate[above right=.8cm of 4] (5);
\coordinate[right=1cm of 5] (6);
\coordinate[right=1cm of 6](8);
\coordinate[above=.5cm of 8](9);
\coordinate[left=.2cm of 8](10);

\coordinate[left=1cm of 1](11);
\coordinate[above=.5cm of 11](12);

\coordinate[right=-1.5cm of 1,label=right:{$w_4=$}] (1');
\coordinate[right=-1cm of 9,label=left:{$h_2$}] (2');
\coordinate[left=-1cm of 12,label=right:{$h_3$}] (3');

\draw[thick,decorate,decoration={snake,amplitude=.4mm,segment length=2mm}]
(1)-- node [anchor=north,scale=.9]{$u_2$} (2)
(5)-- node [anchor=north,scale=.9]{$v_1$} (6);

\draw[thick]
(3)-- node [anchor=south,scale=.9]{$m$} (4);

\draw[thick,->] (3)--node [anchor=south east,scale=.9]{$c$}(2);
\draw[thick,->] (5)--node [anchor=south east,scale=.9]{$b$}(4);

\draw [thick,->] (6) to [out=0,in=-90,looseness=0.8] (9) to[out=90,in=90,looseness=2] (6);
\draw [thick,->] (1) to [out=180,in=-90,looseness=0.8] (12) to[out=90,in=90,looseness=2] (1);

\end{tikzpicture}$};

\node at (0,-7){$\begin{tikzpicture}[scale=1.5, every node/.style={transform shape},node distance=1cm and 1.5cm]
\coordinate[label=left:{}] (1);
\coordinate[right=1cm of 1] (2);
\coordinate[above right=.8cm of 2,label=right:{}] (3);
\coordinate[right=1cm of 3] (4);
\coordinate[below right=.8cm of 4] (5);
\coordinate[right=1cm of 5] (6);
\coordinate[left=1cm of 1] (7);
\coordinate[above=.5cm of 7] (8);
\coordinate[right=1cm of 4] (9);
\coordinate[above=.5cm of 9] (10);

\coordinate[right=-1.5cm of 1,label=right:{$w_5=$}] (1');
\coordinate[left=1cm of 10,label=left:{$h_3$}] (2');
\coordinate[right=1cm of 8,label=right:{$h_1$}] (3');

\draw[thick,decorate,decoration={snake,amplitude=.4mm,segment length=2mm}]
(1)-- node [anchor=north,scale=.9]{$u_1$} (2)
(3)-- node [anchor=north,scale=.9]{$u_2^{-1}$} (4);

\draw[thick,->] (3)--node [anchor=south east,scale=.9]{$e$}(2);

\draw [thick,->] (1) to [out=0,in=-90,looseness=0.8] (8) to[out=90,in=90,looseness=2] (1);
\draw [thick,->] (4) to [out=180,in=-90,looseness=0.8] (10) to[out=90,in=90,looseness=2] (4);

\end{tikzpicture}$};

\node at (9,-7){$\begin{tikzpicture}[scale=1.5, every node/.style={transform shape},node distance=1cm and 1.5cm]
\coordinate[label=left:{}] (1);
\coordinate[right=1cm of 1] (2);
\coordinate[below right=.8cm of 2,label=right:{}] (3);
\coordinate[right=1cm of 3] (4);
\coordinate[right=1cm of 4](8);
\coordinate[above=0.5cm of 8](9);

\coordinate[left=1cm of 1](10);
\coordinate[above=0.5cm of 10](11);

\coordinate[right=-1.5cm of 1,label=right:{$w_5=$}] (1');
\coordinate[right=1cm of 11,label=right:{$h_2$}] (2');

\draw[thick,decorate,decoration={snake,amplitude=.4mm,segment length=2mm}]
(1)-- node [anchor=north,scale=.9]{$v_1^{-1}$} (2)
(3)-- node [anchor=south,scale=.9]{$v_2$} (4);

\draw[thick,->] (2)--node [anchor=south west,scale=.9]{$f$}(3);

\draw [thick,->] (4) to [out=0,in=-90,looseness=0.8] (9) to[out=90,in=90,looseness=2] (4);
\draw [thick,->] (1) to [out=180,in=-90,looseness=0.8] (11) to[out=90,in=90,looseness=2] (1);

\node[] at (8) {$h_4$};

\end{tikzpicture}$};

\end{tikzpicture}
}
\]
\caption{Resolution of $w_1$ and $w_2$.} \label{Fig:res of w1,w2}
\end{figure}

\begin{proposition}\label{removertopwelldefined}
    Let $w_1, w_2, w_3, w_4, w_5$ and $w_6$ as they were defined in \ref{resofloopstrings}. Suppose that $m_1$ (resp. $m_2$) is a proper submodule of  $w_3\oplus w_4$ (resp $w_5\oplus w_6$). If $m_1$  (resp. $m_2$) is reached from $w_3\oplus w_4$ (resp. $w_5\oplus w_6$) by the sequence $(a_i), 1\leq i\leq n$, then we can apply the same sequence $(a_i)$ to the module $w_1\oplus w_2$ (resp. $w_1\oplus (w_2\setminus m)$) to obtain a submodule $m_1'$ (resp. $m_2'$) of it.
    \begin{proof}

           Suppose that $m_1$ is a submodule of $w_3\oplus w_4$ and $(a_i)$, for $1\leq i\leq n$ is a sequence of simples of $w_3\oplus w_4$, such that $(\dots((w_3\oplus w_4/ a_1 ) /a_2)/ \dots )/a_n = m_1$. In the context of loopstrings, this corresponds to removing a sequence of vertices $(a_i), 1\leq i\leq n$, such that in each step, there is no incoming arrow to that vertex. \\
        We will prove the statement by induction. For the base case, we want to prove that if $a_1$ belongs to the top of the module $w_3\oplus w_4$, then it also belongs to the top of the module $w_1\oplus w_2$. It follows immediately by the structure of each string, that if $a_1$ belongs to one of the substrings $u_1, u_2, v_1, v_2$ or is the top of the hook $h$ then it is also a top for the module $w_3\oplus w_4$. If $a_1$ is the first letter of the string $m$, then since this is also assumed to be a top, it means that it was removed from the string $w_4$, and therefore $w_4= u_2 \xleftarrow{c} a_1 \longrightarrow (m\setminus a_1)v_1h$. Therefore $a_1$ belongs also to the top of $w_2$. A similar argument works also for when $a_1$ is assumed to be the last letter of the string $m$.\\

        Assume now, that the sequence of simples $(a_i), 1\leq i\leq n-1$ has already been removed from both loopstrings $w_3\oplus w_4$ and $w_1\oplus w_2$.\\ We need to take into account two cases, depending on which substring, each $a_i$ belongs. The following construction will do exactly this, showcasing how depending on these cases, which top must be removed and from which submodule.\\
        \underline{Construction:}\\
        Suppose that $a_i, 1\leq i\leq n$ is not an element of the string $m$. Then, there is unique choice up to embedding that we can do, regarding from what substring of $w_1\oplus w_2$ we can remove it. Indeed, if $a_1\in u_1$ then we can remove $a_1$ only from the copy of $u_1$ viewed as a substring of $w_1$.\\
        However, if $a_1$ is an element of the substring $m$, then there are two choices. We could either remove it from $m$ viewed as a substring of $w_1$ or a substring of $w_2$. In order to make this construction precise, we impose that if we remove an element of $m$ viewed as a substring of $w_3$ (resp. $w_4$), then we remove the same element from $m$ viewed as a substring of $w_1$ (resp. $w_2$), if this is possible. \\
        In the above construction there  is one exception. Without loss of generality assume that $v_1\neq \emptyset$, (since otherwise we could argue using the hook $h_2$). Suppose that $a_n= m^{l}$ where $m^{l}$ is the last letter of the string $m$, viewed as a substring of $w_3$, and $v_1^1 \neq a_i$ for $1\leq i\leq n-1$, where $v_1^1$ is the first letter of $v_1$. Attempting to remove this simple from the loopstring $w_1$ would lead to a contradiction, since locally in $w_1$ the arrow configuration is as follows:
        \[
        m^l \leftarrow v_1^1.
        \]
        Therefore, we need to apply the sequence $a_i$ in a different way. We retrace the removal of tops and instead of removing local tops from the substring $m$ viewed as a substring of $w_1$, we remove these tops from $m$ viewed as a substring of $w_2$ when possible. This procedure is possible since locally the whole substring $m$ is a top in $w_2$ and therefore in each step we know that we can actually still remove inductively the sequence $a_i$ for $1\leq i\leq n-1$. This allows us to remove $a_n= m^l$ from $w_2$ since locally now the arrow configuration is the following:
        \[
        m^l \rightarrow v_2^1,
        \]
        where $v_2^1$ is the first letter of $v_2$. \\

        The above construction  also indicates us why the induction step is true, since by the way that it was defined we can always remove the next top from the suitable submodule.\\
        
        Suppose now that $m_2$ is a submodule of $w_5\oplus w_6$ and $(a_i)$, for $1\leq i\leq n$ is a sequence of simples of $w_5\oplus w_6$, such that $(\dots((w_3\oplus w_4/ a_1 ) /a_2)/ \dots )/a_n = m_2$. It follows by a simple induction that we can also remove the same sequence $(a_i)$ from the module $w_1\oplus (w_2\setminus m)$ since in this case there is always a unique choice up to embedding of submodules that we can inductively remove tops from $w_1\oplus (w_2\setminus m)$. \\

        \end{proof}
\end{proposition}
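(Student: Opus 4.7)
The plan is to argue by induction on the length $n$ of the removal sequence $(a_i)$. Before starting, I set up a natural bookkeeping between letters of $w_3 \oplus w_4$ and letters of $w_1 \oplus w_2$: every letter lying in $u_1, v_1, h_1$, or $h_2$ has a unique counterpart in $w_1$; every letter in $u_2, v_2, h_3$, or $h_4$ has a unique counterpart in $w_2$; and for the shared substring $m$ I take as the default rule that the copy of $m$ inside $w_3$ is matched with the copy inside $w_1$, and the copy inside $w_4$ with the copy inside $w_2$. This is the only place where a genuine choice is made.

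In the induction step I assume $a_1,\dots,a_{n-1}$ have been removed from both $w_3 \oplus w_4$ and (via the correspondence) from $w_1 \oplus w_2$, and check that removing $a_n$ in the source is consistent with removing its counterpart in the target. For any letter outside $m$, and for any interior letter of $m$, the local arrow configuration in $w_1$ (resp.\ $w_2$) agrees with the one in $w_3$ (resp.\ $w_4$), so being a top is preserved automatically. The comparison for letters lying in the hooks $h_1,\dots,h_4$ is likewise immediate, because the hooks are copied verbatim into the resolved strings.

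The main obstacle is a boundary letter of $m$. Without loss of generality suppose $v_1 \neq \emptyset$ and consider the last letter $m^l$ of $m$: in $w_3$ the arrow $d$ points out of $m^l$, so once $m$ is emptied in $w_3$ the letter $m^l$ is a top there, whereas in $w_1$ the inverse arrow $b$ from $v_1^1$ into $m^l$ blocks $m^l$ from ever being a top until $v_1^1$ has been removed. The fix is to invoke Remark~\ref{backtrackinginlattice}: the subsequence of steps that peel off letters of $m$ can be rearranged without changing the final submodule, so I retrace those steps and reroute every removal of an $m$-letter through the $w_4$-to-$w_2$ matching instead of the $w_3$-to-$w_1$ one. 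In $w_2$ both neighbouring arrows $c$ and $d$ point away from $m$, so the whole substring is locally a ``top region'' and the peeled sequence is legal there; in particular $m^l$ becomes a top of $w_2$ precisely when every other $m$-letter has been removed. The symmetric problem at the first letter of $m$ is handled the same way, swapping the roles of $w_1$ and $w_2$.

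For the statement about $m_2 \subseteq w_5 \oplus w_6$ applied to $w_1 \oplus (w_2 \setminus m)$, the argument is strictly easier: the strings $w_5$ and $w_6$ do not contain $m$ as a substring at all, so every letter of $w_5 \oplus w_6$ has a unique matching in $w_1 \oplus (w_2 \setminus m)$ and no rerouting is ever required. The same induction then gives the desired submodule $m_2'$ with no exceptional case, completing the proof.
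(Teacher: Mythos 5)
Your proposal is correct and follows essentially the same route as the paper: an induction on the removal sequence, the same canonical bookkeeping (letters outside $m$ matched uniquely, the $m$-copy in $w_3$ matched to $w_1$ and the one in $w_4$ to $w_2$), the same identification of the boundary-of-$m$ case as the only obstruction, and the same fix of invoking the rearrangement remark to retrace and reroute the peeled $m$-letters through the other copy where $m$ is a top region. The only slight overshoot is your final sentence about a ``symmetric problem at the first letter of $m$''---under the chosen matching that letter causes no conflict (the arrows agree or are strictly more permissive in the target), so only the $m^l$-from-$w_3$ case needs rerouting---but this is harmless over-caution rather than a gap.
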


\begin{remark}
    Before going any further, we need to explain how the construction in the above proposition \ref{removertopwelldefined} works and why the exception mentioned is the only case that would lead to a contradiction. \\
    Our goal is to be able to remove tops and map submodules of $w_3\oplus w_4$ to submodules of $w_1\oplus w_2$ in a unique way. Since the substring $m$ appears twice in the module $w_1\oplus w_2$ it is obvious that every time that we remove a simple from $m$ we have to make a choice. In our construction we choose to remove a simple from $w_1$ if originally the removed simple was from $w_3$, when possible. However, we could have chosen to remove that simple from $w_2$ when removing it from $w_2$. Doing so would not create any problems apart from the fact that the exception that we mention would be different, and we would run on the same contradiction without the necessary changes. Based on this proposition, we will later construct a morphism from the submodules of $w_3\oplus w_4$ to the submodules of $w_1\oplus w_2$ and we will argue that this map is an injection by construction. \\
    Notice that by making this choice and since all the other substrings $h_1u_1, v_1h_2,$ $h_3u_2$ and $v_2h_4$ appear only once on each side, removing a different sequence of tops from $w_3\oplus w_4$ leads to a different submodule of $w_1\oplus w_2$. The only non-obvious part on why this assignment is unique regards the aforementioned exception. By assuming that $v^1_1$ has not been removed already, we make another indirect choice, namely when $v^1_1$ is not removed and $m^l$ is removed, we "biject" the substring $m$ of $w_3$ to the substring $m$ of $w_2$. However, if both $v_1^1$ and $m^l$ have been removed already we "biject" the substring $m$ of $w_3$ to the substring $m$ of $w_1$. In practice we have the following correspondence:
    \begin{align*}
    (h_1u_1)' \oplus w_4 \longrightarrow ((h_1u_1)'\oplus mv_1h_2)\oplus h_3u_2,\\
    \end{align*}
    where $(h_1u_1)'$ is a submodule of $h_1u_1$.

\end{remark}

From now on, we will not differentiate between a string $w$ and the associated module $M(w)$, denoting both in the same way. Let also $SM(w)$ denote the set of all the submodules of the module $M_w$.\\
The first big step towards proving skein relations is establishing a bijection between the submodules of both hands of the resolution. The existence of a bijection between $SM(w_3\oplus w_4) \bigcup SM(w_5\oplus w_6)$ and  $SM(w_1\oplus w_2)$ should come as no surprise, since as we will see later, these are just two finite sets with the same cardinality. However, the way that this bijection is constructed is very crucial and will allow us later to pass from the loopstrings to the associated monomials.

\begin{theorem}\label{bijofmod}
    Let $w_1, w_2, w_3, w_4, w_5$ and $w_6$ as they were defined in \ref{resofloopstrings}. Then there exists a bijection:
    \[
    \Psi \colon SM(w_3\oplus w_4) \bigcup SM(w_5\oplus w_6) \to SM(w_1\oplus w_2),
    \]
    which is defined as follows:
    \begin{gather*}
    \Psi(w_3\oplus w_4) = w_1\oplus w_2, \\
    \Psi(w_5\oplus w_6) = w_1\oplus (w_2\setminus m),     
    \end{gather*}
    where $w_2\setminus m\coloneqq u_2\oplus v_2$.\\
    If $w$ is a submodule of $w_3\oplus w_4$  which is reached by the sequence $\{a_i\} 1\leq i\leq n$, and $w'$ is the submodule of $w_1\oplus w_2$ which is reached by the same sequence, then we define:
    \[
    \Psi(w) = w'.
    \]
    If $w$ is a submodule of $w_5\oplus w_6$, then $\Psi(w)$ is defined in a similar way.
    
\end{theorem}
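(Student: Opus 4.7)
The plan is to first verify well-definedness of $\Psi$ and then prove bijectivity by constructing the inverse $\Psi^{-1}$ explicitly. Throughout, the union in the statement must be read as a disjoint union, since, for instance, the trivial submodule formally lies in both $SM(w_3\oplus w_4)$ and $SM(w_5\oplus w_6)$.

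Well-definedness is an immediate consequence of Proposition~\ref{removertopwelldefined}: every submodule of $w_3\oplus w_4$ (respectively $w_5\oplus w_6$) is reached by iteratively removing tops along some sequence $(a_i)_{1\le i\le n}$, and the proposition guarantees that the same sequence can be applied to $w_1\oplus w_2$ (respectively $w_1\oplus (w_2\setminus m)$) to produce a valid submodule. That the resulting submodule depends only on the source submodule, and not on the particular sequence used to reach it, would follow from Remark~\ref{backtrackinginlattice}: any two sequences reaching the same submodule differ by transpositions of commuting top-removals, and each transposition is transported consistently to $w_1\oplus w_2$ via the canonical convention introduced in the remark following Proposition~\ref{removertopwelldefined} (always removing from the designated copy of $m$).

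To prove bijectivity I would construct $\Psi^{-1}$ directly. The key observation is that the endpoints of the middle substring $m$ in $w_2$ are \emph{tops} (both $c$ and $d$ point outward from $m$), whereas the endpoints of $m$ in $w_1$ are \emph{sinks}; any canonical submodule $N\subseteq w_1\oplus w_2$ is therefore classified by whether its projection to $w_2$ meets the $m$-substring. In Case~A (the projection contains at least one vertex of $m$ in $w_2$), the preimage $\Psi^{-1}(N)\in SM(w_3\oplus w_4)$ would be obtained by reading off a top-removal sequence that produces $N$ from $w_1\oplus w_2$ and transporting it to $w_3\oplus w_4$ by reversing the canonical convention. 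In Case~B ($N\subseteq w_1\oplus(w_2\setminus m)$), the preimage $\Psi^{-1}(N)\in SM(w_5\oplus w_6)$ would be obtained by regrouping the pieces of $N$ along the splicings $u_1\,e\,u_2^{-1}$ and $v_1^{-1}\,f\,v_2$ that define $w_5$ and $w_6$ in Definition~\ref{resofloopstrings}. Injectivity of $\Psi$ and the identity $\Psi\circ\Psi^{-1}=\mathrm{id}$ are then automatic since the constructions on each branch are deterministic, while surjectivity amounts to the observation that the dichotomy A/B exhausts $SM(w_1\oplus w_2)$.

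The main obstacle will be the case analysis needed to cover every configuration. One must handle separately the degenerate situations in which some of $u_1,u_2,v_1,v_2$ are empty (which alter the shape of $w_5$ and $w_6$, per Definition~\ref{resofloopstrings}); the variations induced by the hooks $h_1,\ldots,h_4$; and, most delicately, submodules $N$ whose projection to $w_1$ contains only a proper initial or terminal segment of $m$. This last situation is precisely the exceptional case isolated in the proof of Proposition~\ref{removertopwelldefined}, which signals exactly where the canonical convention on the assignment of the two copies of $m$ has to be adjusted in order to keep $\Psi^{-1}$ well-defined. Once this bookkeeping is carried out, bijectivity follows directly from the explicit inverse.
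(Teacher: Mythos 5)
Your proposal and the paper diverge on the crux of the argument: the paper proves bijectivity by combining injectivity (Proposition~\ref{injofPsi}) with a cardinality argument (Remark~\ref{termsincoeffreecase}, which imports the already-known coefficient-free skein relations to conclude $|SM(w_3\oplus w_4)| + |SM(w_5\oplus w_6)| = |SM(w_1\oplus w_2)|$). You instead propose to build $\Psi^{-1}$ explicitly, classifying each $N \subseteq w_1\oplus w_2$ by whether its $w_2$-component meets $m$. If your dichotomy were correct it would make the proof self-contained, without the appeal to known skein relations for the count; but there is a genuine gap.

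The proposed Case~A/Case~B partition is wrong: membership of $m$ in the $w_2$-component does not determine which branch a submodule came from. Consider the submodule $w_3\oplus (h_3u_2)$ of $w_3\oplus w_4$, obtained by stripping $v_1h_2$ and then $m$ from $w_4 = h_3u_2cmbv_1h_2$ (both of $m$'s ends become removable once the sink side $v_1h_2$ is gone, since $c$ points outward). Transporting this removal sequence via the convention of Proposition~\ref{removertopwelldefined} strips $v_1h_2$ from $w_1$ and the entire $m$ from $w_2$, so
\[
\Psi\bigl(w_3\oplus (h_3u_2)\bigr) = (h_1u_1m)\oplus\bigl(w_2\setminus m\bigr),
\]
whose $w_2$-component contains no vertex of $m$. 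By your criterion this lands in Case~B and should have preimage in $SM(w_5\oplus w_6)$, but it has preimage in $SM(w_3\oplus w_4)$. Conversely, $\Psi(w_5\oplus w_6) = w_1\oplus(w_2\setminus m)$ has $w_1$-component containing all of $m$, so testing the $w_1$-component does not work either. The correct separation of images is not visible from any single projection — it depends on whether the removal of $m$ could have been performed \emph{first}, which is precisely the subtlety exploited in the paper's injectivity proof (Proposition~\ref{injofPsi}) and sidestepped for surjectivity by the counting argument. Without a correct criterion, the explicit-inverse route does not go through.
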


We will present the proof of the previous Theorem in steps. At first, we will prove that the map $\Psi$ is an injective map. For this, we will need to make a remark regarding the structure of the submodule lattice of a loopstring module.

We are now ready to prove the injectivity of the map $\Psi$.

\begin{proposition}\label{injofPsi}
    The map $\Psi$ as it was defined in Theorem \ref{bijofmod} is an injection.
    \begin{proof}
        Let $m_1, m_2\in SM(w_3\oplus w_4) \bigcup SM(w_5\oplus w_6)$ such that $\Psi(m_1)=\Psi(m_2)$.\\
        First of all it is obvious by construction of the map $\Psi$ that if both $m_1, m_2\in SM(w_3\oplus w_4)$ or $m_1, m_2\in SM(w_5\oplus w_6)$, then $m_1= m_2$.\\
        We just have to show now, that if $m_1\in SM(w_3\oplus w_4)$ and $m_2\in SM(w_5\oplus w_6)$ then $\Psi(m_1)\neq \Psi(m_2)$. Assume that $\Psi(m_1)=\Psi(m_2)$. Since $m_1\in SM(w_3\oplus w_4)$, we know by construction of the map $\Psi$, that there exists a (not necessarily unique) sequence $(a_i), 1\leq i\leq n$ of simples, such that when viewed as a loopstring $(w_3\oplus w_4) \setminus \{a_i\} = m_1$. By proposition \ref{removertopwelldefined} and by construction of $\Psi$, we have that $\Psi(m_1) = \Psi(w_3\oplus w_4)\setminus \{a_i\}$.\\
        Similarly, working on $m_2$, there is a sequence $(b_i), 1\leq i\leq k$, such that $m_1= (w_5\oplus w_6)\setminus \{b_i\}$ and $\Psi(m_2) = \Psi(w_5\oplus w_6)\setminus \{b_i\}$.\\
        However, since $\Psi(m_1)=\Psi(m_2)$ we obtain:
        \[
        \Psi(w_3\oplus w_4)\setminus \{a_i\} = \Psi(w_5\oplus w_6)\setminus \{b_i\}.
        \]
        Since  $\Psi(w_3\oplus w_4) = w_1\oplus w_2$, and $\Psi(w_5\oplus w_6) = w_1\oplus (w_2\setminus m)$, we further obtain that:
        \[
         (w_1\oplus w_2)\setminus\{a_i\} =  (w_1\oplus (w_2\setminus m))\setminus \{b_i\}.
        \]
        The previous equality shows us that the sets $\{a_i\}$ and $(\{b_i\}\bigcup m)$ must be equal. By Remark \ref{backtrackinginlattice}, we can deduce, that there must exist a sequence $\{m_{j_1}, \dots m_{j_l}, b_1, \dots, b_k\}$ leading from $w_1\oplus w_2$ to $\Psi(m_2)$. However, this also implies that using the same sequence $\{m_{j_1}, \dots m_{j_l}, b_1, \dots, b_k\}$, we can go from the module $w_3\oplus w_4$ to the module $m_2$. This is a contradiction, since this would mean that we could remove either the first or the last letter of the string $m$ without removing any other simple from the loopstrings $h_1u_1$ or $v_2h_4$, which cannot happen, by construction of the loopstrings $w_3$ and $w_4$.
    \end{proof}
\end{proposition}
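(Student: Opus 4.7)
The plan is to verify injectivity by a case analysis on which of the two source sets contains each preimage. If $m_1$ and $m_2$ both lie in $SM(w_3\oplus w_4)$ (respectively, both in $SM(w_5\oplus w_6)$), injectivity is essentially immediate from the definition: by Proposition~\ref{removertopwelldefined} and the construction of $\Psi$, the image $\Psi(m_i)$ is obtained from the fixed ambient module $w_1\oplus w_2$ (respectively, $w_1\oplus(w_2\setminus m)$) by removing exactly the multiset of simples used to produce $m_i$ from the source, with a prescribed distribution among the summands. Hence the image determines the multiset of removed simples, and therefore the preimage.

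The substantive case is when $m_1=(w_3\oplus w_4)\setminus A$ and $m_2=(w_5\oplus w_6)\setminus B$ lie in different source sets and satisfy $\Psi(m_1)=\Psi(m_2)$. Substituting $\Psi(w_3\oplus w_4)=w_1\oplus w_2$ and $\Psi(w_5\oplus w_6)=w_1\oplus(w_2\setminus m)$ into this equation gives
\[
(w_1\oplus w_2)\setminus A \;=\; (w_1\oplus(w_2\setminus m))\setminus B,
\]
which at the level of multisets of simples forces $A=B\cup m$. Tracing through the default embedding of Proposition~\ref{removertopwelldefined} (under which removals from the copy of $m$ in $w_4$ correspond to removals from the copy of $m$ in $w_2$), the sequence $A$ must therefore be a valid top-removal sequence in $w_3\oplus w_4$ that removes every simple of one copy of $m$ in addition to the simples recorded by $B$.

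The main obstacle, and the crux of the argument, is to derive a contradiction from the existence of such $A$. The strategy is to apply Remark~\ref{backtrackinginlattice} to rearrange $A$ so that the $|m|$ extra removals form an initial (or final) segment, thereby producing a valid sequence in $w_3\oplus w_4$ that removes all of one copy of $m$ while leaving the neighboring substrings $h_1u_1$, $v_2h_4$, $h_3u_2$, and $v_1h_2$ intact. But by the definition of the resolution, $a=\,\rightarrow$ means that in $w_3=h_1u_1amdv_2h_4$ there is an arrow from the last vertex of $u_1$ into the first vertex of $m$, so the latter cannot be a top of the remaining module while $u_1$ is intact. Symmetrically, $b=\,\leftarrow$ means that in $w_4=h_3u_2cmbv_1h_2$ there is an arrow from the first vertex of $v_1$ into the last vertex of $m$, so that endpoint cannot be a top while $v_1$ is intact. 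Consequently, in either copy of $m$ inside $w_3\oplus w_4$, at least one endpoint is trapped: it can never be removed as a top without first removing a simple of the adjacent subword. This contradicts the assumption that the rearranged initial segment of $A$ consists solely of simples from $m$, so the case $\Psi(m_1)=\Psi(m_2)$ with preimages in distinct source sets is impossible, establishing injectivity.
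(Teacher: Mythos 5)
Your proof follows the same route as the paper's: the same case split, the same multiset identity $A=B\cup m$ obtained by substituting the images of the whole modules, the same appeal to Remark~\ref{backtrackinginlattice} to front-load the $m$-removals, and the same contradiction that an endpoint of $m$ is trapped behind an adjacent subword inside $w_3\oplus w_4$. The only difference is cosmetic: you spell out the arrow directions $a$ and $b$ and observe explicitly that each copy of $m$ has one blocked endpoint (trapped by $u_1$ in $w_3$, by $v_1$ in $w_4$), while the paper compresses this into a single sentence naming only $h_1u_1$ and $v_2h_4$.
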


What remains to be shown, is that $\Psi$ is also a surjection. For this, we shall argue on the size of both the domain and the codomain of the map. Let us notice first of all, that there is a finite number of elements both in the domain and the codomain. If we manage to prove that this is the same, then surjectivity will follow by the injectivity of $\Psi$.

\begin{remark}\label{termsincoeffreecase}
    We have already proven in Section 2, that if $w$ is a loopstring module with associated loop graph $\mathcal{G}$ then the perfect matching lattice of $\mathcal{G}$ is in bijection with the canonical submodule lattice $\mathcal{L}(w)$. We also know from Theorem 5.7 in \cite{wilson2020surface}, that each perfect matching of $\mathcal{G}$, corresponds to a summand of $x_w$, which denotes the associated element of $w$ in the cluster algebra. \\
    It is also easy for one to prove skein relations in the coefficient free case, as it was noted in section 8.4 of \cite{MSWpositivity}, since one can reduce the cases that we are working on, to the classic untagged skein relations, which are already known. Therefore, we have that if we set the $y$ variables to be equal to $1$, then the associated elements of the cluster algebra in each side of our map $\Psi$ must agree. Therefore, the number of the terms, which does not depend on the $y$ variables, must agree.
\end{remark}

\begin{proof}[Proof of Theorem \ref{bijofmod}]
The fact that $\Psi$ is a bijection follows immediately from the fact that $\Psi$ is an injective map, as it was proven in Prop \ref{injofPsi} and the fact that 
\[
|SM(w_3\oplus w_4)| + |SM(w_5\oplus w_6)| = |SM(w_1\oplus w_2)|
\]as it follows from Remark \ref{termsincoeffreecase}.

\end{proof}

\subsection{ Resolution of grafting}

Similarly to the classic case of unpunctured surfaces, there is another type of crossing between two arcs on the surface. This type, following the terminology of \cite{CANAKCI2013240}, is called grafting and occurs when there is an empty overlap between the associated graphs of the given arcs. The main difference with the previous resolution of the crossing between two arcs, is that, although the two arcs cross on the surface, when constructing the associated snake/ loop graphs or the modules associated to them, there is not a common overlap $m$ to them, as it was the case in the regular crossing. \\
We will mainly focus on the case that the grafting appears around a puncture, since, in the other case, that it occurs near a boundary component, it is just a direct generalization of the classic grafting.\\

Before giving the definition, let as recall what a grafting is. In the classical setting, grafting is basically an operation that given two snake graphs associates four new graphs. In our setting we will do a similar construction working with loopstrings instead of graphs. The main difference with the regular resolution, is that a priori, one could define a grafting between two abstract loopstrings even when they correspond to two arcs which do not cross in the surface. The results proven still hold true in the general case, although in principle we are mainly interested in the cases that the two arcs cross on the surface.\\

Let $w_1=u_1 a v_1$ an abstract string and $w_2 = h u_2$ an abstract loopstring, where $u_1, v_1, u_2$ are substrings, $a\in \{\leftarrow,\rightarrow\}$ and $h$ is a hook. Let also $e$ denote the arrow which connects $t(u_1)$ and $s(u_2)$ in the loopstring $w_2$, where $t(u_1)$ denotes the last letter of $u_1$ and $s(u_2)$ denotes the first letter of $u_2$.\\
We can then construct the following four loopstrings:

\begin{enumerate}[(i)]
    \item $ w_3= u_1eu_2$ 
    \item  
    \begin{itemize}
        \item If $a, v_1\neq \emptyset$ then $w_4 = h v_1'$ where $v_1'$  is such that $v_1 = rv_1'$, where $r$ is a maximal sequence of direct arrows (resp. inverse arrows) if $a$ is a direct arrow (resp. inverse arrow).
        \item If $a, v_1= \emptyset$ then $w_4 = h\setminus a_1$, where $a_1= t(e')$ and $e'$ the denotes the second arrow (apart from $e$) which connects $u_2$ with $h$ in $w_2$.\\
    \end{itemize}
    \item  $w_5 = u_1' h$ where $u_1$ is such that $u_1 = u_1'r'$, where $r'$ is a maximal sequence of direct arrows (resp. inverse arrows) if $a$ is a direct arrow (resp. inverse arrow).
    \item 
    \begin{itemize}
        \item If $a, v_1\neq \emptyset$ then $w_6= v_1^{-1}au_2$.
        \item If $a, v_1= \emptyset$ then $w_6= u_2'$ where $u_2'$ is such that $u_2= r''u_2'$, where $r''$ is a maximal sequence  of direct arrows (resp. inverse arrows) if $e'$ is a direct arrow (resp. inverse arrow). 
    \end{itemize}
\end{enumerate}

\begin{definition}\label{grafofloopstrings}
The loopstrings $w_3, w_4, w_5$ and $w_6$, as they were just constructed, are called the \textit{resolution of the grafting} of $w_2$ on $w_1$ in $a$ if $a\neq \emptyset$ or in $e$ if $a= \emptyset$.
\end{definition}

\begin{remark}
    One can notice that if $a\neq \emptyset$ then $e= a^{-1}$. Figure \ref{fig:graftting} illustrates the local behavior of a grafting.

\end{remark}

\begin{figure}
    \centering
    \includegraphics[scale=0.8]{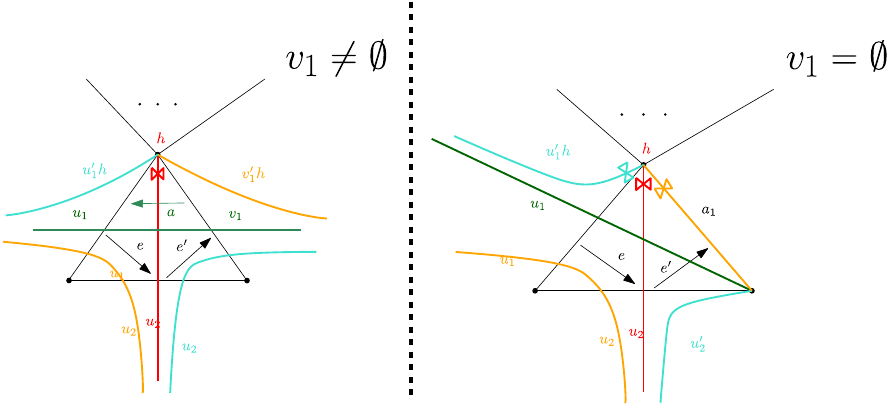}
    \caption{grafting}
    \label{fig:graftting}
\end{figure}

Our aim in this subsection will be to construct a map similar to the one defined in Theorem \ref{bijofmod} and prove that this is also a bijection. 

\begin{theorem}\label{bijofmodgrafting}
    Let $w_1, w_2, w_3, w_4, w_5$ and $w_6$ as they were defined in \ref{grafofloopstrings}. Then there exists a bijection:
    \[
    \Psi' \colon SM(w_3\oplus w_4) \bigcup SM(w_5\oplus w_6) \to SM(w_1\oplus w_2),
    \]
    which is defined as follows:
    \begin{enumerate}[(i)]
        \item If $a=\leftarrow$ then:
        \begin{gather*}
        \Psi'(w_3\oplus w_4) = w_1\oplus w_2, \\
        \Psi'(w_5\oplus w_6) = w_1\oplus (w_2\setminus r').
        \end{gather*}
        \item If $a=\rightarrow$ then:
        \begin{gather*}
        \Psi'(w_3\oplus w_4) = w_1\oplus (w_2\setminus r), \\
        \Psi'(w_5\oplus w_6) = w_1\oplus w_2.
        \end{gather*}
    \end{enumerate}
    In both cases, if $w$ is a submodule of $w_3\oplus w_4$ (or $w_5\oplus w_6$) then we define $\Psi'(w)$ to be the submodule of $w_3\oplus w_4$ (or equivalently of $w_5\oplus w_6$), which is induced by removing the same sequence $(a_i)$ of tops from $w_2\oplus w_4$ (or $w_5\oplus w_6$).
\end{theorem}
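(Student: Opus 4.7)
The plan is to mirror the three-step strategy used for Theorem~\ref{bijofmod}. First, I would establish a grafting analog of Proposition~\ref{removertopwelldefined}: if $(a_i)_{1\leq i\leq n}$ is a sequence of tops whose successive removal from $w_3\oplus w_4$ (respectively $w_5\oplus w_6$) produces a submodule, then the same sequence, applied in the same order, removes tops from the prescribed target module on the right-hand side of the formulas defining $\Psi'$ (namely $w_1\oplus w_2$, $w_1\oplus (w_2\setminus r)$, or $w_1\oplus (w_2\setminus r')$, depending on the direction of $a$). The substrings $u_1$, $v_1$, $u_2$, and the hook $h$ appear uniquely on both sides, so a simple removed from any of them lifts unambiguously; the only subtlety is at the gluing arrow $e$ (and at $a$ itself, when $a\neq\emptyset$). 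The precise choice of target---full module $w_1\oplus w_2$ on one branch versus $w_1\oplus (w_2\setminus r)$ or $w_1\oplus (w_2\setminus r')$ on the other---is exactly what is needed so that locally the top structure matches: the simples that would otherwise create a local obstruction at the splice are precisely those belonging to the maximal monotone segment $r$ or $r'$, and these have already been excised before we begin.

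Second, I would prove that $\Psi'$ is injective, following the pattern of Proposition~\ref{injofPsi}. Restricted to either $SM(w_3\oplus w_4)$ or $SM(w_5\oplus w_6)$ separately, injectivity is immediate from Step~1: two distinct submodules of the source differ in some removed simple, hence their images also differ. To see that the two pieces of the domain have disjoint images, suppose $\Psi'(m_1)=\Psi'(m_2)$ with $m_1\in SM(w_3\oplus w_4)$ and $m_2\in SM(w_5\oplus w_6)$. Writing $m_1=(w_3\oplus w_4)\setminus\{a_i\}$ and $m_2=(w_5\oplus w_6)\setminus\{b_j\}$, the equality of images forces the multisets $\{a_i\}$ and $\{b_j\}$ to differ exactly by $r$ or $r'$; by the reordering principle of Remark~\ref{backtrackinginlattice}, this produces a sequence leading from $w_3\oplus w_4$ to $m_2$ which removes an endpoint simple of $r$ (or the glue vertex) before stripping any simple of the adjoining zig-zag. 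This contradicts the shape of $w_3$ and $w_4$ dictated by Definition~\ref{grafofloopstrings}, so no such coincidence can occur.

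Finally, for surjectivity I would invoke a cardinality argument. By Theorem~\ref{submodule lattice bijective to perfect matchings lattice} each canonical submodule lattice is in bijection with the corresponding perfect matching lattice, and by Remark~\ref{termsincoeffreecase}, specializing all $y$-coefficients to $1$ reduces the grafting identity in the cluster algebra to its classical unpunctured counterpart, which is already known; this yields
\[
|SM(w_3\oplus w_4)|+|SM(w_5\oplus w_6)|=|SM(w_1\oplus w_2)|.
\]
Combined with Step~2, this forces $\Psi'$ to be bijective.

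The main obstacle I expect is Step~1, particularly the case $a=v_1=\emptyset$ where $w_4=h\setminus a_1$ and $w_6=u_2'$ have already been truncated by the construction. Here the bookkeeping of which tops remain available on the right-hand side is more delicate than in the regular-crossing setting of Theorem~\ref{bijofmod}, because the ``missing'' simples interact directly with the hook $h$ rather than with a shared overlap $m$. A careful local analysis of the loop tiles adjacent to $a_1$ (in the spirit of the exception analyzed in the proof of Proposition~\ref{removertopwelldefined}) should resolve this, but it will require splitting into subcases by the direction of $e'$ and by whether the maximal monotone segment $r''$ is empty.
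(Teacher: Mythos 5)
Your proposal follows essentially the same three-step strategy as the paper: well-definedness of the lifted sequence of top-removals, injectivity by a contradiction localized at the grafting splice, and surjectivity by the cardinality count from Remark~\ref{termsincoeffreecase}. The paper is actually terser than you are: it does not state a grafting analogue of Proposition~\ref{removertopwelldefined} at all, and the injectivity argument in Proposition~\ref{injofPsigrafting} is compressed to a two-line sketch pointing out that the contradiction now arises because $r$ (or $r'$) is a local socle of $w_4$ (or $w_6$), which plays the role that the overlap $m$ played in Proposition~\ref{injofPsi}. Your more careful attention to the degenerate subcase $a=v_1=\emptyset$, where $w_4=h\setminus a_1$ and $w_6=u_2'$, is a genuine gap in the paper's exposition that you are right to flag; the paper simply does not address it, relying on ``follows exactly the same idea.'' So the approach is the same, but your writeup supplies a well-definedness step and a case analysis the paper omits.
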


\begin{remark}
    Before proving Theorem \ref{bijofmodgrafting}, we need to explain why the strings $w_2\setminus r'$ and $w_2\setminus r$ appearing in (i) and (ii) are well defined in each case.\\
    First of all, since we are dealing with grafting around a puncture, it is easy to see that $r \bigcup r' = h$. If $a=\leftarrow$, then a letter from $r'$ is a top on $h$, while if $a=\rightarrow$, a letter from $r$ is a top on $h$. Therefore in each setting we can define $w_2\setminus r$ or $w_2\setminus r'$ , since we are removing tops in both cases.
\end{remark}

\begin{figure}
    \centering
    \includegraphics[scale=0.5]{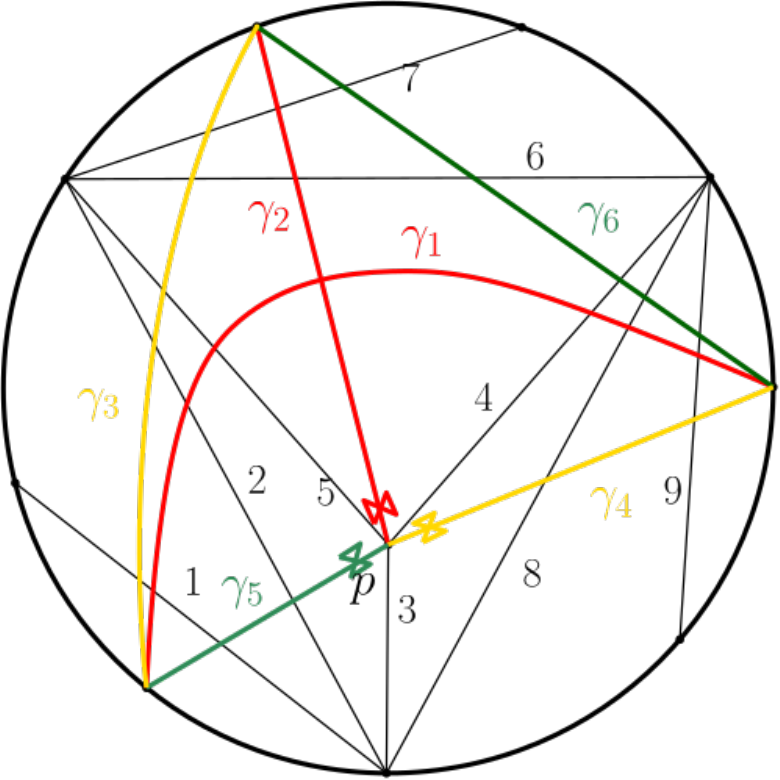}
    \caption{Resolution of grafting incompatibility for regular crossing.}
    \label{fig:Resolution of grafting incompatibility for regular crossing}
\end{figure}

\begin{example}
    Suppose that $(S,M,P,T)$ is the triangulated surface depicted on Figure \ref{fig:Resolution of grafting incompatibility for regular crossing}. We then have that:
    \[
    w_1= 1\rightarrow 2\leftarrow 5 \rightarrow 4 \leftarrow 8 \leftarrow 9,
    \]
    \[
    w_2= 4\rightarrow 5.
    \]
    It is easy to see that in this case we have that:
    \[
    h_1=  3\rightarrow 2\rightarrow 1,
    \]
    \[
    h_2= 6 \leftarrow 7 \leftarrow 8,
    \]
    \[
    m= 4\rightarrow 5.
    \]
    while $u_1$ and $u_2$ are empty! It is also easy to see that since both of these substrings are empty we have two regular incompatibilities, one at each of the two punctures. Following the resolution as it was defined earlier we take:
    \[
    w_3^{o}= \leftarrow3\rightarrow 2\rightarrow 1 \leftarrow 4 \rightarrow 5 \leftarrow 8 \rightarrow 7 \rightarrow 6\leftarrow 5\leftarrow 4\leftarrow,
    \]
    and $w_4= h_1$, $w_5= h_2$ and $w_6= \emptyset$.
\end{example}

\begin{example}
    Suppose that $w_1 = 9\leftarrow 5 \rightarrow 2 \rightarrow 1\rightarrow 8\leftarrow 9 \leftarrow 10$ and $w_2 = 7\leftarrow 3 \hookleftarrow 1 \rightarrow 8 \rightarrow 5 \rightarrow 2$. Then there is a grafting of $w_2$ onto $w_1$, with $ u_1 = 9\leftarrow 5 \rightarrow 2$, $a=\rightarrow$, $v_1 = 1\rightarrow 8\leftarrow 9 \leftarrow 10$, $u_2 = 7\leftarrow 3$ and $h = 1 \rightarrow 8 \rightarrow 5 \rightarrow 2$. Applying theorem \ref{bijofmodgrafting} we have that the grafting is resolved as follows:
    \begin{gather*}
        w_3 = 9\leftarrow 5 \rightarrow 2 \leftarrow  3 \rightarrow 7 ,\\
        w_4 = 10\rightarrow 9 \hookrightarrow 8 \leftarrow 1 \leftarrow 2 \leftarrow 5 ,\\
        w_5 = 10\rightarrow 9 \rightarrow 8 \leftarrow 1 \rightarrow 3 \rightarrow 7,\\
        w_6 = 9 \hookrightarrow 8 \leftarrow 1 \leftarrow 2 \leftarrow 5.
    \end{gather*}
    Notice that we are in case (ii) of theorem \ref{bijofmodgrafting} and indeed $r = 1\rightarrow 8$ is a top of $w_2$ and therefore:
    \[
    w_2\setminus r = 7\leftarrow 3 \rightarrow 2 \leftarrow 5
    \]
    is well defined.
\end{example}

\begin{proposition}\label{injofPsigrafting}
    The map $\Psi'$, as it was defined in \ref{bijofmodgrafting}, is an injection.
    \begin{proof}
        The proof, follows exactly the same idea as the proof of Proposition\ref{injofPsi}. The only difference in this case, is the fact that instead of the overlap $m$ which occurred in the crossing of two arcs, the contradiction here occurs from the existence of $r$ or $r'$ depending on each case.\\ S
        For example assume that $a=\rightarrow$. Suppose that $m_1\in SM(w_3\oplus w_4)$ and $m_2\in SM(w_5\oplus w_6)$ with $\Psi'(m_1)=\Psi'(m_2)$. Then we should be able to 
        first remove the substring $r$ from $w_3\oplus w_4$. However this is a contradiction, since this is a local socle of $w_4$.
    \end{proof}
\end{proposition}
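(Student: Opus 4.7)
The plan is to mirror the argument of Proposition~\ref{injofPsi} line by line, substituting the role played there by the overlap $m$ with the role played here by the maximal zig-zag $r$ or $r'$ (depending on which case of Theorem~\ref{bijofmodgrafting} we are in). The map $\Psi'$ is again defined by tracking a sequence of top removals, so the structure of the argument will be: (a) same-side collisions are forced to be equal by construction, and (b) cross-side collisions yield a contradiction at the moment one tries to remove the letters of $r$ (or $r'$) first.

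For step (a) I would fix, say, $m_1, m_2 \in SM(w_3\oplus w_4)$ with $\Psi'(m_1)=\Psi'(m_2)$. By the defining formula $\Psi'(w_3\oplus w_4)=w_1\oplus(w_2\setminus r)$ (taking $a=\rightarrow$ for concreteness) and the ``apply the same top-removal sequence'' prescription, any sequence $(a_i)$ leading from $w_3\oplus w_4$ to $m_i$ is also the sequence leading from $w_1\oplus(w_2\setminus r)$ to $\Psi'(m_i)$. Since each simple of $w_3\oplus w_4$ has a unique image destination in $w_1\oplus(w_2\setminus r)$ (there is no duplicated substring in the grafting setting, unlike the overlap $m$ in the crossing setting), the induced submodule is uniquely recovered, so $m_1=m_2$. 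The same holds symmetrically on the $w_5\oplus w_6$ side.

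For step (b), suppose $m_1 \in SM(w_3\oplus w_4)$ and $m_2 \in SM(w_5\oplus w_6)$ with $\Psi'(m_1)=\Psi'(m_2)$, still assuming $a=\rightarrow$. Writing the defining sequences $(a_i)$ for $m_1$ and $(b_j)$ for $m_2$, the equation $\Psi'(m_1)=\Psi'(m_2)$ reads
\[
(w_1\oplus(w_2\setminus r))\setminus\{a_i\}=(w_1\oplus w_2)\setminus\{b_j\}.
\]
Matching multisets of removed simples forces $\{b_j\}=\{a_i\}\cup r$ as multisets, so by Remark~\ref{backtrackinginlattice} the sequence for $\Psi'(m_2)$ may be rearranged to remove the letters of $r$ first, followed by a rearrangement of $(a_i)$. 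Transporting this rearranged initial segment back across $\Psi'$ to the $w_3\oplus w_4$ side, the first steps would attempt to strip the letters of $r$ from $w_4$; but by construction of the grafting resolution, $r$ sits inside the hook of $w_4$ as a local socle (its maximality as a zig-zag of direct arrows above $a=\rightarrow$ guarantees an incoming arrow from the rest of $w_4$ at each of its letters). Hence none of the $r$-letters is ever a top of the successive quotients, contradicting the existence of such a sequence. The dual case $a=\leftarrow$ is handled symmetrically with $r'$ in place of $r$.

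The main obstacle will be verifying cleanly that $r$ really is a local socle of $w_4$ (respectively $r'$ of $w_5$) in every configuration of the grafting, in particular when $v_1=\emptyset$ and the alternate clause in the definition of $w_4$ kicks in. This requires a careful local-arrows analysis at the junction where the hook $h$ is grafted onto $u_1$ via the arrow $e$, using the maximality conditions on $r$ and $r'$ exactly as stated in Definition~\ref{grafofloopstrings}. Once this local obstruction is in place, the injectivity follows by the same multiset-and-rearrangement argument as above, and surjectivity can later be handled (as in the crossing case) by comparing cardinalities via Remark~\ref{termsincoeffreecase}.
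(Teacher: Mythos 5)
Your proposal follows essentially the same path as the paper's proof: reduce to Proposition~\ref{injofPsi}, use the multiset/rearrangement argument via Remark~\ref{backtrackinginlattice}, and locate the contradiction at the failed removal of $r$ (resp.\ $r'$) because it sits as a local socle in $w_4$ (resp.\ $w_5$). The additional justification you supply for the socle claim via maximality of the zig-zag, and your explicit note that this needs checking in the degenerate $v_1=\emptyset$ clause, are reasonable elaborations but do not change the approach.
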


\begin{proof}[Proof of Theorem \ref{bijofmodgrafting}]
The fact that $\Psi'$ is a bijection follows immediately from the fact that $\Psi'$ is an injective map, as it was proven in Prop \ref{injofPsigrafting} and the fact that 
\[
|SM(w_3\oplus w_4)| + |SM(w_5\oplus w_6)| = |SM(w_1\oplus w_2)|
\]as it follows from Remark \ref{termsincoeffreecase}.

\end{proof}

\subsection{ Monomials associated to loopstrings}

Snake graphs and loop graphs are a very important combinatorial tool which helps us calculate cluster variables, by computing perfect matchings and assigning a weight to each face and edge of these graphs. The loopstrings that we have already defined may be considered easier to work in some occasions since they seem to carry less information than the corresponding graphs. However, in order to prove skein relations, the established bijection $\Psi$ is not enough.\\
Our goal for the rest of this section will be to first prove that the module $M(P_{max})$ corresponding to the maximal perfect matching $P_{max}$ of a given snake or loop graph $G$, contains almost all the information needed for the calculation of the corresponding summand of the cluster element associated to the graph $G$. \\
After we have proven this fact, we will have all the necessary tools needed to prove our first main theorem \ref{skein relations regular crossing}, which we will do at the end of this subsection.

\begin{remark}
    Drawing inspiration from the notion of blossoming quiver, as it was defined in Definition 2.1 in \cite{palu2022nonkissing}, we would like to extend a loopstring to the so called blossoming loopstring. The idea of the blossoming quiver and loopstring is similar, but the difference is that in our case we are only interested in extending the loopstring at the start or at the end, since we want to keep track of the frozen variables which should be appearing in the associated monomial of a loopstring which we will define later.
\end{remark}

Assume that $(S,M,P)$ is a punctured surface and $T$ a given triangulation of this surface. Then classically, one can associate a quiver $Q_T$ to the given triangulation, where each vertex of the quiver corresponds to an arc of the triangulation. We can additionally assign a vertex to each boundary component of the surface and extend the quiver $Q_T$ to a new quiver $Q_T'$. Let us note that by construction, in each vertex of the quiver $Q_T'$, which corresponds to an arc of $T$, there are exactly two incoming and two outgoing arrows. Suppose furthermore, that $\gamma$ is an arc on the surface which does not belong to the triangulation $T$. Then, one can associate to the arc $\gamma$ a graph $G_\gamma$  as well as a loopstring $w_\gamma$ and a module $M(w_\gamma)$. In the following definition we will extend the loopstring $w_\gamma$, in a way that the new loopstring $w_\gamma'$ will contain all the information that $P_{max}$ contains.

\begin{definition}\label{monomial associated to loopstring}
    Let $(S,M,P,T)$, be a triangulated punctured surface and $Q_T'$ the extended quiver associated to that surface. Assume that $\gamma$ is a plain arc, which does not belong to the triangulation $T$ and $w_\gamma = aebucfd$ is the associated string, where $a, b, c, d$ are vertices of the quiver $Q_T'$, $e, f$ are arrows and $u$ is a substring of w.
    \begin{itemize}
        \item If $e= \rightarrow$, then there exists exactly one vertex $a'$ of the quiver $Q_T'$ such that locally, we have the following configuration: $a\rightarrow a'$ on the quiver $Q_T'$.
        \item  If $e=\leftarrow$, then there exist two vertices $a'$ and $a''$ of the quiver $Q_T'$, such that locally, we have the following configuration $a'\leftarrow a\rightarrow a''$ and the arcs associated to $a,a''$ and $b$ form a triangle on the surface $S$. 
        \item If $f= \leftarrow$, then there exists exactly one vertex $d'$ of the quiver $Q_T'$ such that locally, we have the following configuration: $a\rightarrow a'$ on the quiver $Q_T'$.
        \item  If $f=\rightarrow$, then there exist two vertices $d'$ and $d''$ of the quiver $Q_T'$, such that locally, we have the following configuration $d'\leftarrow a\rightarrow d''$ and the arcs associated to $d,d''$ and $c$ form a triangle on the surface $S$.
    \end{itemize}
    The \textit{extended string of $w$} is defined to be the string 
    \[
    w_\gamma'=a'aebucfdd'.    
    \]
\end{definition}

\begin{remark}
    We can define the extended loopstring of a loopstring in a similar way, by
just extending it to the side that does not contain the hook.
\end{remark}

\begin{definition}\label{monomofstring}
    Let $w= a_1 \dots a_n$ be a loopstring. The \textit{monomial associated to the loopstring $w$} is defined in the following way:
    \[
    x(w) = \prod_{i=1}^{n} x_{a_i}^{e_i},
    \]
    where, if $a_i$ corresponds to a simple in the socle of the module $M(w)$ and simultaneously is part of the hook of the loopstring, then $e_i= 1$, while otherwise $e_i$ denotes the number of incoming arrows to $a_i$.
\end{definition}

\begin{example}
Let us take: 
\[
w= 1\longrightarrow 2 \hookrightarrow3 \longleftarrow 4,
\] 
or equivalently:
\[
w = 
\begin{tikzcd}
       1 \arrow[dr] \\
      & 2 \arrow[dr] & & 4 \arrow[dl] \arrow[ll, bend right=45] \\
      & & 3
\end{tikzcd}
\]
Assume that the blossoming loopstring $w'$ is the following:
\[
 w'= 0\longleftarrow 1\longrightarrow 2 \hookrightarrow3 \longleftarrow 4.
\] 
or equivalently:
\[
\begin{tikzcd}
     &  1 \arrow[dl] \arrow[dr] \\
     0 & & 2 \arrow[dr] & & 4 \arrow[dl] \arrow[ll, bend right=45] \\
     & & & 3
\end{tikzcd}
\]
Then, following definition. \ref{monomofstring} we have that:
\[
x(w) = x_0 x_2^{2} x_3.
\]
\end{example}

\begin{lemma}
    Let $(S,M,P,T)$, be a triangulated punctured surface and $Q_T'$ the extended quiver associated to that surface. Assume that $\gamma$ is a plain arc, which does not belong to the triangulation $T$. Let $P_{max}$ be the maximal perfect matching of the associated graph $G_\gamma$ and $w_\gamma$ the associated loopstring of the arc $\gamma$. Then, if $x(P_{max})$ denotes the $x$-weight of the maximal perfect matching $P_{max}$, as it is classically defined, we have that
    \[
    x(w_\gamma) = x(P_{max}).
    \]
    \begin{proof}
        The proof of this Lemma is straightforward by comparing the definitions of the weight monomial in each setting.
    \end{proof}
\end{lemma}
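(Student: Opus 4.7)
The plan is to verify the equality of monomials by computing, for each arc $\tau$ of the triangulation $T$, the exponent of $x_\tau$ on both sides. On the right, this exponent equals the number of boundary edges of the snake graph $G_\gamma$ labeled $\tau$ that belong to $P_{max}$. On the left, it equals $\sum_{i : a_i = \tau} e_i$, where the sum runs over vertices $a_i$ of the extended loopstring $w_\gamma'$ and $e_i$ denotes the in-degree of $a_i$ in $w_\gamma'$. The task therefore reduces to constructing, for each $\tau$, a bijection between incoming arrows at vertices labeled $\tau$ in $w_\gamma'$ and boundary edges labeled $\tau$ in $P_{max}$.

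First, I would establish the structural description of $P_{max}$ that is dual to Lemma~\ref{eqofpmatchsnake}: by the same inductive argument, for each middle tile $G_j$ (with $1<j<n$) one of the following holds: both boundary edges of $G_j$ lie in $P_{max}$ precisely when $a_j$ corresponds to a top of $M(w_\gamma)$; no boundary edges lie in $P_{max}$ precisely when $a_j$ is a socle; and exactly one boundary edge lies in $P_{max}$ in the remaining case. For the extremal tiles $G_1$ and $G_n$, the boundary edges of $P_{max}$ additionally include sides that do not face a glued neighbor, and these extra sides are precisely what the extension vertices $a'$, $a''$, $d'$ and $d''$ of Definition~\ref{monomial associated to loopstring} are designed to track.

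The central step is a local bijection. An incoming arrow $b\to a_i$ in the extended loopstring $w_\gamma'$ is recorded in the quiver $Q_T$ (or $Q_T'$), which means that $b$ follows $a_i$ clockwise inside some triangle $\Delta$ of $T$. When the arrow is internal to $w_\gamma$ (so $b=a_{i\pm 1}$), the arc $\gamma$ traverses $\Delta$, and the third side of $\Delta$ is a boundary edge of the appropriate tile $G_j$ labeled $a_i$; by the trichotomy recalled above this edge lies in $P_{max}$. When the arrow is supplied by the extension, the triangle $\Delta$ is adjacent to an endpoint of $\gamma$, and the relevant boundary side of $G_1$ or $G_n$ is labeled $a_i$ and again lies in $P_{max}$. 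Conversely, each boundary edge of $G_\gamma$ labeled $\tau$ belonging to $P_{max}$ arises from such a triangle and yields an incoming arrow at some vertex $a_i = \tau$ of $w_\gamma'$. Summing this bijection over all $i$ yields the identity $x(w_\gamma) = x(P_{max})$.

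The main obstacle is a careful case analysis at the extremal tiles, where the number of boundary edges in $P_{max}$ contributed by $G_1$ (either one or two, depending on whether the first triangle traversed by $\gamma$ has its side opposite to $a_1$ on the boundary of $S$ or not) must match the number of extension vertices added on the left of $w_\gamma'$. The four sub-cases in Definition~\ref{monomial associated to loopstring} (one versus two extensions depending on whether $e$ and $f$ are direct or inverse arrows) correspond exactly to this distinction, and an entirely analogous argument applies at $G_n$. Once these extremal cases are settled, the interior bijection is essentially tautological from the correspondence between arrows of $w_\gamma$ and transitions between consecutive tiles of $G_\gamma$, and the identity follows.
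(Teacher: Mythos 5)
Your overall strategy---comparing the exponent of each $x_\tau$ on both sides by building, for each $\tau$, a bijection between incoming arrows at $\tau$-labeled vertices of the extended loopstring and $\tau$-labeled boundary edges in $P_{max}$---is the natural way to make the paper's ``comparison of definitions'' explicit, and the $P_{max}$-trichotomy you invoke is the correct dual of Lemma~\ref{eqofpmatchsnake}. However, the central local identification is wrong as you have written it. If $b=a_{i-1}$ and the arrow $a_{i-1}\to a_i$ comes from the triangle $\Delta$ of $T$ bounded by $\tau_{i_{i-1}}$ and $\tau_{i_i}$, then the \emph{third side} of $\Delta$ (the side not crossed by $\gamma$) is precisely the edge along which $G_{i-1}$ and $G_i$ are glued in Definition~\ref{snake graph associated to regular arc}. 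It is therefore an \emph{interior} edge of the snake graph, it is labeled by that third arc rather than by $a_i$, and it can never belong to $P_{max}$, which consists of boundary edges only. The edge you actually want is the boundary edge of $G_{i-1}$ carrying the label $a_i$---recall that the two boundary edges of a middle tile $G_j$ carry the labels $a_{j-1}$ and $a_{j+1}$---so the arrow $a_{i-1}\to a_i$ must be matched to that edge, not to the third side of $\Delta$.

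Even after this correction a genuine gap remains. The trichotomy only tells you \emph{how many} boundary edges of a middle tile $G_j$ lie in $P_{max}$ (two, one, or zero according as $a_j$ is a top, mixed, or socle); when $a_j$ is mixed, say $a_{j-1}\to a_j\to a_{j+1}$, it does not tell you \emph{which} of the two labels, $a_{j-1}$ or $a_{j+1}$, the surviving boundary edge carries. For the bijection to be label-preserving you need the refined statement that the unique boundary edge of $G_j$ in $P_{max}$ is the one labeled $a_{j+1}$, i.e.\ the one pointed at by the outgoing arrow from $a_j$. This is not ``essentially tautological''---it is the content of the lemma once you have reduced to comparing exponents, and it requires strengthening the induction behind Lemma~\ref{eqofpmatchsnake} so that it records the label of the surviving edge rather than merely its count. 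Without that refinement, and with the third-side misidentification in place, the proposed proof does not close.
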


\begin{remark}\label{removetoppreservesx}
    Definition \ref{monomofstring} is very useful, but unfortunately, when one associates a module to a different perfect matching (apart from the maximal), then the associated $x$ monomial of the loopstring is not the same as the associated $x$ monomial of that perfect matching. However, using the structure of the extended quiver $Q_T'$, and observing how the bijection between the submodule lattice of a loopstring and the perfect matching lattice of the associated loop graphs was constructed one can notice the following useful fact:\\
    Removing a top $t$ from the loopstring $w$ corresponds to switching the two edges on the corresponding quadrilateral in the associated perfect matching $P_{w}$. If we denote the new loopstring as $w'$ and compute $x(P_{w'})$, then we can see that:
    \begin{align*}  
    x(P_{w'}) = \frac{x(P_w) x_b x_d}{x_a x_c} 
    \end{align*} 
    where $a, c \in P_{w}\setminus P_{w'}$ and $b, d \in P_{w'}\setminus P_{w}$.
    We can notice now that locally in the quiver $Q_T'$ we have the configuration appearing in figure \ref{fig:local conf of top in extended quiver}.
    Therefore, by looking at the local configuration around $t$ in the quiver $Q_T'$ we can compute $x(w')$ given $x(w)$. Additionally, we can notice that since the new monomial depends only on the local configuration of the quiver $Q_T'$, if $x(w_1) = x(w_2)$, removing the same top $t$ from both modules $w_1$ and $w_2$ preserves the equality on the $x$-variables, i.e. $x(w_1\setminus t) = x(w_2\setminus t)$.
\end{remark}

\begin{figure}
    \centering
    \includegraphics[scale=0.7]{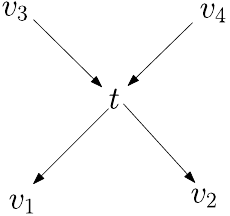}
    \caption{By removing the top $t$ from the string $w$ we obtain $x(P_{w'}) = \frac{x(P_w) x_{v_3} x_{v_4}}{x_{v_1} x_{v_2}}$}.
    \label{fig:local conf of top in extended quiver}
\end{figure}

Before stating the main Theorem of this section, which proves skein relation for a crossing between two possibly tagged arcs, we need to prove two more Lemmas.

\begin{lemma}\label{xvariablesagree1}
     Let $w_1, w_2, w_3, w_4, w_5$ and $w_6$ as they were defined in \ref{resofloopstrings}. Then:
     \[
     x(w_3\oplus w_4) = x(w_1\oplus w_2),
     \]
     and 
     \[
     x(w_5\oplus w_6) = x(w_1\oplus (w_2\setminus m))
     \]
     \begin{proof}
         We will show only the first equality, since the second one is also just a direct computation.\\
         By definition \ref{monomofstring} we can see that if $v_2\neq \emptyset$ then:
         \[
         x(w_3) = x(h_1u_1mv_2h_4) = \frac{x(h_1u_1mv_1h_2)}{x_{t(m)} x(v_1h_2)} x(v_2h_4) x_{s(v_2)},
         \]
         where $t(m)$ denotes the last letter of the string $m$ and $s(v_2)$ denotes the first letter of the string $v_2$, while if $v_2 = \emptyset$ then:
         \[
         x(w_3) = x(h_1u_1mh_4) = \frac{x(h_1u_1mv_1h_2)}{x(v_1h_2)} x(h_4),
         \]
         since then there is an extra term $x(t(m))$ due to the fact that $h_4$ is following $m$.\\ 
         Working now on the string $w_4$ we take:
         \[
         x(w_4) = x(h_3u_2mv_1h_2) = x(h_3u_2m) x_{t(m)} x(v_1h_2).
         \]
         Multiplying now $x(w_3)$ and $x(w_4)$, if $v_2\neq\emptyset$ we take the following:
         \[
         x(w_3) x(w_4) = x(h_1u_1mv_1h_2) x(v_2h_4) x_{s(v_2)} x(h_3u_2m) = x(h_1u_1mv_1h_2) x(h_3u_2mv_2h_4),
         \]
         which is exactly $x(w_1) x(w_2)$, while if $v_2=\emptyset$ we have:
         \[
         x(w_3) x(w_4) = x(h_1u_1mv_1h_2) x(h_4) x(h_3u_2m) x_{t(m)} = x(w_1) x(w_2).
         \]
     \end{proof}
\end{lemma}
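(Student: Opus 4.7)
The plan is to verify both identities by a direct computation of the monomial $x(\cdot)$ from Definition \ref{monomofstring}, using the multiplicativity $x(w\oplus w')=x(w)\cdot x(w')$. The key observation is that $w_3\oplus w_4$ and $w_1\oplus w_2$ are built out of the same pieces $h_1,u_1,m,v_1,h_2,h_3,u_2,v_2,h_4$ and the same arrows $a,b,c,d$, only glued differently. Since the exponent $e_i$ at each position of a loopstring depends only on its local neighbourhood in the (extended) string, every vertex that is not adjacent to a splice point has the same local configuration on both sides, and its contribution matches pointwise. The work is therefore concentrated at the few positions near the splice, namely $t(m)$, $s(v_1)$, $s(v_2)$, and the junction letters of the hooks $h_2, h_4$ when they sit adjacent to $m$.

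First I would write $x(w_3)$ as a rational modification of $x(w_1)$. The two loopstrings share the initial block $h_1u_1am$ and differ only in the tail $bv_1h_2$ versus $dv_2h_4$. Dividing by the monomial contribution of the first tail (which carries $x(v_1h_2)$ together with an extra factor $x_{t(m)}$ coming from the inverse arrow $b$ creating an incoming arrow at $t(m)$) and multiplying by the contribution of the second tail (carrying $x(v_2h_4)$ together with an extra factor $x_{s(v_2)}$ because the direct arrow $d$ now creates an incoming arrow at $s(v_2)$) gives an identity of the shape
\[
x(w_3)=\frac{x(w_1)}{x_{t(m)}\cdot x(v_1h_2)}\cdot x(v_2h_4)\cdot x_{s(v_2)}.
\]
A completely symmetric formula gives $x(w_4)$ as a modification of $x(w_2)$. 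Multiplying the two, one checks that all the correction factors cancel and what remains is $x(w_1)x(w_2)$, which is the first equality.

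The main obstacle is the case analysis on emptiness of the substrings $v_1,v_2$ (and, by symmetry, $u_1,u_2$) together with the interaction of the hooks with the surrounding substrings. When $v_2=\emptyset$, the hook $h_4$ attaches directly to $t(m)$, which changes the exponent of $x_{t(m)}$ and removes the $x_{s(v_2)}$ correction, so the formula above has to be replaced by
\[
x(w_3)=\frac{x(w_1)}{x(v_1h_2)}\cdot x(h_4),
\]
and similar adjustments occur in each of the other empty cases; the delicate point is to check that the adjustments on the $w_3,w_4$ side always match the corresponding adjustments coming from $w_1,w_2$ after multiplication. The second equality $x(w_5\oplus w_6)=x(w_1\oplus(w_2\setminus m))$ is verified by the same recipe, but is in fact easier, because $m$ appears only once on each side; the new arrows $e,f$ and the truncations by maximal zig-zag tails introduced in Definition \ref{resofloopstrings} are exactly what is needed to compensate for the missing copy of $m$ and the removed segments of $u_i,v_i$.
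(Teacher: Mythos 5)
Your proposal follows essentially the same route as the paper: a direct computation with Definition~\ref{monomofstring}, expressing $x(w_3)$ as a rational modification of $x(w_1)$ with the correction factors $x_{t(m)}$ and $x_{s(v_2)}$ (and the corresponding modification of $x(w_4)$ from $x(w_2)$), then checking that the corrections cancel under multiplication, with a separate case when $v_2=\emptyset$ because $h_4$ attaches directly to $t(m)$. The formulas you write down are the same ones used in the paper, so no substantive difference.
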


\begin{theorem}
    Let $(S,M,P,T)$ be a triangulated punctured surface, $\gamma_1$ a singly notched arc on a puncture $p$ and $\gamma_2$ a plain arc. Suppose that $\gamma_1$ and $\gamma_2$ cross on the surface and let $(\gamma_3,\gamma_4)$ and $(\gamma_5,\gamma_6)$ be the two pairs of arcs obtained by smoothing the crossing.
    \begin{itemize}
        \item[(i)] 
        If  $\gamma_1$ and $\gamma_2$ have a non empty overlap, then:
        \[
        x_{\gamma_1} x_{\gamma_2} = x_{\gamma_3} x_{\gamma_4}  + Y^{+} x_{\gamma_5} x_{\gamma_6},
        \]
        where $Y^{+} =  Y(w_1\oplus (w_2\setminus m)) \setminus Y(w_5\oplus w_6)$.
        \item[(ii)]
        If $\gamma_1$ and $\gamma_2$ have an empty overlap and $a$ is the arrow appearing in definition \ref{grafofloopstrings}, then we have the following cases.
        \begin{itemize}
            \item[(1)] If $a =\leftarrow$, then:
            \[
            x_{\gamma_1} x_{\gamma_2} = x_{\gamma_3} x_{\gamma_4}  + Y^{+} x_{\gamma_5} x_{\gamma_6},
            \]
            where $Y^{+} =  Y(w_1\oplus (w_2\setminus r')) \setminus Y(w_5\oplus w_6)$.
            \item[(2)] If $a =\rightarrow$, then:
            \[
            x_{\gamma_1} x_{\gamma_2} = Y^{-} x_{\gamma_3} x_{\gamma_4}  + x_{\gamma_5} x_{\gamma_6},
            \]
            where $Y^{-} =  Y(w_1\oplus (w_2\setminus r)) \setminus Y(w_3\oplus w_4)$.
        \end{itemize}
    \end{itemize}

    \begin{proof}
        The proof of this theorem for the case $(i)$ (resp. $(ii)$)  is a direct consequence of Theorem \ref{bijofmod} (resp \ref{bijofmodgrafting}), Lemma \ref{xvariablesagree1} and Remark \ref{removetoppreservesx}.
    \end{proof}
\end{theorem}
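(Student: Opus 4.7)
The plan is to reduce the desired multiplicative identity to a combinatorial statement about lattices of perfect matchings, then to a statement about submodule lattices of loop modules, and finally to resolve it via the bijections $\Psi$ and $\Psi'$ already established. By the expansion formula, for each arc $\gamma_i$ we have
\[
x_{\gamma_i}\cdot \cross(\gamma_i,T) \;=\; \sum_{P \in \Match(\mathcal{G}_{\gamma_i})} x(P)\, y(P),
\]
so the product $x_{\gamma_1}x_{\gamma_2}$, after clearing crossing monomials, becomes a sum indexed by $\Match(\mathcal{G}_{\gamma_1}) \times \Match(\mathcal{G}_{\gamma_2}) \cong \Match(\mathcal{G}_{\gamma_1}\sqcup\mathcal{G}_{\gamma_2})$. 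By Theorem~\ref{submodule lattice bijective to perfect matchings lattice} this index set is in bijection with the canonical submodule lattice $\mathcal{L}(M(w_1)\oplus M(w_2))$. The idea is then to apply $\Psi$ (in case (i)) or $\Psi'$ (in case (ii)) to partition this index set into two pieces, corresponding exactly to the two summands on the right-hand side.

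First I would verify that corresponding terms carry the same $x$-monomial. Lemma~\ref{xvariablesagree1} provides this identification at the top of the lattices, i.e.\ for the pair $(M(w_3)\oplus M(w_4), M(w_1)\oplus M(w_2))$, and analogously for the $w_5\oplus w_6$ side (after discarding the contribution of the substring $m$, respectively $r$ or $r'$, which is precisely what the definitions of $\Psi$ and $\Psi'$ prescribe). Remark~\ref{removetoppreservesx} then upgrades this to all submodules: removing the same top on both sides transforms the $x$-monomial by the same local formula, determined only by the incoming/outgoing arrows in the extended quiver $Q_T'$. Since, by construction, $\Psi$ and $\Psi'$ send a submodule reached from the top by a sequence $(a_i)$ of top-removals to the submodule obtained by the same sequence, the equality $x(N) = x(\Psi(N))$ propagates throughout the entire bijection.

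Next I would match the crossing monomials and the height monomials. The arcs crossed by $\gamma_1$ and $\gamma_2$ coincide, as multisets, with those crossed by $\gamma_3,\gamma_4$ (respectively $\gamma_5,\gamma_6$), possibly up to the arcs traversed by the overlap $m$ or the extra zig-zag $r,r'$ around the puncture; this makes $\cross(\gamma_1,T)\cross(\gamma_2,T)$ cancel with the corresponding product on each side up to a controlled factor that is absorbed into the $x$-monomial identification of the previous step. For the height monomials, the bijection $\Psi$ (or $\Psi'$) sends a submodule to one whose symmetric difference with $P_{\min}$ agrees everywhere except on the tiles corresponding to $m$ (respectively $r$ or $r'$); the missing tiles are exactly those captured by the correction factor $Y^+ = Y(w_1\oplus(w_2\setminus m))\setminus Y(w_5\oplus w_6)$ in case (i), and analogously for $Y^-$ in case (ii)(2). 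The fact that exactly one of $Y^+,Y^-$ reduces to $1$ follows because the extra tiles sit entirely inside $P_{\min}$ on one of the two resolved configurations.

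The expected main obstacle is the bookkeeping of the height monomials around the hook: when $\gamma_1$ is singly notched, the hook contributes a maximal zig-zag of tiles whose orientation (and hence whether they lie inside $P_{\min}$ or not) depends delicately on which side of the crossing one sits, and on the direction of the arrow $a$ in the grafting case. Ensuring that the definitions $Y^{\pm} = Y(\cdot)\setminus Y(\cdot)$ coincide with the actual discrepancy between corresponding height monomials under $\Psi$, and verifying case-by-case that the arrow $a$ dictates which of $Y^+,Y^-$ becomes trivial, is the place where the argument is least formal and must be checked carefully; everything else follows by assembling Theorems~\ref{submodule lattice bijective to perfect matchings lattice}, \ref{bijofmod}, \ref{bijofmodgrafting}, Lemma~\ref{xvariablesagree1} and Remark~\ref{removetoppreservesx}.
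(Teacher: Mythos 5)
Your proposal follows essentially the same route as the paper: apply the bijection $\Psi$ (Theorem~\ref{bijofmod}) or $\Psi'$ (Theorem~\ref{bijofmodgrafting}) to partition the terms of the Laurent expansion of $x_{\gamma_1}x_{\gamma_2}$, anchor the $x$-monomial identification at the top of the lattice with Lemma~\ref{xvariablesagree1}, and propagate it to every term using Remark~\ref{removetoppreservesx}. The paper's proof is precisely the one-line citation of those three ingredients; your write-up additionally surfaces Theorem~\ref{submodule lattice bijective to perfect matchings lattice} and the expansion formula as the bridge between the cluster-algebraic statement and the submodule-lattice bijection, which the paper leaves implicit, and you correctly flag that the bookkeeping of the height monomials around the hook (and hence which of $Y^+,Y^-$ becomes trivial, depending on the direction of $a$) is where the real verification lives.
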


\section{ Skein relations for crossing arcs and loops}
Our goal in this section is to prove skein relations in the case that there is a crossing between an arc and a loop on the surface. This covers the following two cases, as they were listed in \cite{MSWpositivity}:

\begin{itemize}
    \item [(i)] a singly notched arc and a loop,
    \item [(ii)] a doubly notched arc and a loop.
\end{itemize}

As in the case of a crossing between two arcs, there are two different possibilities when an arc crosses a loop. They can either cross with a non-empty overlap, or cross in an empty overlap, fro which case we will define a grafting between the loop and the arc. \\

The following Theorem, the proof of which will be covered later on this section deals with these two cases.

\begin{theorem}\label{skein for loop and arc}
        Let $(S,M,P,T)$ be a triangulated punctured surface, $\gamma_1$ a notched arc on a puncture $p$ and $l^{o}$ a loop. Suppose that $\gamma_1$ and $\gamma_l$ cross on the surface and let $\gamma_2$ and $\gamma_3$ be the two arcs obtained by smoothing the crossing.\\

        If  $\gamma_1$ and $\gamma_l$ have a non empty overlap, then:
        \[
        x_{\gamma_1} x_{l^{o}} = x_{\gamma_2}  + Y^{+} x_{\gamma_3},
        \]
        where $Y^{+} =  Y(m)$.
\end{theorem}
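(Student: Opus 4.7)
The proof would follow the same paradigm used for Theorem~\ref{skein relations regular crossing}, replacing one of the two arc loopstrings by the band loopstring associated to $l^{o}$. The plan is to associate a loopstring $w_1$ to the notched arc $\gamma_1$ and a band loopstring $w_{l^{o}}$ to the closed curve $l^{o}$. Since $\gamma_1$ and $l^{o}$ intersect with non-empty overlap $m$, the band $w_{l^{o}}$ can be cut open at $m$ and presented as $u\, c\, m\, d\, v$ where, because $l^{o}$ is closed, $u$ and $v$ are linked through the band identification, while $w_1$ decomposes as $h_1 u_1 a m b v_1 h_2$ in the style of Definition~\ref{resofloopstrings} (with $h_1$ or $h_2$ possibly empty according to the tagging of $\gamma_1$).

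The first main step is to define the resolution. Smoothing the crossing produces exactly two curves: an arc $\gamma_2$ whose loopstring $w_2$ is obtained by splicing $w_1$ and $w_{l^{o}}$ together at $m$ (this splice unwinds the band, using the non-zero maps of the band module at $m$ to replace the cyclic identification by a single long linear word), and an arc $\gamma_3$ whose loopstring $w_3$ is the analogue of the opposite smoothing. I would then prove, in direct analogy with Theorem~\ref{bijofmod}, that there is a bijection
\[
\Psi\colon SM(w_2)\cup SM(w_3)\longrightarrow SM(w_1\oplus w_{l^{o}}),
\]
sending a submodule reached from $w_2$ (respectively $w_3$) by removing a sequence of tops $(a_i)$ to the submodule of $w_1\oplus w_{l^{o}}$ reached by removing the same sequence. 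Injectivity is argued as in Proposition~\ref{injofPsi}: a collision between a submodule of $SM(w_2)$ and one of $SM(w_3)$ would require deleting the entire overlap $m$ in a way that is forbidden by the socle/top configuration at the endpoints of $m$. Surjectivity follows by a cardinality count, reducing to the coefficient-free situation as in Remark~\ref{termsincoeffreecase}, where the statement is classical for loops crossing arcs on unpunctured pieces of the surface.

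The second main step is to promote this bijection of submodules to an identity of Laurent monomials. I would prove the analogue of Lemma~\ref{xvariablesagree1}, namely
\[
x(w_2)=x(w_1\oplus w_{l^{o}}),\qquad x(w_3)=x(w_1\oplus (w_{l^{o}}\setminus m)),
\]
by a direct computation with Definition~\ref{monomial associated to loopstring}, where the cancellations at the two endpoints of $m$ are exactly the same as in the two-arc case; the band identification does not change these local contributions. Remark~\ref{removetoppreservesx} then propagates the equality to every pair of corresponding submodules under $\Psi$, so the full Laurent expansions of both sides agree term by term after multiplication by the appropriate crossing monomials. Tracking the height monomial picks up a single extra factor $Y(m)$ on the $\gamma_3$ side, coming from the tiles of $m$ that lie in the symmetric difference with $P_{\min}$, which gives the stated $Y^{+}=Y(m)$.

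The main obstacle is the cyclic nature of the band $w_{l^{o}}$. One has to fix a presentation of the band as a linear word so that the overlap $m$ appears as a contiguous substring, check that this choice is irrelevant up to the natural equivalence on band presentations, and verify that the construction of $\Psi$ and of $x(w_2)$ does not depend on the cut. Concretely, when removing tops in a band module one must exclude the tops that only exist because of the linearisation; once this is done, the remaining combinatorics is identical to the linear case from the previous subsection, and the argument goes through essentially unchanged.
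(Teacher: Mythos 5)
Your high-level strategy matches the paper exactly: present the curves as a loopstring $w_1$ and a band string $l^{o}$, define the resolution $(w_2, w_3)$, build a bijection $\Psi\colon SM(w_2)\cup SM(w_3)\to SM(w_1\oplus l^{o})$ by removing the same sequence of tops (injectivity as in Proposition~\ref{injofPsi}, surjectivity by the cardinality argument of Remark~\ref{termsincoeffreecase}), then transport through Definition~\ref{monomial associated to loopstring}/Remark~\ref{removetoppreservesx} to get the Laurent identity with $Y^{+}=Y(m)$. This is precisely what the paper does in Definition~\ref{cross of arc and loop}, Theorem~\ref{bij of mod arc and band}, Proposition~\ref{inj for band and arc} and Lemma~\ref{xvariablesagree}.

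However, there is a concrete gap in how you set up $w_1$, and it propagates. You decompose $w_1 = h_1 u_1 a m b v_1 h_2$ ``in the style of Definition~\ref{resofloopstrings}'', allowing a substring $v_1$ between the overlap $m$ and the hook $h_2$. The paper's Definition~\ref{cross of arc and loop} instead takes $w_1 = h_1 u_1 m h_2$ with no $v_1$, and Remark~\ref{hook is also part of the overlap} explains why this is forced: because $l^{o}$ is a simple closed curve considered up to isotopy, the overlap $m$ of $\gamma_1$ with $l^{o}$ must sit directly against the hook $h_2$. Some of the letters of the band $u_2$ are then already contained in $h_2$, and this overlap between $u_2$ and $h_2$ is exactly what drives the delicate case analysis in both the resolution and the bijection. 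Your proposal does not account for this.

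Because of this, your claimed analogue of Lemma~\ref{xvariablesagree1}, namely $x(w_3)=x(w_1\oplus (w_{l^{o}}\setminus m))$, does not match the paper's Lemma~\ref{xvariablesagree}: the paper's identity has an extra crossing-monomial factor $\prod_i x_{m(i)}$ on the left, and its right-hand side depends on the orientation of $a$ and on whether $u_1=\emptyset$. In the $a=\rightarrow$ case your formula essentially reduces to the paper's $\Psi(w_3)=w_1\oplus u_2$, but in the $a=\leftarrow$ case the paper's image is $(h_1 u_1)\oplus(h_2\setminus s)\oplus l$ (or $(h_1\setminus k)\oplus(h_2\setminus s)\oplus l$), which keeps the entire band $l$ on the right-hand side rather than deleting $m$ from it. Your version would therefore not produce a well-defined bijection in that case, and the monomial computation in Lemma~\ref{xvariablesagree}, which uses the contributions from the two extra arrows of $l^{o}$ pointing into $s(m)$ and $t(m)$, would not close up. The fix is to drop the $v_1$, take the decomposition $w_1 = h_1 u_1 m h_2$, separate the cases on $a$ and on $u_1=\emptyset$, and track how $s$ (resp.\ $k$) sits inside $h_2$ (resp.\ $h_1$) as in Definition~\ref{cross of arc and loop}.
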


\subsection{ Resolution of crossing between an arc and a loop, with non empty overlap}
As it was the case with the previous chapter, suppose that $(S,M,P)$ is a triangulated punctured surface, and $\mathcal{A}$ is the Cluster algebra associated to it. \\

Suppose that there is a crossing between an arc $\gamma_1$ and a loop $\beta$ on the surface, where $\gamma_1$ is notched tagged in at least one of its endpoints. We can then associate a loopstring $w_1$ to $\gamma_1$ and a band string $l$ to $\beta$.\\
In \cite{canakci2} the authors resolved a crossing between a pain arc and a loop, by constructing a bijection of snake and band graphs. In that case the resolution produces two new plain arcs. A similar thing holds, when the arc is not assumed to be plain, as it was to be expected. In the following definition, we resolve the crossing of such two arcs, when viewed as loopstrings or band strings instead. 

\begin{definition}\label{cross of arc and loop}
    Suppose that $w_1$ and $l^{o}$ are two abstract loopstrings and band strings respectively, where $w_1$ possible contains a hook at its start or its end. Assume that $w_1= h_1u_1mh_2$ and $l^{o}= bmau_2b$ where $u_i, m$ are substrings, $a, b$ are arrows (which are opposite) and $h_i$ are hooks. We then say that $w_1$ and $l^{o}$ cross in m.\\
    
    The \textit{resolution of the crossing of $w_1$ and $l^{0}$ with respect to $m$} are the loopstring $w_2$ and $w_3$ which are defined as follows:
    \begin{enumerate}[(i)]
        \item $w_2 = h_1u_1mu_2mh_2$,
        \item If  $u_1\neq \emptyset$ then $w_3 = h_1u_1(u_2^{-1})'h_2$,\\
        where $(u_2^{-1})'$ is such that $u_2^{-1} = (u_2^{-1})' s$ where $s$ is a maximal sequence of direct arrows (inverse arrows) if $a=\rightarrow$ (resp. $a=\leftarrow$).\\
        If $u_1= \emptyset$ then $w_3 = h_1(u_2^{-1})''h_2$,\\
        where $(u_2^{-1})''$ is such that $u_2^{-1} =k (u_2^{-1})' s$ where $s$ is a maximal sequence of direct arrows (inverse arrows) if $a=\rightarrow$ (resp. $a=\leftarrow$) and $k$ is a maximal sequence of inverse arrows (direct arrows) if $a=\rightarrow$ (resp. $a=\leftarrow$).\\
    \end{enumerate}
    
\end{definition}

\begin{remark}\label{hook is also part of the overlap}
    Let us notice that in the above definition \ref{cross of arc and loop} we did not make an assumption that there might exist a loopstring in between the overlap $m$ and the hook $h_2$. We decided to not include this case in the definition since we care about loopstrings and bands associated to tagged arcs and band arcs on a surface respectively, and such behavior cannot occur on such a surface, since loops are also considered up to isotopy. \\
    We also need to elaborate a little bit more, regarding the local configuration of $m$ and $h_2$ in the loopstring $w_1$. Since the substring $m$ is attached to the hook $h_2$, locally there are two arrows connecting the last letter of $m$  and the hook $h_2$. One incoming and one outgoing arrow. Therefore, since the last letter of $m$ is also connected to the substring $u_2$ in $l^{o}$ through the arrow $a$, this arrow $a$ as well as a substring of $u_2$ are part of the hook $h_2$. This means that we could possibly define the overlap of the intersection to be a bigger substring that the one appearing in \ref{cross of arc and loop}. However, we opted to not define like this, since by the definition of the loop graphs and the tagging on a puncture, the hook which was introduced by Wilson \cite{FST} is artificial and does not appear on the surface. This important detail will come into the spotlight in the proof of proposition \ref{remove tops from tagged and loop}.
\end{remark}

The next two examples showcase the two different cases that appear in definition \ref{cross of arc and loop}. One may argue that the definition of $k$ and $s$  in the previous definition is superfluous, since in the cases that $h_1$ and $h_2$ are non-empty, the arcs in $k$ and $s$ are part of the hooks. However, the above definition should be a generalization of the classic case in which there are no tagged arcs, and in those cases the exemption of the arcs in $k$ and $s$ is needed. 

\begin{example}
    Suppose that $w_1 = 1 \longleftarrow 9 \longrightarrow 8 \hookleftarrow 7 \longrightarrow 5 \longrightarrow 6$ and  \\ $l = \longrightarrow 9 \longrightarrow 8 \longleftarrow 7 \longrightarrow 5 \longleftarrow 4 \longrightarrow 3 \longrightarrow 2 \longrightarrow$.\\ 
    Then, following definition \ref{cross of arc and loop}, we have that $h_1=\emptyset$, $u_1= 1$, $m= 9 \longrightarrow 8$, $u_2= 7 \longrightarrow 5 \longleftarrow 4 \longrightarrow 3 \longrightarrow 2$, $a=\longleftarrow$ and $h_2 = \longrightarrow 7 \longrightarrow 5 \longrightarrow 6 \longrightarrow$.\\
    The resolution of the crossing of the loopstring $w_1$ and the loop $l$ in $m$ is the following:
    \[
    w_2 = 1 \longleftarrow 9 \longrightarrow 8 \longleftarrow 7 \longrightarrow 5 \longleftarrow 4 \longrightarrow 3 \longrightarrow 2 \longrightarrow 9 \longrightarrow 8 \hookleftarrow 7 \longrightarrow 5 \longrightarrow 6.
    \]
    \[
    w_3= 1 \longrightarrow 2 \longleftarrow 3 \longleftarrow 4 \hookrightarrow 5 \longleftarrow 7 \longleftarrow 6.
    \]
\end{example}

\begin{proposition}\label{remove tops from tagged and loop}
    Let $w_1, w_2, w_3$ and $l$ as they were defined in \ref{cross of arc and loop}. Suppose that $m_1$ (resp. $m_2$) is a proper submodule of $w_2$ (resp. $w_3$). If $m_1$  (resp. $m_2$) is reached from $w_1$ (resp. $w_2$) by the sequence $(a_i), 1\leq i\leq n$, then we can apply the same sequence $(a_i)$ to the module $w_1\oplus l^o$ (resp. $w_1\oplus (w_2\setminus m)$) to obtain a submodule $m_1'$ (resp. $m_2'$) of it. 
    \begin{proof}
        The proof of this proposition will follow the steps of Prop.\ref{removertopwelldefined} and will be done by induction. We will also assume that the arrow $a$ as it was defined in \ref{cross of arc and loop} is an inverse arrow.\\
        Suppose that $m_1$ is a submodule of $w_2$. If only one top was removed from $w_2$ to generate the submodule $m_1$ it is clear by the construction of $w_2$ and the shape of $w_1$ and $l^o$ that the simple which was removed, is also a top in one of the modules associated to $w_1$ or $l^o$ and therefore removing it gives us a submodule of the module $w_1\oplus l^o$.\\
        Assume now that $m_1$ is reached from $w_2$ by removing a sequence of simples $(a_i), 1\leq i\leq n$ (which is, as we have stated already non-unique). By the inductive assumption, removing the sequence $(a_i), 1\leq i\leq n-1$ from $w_1\oplus l^o$ generates a submodule of this module. In order to showcase how the induction works we first need to do the following construction.\\
        \underline{Construction:}\\
        If $a_i$ for $1\leq i\leq n$ is simple in $h_1$ or $u_1$ viewed as substrings of $w_2$ then there is a unique simple $a_i$ (up to embedding of submodules) in $w_1\oplus l^o$, and therefore we remove that unique one.\\
        If $a_i$ is in $m$, there may be two choices depending on which $a_i$ we can remove from $w_1\oplus l^o$ since $m$ appears in both. Additionally, notice that due to the local behavior of the hook $h_2$ (as it was discussed in Remark\ref{hook is also part of the overlap}), if $a_i$ is in $h_2$ or $u_2$, it may happen that the simple $a_i$ is also part of the hook $h_2$ or the substring $u_2$ making the choice of which simple to be removed from $w_1\oplus l^o$ non-unique.\\
        In those cases, we decide the following: If $a_i$ is the simple in $m$, removed from the first copy of $m$ in $w_2$, then we remove $a_i$ from $w_1$, while if the opposite happens, we remove it from $l^o$, if possible. If $a_i$ is in $h_2$ then we remove it from $h_2$ viewed as the hook of $w_1$ while if $a_1$ is in $u_2$ we remove it from $u_2$ viewed as a substring of $l^o$, if possible.\\
        Nevertheless, it may happen that we are not able to apply the previous construction. If such a removal is not possible, we apply the following exception to the rule:\\
        Suppose that $m^l$ is the last letter of the string $m$ and $a_n= m^l$ removed from the first copy of $w_2$. Since $a$ is an inverse arrow (making $m_l$ not a local top in $w_2$), $u_2^1$ which denotes the first letter of $u_2$ must have been one of the simples in the sequence $a_i$ for $1\leq i\leq n-1$. Therefore, following the steps of the previous construction it should have been removed from $u_2$ viewed as a substring of $l^o$. This would lead to a contradiction, when trying to remove $m^l$ from $w_1$ since it is still not a local top. However, we know that $u_2^1= h_2^1$ or $u_2^1=h_2^l$ where $h_2^1$ (resp. $h_2^l$) denotes the first (resp. last) letter of the hook $h_2$. Therefore, by retracing the removal of tops and removing $u_2^1$ from the hook $h_2$ now, we take that $m^l$ is a local top in $w_1$ and can be removed to produce a submodule of $w_1\oplus l^o$. \\
        The above construction also proves the validity of the induction step, since we can now always remove $a_n$ from $w_1\oplus l^o$\\
        The proof fro $m_2$ being a submodule of $w_3$ is omitted since it is an easy induction following the same steps as the earlier proof without the need of defining an exception, since by construction the choice of which simple to remove is always unique.
    \end{proof}
\end{proposition}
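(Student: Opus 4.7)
The plan is to adapt the inductive strategy of Proposition~\ref{removertopwelldefined}, with the new feature being that one of the two resolving pieces is a band (loop) rather than a second loopstring. I would proceed by induction on the length $n$ of the removal sequence $(a_i)_{1\le i\le n}$. At each stage one must specify, given the next top $a_i$ of the current submodule of $w_2$ (resp.\ $w_3$), which of its preimages in $w_1\oplus l^o$ (resp.\ $w_1\oplus(w_2\setminus m)$) should be removed, and then verify that the chosen preimage is still a local top of the current submodule on that side.

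The base case is read off from the shape $w_2=h_1u_1mu_2mh_2$: any top of $w_2$ sits at a local configuration that also occurs in either $w_1=h_1u_1mh_2$ or $l^o=bmau_2b$, hence is a top of one of the two summands of $w_1\oplus l^o$. For the inductive step I would fix a canonical lift: simples in $h_1$ or $u_1$ are removed from $w_1$; simples in the first copy of $m$ in $w_2$ are removed from $w_1$, those in the second copy from $l^o$; simples in $h_2$ are removed from the hook of $w_1$, and simples in $u_2$ are removed from $l^o$. The genuinely ambiguous letters are those in $u_2$ which, by Remark~\ref{hook is also part of the overlap}, are identified with letters of $h_2$; for these the rule designates one copy, but the other remains available as a fallback.

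The hard part, paralleling the exception in Proposition~\ref{removertopwelldefined}, is the inductive step precisely when this canonical rule fails. The critical configuration is: $a_n$ equals the last letter $m^l$ of the first copy of $m$ in $w_2$, and the arrow $a$ from Definition~\ref{cross of arc and loop} is inverse, so $m^l$ is not yet a local top on the $w_1\oplus l^o$ side. By the inductive hypothesis and Remark~\ref{hook is also part of the overlap}, the first letter $u_2^1$ of $u_2$ must already have been removed, and the rule took it from the $l^o$-copy. Since $u_2^1$ is identified with $h_2^1$ or $h_2^l$, the fix is to retrace that single removal and take $u_2^1$ from $h_2$ instead; after this rerouting $m^l$ becomes a local top of the current submodule of $w_1\oplus l^o$, and the induction continues.

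The statement for $m_2\subseteq w_3$ follows by the same inductive template but is substantially easier. Since $w_3=h_1u_1(u_2^{-1})'h_2$ (or its empty-$u_1$ analogue) contains each vertex at most once, modulo the intrinsic hook/$u_2$ identifications, the canonical lift to $w_1\oplus(w_2\setminus m)$ is essentially forced and no retracing is required. The main obstacle throughout is therefore the single ambiguity created by the double occurrence of $m$ in $w_2$ combined with the Wilson hook at the puncture; once the retracing exception is in place, the rest of the argument is routine bookkeeping about local tops.
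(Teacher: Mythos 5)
Your proposal matches the paper's argument step for step: the same induction on the removal sequence, the same canonical lift (simples in $h_1,u_1$ and the first copy of $m$ go to $w_1$, the second copy of $m$ and $u_2$ go to $l^o$, letters of $h_2$ go to the hook of $w_1$), the same critical exception when $a_n=m^l$ and $a$ is an inverse arrow, and the same retracing fix via the identification $u_2^1=h_2^1$ or $u_2^1=h_2^l$ from Remark~\ref{hook is also part of the overlap}. Your closing observation that the $w_3$ case is routine because the canonical lift is forced is also exactly what the paper asserts.
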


\begin{remark}
    The only reason for making the assumption in the previous proof that the arrow $a$ is considered to be an inverse arrow, is the fact that we need to be precise on when the exception mentioned in the proof occurs. If the arrow $a$ was considered to be a direct arrow, we would just have to adjust part of the proof, since now the exception would occur when we attempt to remove the last letter of the second copy of $m$ in $w_2$. 
\end{remark}

\begin{theorem}\label{bij of mod arc and band}
     Let $w_1, w_2, w_3$ and $l$ as they were defined in \ref{cross of arc and loop}. Then there exists a bijection:
    \[
    \Psi \colon SM(w_2) \bigcup SM(w_3) \to SM(w_1\oplus l),
    \]
    which is defined as follows:\\
    \begin{enumerate}
        \item If $a = \longleftarrow$ then:
          \begin{gather*}
          \Psi(w_2) = w_1\oplus l, \\
          \Psi(w_3) = \begin{cases}
                 (h_1u_1)\oplus (h_2\setminus s)  \oplus l & \text{if $u_1= \neq \emptyset$}.\\
                 (h_1\setminus k)\oplus (h_2\setminus s)  \oplus l & \text{if $u_1 = \emptyset$}.\\
             \end{cases}  
          \end{gather*}
        \item If $a = \longrightarrow$ then:
        \begin{gather*}
          \Psi(w_2) = w_1\oplus l, \\
          \Psi(w_3) = w_1 \oplus u_2,     
        \end{gather*}
    \end{enumerate}
    If $w$ is a submodule of $w_2$  which is reached by the sequence $\{a_i\} 1\leq i\leq n$, and $w'$ is the submodule of $w_1\oplus w_2$ which is reached by the same sequence, then we define:
    \[
    \Psi(w) = w'.
    \]
    If $w$ is a submodule of $w_3$, then $\Psi(w)$ is defined in a similar way. 
\end{theorem}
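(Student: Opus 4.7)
The plan is to follow the same three-step pattern used in Theorem \ref{bijofmod}: verify that $\Psi$ is well-defined on all submodules (not just on the top elements $w_2$ and $w_3$), show that it is injective, and conclude surjectivity by a cardinality argument that appeals to the coefficient-free skein relations. The well-definedness step is essentially already packaged in Proposition \ref{remove tops from tagged and loop}: any sequence $(a_i)$ of tops that reaches a submodule $m_1 \in SM(w_2)$ from $w_2$ can be replayed on $w_1 \oplus l^{o}$, with the construction there specifying in each step from which of the two copies of $m$, from which piece of the hook $h_2$, or from which occurrence of $u_2$ the simple is removed. The same proposition applies to $m_2 \in SM(w_3)$, landing inside $\Psi(w_3)$ as prescribed by the two cases on the direction of the arrow $a$.

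Next I would prove injectivity, in the spirit of Proposition \ref{injofPsi}. The restrictions $\Psi|_{SM(w_2)}$ and $\Psi|_{SM(w_3)}$ are injective by construction, since distinct top-removal sequences on the left side produce distinct submodules on the right side. The only non-trivial situation is therefore when $m_1 \in SM(w_2)$ and $m_2 \in SM(w_3)$ with $\Psi(m_1) = \Psi(m_2)$. If this were to happen, then by Remark \ref{backtrackinginlattice} I could reorder the two sequences so that the full discrepancy between them is concentrated at one end of the process; in the case $a = \rightarrow$ this discrepancy is precisely the substring $u_2$ (viewed as the submodule of $l^{o}$ removed to form $\Psi(w_3) = w_1 \oplus u_2$), while in the case $a = \leftarrow$ the discrepancy is concentrated in the pieces $s$ (and possibly $k$) that are subtracted from the hooks. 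Replaying this prefix on $w_2$ forces me to remove the first or last letter of one of the copies of $m$ inside $w_2$ before the neighbouring letters have been removed, which contradicts the structure of $w_2 = h_1 u_1 m u_2 m h_2$, in which both copies of $m$ are internally glued to $u_2$ and neither endpoint of $m$ is a local top of $w_2$.

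With injectivity in hand, surjectivity reduces to an equality of cardinalities. By Theorem \ref{submodule lattice bijective to perfect matchings lattice}, $SM(w_2)$, $SM(w_3)$, and $SM(w_1 \oplus l^{o})$ are in bijection with the corresponding perfect matching lattices. In the coefficient-free case, the skein relation for the crossing of an arc and a loop reduces to the untagged skein relations of \cite{CANAKCI2013240} and \cite{canakci2}, and as in Remark \ref{termsincoeffreecase} this gives the numerical identity
\[
|SM(w_2)| + |SM(w_3)| = |SM(w_1 \oplus l^{o})|,
\]
so the injective map $\Psi$ must also be surjective.

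The hard part will be the injectivity argument when $a = \leftarrow$, because the target $\Psi(w_3)$ is described in terms of $h_1 u_1 \oplus (h_2 \setminus s) \oplus l^{o}$ (or $(h_1 \setminus k) \oplus (h_2 \setminus s) \oplus l^{o}$ when $u_1 = \emptyset$), and these pieces interact with the hook structure in the subtle way highlighted in Remark \ref{hook is also part of the overlap}. Identifying exactly which simple of $w_1 \oplus l^{o}$ corresponds to a given top removal in $w_3$ requires the same retracing exception that appeared in the proof of Proposition \ref{remove tops from tagged and loop}, and the contradiction that drives the injectivity argument must be formulated relative to that convention. Once this bookkeeping is set up, the remaining arguments are essentially formal and parallel those of Theorem \ref{bijofmod}.
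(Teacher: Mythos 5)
Your outline matches the paper's strategy: well-definedness through Proposition \ref{remove tops from tagged and loop}, injectivity by a backtracking-sequence contradiction in the spirit of Proposition \ref{injofPsi}, and surjectivity via the cardinality identity coming from Remark \ref{termsincoeffreecase}, which is exactly how the paper assembles the proof of Theorem \ref{bij of mod arc and band} from Proposition \ref{inj for band and arc}.

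One small slip: in the $a=\rightarrow$ case the discrepancy between the two removal sequences is the substring $m$, not $u_2$ --- passing from $\Psi(w_2)=w_1\oplus l$ to $\Psi(w_3)=w_1\oplus u_2$ is achieved by deleting $m$ from $l = mu_2$, while $u_2$ is what survives. Your subsequent description of the contradiction (being forced to remove an endpoint of a copy of $m$ before its neighbours) is consistent with the correct reading, so this is a mis-statement rather than a defect in the argument, but it is worth fixing. The paper's injectivity proof for $a=\rightarrow$ also goes somewhat further than your sketch, ruling out a partial split of the removed letters between the two copies of $m$ by induction on maximal runs of same-direction arrows; your one-sentence appeal to ``the structure of $w_2$'' compresses that step, which you would want to expand if writing the argument in full.
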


We will now prove that the map $\Psi$ that was defined in \ref{bij of mod arc and band} is an injection and the surjectivity will follow in a similar way as it was the case with the crossing of two arcs.

\begin{proposition}\label{inj for band and arc}
    The map $\Psi$ as it was defined in \ref{bij of mod arc and band} is an injection.
    \begin{proof}
        Let $m_1, m_2\in SM(w_2) \bigcup SM(w_3)$ such that $\Psi(m_1)=\Psi(m_2)$.\\
        By construction of the map $\Psi$, if both $m_1, m_2\in SM(w_2)$ or $m_1, m_2\in SM(w_3)$, then $m_1= m_2$.\\
        Suppose that $m_1\in SM(w_2)$ and $m_2\in SM(w_3)$. We will show that $\Psi(m_1)\neq \Psi(m_2)$. \\
        Assume that $\Psi(m_1)=\Psi(m_2)$. We need to take cases:
        \begin{itemize}
            \item Suppose that $a = \longrightarrow$. Since $m_1\in SM(w_2)$, we know by construction of the map $\Psi$, that there exists a  sequence $(a_i), 1\leq i\leq n$ of simples, such that when viewed as a loopstring $w_2 \setminus \{a_i\} = m_1$. By proposition \ref{removertopwelldefined} and by construction of $\Psi$, we have that $\Psi(m_1) = \Psi(w_2)\setminus \{a_i\}$.\\
        Similarly, working on $m_2$, there is a sequence $(b_i), 1\leq i\leq k$, such that $m_1= (w_3)\setminus \{b_i\}$ and $\Psi(m_2) = \Psi(w_3)\setminus \{b_i\}$.\\
        However, since $\Psi(m_1)=\Psi(m_2)$ we obtain:
        \[
        \Psi(w_2)\setminus \{a_i\} = \Psi(w_3)\setminus \{b_i\}.
        \]
        Since  $\Psi(w_2) = w_1\oplus l$, and $\Psi(w_3) = w_1\oplus u_2$, we further obtain that:
        \[
         (w_1\oplus l)\setminus\{a_i\} =  (w_1\oplus u_2)\setminus \{b_i\}.
        \]
        The previous equality shows us that the sets $\{a_i\}$ and $(\{b_i\}\bigcup m)$ must be equal. By Remark \ref{backtrackinginlattice}, we can deduce, that there must exist a sequence $\{m_{j_1}, \dots m_{j_l}, b_1, \dots, b_k\}$ leading from $w_1\oplus l$ to $\Psi(m_2)$. However, this also implies that using the same sequence $\{m_{j_1}, \dots m_{j_l}, b_1, \dots, b_k\}$, we can go from the module $w_2$ to the module $m_2$. We will show that this is a contradiction. Notice fist that $w_2$ contains two copies of the string $m$. it is easy to see that we cannot remove a whole copy of $m$ from $w_2$ since neither copy of $m$ is a local top of $w_2$. Suppose now that $\{m_{j_1}, \dots m_{j_k}\}$ , where $k<l$, is removed from the first copy of $m$ in $w_2$ and $\{m_{j_{k+1}}, \dots m_{j_l}\}$ is removed from the second copy of $m$ in $w_2$. By the shape of $w_2$ obviously $s(m)$ must be removed from the second copy of $m$. Furthermore suppose that $m'$ is the connected substring of $m$ which contains $s(m)$ and contains a maximal sequence of direct arrows in $m$. Then we can also see that $t(m')$ must be removed from the second copy of $m$ in $w_2$. Additionally since this is a maximal sequence of direct arrows we also take that locally in $m$ we have that $t(m')\leftarrow s(m\setminus m')$. Therefore, since $t(m')$ is removed from the second copy of $m$, $s(m\setminus m')$ must also be removed from the second copy of $m$. Inductively, working on maximal substrings of $m$ that contacting only let arrows or right arrows, we take that every $m_i$, $1\leq i\leq l$, must be removed from the second copy which is a contradiction. 
        \item Suppose that $a = \longleftarrow$. Similar arguments hold in that case too, since the position of the substring $m$ leads to a similar contradiction as before.
        \end{itemize}
    \end{proof}
\end{proposition}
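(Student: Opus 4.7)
The plan is to mirror the strategy of Proposition~\ref{injofPsi} for the two-arc crossing, dividing into three sub-cases. If $m_1,m_2$ lie both in $SM(w_2)$ or both in $SM(w_3)$, injectivity is immediate: in each case $\Psi$ is determined by a top-removal sequence as in Proposition~\ref{remove tops from tagged and loop}, and two distinct submodules of $w_2$ (resp.\ $w_3$) correspond to essentially different top-removal prefixes, so their images in $SM(w_1\oplus l^o)$ must differ.

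The delicate sub-case is $m_1\in SM(w_2)$, $m_2\in SM(w_3)$. Suppose for contradiction that $\Psi(m_1)=\Psi(m_2)$. Choose witnessing sequences $(a_i)_{i\le n}$ and $(b_j)_{j\le k}$ with $m_1=w_2\setminus\{a_i\}$ and $m_2=w_3\setminus\{b_j\}$, so that by construction
\[
(w_1\oplus l^o)\setminus\{a_i\} \;=\; \Psi(w_3)\setminus\{b_j\}.
\]
Comparing multisets, $\{a_i\}=\{b_j\}\cup D$, where $D$ is the vertex set appearing in $\Psi(w_2)=w_1\oplus l^o$ but not in $\Psi(w_3)$. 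By the backtracking principle of Remark~\ref{backtrackinginlattice}, we could reach $m_2$ from $w_2$ by a sequence whose initial segment removes precisely $D$ from $w_2$ and then removes $(b_j)$. The contradiction will come from showing that the vertices of $D$ cannot be peeled as an initial segment from $w_2$.

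Concretely, for $a=\rightarrow$ one has $\Psi(w_3)=w_1\oplus u_2$, so $D$ consists of one copy of $m$ together with the connecting arrow $a$ and the band-identifications coming from the $b$-arrows of $l^o$; removing $D$ first from $w_2=h_1u_1mu_2mh_2$ would peel one full copy of $m$ while leaving the other intact. The key structural obstruction is that in $w_2$ the two copies of $m$ are linked by $u_2$, and neither copy is a local top: any extremal letter of one $m$-copy is adjacent to a letter of $u_2$ (or to $h_2$) in a way that forbids its removal until part of $u_2$ is stripped away first. A maximal-zig-zag argument in the spirit of Lemma~\ref{pmatchinloop}, applied to the arrow pattern of $m$ from its outer end inward, then shows that peeling one $m$-copy forces interlocking peels of its partner, contradicting the required ordering. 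The case $a=\leftarrow$ is symmetric, with $h_2\setminus s$ playing the role of the connecting bridge in place of $u_2$; a parallel zig-zag argument yields the same obstruction.

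The main difficulty I expect is making the ``no full copy of $m$ can be peeled first'' step fully rigorous, because the overlap $m$ can exhibit any mixture of direct and inverse arrows. In particular, the subcases $u_1=\emptyset$ and $v$-type boundary effects near the hook $h_2$ (already responsible for the exception in Proposition~\ref{remove tops from tagged and loop}) must be handled separately, mirroring the $k,s$-maximal-run bookkeeping built into Definition~\ref{cross of arc and loop}. Once this structural claim is in place, injectivity in the mixed case follows by the direct contradiction outlined above, and the theorem is complete.
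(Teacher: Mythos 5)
Your proposal follows the paper's structure exactly: same easy subcases, same reduction via Remark~\ref{backtrackinginlattice}, same target contradiction (one cannot peel a full extra copy of $m$ as an initial removal segment from $w_2=h_1u_1mu_2mh_2$). But the step you yourself flag as the expected difficulty is precisely the step the paper must carry out and your sketch does not: you appeal loosely to ``interlocking peels'' and to Lemma~\ref{pmatchinloop}, and you frame the obstruction as an $m$-letter being blocked until part of $u_2$ is stripped first, whereas the actual point is more delicate. The removal sequence $\{m_{j_1},\dots,m_{j_l}\}$ may a priori be distributed across the two $m$-copies of $w_2$, so one must rule out a split. The paper does this by first observing (via the arrow $d$ at the end of $m$ in $w_2$) that $s(m)$ is removable only from the second copy, and then propagating this constraint through $m$ one maximal run of direct (resp.\ inverse) arrows at a time: if a maximal direct run $m'$ starting at $s(m)$ lies in the second copy, then $t(m')\leftarrow s(m\setminus m')$ forces $s(m\setminus m')$ into the second copy as well, and so on inductively. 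This pushes all $l$ removals into the second copy, i.e.\ a whole copy of $m$ is removed, contradicting the opening observation that neither copy of $m$ is a local top of $w_2$. That run-by-run induction is the missing content, and it is not subsumed by Lemma~\ref{pmatchinloop}, which concerns boundary edges of tiles in a loop graph rather than top-removal orderings.

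One further small point: since the removal sequences consist of \emph{simples} (vertices of the loopstring), the difference set $D$ between $\Psi(w_2)=w_1\oplus l^o$ and $\Psi(w_3)=w_1\oplus u_2$ is exactly the vertex set of one copy of $m$; the connecting arrow $a$ and the band identifications of $l^o$ are not simples and should not be counted in $D$. With $D$ identified correctly and the run-by-run induction written out, your argument coincides with the paper's proof.
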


\begin{proof}[Proof of Theorem \ref{bij of mod arc and band}]
It follows straightforwardly using \ref{inj for band and arc} and \ref{termsincoeffreecase} that the map $\Psi$ is a bijection.

\end{proof}

\subsection{ Monomials associated to loops}

So far we have proven the existence of a bijection between the submodules of the direct sum of a loopstring and a band string and the disjoint union of the resolved modules. However we still need to show that the monomials associated to these modules are equal. 

\begin{remark}
    Suppose that $l$ is a band. Then we can associate a monomial $x(l)$ to this band, which follows exactly the same rules as they were dictated in definition \ref{monomial associated to loopstring}. Let us note though that in the case of bands, we are not defining an analogue of a blossoming string.
\end{remark}

\begin{lemma}\label{xvariablesagree}
     Let $w_1, w_2, w_3$ and $l$ as they were defined in \ref{cross of arc and loop}. Then:
     \[
     x(w_2) = x(w_1\oplus l^{o}),
     \]
     and 
          \begin{gather*}
          x(w_3)\prod_{i} x_{m(i)} = \begin{cases}
                 x(w_1\oplus u_2)& \text{if } a=\rightarrow.\\
                 x((h_1u_1)\oplus (h_2\setminus s)  \oplus l) & \text{if } a=\leftarrow \text{ and } u_1 \neq \emptyset.\\
                 x((h_1\setminus k)\oplus (h_2\setminus s)  \oplus l) & \text{if } a=\leftarrow \text{ and } u_1 = \emptyset.\\
             \end{cases}  
          \end{gather*}
     \begin{proof}
         We will show only one of these equalities since the proof for the others is very similar.\\
         Let us assume that $a=\leftarrow$ and $u_1\neq \emptyset$.
         Isolating the factor $x(l^{o})$ we obtain:
         \[
         x(l^{o})= x(mu_2)= x(m)x(u_2)x_{s(m)}x_{t(m)},
         \]
         where $s(m)$ and $t(m)$ denote the first and last letter of the string $m$ respectively and the equality follows since $a=\leftarrow$ and therefore there ate two extra arrows in $l^{o}$ that point at the start and respectively at the end of the string $m$.\\
         Similarly working on the term corresponding to the band $w_1$ we obtain:
         \[
         x(w_1) = x(h_1u_1)x_{t(u_1)} x(mh_2).
         \]
         Simplifying on the left hand side we have:
         \[
         x(w_2)= x(h_1u_1u_2^{-1}h_2) = x(h_1u_1) x_{t(u_1)} x(m) x_{t(m)} x(u_2^{-1}) x_{s(m)} x(mh_2) = x(w_1) x(l^{o}),
         \]
         which concludes the proof.
     \end{proof}

\end{lemma}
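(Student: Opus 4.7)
The plan is to compute both sides of each identity directly from Definition \ref{monomofstring} by breaking every loopstring into its constituent substrings $h_1, u_1, m, u_2, h_2$ and tracking, vertex by vertex, where the exponent $e_i$ changes due to a junction between pieces or to the socle/hook convention. Since the exponent of a vertex counts incoming arrows, the monomial is multiplicative over concatenation except at the junction vertex between two subwords, where an extra incoming arrow may be created; so $x(\alpha\beta) = x(\alpha) x(\beta) x_{t(\alpha)}^{\varepsilon}$ where $\varepsilon \in \{0,1\}$ depending on the direction of the arrow joining $\alpha$ to $\beta$. Extracting these junction factors is the whole content of the computation.

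For the first equality $x(w_2) = x(w_1 \oplus l^o)$, I would isolate the two copies of $m$ inside $w_2 = h_1 u_1 m u_2 m h_2$ and the single copy of $m$ inside $w_1 = h_1 u_1 m h_2$ and inside $l^o = b m a u_2 b$. Since $l^o$ is cyclic, both endpoints $s(m)$ and $t(m)$ receive an extra incoming arrow in $x(l^o)$, producing the factor $x_{s(m)} x_{t(m)}$. On the other side, the two new junctions $u_1 \mid m$ and $m \mid u_2$ inside $w_2$ (or the analogous pair depending on the direction of $a$) produce exactly the same boundary factors, so the two monomials agree after cancellation. The second equality is handled case by case: for $a = \rightarrow$ the truncation $u_2^{-1} = (u_2^{-1})' s$ removes precisely the letters of $m$ from $w_3$, and multiplying by $\prod_i x_{m(i)}$ restores them and matches $x(w_1 \oplus u_2)$; for $a = \leftarrow$ with $u_1 \neq \emptyset$ (respectively $u_1 = \emptyset$) the further removal of $s$ from $h_2$ (and $k$ from $h_1$) on the right compensates for the factor $\prod_i x_{m(i)}$ together with the junction vertex appearing between $u_2^{-1}$ and $h_2$ on the left.

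The main obstacle will be the bookkeeping around the hooks $h_1, h_2$. By Remark \ref{hook is also part of the overlap}, part of $u_2$ and the arrow $a$ already belong to the hook $h_2$, so a naive multiplicativity computation would double-count certain vertices. The truncations by $s$ and $k$ in Definition \ref{cross of arc and loop} are engineered precisely to remove this redundancy, and the socle exponent convention in Definition \ref{monomofstring} (which sets $e_i = 1$ at a socle vertex lying in the hook rather than the number of incoming arrows) must be applied consistently at these overlap points. Once the contributions of the hook portions of $u_2$ and of $a$ are identified and cancelled explicitly, the remaining factors on both sides are a product over the vertex set of the extended quiver with identical multiplicities, which finishes the proof.
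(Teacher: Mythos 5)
Your plan matches the paper's proof in substance: decompose each $x$-monomial across the subwords $h_1, u_1, m, u_2, h_2$, account for the extra factors arising at each junction (and at both ends of $m$ once $l^{o}$ is closed into a cycle, producing $x_{s(m)}x_{t(m)}$), and let the truncations by $s$ and $k$ together with the socle/hook convention absorb the remaining discrepancy. The one slip to watch is that the junction factor can land on either $t(\alpha)$ or $s(\beta)$ depending on the orientation of the connecting arrow---the paper's computation uses $x_{t(u_1)}$, $x_{s(m)}$ and $x_{t(m)}$ accordingly---so your formula $x(\alpha\beta)=x(\alpha)x(\beta)x_{t(\alpha)}^{\varepsilon}$ should be read as allowing the extra factor at either endpoint of the junction.
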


The following theorem is a direct consequence of the previous Lemma~\ref{xvariablesagree} and the existence of the bijection $\Psi$ from Theorem~\ref{bij of mod arc and band}

\begin{theorem}
        Let $(S,M,P,T)$ be a triangulated punctured surface, $\gamma_1$ a notched arc on a puncture $p$ and $l^{o}$ a loop. Suppose that $\gamma_1$ and $\gamma_l$ cross on the surface and let $\gamma_2$ and $\gamma_3$ be the two arcs obtained by smoothing the crossing.\\

        If  $\gamma_1$ and $\gamma_l$ have a non empty overlap, then:
        \[
        x_{\gamma_1} x_{l^{o}} = x_{\gamma_2}  + Y^{+} x_{\gamma_3},
        \]
        where $Y^{+} =  Y(m)$.

\end{theorem}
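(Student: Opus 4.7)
The plan is to mirror closely the argument structure used for the crossing of two arcs in the preceding section, adapting it to the asymmetric situation where one of the curves is a closed loop. First, I would recall that by the $F$-polynomial type expansion formula of Wilson, together with the bijection of Theorem~\ref{submodule lattice bijective to perfect matchings lattice}, the Laurent expansions of $x_{\gamma_1}$ and $x_{l^{o}}$ are parametrized by submodules of $M(w_1)$ and $M(l^{o})$ respectively, divided by the corresponding crossing monomials. Multiplying, the product $x_{\gamma_1} x_{l^{o}}$ is indexed by submodules of the direct sum $M(w_1) \oplus M(l^{o})$, divided by $\cross(\gamma_1)\cross(l^{o})$.

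Next, I would invoke the bijection
\[
\Psi \colon SM(w_2) \sqcup SM(w_3) \longrightarrow SM(w_1 \oplus l^{o})
\]
from Theorem~\ref{bij of mod arc and band}, which partitions the indexing set of the left-hand side into two parts, one for $w_2$ (corresponding to $x_{\gamma_2}$) and one for $w_3$ (corresponding to $x_{\gamma_3}$). The goal is then to show that, term by term under $\Psi$, the $x$-part of the summand on the right matches that on the left up to appropriate factors of the crossing monomials, and that the $y$-parts differ by $1$ in the first piece and by $Y(m)$ in the second. For the $x$-variables, the initial (maximal-top) equality $x(w_2) = x(w_1\oplus l^{o})$ and the analogous relation for $w_3$ from Lemma~\ref{xvariablesagree} serve as the base case; the equality then propagates through the whole lattice because $\Psi$ is defined by removing the same sequence of tops, and the analog of Remark~\ref{removetoppreservesx} guarantees that removing a top transforms the $x$-weight in a way depending only on the local configuration of the extended quiver $Q_T'$, which agrees on both sides.

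For the $y$-coefficients, I would separate into the two cases of Theorem~\ref{bij of mod arc and band} depending on the direction of the arrow $a$. In the case $a = \rightarrow$, the image $\Psi(w_3) = w_1 \oplus u_2$ is obtained from $w_1 \oplus l^{o}$ by passing to the submodule that removes the simples of $m$; dually, in the case $a = \leftarrow$, $\Psi(w_3)$ is the submodule reached by removing the tops of $m$ (together with the adjacent fragments of the hooks). In either case, the height monomial recorded by the symmetric difference with the minimal matching on the $w_1 \oplus l^{o}$ side picks up exactly the tiles indexed by the vertices of $m$, yielding the factor $Y(m)$. The other branch, corresponding to $\Psi(w_2) = w_1 \oplus l^{o}$, leaves the minimal matching unchanged and so contributes no extra $y$-factor, giving the coefficient $1$ in front of $x_{\gamma_2}$. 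Finally, one checks that the ratio of crossing monomials $\cross(\gamma_2)/(\cross(\gamma_1)\cross(l^{o}))$ and $\cross(\gamma_3)/(\cross(\gamma_1)\cross(l^{o}))$ absorbs correctly using the explicit form of $w_2$ and $w_3$ in Definition~\ref{cross of arc and loop}, which is again a direct consequence of Lemma~\ref{xvariablesagree}.

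The main obstacle I expect is not the bijection itself, which has already been established, but the bookkeeping for the $y$-coefficient. Concretely, the delicate point is to verify that the symmetric difference $P_{w_3} \ominus P_{\min}(w_1 \oplus l^{o})$ equals exactly the tiles corresponding to the overlap $m$, and not more. This requires using the subtlety already flagged in Remark~\ref{hook is also part of the overlap} that a portion of $h_2$ and $u_2$ overlap locally around the puncture, so that the "exception" clause in the proof of Proposition~\ref{remove tops from tagged and loop} is exactly what is needed to prevent spurious contributions to $Y^{+}$. Once this identification is made, the equality $Y^{+} = Y(m)$ follows immediately, completing the proof.
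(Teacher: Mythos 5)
Your proposal is correct and follows essentially the same route as the paper: the paper's own proof is a one-line appeal to Lemma~\ref{xvariablesagree} and the bijection $\Psi$ of Theorem~\ref{bij of mod arc and band}, with the propagation through the lattice (via Remark~\ref{removetoppreservesx}) implicitly inherited from the proof pattern of the preceding theorem on crossings of two arcs. The delicate $y$-coefficient point you flag -- verifying that the symmetric difference $P_{w_3} \ominus P_{\min}(w_1 \oplus l^{o})$ consists of exactly the tiles of $m$, using the overlap of $h_2$ and $u_2$ near the puncture from Remark~\ref{hook is also part of the overlap} -- is a genuine subtlety that the paper glosses over, so raising it is appropriate rather than a gap.
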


\section{Skein relations for single incompatibility in a puncture}
So far, we have explored what happens when there is a regular crossing between two arcs. The previous cases could be understood as generalizations of the classical skein relations on unpunctured surfaces, since even if the arcs were not tagged, the arcs are considered incompatible to each other.\\
However starting with this chapter and onward, the cases that we will explore appear only on punctured surfaces since those cases will deal with incompatibilities that occur between two arcs at a puncture.\\
In this chapter more specifically we will deal with the following cases:

\begin{itemize}
    \item a singly notched arc and a plain arc which have an incompatible tagging at a puncture,
    \item a doubly notched arc and a plan arc which have an incompatible tagging at a puncture,
    \item two singly notched arcs which have one incompatible tagging at a puncture.
    \item a doubly notched arc and a singly notched arc which have one incompatible tagging at a puncture.
\end{itemize}

The reason, for grouping these cases together is the fact that only the incompatibility of the tagging at a single puncture plays a role to the skein relations. The cases that there are two incompatibilities at two punctured are explored later, since then the situation is drastically different. \\
The following theorem which addresses the above cases will be proven at the end of the chapter.

\begin{theorem}
        Let $(S,M,P,T)$ be a triangulated punctured surface, $\gamma_1$ a notched arc at a puncture $p$ and $\gamma_2$ an arc which has an plain endpoint at the puncture $p$. Let $\gamma_3$ and $\gamma_4$ be the two arcs obtained by smoothing the crossing.
        Then we have that:
        \[
        x_{\gamma_1} x_{\gamma_2} = x_{\gamma_3}  + Y^{+} x_{\gamma_4},
        \]
        where $Y^{+}$ is a monomial on $y$-variables.
\end{theorem}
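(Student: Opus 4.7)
The plan is to adapt the submodule--bijection machinery developed in the previous sections to the single-puncture incompatibility case. First I would make the loopstring presentations of $w_1 = w_{\gamma_1}$ and $w_2 = w_{\gamma_2}$ explicit: since $\gamma_1$ is notched at $p$, its loopstring contains a hook $h_p$ encircling $p$, while $\gamma_2$, being plain at $p$, ends at the arc of $T$ adjacent to $p$ without a hook. The shared endpoint at $p$ forces a structural overlap between $w_1$ and $w_2$ in the region near $h_p$, since the arcs of $T$ incident to $p$ appear as letters both inside $h_p$ and in the tail of $w_2$.

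Next I would describe the resolution $\{\gamma_3,\gamma_4\}$ at $p$. One of the two resolved arcs is obtained by changing the tagging at $p$ so that the pair becomes compatible (this is the term carrying the $1$ coefficient), while the other involves either a closed curve around $p$ or a suitably modified arc (carrying the monomial $Y^+$). I would write down the corresponding loopstrings $w_3$ and $w_4$ explicitly using the hook construction from Definition \ref{loop graph associated to tagged arc} together with the loopstring operations defined in Chapter~2, and identify which of $x_{\gamma_3},x_{\gamma_4}$ gets coefficient $1$.

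Then, following the template of Proposition \ref{removertopwelldefined} and Theorem \ref{bijofmod}, I would define a map $\Psi\colon \mathrm{SM}(w_3) \sqcup \mathrm{SM}(w_4) \to \mathrm{SM}(w_1 \oplus w_2)$ by removing the same sequence of tops on both sides. The crucial step is to verify this is well defined, with special attention to $h_p$: as noted in Remark \ref{hook is also part of the overlap}, the hook shares letters with the substring where the incompatibility occurs, and this artificial overlap must be handled by an exception clause analogous to the one used in Proposition \ref{remove tops from tagged and loop}. Once the map is set up, injectivity follows by the retracing argument of Proposition \ref{injofPsi}, and surjectivity is obtained from the cardinality count of Remark \ref{termsincoeffreecase} applied to the coefficient-free specialisation of the identity.

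Finally I would verify the $x$-variable equalities $x(w_3) = x(w_1 \oplus w_2)$ and $Y^+\cdot x(w_4) = x(w_1 \oplus w_2 \setminus m)$ by direct computation using Definition \ref{monomial associated to loopstring}, mirroring Lemma \ref{xvariablesagree1}; combining these with $\Psi$ yields the claimed skein identity. The main obstacle will be correctly identifying the overlap $m$ and the resulting resolved loopstrings when one arc carries a hook at $p$ and the other does not: the hook is an artificial object with no corresponding curve on the surface, and the exponents arising from the blossoming extension at $p$ must be tracked carefully to pin down $Y^+$ exactly. Once this bookkeeping is under control, the argument slots into the framework of the previous two sections without further difficulty.
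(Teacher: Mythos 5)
Your high-level strategy matches the paper's: realise $\gamma_1,\gamma_2$ as loopstrings, write down the resolved loopstrings, construct a ``remove the same sequence of tops'' map $\Psi\colon SM(w_3)\sqcup SM(w_4)\to SM(w_1\oplus w_2)$, prove injectivity by a retracing argument, deduce bijectivity from the coefficient-free count, and finish with an $x$-variable computation. However, there are two gaps in your plan that would stop you from writing down $w_3,w_4$ correctly.

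First, the paper splits the single-puncture case into two genuinely different sub-cases, \emph{regular incompatibility} and \emph{grafting incompatibility}, according to whether $\gamma_1$ and $\gamma_2$ cross a common arc of $T$ before reaching $p$. Your proposal treats the interaction near $p$ as a single ``structural overlap,'' but these two cases require different resolution definitions. In the regular case there is \emph{no} shared substring $m$; instead the interaction is only visible after passing to the extended loopstring $w_2'$, which introduces a single vertex $k$ lying inside the hook $h_2$ of $w_1$, and one writes $h_2=s_1 a k s_2$ and resolves via $s_1$ and $s_2$. In the grafting case there \emph{is} a shared overlap $m$ and the resolution duplicates $m$ in $w_3$, closer to what you describe. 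Trying to force both into one overlap-based scheme will make the definition of $w_3,w_4$ ambiguous.

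Second, your description of $\gamma_3,\gamma_4$ — one obtained by ``changing the tagging at $p$'' and the other involving ``a closed curve around $p$'' — is not what the paper does (and not what happens geometrically). In the paper's resolution, $\gamma_3$ and $\gamma_4$ are both arcs obtained by gluing $\gamma_1$ to $\gamma_2$ and turning, respectively, to one side or the other around $p$; no closed curve or re-tagging appears in this case. Getting this wrong propagates into the bijection: the paper's $\Psi$ sends $w_3\mapsto (w_1\setminus s_2)\oplus w_2$ and $w_4\mapsto w_1\oplus w_2$ (regular case), which is not what you would write down from your characterisation of the resolved arcs. Once you correct the case split and the description of the resolved loopstrings, the rest of your programme (the exception-clause handling for hook overlap, the injectivity via retracing, the cardinality count, and the $x$-variable check) does go through as in the paper.
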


\subsection{ Resolution of single incompatibility of two arc at a puncture}

As we have seen in the previous cases already, given two arcs which have exactly one incompatibility at one puncture, there are two different cases for the resolutions in the skein relations depending on the triangulation of the surface. \\
We will separate these two cases, calling the first one, which will be dealt with in this chapter as \textit{regular incompatibility} while we will call the second case, which will be dealt in the next section \textit{grafting incompatibility}, since it poses some similarities to the grafting for regular crossing of arcs.\\

Suppose now that $(S, M, P)$ is a punctured surface and $T$ is a triangulation of the surface. Let $\gamma_1$ and $\gamma_2$ be two arcs which have exactly one incompatible tagging at a puncture $p$, and $w_1, w_2$ the associated loopstring to each arc respectively. We will say that $\gamma_1$ and $\gamma_2$ have a \textit{regular incompatibility} at the puncture $p$ when the arcs do not cross the same arc $t$ of the triangulation $T$ before meeting at the puncture $p$ (Figure \ref{fig:regular incompatibility puncture}). \\
looking at the associated loopstrings or modules of the arcs $\gamma_1$ and $\gamma_2$ one can notice that we would not be able to detect when these arcs meet at a puncture, since it can happen that they do not share any crossings. However since there is an incompatibility at that puncture, we know that at least one of these two arcs it tagged at its endpoint at the puncture $p$, o its loopstring should contain a hook $h$ which s all the arcs of the triangulation $T$ which are adjacent to the puncture p. Therefore if we assume that $\gamma_1$ is tagged at $p$ and $\gamma_2$ is untagged at $p$, by considering the extension $w_2'$ of the loopstring $w_2$, we can notice that $w_1$ and $w_2'$ share at least one vertex, which must belong to the hook $h$ of $\gamma_1$. \\
The following definition explains how we can resolve such an incompatibility and Figure \ref{fig:regular incompatibility puncture} showcases the situation that we just explained.

\begin{figure}
    \centering
    \includegraphics[scale=0.9]{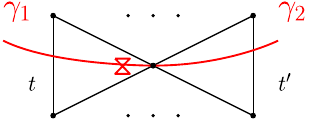}
    \caption{Regular incompatibility at a puncture.}
    \label{fig:regular incompatibility puncture}
\end{figure}

\begin{definition}\label{resolution of incompatible tagging  one puncture}
 Suppose that $w_1$ and $w_2$ are two abstract loopstrings such that $w_1$ contains a hook at its end, while $w_2$ does not contain a hook at its end. Assume that $w_1= h_1u_1h_2$ and $w_2= h_3u_2$ where $u_i$ are substrings and $h_i$ are hooks, with $h_2\neq \emptyset$. Let also $w_2'= h_3u_2k$ be the extended loopstring of $w_2$ with $k\in h_2$ We then say that $w_1$ and $w_2$ have a \textit{regular incompatibility} at their endpoints.\\

Since $k\in h_2$ we can write $h_2= s_1aks_2$ where $s_i$ are not necessarily non empty strings and $a$ is an arrow. 
The \textit{resolution of the regular incompatibility of $w_1$ and $w_2$ with respect to $k$} are the loopstrings $w_3$ and $w_4$ which are defined as follows:
    \begin{enumerate}[(i)]
        \item  $w_3 = h_1u_1s_1bu{_2}^{-1}h_2^{-1}$, \textit{where $b$ is an opposite arrow to $a$}.
        \item  $w_4 = h_1u_1s_2cu{_2}^{-1}h_2^{-1}$, \textit{where $b= a$.}
    \end{enumerate}
    
\end{definition}

\begin{remark}
    Looking at the above Definition \ref{resolution of incompatible tagging  one puncture}, one can notice that the resolution is defined uniquely for every situation of regular incompatibility, and there is no need for extra cases. This occurs since visually the resolution basically "glues" the two arcs $\gamma_1$ and $\gamma_2$ to create two new arcs $\gamma_3$ and $\gamma_4$ which follow $\gamma_3$ and $\gamma_4$ around the puncture in the two possible ways that one can follow. This can be further illustrated in Figure \ref{fig:Resolution of regular incompatibility at a puncture p}, on which also the next example is based on. 
\end{remark}

\begin{figure}
    \centering
    \includegraphics[scale=0.35]{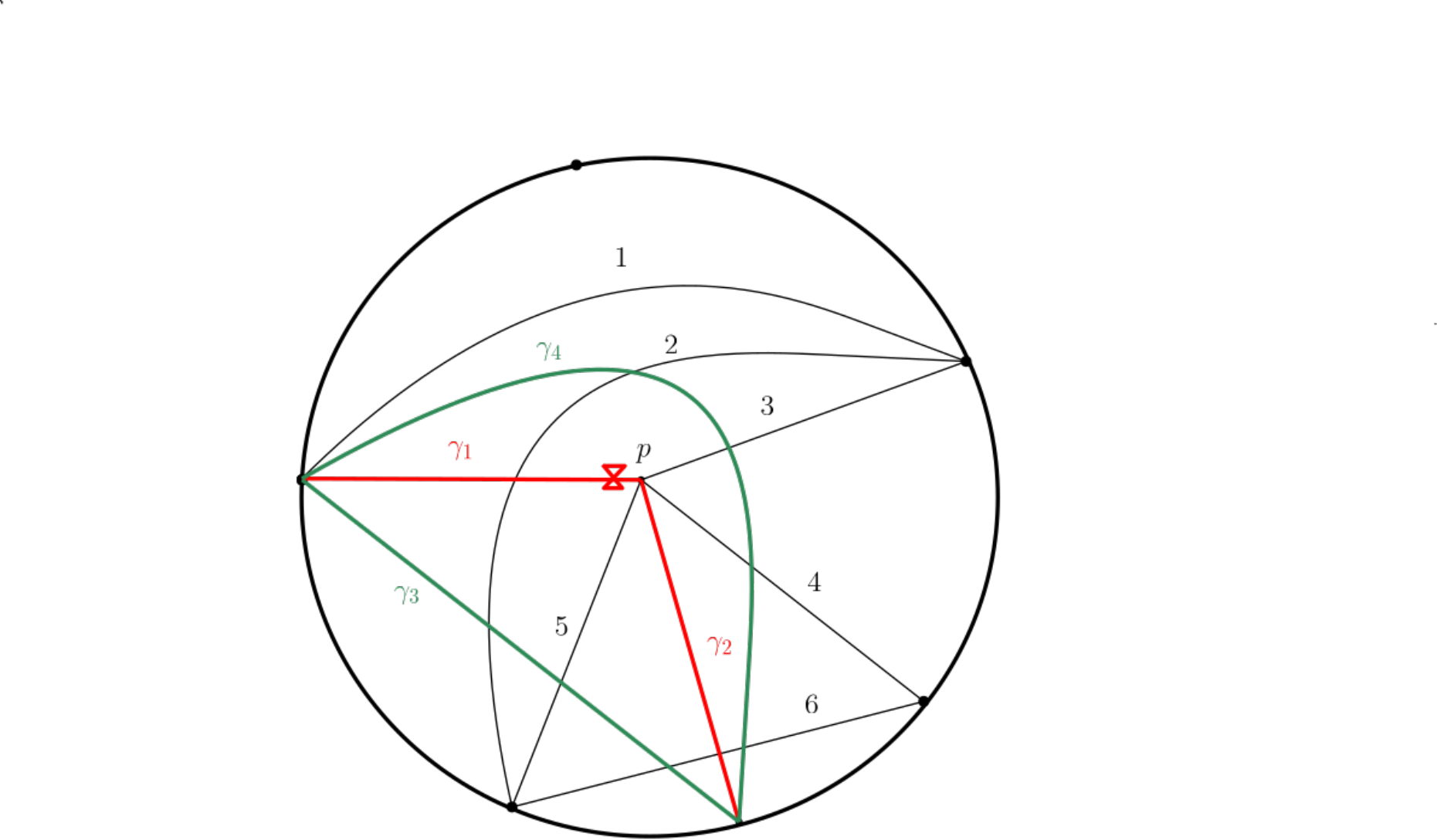}
    \caption{Resolution of regular incompatibility at a puncture p.}
    \label{fig:Resolution of regular incompatibility at a puncture p}
\end{figure}

\begin{example}
    Suppose that $(S, M, P, T)$ is the triangulated surface depicted on figure \ref{fig:Resolution of regular incompatibility at a puncture p}. Then we have that $w_1 = 2\hookrightarrow 5\leftarrow 4\leftarrow 3$ and $w_2 = 6$. Considering the extension of $w_3$ we take $w_3' = 6 \rightarrow 4$. Therefore, so far we have $h_1=\emptyset$, $u_1= 2$, $h_2 = 5\leftarrow 4\leftarrow 3$, $h_3= \emptyset$,
    $u_2= 6$ and $k=4$. \\
    Notice that indeed $k\in h_2$ and we can write $h_2= s_1aks_2$ where $s_1= 5$, $s_2= 4\leftarrow 3$ and $a= \leftarrow$.\\
    Using Definition \ref{resolution of incompatible tagging  one puncture} we can resolve the regular incompatibility with respect to $k= 5$, by defining:
    \[
    w_3= 2\rightarrow 5 \rightarrow 6,
    \]
    \[
    w_4= 2\leftarrow 3\rightarrow 4\leftarrow 6.
    \]
\end{example}

\begin{remark}
    One can notice that the hooks $h_1$ and $h_3$ can be either assumed to be empty or not without anything substantial changing to the definition of the resolution. This happens, since as one can see by the definition of the resolution, we just follow the arc $\gamma_1$ until it "reaches" the puncture $p$ and then we follow the path of $\gamma_2$ by either turning left or right at the puncture $p$. Therefore, the resolution depends on the local neighborhood of the puncture $p$ and the triangulation of the surface around the puncture, so it does not come as a surprise that possible other taggings or equivalently hooks would change anything.
\end{remark}

Our next aim, after having given the definition of the resolution is to actually construct a bijection between the submodules of $w_1\oplus w_2$ and the submodules of $w_3$ and $w_4$. \\
In order to do so we first have to give a proposition which will help us iteratively define the bijection as we have already done in the previous cases too.

\begin{lemma}\label{removing tops for one puncture incompatibility}
 Let $w_1, w_2, w_3$ and $w_4$ as they were defined in \ref{resolution of incompatible tagging  one puncture}. Suppose that $m_1$ (resp. $m_2$) is a proper submodule of  $w_3$ (resp $w_4$). If $m_1$  (resp. $m_2$) is reached from $w_3$ (resp. $w_4$) by the sequence $(a_i), 1\leq i\leq n$, then we can apply the same sequence $(a_i)$ to the module $w_1\setminus s_2\oplus w_2$ (resp. $w_1\oplus w_2$) to obtain a submodule $m_1'$ (resp. $m_2'$) of it.
    \begin{proof}
    Suppose that $m_2$ is a submodule of $w_4$ and it is reached by the sequence $(a_i), 1\leq i\leq n$. First of all, since $k$ belongs to $w_2'$ by definition we have that there exists a right arrow $t(w_2)\rightarrow k$. Therefore, by the construction of $w_4$, $k$ is a socle. This shows us that we can split the sequence $(a_i), 1\leq i\leq n$ w.l.o.g. to two parts, where the the sequence $(a_i), 1\leq i\leq j$ is a sequence of removing tops from $u_2^{-1}$ and $(a_i), j+1\leq i\leq n$ is a sequence of removing tops from the rest of $w_4$. The only extreme case, is the case that $k$ is an element of this sequence, but in order for this to happen we should have that all the elements of the string $s_2$ are also part of the sequence $(a_i), j+1\leq i\leq n$. \\
    The previous argument is enough to show that we can also apply this same sequence to the module $w_1\oplus w_2$, since the two subsequences are independent of each other. \\

    Suppose now that $m_2$ is a submodule of $w_3$ and it is reached by the sequence $(a_i), 1\leq i\leq n$. Notice that we want to show that we can apply the same sequence not the the whole module $w_1\oplus w_2$, but to the submodule of it $w_1\setminus s_2\oplus w_2$. Additionally, it is easy to see that $w_1\setminus s_2= u_1\rightarrow s_1$. Therefore, we can again split the sequence $(a_i), 1\leq i\leq n$ into two smaller ones, $(b_i)=(a_i), 1\leq i\leq j$ and $(c_i)=(a_i), j+1\leq i\leq n$ for some $0\leq j\leq n$, where $(b_i)$ consists of vertices only of the string $u_1\rightarrow s_1$ while $(c_i)$ consists of vertices only of the string $u_1\rightarrow s_1$. It is important to order them in that way, since any top of the substring $u_1\rightarrow s_1$ in $w_3$ is a top, but not necessarily any top of $u_2^{-1}$ is a top in $w_3$. 
    \end{proof}

\end{lemma}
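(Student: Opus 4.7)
The plan is to prove both assertions by induction on the length $n$ of the removal sequence, in the spirit of Proposition \ref{removertopwelldefined}: for each local top removed from $w_3$ or $w_4$, I would identify a corresponding top in the target module and verify that the same removal is legal there. The key structural fact to exploit is that the concatenations defining $w_3$ and $w_4$ preserve all of the substrings $h_1 u_1$, $u_2$ and the appropriate part of $h_2$ (either $s_1$ or $s_2$) up to a single gluing arrow, so away from the junction the top condition is inherited directly from $w_1$ or $w_2$. Accordingly, my map from sequences on $w_3, w_4$ to sequences on $w_1\oplus w_2$ would send each vertex to its unique ``origin'' letter, choosing the non-junction side whenever ambiguity arises.

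First I would treat $m_2\subseteq w_4$. Because $c=a$ and the extension $w_2'=h_3 u_2 k$ places a right arrow $t(u_2)\to k$, the vertex where $s_2$ meets $u_2^{-1}$ in $w_4$ is pinned as a socle, so no top of $w_4$ ever straddles it. This yields the crucial splitting principle: any valid removal sequence decomposes into two independent subsequences, one on the subword $h_1 u_1 s_2$ coming from $w_1$ and one on $u_2^{-1}$ (together with the terminal hook) coming from the reversal of $w_2$. Each subsequence is then transported tile by tile: a top in $h_1 u_1$ matches a top in $w_1$; a top inside $s_2\subseteq h_2$ becomes a top of $w_1$ once its hook-neighbors have been peeled, which is guaranteed inductively; and a top in $u_2^{-1}$ corresponds under reversal to a socle peeling of $w_2$, hence a genuine top removal from the opposite end of the summand $w_2$ in $w_1\oplus w_2$.

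For $m_1\subseteq w_3$, the target is $(w_1\setminus s_2)\oplus w_2$, and the guiding idea is that $w_3=h_1 u_1 s_1 b u_2^{-1}\cdots$ uses precisely the portion of $w_1$ that remains after peeling off $s_2$. I would first verify that $s_2$ is a valid initial removal from $w_1$: by the hook structure around the puncture $p$, $s_2$ forms a connected chain of local tops relative to $u_1 s_1 a k$, so $w_1\setminus s_2=h_1 u_1 s_1$ is well defined and matches the prefix of $w_3$ up to the gluing arrow $b$. Since $b$ is opposite to $a$, no spurious top appears at the junction, and the induction then proceeds by the same case analysis as for $w_4$, with $s_1$ in place of $s_2$ and no socle obstruction.

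The main obstacle I anticipate is exactly the junction bookkeeping: ruling out that a top of $w_3$ or $w_4$ sitting adjacent to the gluing arrow could fail to correspond to a top of $w_1\oplus w_2$ (respectively $(w_1\setminus s_2)\oplus w_2$) at the induction step. This requires reading off the arrow directions dictated by Definition~\ref{resolution of incompatible tagging  one puncture} in all four sub-cases of $a\in\{\leftarrow,\rightarrow\}$ and $b=\pm a$, and handling the exceptional situation, parallel to the one in Proposition~\ref{removertopwelldefined}, where the standard ``follow the side of origin'' rule has to be reversed at the last moment and a letter be removed from the opposite summand; this is precisely what makes the target $w_1\setminus s_2\oplus w_2$ in the $w_3$ case (as opposed to the full $w_1\oplus w_2$) the correct one. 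Once this local check is carried out, the canonical submodules $m_1'$ and $m_2'$ are produced directly by the induction.
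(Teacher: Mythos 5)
Your overall plan follows the same spirit as the paper's argument, namely an induction that transports removal sequences term by term, using the socle at $k$ to decouple the two halves of $w_4$ and the removability of $s_2$ to identify $w_1\setminus s_2$ with the $h_1 u_1 s_1$ prefix of $w_3$. However, two points merit attention.

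First, the remark that ``a top in $u_2^{-1}$ corresponds under reversal to a socle peeling of $w_2$'' is incorrect: reversing a string changes only the reading direction, not the arrow directions, so tops and socles are preserved. A top of $u_2^{-1}$ is exactly a top of the subword $u_2$ of $w_2$, and the corresponding operation in $w_1\oplus w_2$ is an ordinary top removal, not a socle removal. Your subsequent ``hence a genuine top removal'' salvages the conclusion, but the reasoning that led there is confused, and if taken literally (removing socles rather than tops from $w_2$) it would produce quotients rather than submodules and break the argument.

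Second, the anticipated ``exceptional situation, parallel to the one in Proposition~\ref{removertopwelldefined}, where the standard `follow the side of origin' rule has to be reversed'' does not arise in this lemma. That exception in the crossing case stems from the duplicated overlap $m$, which makes the map from removed letters to summands ambiguous. Here there is no duplication: every vertex of $w_3$ (resp.\ $w_4$) occurs exactly once in $(w_1\setminus s_2)\oplus w_2$ (resp.\ $w_1\oplus w_2$), so the transport of each removal is forced. In fact the lemma is simpler than the crossing version precisely for this reason: $w_3$ and $w_4$ are obtained from the target modules by adding a single gluing arrow (which creates the socle at $k$ for $w_4$, and an extra incoming arrow at $s(u_2^{-1})$ for $w_3$), so $w_3$ and $w_4$ have strictly more top-removal constraints and any valid sequence on them transports directly. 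The paper's proof exploits this by splitting the sequence at the socle $k$ for $w_4$ and by ordering the $u_1 s_1$ subsequence before the $u_2$ subsequence for $w_3$; importing Proposition~\ref{removertopwelldefined}'s exceptional mechanism is unnecessary and suggests a misreading of where the difficulty in that proposition actually came from.
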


The above lemma, is very important, since it shows us that there is a way to construct a map between the sets that we want to. However, this is not enough, since we want to define that map in a unique way, which will actually preserve the information lost from passing to a loopstring given a loop graph. 

\begin{proposition}\label{unique remove tops incompatibility one puncture}
    Let $w_1, w_2, w_3$ and $w_4$ as they were defined in \ref{resolution of incompatible tagging  one puncture}. Suppose that $m_1$ (resp. $m_2$) is a proper submodule of  $w_3$ (resp $w_4$). If $m_1$  (resp. $m_2$) is reached from $w_3$ (resp. $w_4$) by the sequence $(a_i), 1\leq i\leq n$, then we can apply the same sequence $(a_i)$ to the module $w_1\setminus s_2\oplus w_2$ (resp. $w_1\oplus w_2$) to obtain a unique submodule $m_1'$ (resp. $m_2'$) of it. 
    \begin{proof}
        the uniqueness of the modules $m_1'$ and $m_2'$ follows straightforwardly by the construction of the modules as it was done in lemma \ref{removing tops for one puncture incompatibility}, by noticing that the two subsequences of $(a_i), j+1\leq i\leq n$ as they were defined in the proof were independent of each other for both cases.
    \end{proof}
\end{proposition}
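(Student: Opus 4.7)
The plan is to piggy-back on the decomposition of the sequence $(a_i)$ already exhibited in Lemma~\ref{removing tops for one puncture incompatibility}, and then to upgrade the existence of $m_1'$ (resp. $m_2'$) to uniqueness by tracking positions. The starting observation is that in the previous lemma we produced a splitting $(a_i)_{1 \leq i \leq n} = (b_i)_{1 \leq i \leq j} \cup (c_i)_{j+1 \leq i \leq n}$ where the two halves are extracted from disjoint substrings of $w_3$ (respectively $w_4$): the $b_i$ come from the $u_2^{-1}$ segment and the $c_i$ from the remaining segment $h_1 u_1 s_1$ (respectively $h_1 u_1 s_2$). The crucial point is that these two segments of $w_3$ (or $w_4$) map, via the construction of the resolution, to two disjoint canonical subloopstrings of the target: the $u_2^{-1}$ segment corresponds to $u_2 \subset w_2$, while the $h_1 u_1 s_1$ (or $h_1 u_1 s_2$) segment corresponds to the matching substring of $w_1 \setminus s_2$ (or $w_1$).

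With this in hand, I would argue as follows. Each letter $a_i$ has a well-defined position in $w_3$ (or $w_4$), not merely a label in $Q_T$; this position lies either in the $u_2^{-1}$ segment or in the $h_1 u_1 s_1$ (respectively $h_1 u_1 s_2$) segment, but never in both. The canonical embedding of each of these two segments into its image inside $w_1 \setminus s_2 \oplus w_2$ (respectively $w_1 \oplus w_2$) is injective on positions, so the position of $a_i$ in the source forces a unique position in the target. Consequently there is no ambiguity about which occurrence of the vertex label $a_i$ must be removed, even if that label happens to appear elsewhere in $w_1 \oplus w_2$.

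It remains to check that this prescription is insensitive to the order in which we interleave $(b_i)$ and $(c_i)$. Since the two subsequences act on disjoint summands of $w_1 \setminus s_2 \oplus w_2$ (one supported in $w_2$, the other in $w_1 \setminus s_2$, and analogously for $w_4$), each $b_i$ removal commutes with each $c_i$ removal. Thus any legal order of application produces the same submodule, which we take to be $m_1'$ (respectively $m_2'$).

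The main obstacle I anticipate is verifying that the extreme case flagged in the proof of Lemma~\ref{removing tops for one puncture incompatibility} — when $k$ itself belongs to the sequence — does not break the positional bijection. Removing $k$ requires first removing every letter of $s_2$ (since $k$ sits behind $s_2$ in $w_4$ and has an incoming arrow from $t(w_2)$), so $k$ always appears in the $c$-subsequence in a position that is still unambiguously identified with its unique occurrence inside $h_2 \subset w_1$. A careful case distinction here, checking the $w_3$-side analogously with the roles of $s_1$ and $s_2$ swapped, closes the argument and yields the claimed uniqueness.
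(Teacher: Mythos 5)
Your proposal is correct and follows essentially the same route as the paper: both rely on the splitting of the sequence $(a_i)$ from Lemma~\ref{removing tops for one puncture incompatibility} into two subsequences supported on disjoint segments that land in disjoint summands of the target, and both deduce uniqueness from the resulting independence (commutativity) of the two halves. You supply more detail than the paper's one-line argument — making explicit the positional injectivity of the canonical embeddings and the extreme case involving $k$ — but the underlying idea is identical; note only that for $w_3$ the paper orders the subsequences with the $u_1 s_1$ part first, the opposite of what you wrote, though this does not affect the disjointness argument.
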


Having established that there is a unique way of passing from submodules of $w_3$ and $w_4$ to submodules of $w_1\oplus w_2$ we are now ready to construct the desired bijection. 

\begin{theorem}\label{bijection of modules incompatibility one puncture}
    Let $w_1, w_2, w_3$ and $w_4$ as they were defined in \ref{resolution of incompatible tagging  one puncture}. Then there exists a bijection:
    \[
    \Psi \colon SM(w_3) \bigcup SM(w_4) \to SM(w_1\oplus w_2),
    \]
    which is defined as follows:
    \begin{gather*}
    \Psi(w_3) = (w_1\setminus s_2)\oplus w_2, \\
    \Psi(w_4) = w_1\oplus w_2,     
    \end{gather*}
    If $w$ is a submodule of $w_3$  which is reached by the sequence $\{a_i\} 1\leq i\leq n$, and $w'$ the unique submodule of $(w_1\setminus s_2)\oplus w_2$ which is reached by the same sequence, then we define:
    \[
    \Psi(w) = w'.
    \]
    If $w$ is a submodule of $w_4$, then $\Psi(w)$ is defined in a similar way.
    
\end{theorem}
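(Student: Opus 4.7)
The plan is to mirror the three-step strategy already used in Theorem~\ref{bijofmod} and Theorem~\ref{bij of mod arc and band}. First, well-definedness of $\Psi$ on submodules is already guaranteed by Proposition~\ref{unique remove tops incompatibility one puncture}: a top-removal sequence $(a_i)$ realizing $m_1 \in SM(w_3)$ (respectively $m_2\in SM(w_4)$) produces a unique submodule of $(w_1\setminus s_2)\oplus w_2$ (respectively $w_1\oplus w_2$). The only thing that needs checking on the ``boundary'' values is that $\Psi(w_3)$ and $\Psi(w_4)$ are indeed submodules of $w_1\oplus w_2$; the first is immediate, and for $(w_1\setminus s_2)\oplus w_2$ one only needs to observe that the vertices of $s_2$ sit at the very end of $w_1 = h_1u_1 s_1 a k s_2$ and can therefore be peeled off as successive tops.

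For injectivity I would argue as in Proposition~\ref{injofPsi} and Proposition~\ref{inj for band and arc}. Take $m_1\in SM(w_3)$ and $m_2\in SM(w_4)$ with $\Psi(m_1)=\Psi(m_2)$. Let $(a_i)_{1\le i\le n}$ realize $m_1$ from $w_3$ and $(b_j)_{1\le j\le k}$ realize $m_2$ from $w_4$. Since $\Psi(w_3)=(w_1\setminus s_2)\oplus w_2$ while $\Psi(w_4)=w_1\oplus w_2$, the equality of images forces $\{b_j\}$ to contain all vertices of $s_2$ in addition to a subset matching $\{a_i\}$. Using Remark~\ref{backtrackinginlattice} I can reorder the sequence $(b_j)$ so that the vertices lying in $s_2$ appear as the initial segment; transporting this reordered sequence back to $w_4$ would require that $s_2$ be removable as a terminal zig-zag of tops from $w_4=h_1u_1 s_2 c u_2^{-1} h_2^{-1}$. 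But in $w_4$ the substring $s_2$ sits between $u_1$ and the arrow $c$, and by construction one of the ends of $s_2$ carries an incoming arrow either from $u_1$ or from $c$, so $s_2$ is never a local top in $w_4$ without first deleting part of $u_2^{-1}$ or $h_2^{-1}$. This is a direct contradiction, completely analogous to the obstruction created by the overlap $m$ in the regular crossing case.

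Surjectivity then comes for free. By Remark~\ref{termsincoeffreecase} the coefficient-free skein identity in this case reduces to a known untagged identity and hence the two sides of the prospective resolution have the same number of summands; via the perfect-matching/submodule bijection of Theorem~\ref{submodule lattice bijective to perfect matchings lattice}, this translates into the cardinality identity
\[
|SM(w_3)|+|SM(w_4)| \;=\; |SM(w_1\oplus w_2)|.
\]
Since $\Psi$ is an injection between finite sets of equal cardinality, it is a bijection, and the definition of $\Psi$ on $w_3$ and $w_4$ fixes the two ``top'' values as stated.

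The main obstacle will be the injectivity step. Unlike the regular crossing case, where a whole two-sided overlap $m$ obstructs the forbidden reordering, here the obstruction is local to the hook $h_2 = s_1 a k s_2$ and depends delicately on the direction of the arrow $a$ and on whether $h_1$ (or $h_3$, via the extension $w_2'$) is empty. To convert the heuristic above into a proof I expect a small case analysis -- on the sign of $a$ and on emptiness of $u_1,u_2$ -- showing in each case that $s_2$ never becomes a top of $w_4$ until part of $u_2^{-1}h_2^{-1}$ has been removed, which is precisely what prevents the two sequences $(a_i)$ and $(b_j)$ from producing the same image.
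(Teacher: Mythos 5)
Your overall strategy (well-definedness via Proposition~\ref{unique remove tops incompatibility one puncture}, injectivity by contradiction, surjectivity by the cardinality identity from Remark~\ref{termsincoeffreecase}) matches the paper exactly, and the framing of $\Psi(w_3)$, $\Psi(w_4)$, and the surjectivity step are all fine. The gap is in the injectivity argument.

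You observe that $\{b_j\}$ must account for all of $s_2$, then invoke Remark~\ref{backtrackinginlattice} to ``reorder $(b_j)$ so that the vertices lying in $s_2$ appear as the initial segment,'' transport this back to $w_4$, and declare a ``direct contradiction'' because $s_2$ is not initially a local top. But Remark~\ref{backtrackinginlattice} only permits swapping a \emph{tail} of a removal sequence for a rearranged tail that is already known to be valid --- it does not license an arbitrary reorder bringing $s_2$ to the front. Moreover, even granting the reorder, the observation that $s_2$ is not a local top of $w_4$ without first deleting part of $u_2^{-1}h_3^{-1}$ is not a contradiction: the original $(b_j)$ can perfectly well remove portions of $u_2^{-1}$ or $h_3^{-1}$ before reaching $s_2$, and nothing you have said rules that out. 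The analogy to the overlap $m$ in Proposition~\ref{injofPsi} breaks down because there the obstruction is that $m$ appears on \emph{both} sides with opposite incidence, whereas here $s_2$ appears only on the $w_4$ side and its removal from $w_4$ is simply a legitimate deeper submodule.

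The paper's actual injectivity argument (Proposition~\ref{injectivity of Psi incompatibility one puncture}) uses a different, structural obstruction: $s_1$ is a substring of $w_3$ but of \emph{neither} $w_4$ nor $w_2$, so by construction of $\Psi$ the image $\Psi(m_2)$ of any $m_2\in SM(w_4)$ always retains the entire string $s_1$. On the other hand, tracing the cascade --- removing $s_2$ forces removing $s(u_2^{-1})$, which (via the direct-arrow run through $s_1$ back to $t(u_1)$) forces $t(u_1)$ and all of $s_1$ into $(a_i)$ --- shows $\Psi(m_1)$ must \emph{not} contain $s_1$, a genuine contradiction with $\Psi(m_1)=\Psi(m_2)$. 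To repair your argument you should replace the reorder-and-top heuristic with this ``$s_1$ lives in $w_3$ but not in $w_4$'' observation.
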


As the reader may already be familiar by now, we only need to prove that the map defined on Theorem \ref{bijection of modules incompatibility one puncture} is an injection, since then the bijectivity will follow from Remark \ref{termsincoeffreecase}.\\

\begin{proposition}\label{injectivity of Psi incompatibility one puncture}
    The map $\Psi$ as it was defined in Theorem \ref{bijection of modules incompatibility one puncture} is an injection.
    \begin{proof}

        Let $m_1, m_2\in SM(w_3) \bigcup SM(w_4)$ such that $\Psi(m_1)=\Psi(m_2)$.\\
        By construction of the map $\Psi$, if both $m_1, m_2\in SM(w_3)$ or $m_1, m_2\in SM(w_4)$, then $m_1= m_2$.\\
        We will now show that if $m_1\in SM(w_3)$ and $m_2\in SM(w_4)$ then $\Psi(m_1)\neq \Psi(m_2)$. Assume that $\Psi(m_1)=\Psi(m_2)$. Since $\Psi(m_1)=\Psi(m_2)$ and $\Psi(m_1)\subset w_1\setminus s_2$, by construction of $\Psi$ if $(b_i), 1\leq i\leq n$ is a sequence of the simples removed to reach $\Psi(m_2)$, we must have that all the elements of the string $s_2$ must be contained in that sequence $(b_i)$. Since there exists an arrow $k\leftarrow s(u_2^{-1})$ we deduce that $s(u_2^{-1})$ must also be an element of this sequence.\\
        Assume now that $(a_i), 1\leq i\leq n$ is a sequence of the simples removed from $m_1$ to reach $\Psi(m_1)$.
        Since $s(u_2^{-1})$ is part of the sequence $(b_i)$, it must be also part of the sequence $(a_i)$, since otherwise we could not have $\Psi(m_1)=\Psi(m_2)$. However, this in turn means that $t(u_1)$ as well as the whole string $s_1$ must belong to the sequence $(a_i)$, since there is a sequence of direct arrows starting from $t(u_1)$ and going at least until $s(u_2^{-1})$. The above arguments gives  us that the module $\Psi(m_1)=\Psi(m_2)$ does not contain any element of the hook $h_3$. However this means that a submodule of $w_4$ is mapped to a module which does not contain any element of the hook $h_3$ which is impossible by the construction of the map $\Psi$ and the fact that the elements of the string $s_1$ are not in $w_4$. (Basically submodules of $w_4$ are always mapped to elements which contain the whole string $s_1$).\\
    \end{proof}
\end{proposition}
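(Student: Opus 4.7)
The plan is to split into cases according to which of the two sets $SM(w_3)$ and $SM(w_4)$ contain $m_1$ and $m_2$, then reduce the nontrivial case to a propagation argument along the chain of arrows through the hook at the puncture. If $m_1$ and $m_2$ both lie in $SM(w_3)$, or both in $SM(w_4)$, then $\Psi$ restricted to either piece is the unique top-removal map of Proposition~\ref{unique remove tops incompatibility one puncture}: distinct removal sequences (up to the permutation equivalence of Remark~\ref{backtrackinginlattice}) yield distinct submodules, and a common image forces a common source. So the only case requiring work is $m_1\in SM(w_3)$ and $m_2\in SM(w_4)$.

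In that case I would exploit the asymmetry $\Psi(w_3) = (w_1\setminus s_2)\oplus w_2$ versus $\Psi(w_4) = w_1\oplus w_2$: every vertex of the substring $s_2$ appears in the target of $\Psi|_{SM(w_4)}$ but is absent from the target of $\Psi|_{SM(w_3)}$. Assuming $\Psi(m_1)=\Psi(m_2)$, let $(b_i)$ be the removal sequence producing $\Psi(m_2)$ from $w_1\oplus w_2$. Since $\Psi(m_1)$ contains no vertex of $s_2$, the sequence $(b_i)$ must contain every vertex of $s_2$. I would then propagate this requirement using the local arrow configuration around the puncture: the arrow $k\leftarrow s(u_2^{-1})$ guarantees that $s(u_2^{-1})$ is also in $(b_i)$, and the directed chain of arrows running from $t(u_1)$ through $s_1$ to $k$ (given by the fact that $s_1\,a\,k\,s_2$ is the hook portion of $h_2$) iteratively forces $t(u_1)$ and the entire $s_1$ into $(b_i)$ as well.

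The resulting image $\Psi(m_1)=\Psi(m_2)$ therefore contains no vertex of $s_1$. I would then derive a contradiction by tracing back how $\Psi$ was defined on $SM(w_4)$: the submodule $m_2\subseteq w_4 = h_1 u_1 s_2 c u_2^{-1} h_2^{-1}$ is reached from $w_4$ by a sequence of top removals, and $\Psi$ applies the corresponding sequence to $w_1\oplus w_2$. Because the vertices of $s_1$ do not occur in $w_4$ at all, $\Psi$ applied to any submodule of $w_4$ retains the full $s_1$-contribution sitting inside $w_1$, contradicting the conclusion above.

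The main obstacle I anticipate is making the propagation step airtight. One must verify that the zig-zag/straight structure of the hook portion of $h_2$ between $s(u_2^{-1})$ and $s_2$ really does force every vertex along it to be deleted as a top in turn, and that no alternative removal order (via the permutation freedom of Remark~\ref{backtrackinginlattice}) can avoid this. I would formalize this as a short auxiliary lemma saying that within $w_1$ the subword $u_1\,s_1\,a\,k\,s_2$ supports a unique monotone order for top removal once $s_2$ is deleted, and then quote it to close the argument.
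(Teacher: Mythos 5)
Your proposal takes essentially the same route as the paper's proof: you exploit the asymmetry between $\Psi(w_3)=(w_1\setminus s_2)\oplus w_2$ and $\Psi(w_4)=w_1\oplus w_2$ to force the whole of $s_2$ into the removal sequence, propagate through the arrow $k\leftarrow s(u_2^{-1})$ and the directed chain through $s_1$ to deduce that $s_1$ must also be entirely removed, and then contradict this with the observation that $s_1$ never occurs in $w_4$, so $\Psi$ applied to any submodule of $w_4$ retains all of $s_1$. One small point in your favor: where the paper's write-up says the image ``does not contain any element of the hook $h_3$,'' the substring actually forced out is $s_1$, which is exactly what you wrote and what the paper's closing parenthetical remark confirms; your phrasing is the more consistent one. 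Your suggestion to isolate the propagation step as an auxiliary lemma about the unique monotone removal order along $u_1\,s_1\,a\,k\,s_2$ would tighten an argument the paper leaves somewhat implicit.
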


\begin{proof}[Proof of Theorem \ref{bijection of modules incompatibility one puncture}]
The fact that $\Psi$ is a bijection follows immediately from the fact that $\Psi$ is an injective map (Proposition \ref{injectivity of Psi incompatibility one puncture}) and Remark \ref{termsincoeffreecase}.

\end{proof}

\subsection{ Resolution of grafting incompatibility of two arcs at a puncture}

So far in this chapter we have covered the resolution of an incompatibility of two arcs at a puncture when they possibly do not have any common intersections to the arcs of the given triangulation. However the situation depicted in Figure \ref{fig:Grafting incompatibility at a puncture} can happen, which is reminiscent of the grafting of two arcs which was covered in earlier chapters. This is why we will call such an incompatibility a \textit{grafting incompatibility}.\\
The structure of this section  will be similar to the previous one. We will first give a proper definition of what a grafting incompatibility is and how this can be resolved. Later we will construct and prove the desired bijection which in turn will help us prove the announced theorem of this chapter.\\

Suppose that $(S, M, P)$ is a punctured surface and $T$ is a triangulation of the surface. Let $\gamma_1$ and $\gamma_2$ be two arcs which have exactly one incompatible tagging at a puncture $p$, and $w_1, w_2$ the associated loopstring to each arc respectively. We will say that $\gamma_1$ and $\gamma_2$ have a \textit{grafting incompatibility} at the puncture $p$ when the arcs cross the same arc $t$ of the triangulation $T$ before meeting at the puncture $p$ (Figure \ref{fig:Grafting incompatibility at a puncture}). \\
One big difference of this incompatibility to the regular incompatibility that was covered earlier, is the fact that one does not need to consider the extended loopstring in order to notice that such a problem occurs on the surface, and the definition is more reminiscent of the classical resolution of a crossing of two arcs with common overlap. However, the idea of the resolution still remains the same. One must follow on arc until the puncture and then create two new arcs by turning left and right respectively around the puncture before continuing following the path of the other arc.

\begin{figure}
    \centering
    \includegraphics[scale=0.7]{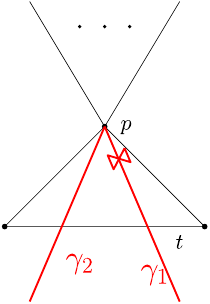}
    \caption{Grafting incompatibility at a puncture.}
    \label{fig:Grafting incompatibility at a puncture}
\end{figure}

\begin{definition}\label{resolution of grafted tagging at one puncture}
 Suppose that $w_1$ and $w_2$ are two abstract loopstrings such that $w_1$ contains a hook at its end, while $w_2$ does not contain a hook at its end. Assume that $w_1= h_1u_1amh_2$ and $w_2= h_3u_2m$ where $u_i$ and $m$ are substrings, $h_i$ are hooks, with $h_2\neq \emptyset$ and $a$ is an arrow. We then say that $w_1$ and $w_2$ have a \textit{grafting incompatibility} at their endpoints.\\

The resolution of the grafting incompatibility of $w_1$ and $w_2$ are the loopstrings $w_3$ and $w_4$ which are defined as follows:
    \begin{enumerate}[(i)]
        \item  $w_3 = h_1u_1mh_2m^{-1}u_2^{-1}$,
        \item
         \begin{equation*}
             w_4 = \begin{cases}
                 h_1u_1eu_2^{-1}h_3 & \text{if $u_1, u_2 \neq \emptyset$ and where $e$ is an opposite arrow to $a$}.\\
                 h_1u_1' & \text{if $u_1\neq \emptyset$ and $u_2= \emptyset$}.\\
                 u_2'h_3 & \text{if $u_2\neq \emptyset$ and $u_1= \emptyset$}.
             \end{cases}
         \end{equation*}
         where $u_1'$ is such that $u_1= u_1'r'$ where $r''$ is a maximal sequence of inverse arrows and $u_2'$ is such that $u_2= u_2'r''$ where $r'$ is a maximal sequence of direct arrows.
    \end{enumerate} 
    
\end{definition}

The following example is based on Figure \ref{fig:Surface grafting incompatibility}, in which we can see how visually the resolution may look the same, but looking at the intersection of the arcs $\gamma_3$ and $\gamma_4$ one can notice that there is a fan (in this example it consists of only the arc 4), which fan the arc $\gamma_4$ is not actually intersecting, explaining the existence of the cases for $w_4$ in the above definition.

\begin{figure}
    \centering
    \includegraphics[scale=0.7]{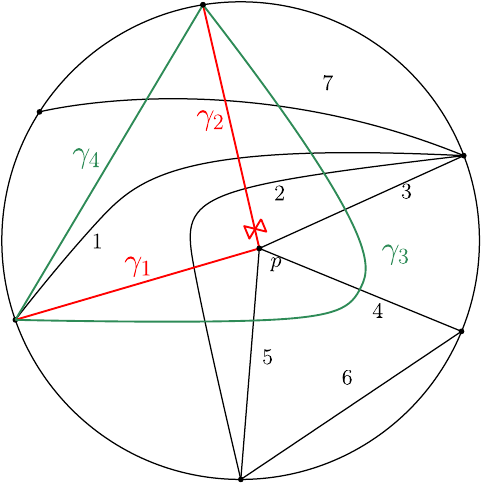}
    \caption{Resolution of grafting incompatibility at a puncture p.}
    \label{fig:Surface grafting incompatibility}
\end{figure}

\begin{example}
Suppose that $(S, M, P, T)$ is the triangulated surface that appears on Figure \ref{fig:Surface grafting incompatibility}. \\
Then we have that $w_1= 2\hookrightarrow 5 \leftarrow 4 \leftarrow 3$ and $w_2= 7\leftarrow 1\leftarrow 2$. Using Definition \ref{resolution of grafted tagging at one puncture} and since we can write $u_1= 7\leftarrow 1$ as $u_1= u_1'r'$ where $u_1'= 7$ and $r'= 1$, we have:
\[
w_3= 7\leftarrow 1\leftarrow 2 \leftarrow 3 \rightarrow 4 \rightarrow 5\leftarrow 2,
\]
\[
w_4= 7.
\]
\end{example}

As we have already done in each chapter we will first present an important lemma and a proposition which will be used for the definition of the desired bijection. In the continuation of this section we will focus only on one case, regarding how $w_4$ is defined in the resolution, since similar arguments and constructions can be given for the rest of the cases.

\begin{lemma}\label{removing tops for one puncture grafting incompatibility}
 Let $w_1, w_2, w_3$ and $w_4$ as they were defined in Definition~\ref{resolution of grafted tagging at one puncture} with $u_1=\emptyset$. Suppose that $m_1$ (resp. $m_2$) is a proper submodule of  $w_3$ (resp $w_4$). If $m_1$  (resp. $m_2$) is reached from $w_3$ (resp. $w_4$) by the sequence $(a_i), 1\leq i\leq n$, then we can apply the same sequence $(a_i)$ to the module $w_1\oplus w_2$ (resp. $w_1\oplus u_2'$) to obtain a submodule $m_1'$ (resp. $m_2'$) of it.
    \begin{proof}
    Suppose that $m_2$ is a submodule of $w_4$ and it is reached by the sequence $(a_i), 1\leq i\leq n$. One can easily see we can just take out the same sequence from the module $w_1\oplus u_2'$, by applying the sequence only to $u_2'\subset w_1\oplus u_2$. \\

    Suppose now that $m_1$ is a submodule of $w_3$ and it is reached by the sequence $(a_i), 1\leq i\leq n$. The only tricky part of this case is the duplication of the string $m$ in $w_3$, since removing elements from $m$ when viewed as a substring of $w_3$, does not give us a natural choice of removing this element from either $w_1$ or $w_2$.\\
    W.l.o.g. let $h_2= x_1,\dots,x_n$ and $w_1=h_1u_1m\hookrightarrow x_k\leftarrow\dots\leftarrow x_1$. We then have that $w_3= h_1u_1m\rightarrow x_k\leftarrow\dots\leftarrow x_1\rightarrow m^{-1}u_2^{-1}$. It is now easy to see that if the sequence $(a_i)$ does not contain any of the elements in the set $\{x_1,\dots,x_k\}$ then if $a_j$ is in $h_1,u_1,h_3,u_2$ or $m$ for some $1\leq j\leq n$, then there is a unique and obvious choice for the same element to be removed from $w_1\oplus w_2$. Assume now that there is at least one $a_j\in h_2$ for some $1\leq j\leq n$. Then this implies also that $x_k\in (a_i)$ since it is locally a top. Therefore we must remove also $x_k$ from $h_w$ viewed as a hook of $w_1$, which makes $m$ locally a top in $w_1$ when viewed next to $h_2$. Therefore when we are removing elements twice from both the substrings $m$ of $w_3$ we can also remove the same elements from $w_1$ or $w_2$ which completes the proof. 
    \end{proof}
\end{lemma}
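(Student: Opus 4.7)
The plan is to prove the lemma by treating the two resolutions separately, with the $m_1\subseteq w_3$ case being the source of the only genuine difficulty: since $w_3$ contains two copies of the overlap $m$, the removal of a top in $w_3$ does not a priori determine a unique choice of top in $w_1\oplus w_2$. First I would dispose of the case $m_2\subseteq w_4$. In the setting $u_1=\emptyset$, the relevant branch of Definition~\ref{resolution of grafted tagging at one puncture} gives $w_4=u_2'h_3$, so $w_4$ sits as a canonical sub-loopstring of $w_1\oplus u_2'$ (the $h_3$ piece and the $u_2'$ piece both appear verbatim there), and hence any sequence of top-removals in $w_4$ lifts by applying it to the same labels in $w_1\oplus u_2'$. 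This case contains no combinatorics beyond identifying the two occurrences.

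For the main case, I would set up an explicit vertex-level map $\phi$ from the vertices of $w_3=h_1\,m\,h_2\,m^{-1}u_2^{-1}$ into those of $w_1\oplus w_2=(h_1\,a\,m\,h_2)\oplus(h_3\,u_2\,m)$, sending $h_1$, $h_2$, and the first (unreversed) copy of $m$ into the corresponding part of $w_1$, and sending $u_2^{-1}$ together with the second copy of $m$ into the corresponding part of $w_2$. The key sanity check is that the multiset of vertex labels agrees on both sides: each vertex of $m$ occurs with multiplicity two, while every other vertex occurs once. This is the only reasonable choice consistent with the construction of $w_3$, and it will be the map used to transfer the removal sequence.

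The main step is an induction on the length $n$ of $(a_i)$. Assuming inductively that $a_1,\dots,a_{i-1}$ have been consistently removed on both sides and that $a_i$ is a top of the current submodule of $w_3$, the only delicate positions for $a_i$ are (i) inside $h_2$, and (ii) inside one of the two copies of $m$. For (i), $a_i$ being a top in $w_3$ forces its neighbours in $h_2$, its neighbour in the first copy of $m$, and (if applicable) its neighbour in the second copy of $m$ to have already been removed; under $\phi$ this deletes every neighbour of $a_i$ inside $w_1$, which is exactly what topness in $w_1\oplus w_2$ requires. For (ii), each individual copy of $m$ in $w_3$ is mapped isomorphically either into $w_1$ or into $w_2$, and the induced arrow configurations around $a_i$ are preserved by $\phi$, so topness again transfers. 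All other positions admit a unique lift by construction, so the induction closes.

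The hard part will be bookkeeping around the hook $h_2$ between the two copies of $m$ in $w_3$: one must verify that whenever $a_i$ becomes a top in $w_3$ by virtue of the interaction through $h_2$, the corresponding vertex in $w_1$ really is a top of the current submodule of $w_1\oplus w_2$. This is precisely the place where the duplication of $m$ could create a mismatch, and it is essentially the content of the author's ``$x_k$ is locally a top'' observation that I would need to formalise; once that is settled, the rest of the argument is routine case analysis on where $a_i$ sits in $w_3$.
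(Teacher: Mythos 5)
Your treatment of the $w_4$ case contains a concrete error. With $u_1=\emptyset$, Definition~\ref{resolution of grafted tagging at one puncture} gives $w_4=u_2'h_3$, and you assert that the ``$h_3$ piece and the $u_2'$ piece both appear verbatim'' in $w_1\oplus u_2'$. But the hooks in $w_1=h_1\,a\,m\,h_2$ are $h_1$ and $h_2$, and $u_2'$ is a plain string with no hook, so $h_3$ (the hook at the \emph{other} endpoint of $w_2$) does not appear in $w_1\oplus u_2'$ at all. For the same reason your ``key sanity check'' for $\phi$ is false: the vertex multisets of $w_3=h_1mh_2m^{-1}u_2^{-1}$ and $w_1\oplus w_2=(h_1\,a\,m\,h_2)\oplus(h_3\,u_2\,m)$ do \emph{not} agree -- the right-hand side carries the extra block $h_3$. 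The correspondence between removal sequences is therefore not a matter of matching vertex labels one-to-one, and both halves of your argument start from premises that fail whenever $h_3\neq\emptyset$.

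The second and more structural gap is in your induction step for $a_i\in h_2$. You claim that if $a_i$ is a top of the current submodule of $w_3$ then under $\phi$ all neighbours of $\phi(a_i)$ in $w_1$ have already been deleted. This does not follow from the picture you set up, because $h_2$ sits inside $w_1$ as a genuine hook (so there is an extra loop arrow between its boundary tile and $t(m)$ of the \emph{first} copy of $m$), whereas in $w_3$ the same block $h_2$ is an ordinary zig-zag sandwiched between the two copies of $m$, and its boundary vertex is adjacent to $t(m)$ of the \emph{second} copy $m^{-1}$. Under your $\phi$ those two occurrences of $t(m)$ land in different summands ($w_1$ vs.\ $w_2$), so topness in $w_3$ does not transfer by inspecting $\phi$-images of $w_3$-neighbours. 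This is exactly the point that requires an argument (the paper handles it by observing that any top-removal touching $h_2$ forces the removal of the distinguished vertex joining $m$ to the hook, after which $m$ becomes a local top of $w_1$ and the two copies of $m$ can be synchronised). You explicitly defer this step, so as written the proof is incomplete precisely at the part that carries all the content of the lemma.
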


The next proposition, provides the uniqueness of the construction of the associated submodules as it is needed later for the construction of a suitable mapping.   

\begin{proposition}\label{unique remove tops grafting incompatibility one puncture}
Let $w_1, w_2, w_3$ and $w4$ as they were defined in \ref{resolution of grafted tagging at one puncture} with $u_1=\emptyset$. Suppose that $m_1$ (resp. $m_2$) is a proper submodule of  $w_3$ (resp $w_4$). If $m_1$  (resp. $m_2$) is reached from $w_3$ (resp. $w_4$) by the sequence $(a_i), 1\leq i\leq n$, then we can apply in a unique the same sequence $(a_i)$ to the module $w_1\oplus w_2$ (resp. $w_1\oplus u_2'$) to obtain a submodule $m_1'$ (resp. $m_2'$) of it.
    \begin{proof}
        The uniqueness of the module $m_2'$ follows straightforwardly by the construction of the module as it was done in lemma \ref{removing tops for one puncture grafting incompatibility}, by noticing that we can just``forget" the module $w_1$, and remove the same tops from $u_2'$ when viewed as a submodule of $w_1\oplus u_2'$.
    \end{proof}
\end{proposition}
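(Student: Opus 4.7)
The plan is to verify uniqueness of each of the two constructed submodules separately, leveraging the explicit removal rule that is implicit in the proof of Lemma \ref{removing tops for one puncture grafting incompatibility}. The existence of $m_1'$ and $m_2'$ is already provided by that lemma, so what remains is to argue that at each step of the inductive removal there is only one possible vertex of $w_1 \oplus w_2$ (respectively, of $w_1 \oplus u_2'$) that can be removed in accordance with the rule.

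For $m_2 \subseteq w_4$, I would first observe that by Definition~\ref{resolution of grafted tagging at one puncture}(ii), under the hypothesis $u_1 = \emptyset$ and $u_2 \neq \emptyset$, we have $w_4 = u_2'$, where $u_2'$ is a connected substring of $u_2$, and $u_2$ appears exactly once in the direct summand decomposition $w_1 \oplus u_2'$. Each vertex of $u_2'$ therefore admits a unique canonical embedding into $w_1 \oplus u_2'$, and this embedding lands entirely in the $u_2'$-summand and is disjoint from the vertices of $w_1$. Consequently, the sequence $(a_i)$ of tops of $w_4$ can be realized inside $w_1 \oplus u_2'$ in only one way, namely by removing at each step the corresponding vertex of the $u_2'$-summand; since this choice is forced, $m_2'$ is unique.

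For $m_1 \subseteq w_3$, I would analyze removals by the location of each $a_i$. The substrings $h_1$ (inside $w_1$), $h_3, u_2$ (inside $w_2$) each occur once in $w_1 \oplus w_2$, so the removal of any $a_i$ lying in these pieces is forced. The only ambiguity is in the substring $m$, which occurs twice in $w_3$ (once attached to $h_1 h_2$, once to $u_2^{-1}$) and also once in $w_1$ and once in $w_2$. I would then adopt the rule from the proof of Lemma~\ref{removing tops for one puncture grafting incompatibility}: if $a_i$ is removed from the first copy of $m$ in $w_3$, then it is removed from the copy of $m$ inside $w_1$, and if it is removed from the second copy in $w_3$, then from the copy inside $w_2$. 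With this rule fixed, uniqueness reduces to showing that the prescribed removal is always admissible as a top removal in $w_1 \oplus w_2$; this is precisely what was shown in the existence argument, including the subtle case where the sequence $(a_i)$ enters $h_2$, forcing $x_k$ to be removed and thereby locally freeing $m$ as a top in $w_1$ so that the removals can proceed consistently.

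The main obstacle will be the second case, since it is the one where two parallel copies of $m$ live on both sides of the map and the choice rule must be shown to be both forced (by the order and labeling of the sequence) and compatible with top-removability. I would finish by observing that once the rule assigning each $a_i$ a unique target in $w_1 \oplus w_2$ is fixed and shown to be admissible, distinct sequences $(a_i)$ end at distinct submodules by a straightforward induction on the length of $(a_i)$, so $m_1'$ is uniquely determined by the original sequence used to reach $m_1$ in $w_3$.
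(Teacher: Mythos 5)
Your proposal is broader than the paper's own proof: the paper addresses only the uniqueness of $m_2'$, observing that one may ``forget'' $w_1$ entirely and track removals inside the single summand $u_2'$, which is precisely your argument for the $w_4$ case. The $m_1'$ case is left unargued in the paper's proof of this proposition — the author evidently regards it as subsumed in the construction of Lemma~\ref{removing tops for one puncture grafting incompatibility} — whereas you spell it out explicitly.

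For $m_1'$ your route is plausible, and the rule you state (first copy of $m$ in $w_3 \mapsto$ the copy in $w_1$, second copy $\mapsto$ the copy in $w_2$) is a reasonable reconstruction of what the lemma does. However, be aware that the lemma never actually states this rule; it only asserts that when the sequence avoids $\{x_1,\dots,x_k\}$ there is ``a unique and obvious choice,'' and then treats the case where the sequence enters $h_2$ separately. Your rule and the lemma's implicit rule are not obviously the same thing: in the first regime the lemma's uniqueness appears to come from only one copy of $m$ being locally removable at all, not from a labeling of copies. You should verify that your deterministic rule agrees with the lemma's admissible choice in both regimes, since uniqueness here means precisely that the construction of $m_1'$ is independent of any auxiliary choices made in the lemma's proof. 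Also, as a small point, your list of substrings occurring once should include $h_2$ (inside $w_1$), though this omission does not affect the conclusion that only $m$ is ambiguous.
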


Having established that there is a unique way of passing from submodules of $w_3$ and $w_4$ to submodules of $w_1\oplus w_2$ we are now ready to construct the following bijection. 

\begin{theorem}\label{bijection of modules grafting incompatibility one puncture}
    Let $w_1, w_2, w_3$ and $w_4$ as they were defined in \ref{resolution of grafted tagging at one puncture}. Then there exists a bijection:
    \[
    \Psi \colon SM(w_3) \bigcup SM(w_4) \to SM(w_1\oplus w_2),
    \]
    which is defined as follows:
    \begin{gather*}
    \Psi(w_3) = w_1\oplus w_2, \\
    \Psi(w_4) = w_1\oplus u_2',     
    \end{gather*}
    If $w$ is a submodule of $w_3$  which is reached by the sequence $\{a_i\} 1\leq i\leq n$, and $w'$ the unique submodule of $(w_1\setminus s_2)\oplus w_2$ which is reached by the same sequence, then we define:
    \[
    \Psi(w) = w'.
    \]
    If $w$ is a submodule of $w_4$, then $\Psi(w)$ is defined in a similar way.
    
\end{theorem}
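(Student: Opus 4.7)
The plan is to follow the template already established by the earlier bijection results, in particular Theorems \ref{bijofmod}, \ref{bij of mod arc and band}, and \ref{bijection of modules incompatibility one puncture}. Three steps will suffice: first verify that the map $\Psi$ is well-defined on each summand $SM(w_3)$ and $SM(w_4)$; second prove that $\Psi$ is injective by comparing sequences of top-removals leading to an assumed coincidence in the codomain; and third conclude bijectivity from equality of cardinalities, without constructing an explicit inverse.

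For well-definedness, Proposition \ref{unique remove tops grafting incompatibility one puncture} already covers the configuration with $u_1 = \emptyset$. First I would extend the proposition to the remaining two configurations of $w_4$ in Definition \ref{resolution of grafted tagging at one puncture}, namely $u_1,u_2 \ne \emptyset$ and $u_1 \ne \emptyset$, $u_2 = \emptyset$. In each case one identifies the duplicated substring $m$ inside $w_3 = h_1u_1 m h_2 m^{-1} u_2^{-1}$ and fixes a convention matching each copy of $m$ with one of the two copies appearing inside $w_1 \oplus w_2$, in complete analogy with the construction in Proposition \ref{removertopwelldefined}. The exceptional case is precisely when a top-removal is blocked on the first copy of $m$ by the presence of the hook letters of $h_2$; in that situation one retraces the choice and removes from the second copy instead, exactly as before. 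This produces a unique target submodule of $w_1 \oplus w_2$ (respectively $w_1 \oplus u_2'$), giving $\Psi$ unambiguously.

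The main obstacle is injectivity. As in the previous bijection theorems, if $m_1, m_2 \in SM(w_3) \cup SM(w_4)$ with $\Psi(m_1) = \Psi(m_2)$, the only non-trivial case is $m_1 \in SM(w_3)$ and $m_2 \in SM(w_4)$. Writing $\Psi(m_1)$ as $w_1 \oplus w_2$ minus a sequence $(a_i)$ and $\Psi(m_2)$ as $w_1 \oplus u_2'$ minus a sequence $(b_j)$, the equality forces $\{a_i\}$ to contain the whole trimmed tail $r''$ of inverse arrows (those letters of $u_2$ absent from $u_2'$), together with the connecting arrow $e$ in the $u_1,u_2 \ne \emptyset$ subcase. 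Using Remark \ref{backtrackinginlattice} to reorder top-removals, the same sequence would have to be applicable to $w_3$; but the first letter of $r''$ sits in $w_3$ adjacent to a copy of $m$ in an arrow configuration that is \emph{not} locally a top until a forced chain of removals has consumed the associated letters of $h_2$. Since $h_2$ never appears in $w_4$, the resulting target would be missing letters that $\Psi(m_2)$ necessarily contains, which is the desired contradiction. The argument parallels the one in Proposition \ref{injectivity of Psi incompatibility one puncture}, with the hook $h_2$ playing the role of the string $s_1$ there.

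Once injectivity is established, bijectivity follows from Remark \ref{termsincoeffreecase}: setting all $y$-variables equal to $1$ reduces the skein relation at a single grafting incompatibility to the classical untagged skein relation, which is already known, and this forces
\[
|SM(w_3)| + |SM(w_4)| = |SM(w_1 \oplus w_2)|.
\]
Combined with injectivity this yields that $\Psi$ is a bijection. The only place where genuine case-analysis is unavoidable is the injectivity step across the three configurations of $w_4$; the $u_1,u_2 \ne \emptyset$ case is the most intricate because of the extra letter $e$ at the left endpoint of the graft, but the contradiction is driven by the same blocking mechanism of $h_2$ in all three subcases.
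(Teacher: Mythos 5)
Your proposal follows the paper's approach closely: well-definedness via a top-removal lemma analogous to Lemma~\ref{removing tops for one puncture grafting incompatibility} and Proposition~\ref{unique remove tops grafting incompatibility one puncture}, injectivity by a blocking-sequence contradiction parallel to Proposition~\ref{injectivity of Psi grafted incompatibility one puncture}, and bijectivity from the coefficient-free cardinality count of Remark~\ref{termsincoeffreecase}. The one genuine addition is that you propose to explicitly carry out the well-definedness and injectivity arguments across all three configurations of $w_4$, whereas the paper restricts its detailed lemmas to the $u_1 = \emptyset$ case and relies on the stated convention that the other cases are similar; beware, though, a small slip in your injectivity paragraph -- $r''$ is the maximal sequence of \emph{direct} arrows in the paper's convention, and $e$ is an arrow rather than a simple, so ``containing the arrow $e$'' should be rephrased as containing the endpoint vertex it introduces.
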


As per usual we only need to prove that the map defined on Theorem \ref{bijection of modules grafting incompatibility one puncture} is an injection, which is done in the following proposition.\\

\begin{proposition}\label{injectivity of Psi grafted incompatibility one puncture}
    The map $\Psi$ as it was defined in Theorem \ref{bijection of modules grafting incompatibility one puncture} is an injection.
    \begin{proof}

        Let $m_1, m_2\in SM(w_3) \bigcup SM(w_4)$ such that $\Psi(m_1)=\Psi(m_2)$.\\
        By construction of the map $\Psi$, if both $m_1, m_2\in SM(w_3)$ or $m_1, m_2\in SM(w_4)$, then $m_1= m_2$.\\
        We will now show that if $m_1\in SM(w_3)$ and $m_2\in SM(w_4)$ then $\Psi(m_1)\neq \Psi(m_2)$. Assume that $\Psi(m_1)=\Psi(m_2)$ and let $(a_i), 1\leq i\leq n$ be a sequence of the simples removed from $m_1$ to reach $\Psi(m_1)$.\\
        Since $w_4= u_2'$ we take that all the elements of $r'$ and $m$ must belong to the sequence $(a_i)$. However for this to happen, the sequence $(a_i)$ should contain at least one element from the hook, since $r'$ and $m$ are not a top of the module $w_3$. However in turn this would give us that there is a submodule of $w_4= u_2'$ which is not mapped to $w_1\oplus m''$ for $m''$ a submodule of $w_4$, which gives us a contradiction by the construction of the map $\Psi$.\\

    \end{proof}
\end{proposition}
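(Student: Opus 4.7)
The plan is to argue by contradiction, following the same template as the injectivity proofs for the earlier bijections (Propositions~\ref{injofPsi} and~\ref{inj for band and arc}). First I would split into cases according to which side of the disjoint union contains $m_1$ and $m_2$. The \emph{same-side} cases $m_1, m_2 \in SM(w_3)$ or $m_1, m_2 \in SM(w_4)$ are immediate from the definition of $\Psi$: two distinct submodules of $w_3$ (respectively $w_4$) are reached from $w_3$ (respectively $w_4$) by genuinely different top-removal sequences, and by Proposition~\ref{unique remove tops grafting incompatibility one puncture} these sequences produce distinct submodules of $\Psi(w_3) = w_1 \oplus w_2$ (respectively $\Psi(w_4) = w_1 \oplus u_2'$).

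The substantive work is the \emph{cross case}, where $m_1 \in SM(w_3)$ and $m_2 \in SM(w_4)$. I would assume $\Psi(m_1) = \Psi(m_2)$ and let $(a_i)_{1\le i\le n}$ be a top-removal sequence taking $w_3$ to $m_1$, so that $\Psi(m_1)$ is obtained from $w_1 \oplus w_2$ by applying the same sequence, as guaranteed by Lemma~\ref{removing tops for one puncture grafting incompatibility}. Since $\Psi(m_2) \subseteq w_1 \oplus u_2'$, and the factor $u_2'$ is obtained from $w_2 = h_3 u_2 m$ by deleting the hook $h_3$, the direct-arrow tail $r'$ (where $u_2 = u_2' r'$), and the full copy of $m$ sitting in $w_2$, the sequence $(a_i)$ must erase exactly those simples from the $w_2$-summand of the image while leaving the $w_1$-summand untouched.

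Now I would inspect the shape $w_3 = h_1 u_1 m h_2 m^{-1} u_2^{-1}$. The simples of the duplicated substring $m$ and of $r' \subseteq u_2^{-1}$ are blocked by $h_2$: they are not locally tops in $w_3$ until one first removes simples of $h_2$, which is the terminal hook of $w_1$. Hence $(a_i)$ would have to include simples of $h_2 \subseteq w_1$, so $\Psi(m_1)$ loses simples of $w_1$. But $\Psi(m_2) \subseteq w_1 \oplus u_2'$ retains the full $w_1$-summand, since removing tops from $w_4$ only affects the $u_2'$-factor. This contradicts $\Psi(m_1) = \Psi(m_2)$. The main obstacle will be the combinatorial bookkeeping around the two copies of $m$ inside $w_3$, where there is genuine freedom in which copy the sequence $(a_i)$ targets, together with handling the parallel sub-cases of Definition~\ref{resolution of grafted tagging at one puncture} (when both $u_1, u_2 \neq \emptyset$, or $u_2 = \emptyset$) by symmetric arguments, taking care to identify in each branch the analogous ``blocking'' substring playing the role of $h_2$.
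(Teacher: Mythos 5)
Your proposal is correct and follows essentially the same strategy as the paper's proof: split into same-side and cross cases, and in the cross case derive a contradiction by observing that matching $\Psi(m_2)\subseteq w_1\oplus u_2'$ (which retains the full $w_1$-summand) would force the sequence taking $w_3$ to $m_1$ to remove $r'$ and $m$, which in turn forces removal of simples in the hook $h_2\subseteq w_1$, contradicting preservation of $w_1$. You are somewhat more explicit than the paper about which hook does the blocking and about the residual bookkeeping around the duplicated copy of $m$ and the sub-cases of Definition~\ref{resolution of grafted tagging at one puncture}, but the underlying argument is the same one the paper uses.
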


\begin{proof}[Proof of Theorem \ref{bijection of modules grafting incompatibility one puncture}]
The fact that $\Psi$ is a bijection follows immediately from the fact that $\Psi$ is an injective map (Proposition \ref{injectivity of Psi grafted incompatibility one puncture}) and Remark \ref{termsincoeffreecase}.
\end{proof}

\subsection{ Skein relation for arcs with a single incompatibility at a puncture}

Having already established the two bijection \ref{bijection of modules incompatibility one puncture} and \ref{bijection of modules grafting incompatibility one puncture} we would also like to prove that there is no "lost" information from passing to a loopstring from a loop graph. For this, we will use the monomials associated to  loopstrings as they were defined in Chapter 4.

We will not give full proofs for each case, since the idea is very similar and it is basically depending on a series of computational tricks in order to rewrite some monomials.

\begin{lemma}\label{xvariablesagree regular incompatibility at a puncture}
     Let $w_1, w_2, w_3$ and $w_4$ as they were defined in \ref{resolution of incompatible tagging  one puncture}. Then:
     \[
     x(w_3) = x((w_1\setminus s_2)\oplus w_2),
     \]
     and 
     \[
     x(w_4) = x(w_1\oplus w_2)
     \]
     \begin{proof}
         We will show only the first equality, since the second one is also just a direct computation.\\

         By definition \ref{monomofstring} we have that:
         \[
         x(w_3) = x(h_1u_1\rightarrow s_1\rightarrow k\leftarrow_2^{-1}h_3) = \frac{x(h_1u_1\rightarrow s_1)x(u_2^{-1}h_3)}{x_k}
         \]
         However, since $k$ by definition belongs to the extended $w_2$ we have that:
         \[
         x(w_2) = \frac{x(h_3u_2)}{x_k},
         \]
         and therefore we take that
     \[
     x(w_3) = x((w_1\setminus s_2)\oplus w_2),
     \]
         as we wanted to show.

     \end{proof}
\end{lemma}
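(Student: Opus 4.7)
The plan is to compute both sides of each equality directly from Definition~\ref{monomofstring}, which expresses $x(w)$ as a product $\prod x_{a_i}^{e_i}$ where $e_i$ counts incoming arrows in the extended loopstring (subject to the socle-in-hook correction). The vertex $k$, which is the arc of $T$ at which the extension $w_2'$ meets the hook $h_2$ of $w_1$ and where the resolution pivots around the puncture, plays the central role. My strategy is to split each monomial at $k$ and track how its exponent redistributes between the pieces.

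For the identity $x(w_3) = x((w_1\setminus s_2)\oplus w_2)$, I would first write $w_3$ explicitly as $h_1 u_1 s_1 \to k \leftarrow u_2^{-1} h_3$, so that $k$ appears exactly once, with incoming arrows from both $s_1$ and $u_2^{-1}$. Splitting multiplicatively at $k$ yields
\[
x(w_3) \;=\; \frac{x(h_1 u_1 s_1)\, x(u_2^{-1} h_3)}{x_k},
\]
where the $1/x_k$ corrects the overcount produced by treating the two halves as if $k$ were an endpoint in each. On the other side, $w_1\setminus s_2 = h_1 u_1 s_1 a k$ contributes $x(h_1 u_1 s_1)$ together with a power of $x_k$ fixed by the local configuration, while the extension convention gives $w_2' = h_3 u_2 k$ and hence $x(w_2) = x(u_2^{-1} h_3)/x_k$. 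Multiplying the two should reproduce the expression above.

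The second identity $x(w_4) = x(w_1 \oplus w_2)$ is handled analogously, but the gluing now passes through $s_2$ rather than $s_1$, so the entire hook $h_2$ (and hence all of $w_1$) is preserved on the right and no truncation like $\setminus s_2$ appears. The computation is symmetric and in fact slightly simpler, since no cancellation at a boundary of $w_1$ is required.

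The main obstacle will be a careful local case analysis at the vertex $k$: depending on whether the arrow $a$ in $h_2 = s_1\, a\, k\, s_2$ is direct or inverse, and on whether $b$ agrees with or opposes $a$, the vertex $k$ may or may not be a socle of the associated module, which changes its exponent via the socle-in-hook clause of Definition~\ref{monomofstring}. One also needs to verify that the local quiver configuration around $k$ (dictated by the triangles of $T$ incident to $p$) produces matching exponent corrections on both sides, both for $k$ itself and for its neighbours. Once this bookkeeping is in place, the cancellation of $x_k$ is clean and the identity falls out.
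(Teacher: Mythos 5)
Your proposal matches the paper's proof almost line for line: you split $x(w_3)$ multiplicatively at the pivot vertex $k$ to obtain $x(w_3) = x(h_1u_1 s_1)\,x(u_2^{-1}h_3)/x_k$, you invoke the extension convention to write $x(w_2) = x(u_2^{-1}h_3)/x_k$, and you identify $x(w_1\setminus s_2)$ with the first factor; this is precisely the computation the paper carries out. The one place you go beyond the paper is in flagging the need to check the exponent of $x_k$ coming from $w_1\setminus s_2 = h_1u_1s_1ak$ and the socle-in-hook clause of Definition~\ref{monomofstring}; the paper silently assumes that bookkeeping works out and writes $x(w_1\setminus s_2) = x(h_1u_1\rightarrow s_1)$ without comment. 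That is the right place to be careful, since after deleting $s_2$ the hook at $k$ is broken and one must verify that the number of incoming arrows at $k$ under the extended-quiver count exactly compensates the $1/x_k$ on the other side -- so your "main obstacle" paragraph is not an extra complication but the step the published proof leaves implicit.
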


\begin{theorem}\label{skein relation for single incompatibility at a puncture}
    Let $(S,M,P,T)$ be a triangulated punctured surface, $\gamma_1$ a notched arc at a puncture $p$ and $\gamma_2$ an arc which has an plain endpoint at the puncture $p$. Let $\gamma_3$ and $\gamma_4$ be the two arcs obtained by smoothing the crossing.
    \begin{itemize}
        \item[(i)] 
        If  $\gamma_1$ and $\gamma_2$ have a regular incompatibility at the puncture $p$, then:
        \[
        x_{\gamma_1} x_{\gamma_2} = x_{\gamma_3}  + Y^{+} x_{\gamma_4},
        \]
        where $Y^{+} =  Y(s_1)$.
        \item[(ii)]
        If $\gamma_1$ and $\gamma_2$ have a grafting incompatibility at the puncture $p$, then:
        \[
        x_{\gamma_1} x_{\gamma_2} = x_{\gamma_3}  + Y^{+} x_{\gamma_4},
        \]
        where $Y^{+} =  Y(w_1)$.
    \end{itemize}

    \begin{proof}
        The proof of this theorem is a direct consequence of Theorems \ref{bijection of modules incompatibility one puncture}, \ref{bijection of modules grafting incompatibility one puncture}, Lemma \ref{xvariablesagree regular incompatibility at a puncture} and Remark \ref{removetoppreservesx}.
    \end{proof}
\end{theorem}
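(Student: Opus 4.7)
The plan is to assemble the skein relation from the two bijections of submodule lattices established in the preceding subsections, combined with the compatibility of the associated $x$-monomials. Concretely, for case (i) I would invoke Theorem~\ref{bijection of modules incompatibility one puncture}, which supplies a bijection $\Psi\colon SM(w_3)\cup SM(w_4) \to SM(w_1\oplus w_2)$, and for case (ii) the analogous bijection of Theorem~\ref{bijection of modules grafting incompatibility one puncture}. Through Theorem~\ref{submodule lattice bijective to perfect matchings lattice} each submodule lattice is identified with the perfect matching lattice of the associated loop graph, and each summand of $x_{\gamma_i}$ in the expansion formula for the cluster variable corresponds to exactly one such matching. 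Thus the proof reduces to a summand-by-summand comparison indexed by these lattices.

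First I would expand both sides as sums over submodules, after clearing the crossing monomials. The left-hand side becomes a sum over $SM(w_1\oplus w_2)$, while the right-hand side splits, via $\Psi$, into the piece indexed by $SM(w_3)$ and the piece indexed by $SM(w_4)$. The first piece is mapped by $\Psi$ onto $SM(w_1\oplus w_2)$ (case (i)) or all of $SM(w_1\oplus w_2)$ (case (ii)), while the second piece covers the complementary submodules. This reduces the equality of cluster-algebra elements to checking that corresponding summands agree up to a controlled coefficient monomial.

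To upgrade the bijection to an equality of monomials I would chain Lemma~\ref{xvariablesagree regular incompatibility at a puncture} with Remark~\ref{removetoppreservesx}. The lemma furnishes the base case $x(w_3)=x((w_1\setminus s_2)\oplus w_2)$ and $x(w_4)=x(w_1\oplus w_2)$ (and analogously for grafting). Since $\Psi$ is defined by removing exactly the same sequence of tops on both sides, and since the effect on the $x$-monomial of removing a top depends only on the local neighborhood in the extended quiver $Q_T'$, Remark~\ref{removetoppreservesx} propagates the base case: for every $m$ in $SM(w_3)\cup SM(w_4)$ one obtains $x(m)=x(\Psi(m))$. Together with the bijectivity of $\Psi$ this proves the $x$-part of the identity.

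The remaining task, and the step I expect to be the main obstacle, is reading off the correct $y$-coefficient $Y^{+}$. The heights are computed as symmetric differences with the minimal perfect matching, so I must track which tiles of the left-hand graph lie in $P\ominus P_{min}$ before and after applying $\Psi$. In case (i) the image $\Psi(w_3)=(w_1\setminus s_2)\oplus w_2$ differs from the ``full'' matching by precisely the tiles of $s_1$, which yields $Y^{+}=Y(s_1)$; in case (ii) the image $\Psi(w_4)=w_1\oplus u_2'$ differs by the tiles of $w_1$, giving $Y^{+}=Y(w_1)$. The delicate bookkeeping here is caused by the duplicated hook $h_2$ in the regular incompatibility and the duplicated overlap $m$ in the grafting incompatibility, so certain tiles appearing once on the left must be matched against tiles appearing in different copies on the right; I would verify that no tile outside $s_1$ (respectively $w_1$) is ever flipped, by inspecting the local configuration around the puncture $p$ and using that $\Psi$ removes tops in the same order on both sides. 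Once this height comparison is in place, summing the termwise identities over the lattice produces the stated skein relation.
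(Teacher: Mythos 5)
Your proof follows the paper's essentially verbatim: it chains the submodule-lattice bijections of Theorems~\ref{bijection of modules incompatibility one puncture} and \ref{bijection of modules grafting incompatibility one puncture} with the $x$-monomial base case of Lemma~\ref{xvariablesagree regular incompatibility at a puncture} and the top-removal invariance of Remark~\ref{removetoppreservesx}, exactly as the paper's one-line proof does. One small slip in your final-paragraph $y$-bookkeeping for case (i): $(w_1\setminus s_2)\oplus w_2$ differs from $w_1\oplus w_2$ by the tiles of $s_2$, not $s_1$, and since $Y^{+}$ is the coefficient of $x_{\gamma_4}$, the tiles contributing to it should be traced through $\Psi(w_4)=w_1\oplus w_2$ compared against $w_4$ itself, not through $\Psi(w_3)$.
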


\section{ Skein relations for double incompatibilities in two punctures}

In this section we will work on skein relations for arcs which have two incompatibilities in two punctures. This can happen in two different occasions. We can either have a double notched arc with its endpoints in two punctures $p$ and $q$ and a plain arc with its endpoints in the same punctures $p$ and $q$. Additionally we could have two singly notched arcs, which have both of their endpoints in the same punctures $p$ and $q$, but they are tagged in different punctures. \\
We will see that both of these cases give rise to the same skein relation and therefore instead of proving skein relation for these two cases separately we will just deal with one of these cases and then prove that indeed these cases are equivalent. \\
These two cases behave a little bit differently than what we have seen so far in the sense that they can give rise to four different arcs of which only one of these arcs does not include ant additional $y$-terms when resolving the incompatibilities. The deeper reason on why such a thing occurs can be understood through looking at the two incompatibilities that occur separately, instead of both at the same time.\\
To understand the above argument let us consider one doubly notched arc $\gamma_1$, which has a tagging at its endpoints $p$ and $q$, and a plain arc $\gamma_2$ which has its endpoints in $p$ and $q$. One could try and resolve first the tagging at the puncture $q$. This procedure would look like a natural extension of the previous section, since then, we could think of dealing with a single incompatibility at the puncture $q$ and therefore apply the results of the previous section to resolve this incompatibility, by creating two new arcs $\gamma'_1$ and $\gamma'_2$, as they can be seen in Figure~\ref{fig:Step by step resolution of double incomp}, which would have a self incompatibility at the puncture $p$. If we could assume that the results of the previous section can be generalized for self incompatible arcs, we could then deal with these two self incompatibilities separately, giving rise to four new arcs in total. Assuming that the previous results still hold, only one of the newly formed arcs would not have any additional $y$-terms in front of it in the complete skein relations. \\
To sum up and under the fore mentioned assumptions the expected skein relations for such arcs would be the following:
\[
x_{\gamma_1} x_{\gamma_2} = Y_1 x_{\gamma'_1} + Y_2 x_{\gamma'_2} = Y_1 (Y_1^+ \delta_1 + Y_1^- \delta_2) +  Y_2 (Y_2^+ \delta_3 + Y_2^- \delta_4).
\]
Notice also that the initial arbitrary selection of dealing with one specific incompatibility was not restricting. Even if we started by resolving the other incompatibility we would end up to a similar relation, since one can notice that by rearranging the terms we could take the following equality:
\[
x_{\gamma_1} x_{\gamma_2} = Y_1 x_{\gamma'_1} + Y_2 x_{\gamma'_2} = Y_1 (Y_1^+ \delta_1 + Y_1^- \delta_2) +  Y_2 (Y_2^+ \delta_3 + Y_2^- \delta_4).
\]

\begin{figure}
    \centering
    \includegraphics[scale=0.9]{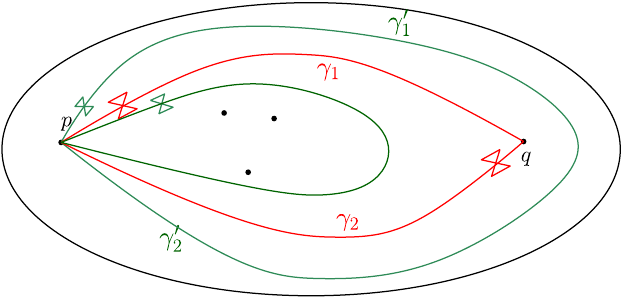}
    \caption{Resolution of single incompatibility and self incompatible arcs.}
    \label{fig:Step by step resolution of double incomp}
\end{figure}

However, the above arguments cannot be applied straightforwardly since we do not deal with arcs that have self incompatibilities. In fact, there was an open question from Musiker, Schiffler  and Williams, regarding such arcs and if those arcs are elements of the Cluster Algebra. We conjecture that such arcs are indeed part of the cluster algebra when we are dealing with bordered surfaces. The idea, is that such arcs can be decomposed into bands, and then in the presence of border components, one can prove that each band is part of the cluster algebra. We will not get into more details now, since this will be further explored in the last chapter of this thesis. \\
To get back on the topic of this chapter, we will prove that indeed the skein relations when resolving double incompatibilities at a puncture between two arcs have exactly the form that we described earlier. Nonetheless, our approach in the proof of this statement will be different. We will prove every statement by resolving both incompatibilities at the same time, avoiding in this way to deal with self incompatible arcs at a puncture. \\
We should also mention an important detail before going to the main part of this section. Not every skein relation gives rise to four bands. It can happen that the resolution gives rise to one and only band arc. This however can occur in some very specific cases, as the one appearing in figure~\ref{fig:Resolution of double incomp resulting to a single band}. Such an observation can be very crucial in proving that some band arcs, even in surfaces without boundary components, are part of the cluster algebra. Notably, problems occur in surfaces with genus bigger that one and when dealing with bands that ``pass around" the handle of the surface. \\

\begin{figure}
    \centering
    \includegraphics[scale=0.9]{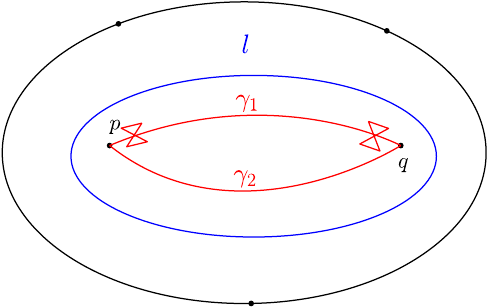}
    \caption{Resolution of double incompatibility, which gives rise to a singe band arc $l$.}
    \label{fig:Resolution of double incomp resulting to a single band}
\end{figure}

To sum up in this chapter we deal with the following cases: 
\begin{itemize}
    \item two singly notched arcs which have two incompatible taggings at two punctures,
    \item a doubly notched arc and a plan arc which have two incompatible taggings at two punctures.
\end{itemize}
The following theorem which addresses these cases and will be proven at the end of this chapter.

\begin{theorem}
        Let $(S,M,P,T)$ be a triangulated punctured surface, $\gamma_1$ a double notched arc at two puncture $p$ and $q$, and $\gamma_2$ an arc which has an plain endpoint at the puncture $p$. Let $\gamma_3$, $\gamma_4$, $\gamma_5$ and $\gamma_5$ be the four arcs obtained by smoothing the incompatibilities at the two punctures.
        Then we have that:
        \[
        x_{\gamma_1} x_{\gamma_2} = Y_1Y_1^- x_{\gamma_3}  + Y_1Y_1^{+} x_{\gamma_4} + Y_2Y_2^- x_{\gamma_5}  + Y_2Y_2^{+} x_{\gamma_6},
        \]
        where $Y_1, Y_2, Y_1^+, Y_1^-, Y_2^+$ and $Y_2^-$ are monomials on $y$-variables and in each pair $(Y_1, Y_2)$, $(Y_1^+, Y_1^-)$ and $(Y_2^+, Y_2^-)$ at least one term of each pair is equal to 1.
\end{theorem}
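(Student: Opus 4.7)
The plan is to imitate the strategy used in Theorems~\ref{bijection of modules incompatibility one puncture} and \ref{bijection of modules grafting incompatibility one puncture}, but resolving both incompatibilities \emph{simultaneously} rather than sequentially. The sequential approach would force us through self-incompatible arcs at a puncture, whose status as cluster variables is, as the introduction explains, only conjectural. Writing the loopstring of $\gamma_1$ as $w_1 = h_p u_1 h_q$ with two non-empty hooks, and the loopstring of $\gamma_2$ as $w_2 = u_2$ (no hooks at $p$ or $q$, since $\gamma_2$ is plain at both endpoints), the double resolution will produce four loopstrings $w_3,w_4,w_5,w_6$. Each of the two punctures can independently present a \emph{regular} or a \emph{grafting} incompatibility, so I will split the argument into four sub-cases according to the pair of types at $(p,q)$. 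In every sub-case the definition of $w_3,\dots,w_6$ is obtained by simultaneously applying, at each puncture separately, the local rule already encoded in Definitions~\ref{resolution of incompatible tagging  one puncture} and~\ref{resolution of grafted tagging at one puncture}; the choice of ``turn left / turn right'' at each puncture is independent, which is exactly what produces the four resolved arcs.

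First I would set up the analogue of Propositions~\ref{removing tops for one puncture incompatibility} and~\ref{removing tops for one puncture grafting incompatibility} in the two-puncture setting: given a sequence of tops reaching a submodule of some $w_i$, I will show how to transport it canonically to a sequence of tops on $w_1 \oplus w_2$, making a choice for each puncture's local configuration exactly as in the one-puncture proofs. The key observation is that the two punctures live in disjoint portions of the loopstrings (the hooks $h_p$ and $h_q$ sit at opposite ends of $w_1$, and the corresponding extension letters $k_p, k_q$ lie at opposite ends of the extended string $w_2'$), so the local choices at $p$ and $q$ do not interfere. Then I will define
\[
\Psi\colon SM(w_3)\sqcup SM(w_4)\sqcup SM(w_5)\sqcup SM(w_6)\to SM(w_1\oplus w_2)
\]
by sending each full $w_i$ to the appropriate ``reference'' submodule of $w_1\oplus w_2$ (one of $w_1\oplus w_2$, $(w_1\setminus s_2^{(p)})\oplus w_2$, $w_1\oplus(w_1\setminus s_2^{(q)})$, or $(w_1\setminus s_2^{(p)})\oplus(w_2\setminus s_2^{(q)})$, with the obvious analogues when a puncture exhibits a grafting incompatibility), and then extending by the rule ``remove the same sequence of tops''.

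Injectivity of $\Psi$ should follow by essentially repeating the contradiction arguments of Propositions~\ref{injectivity of Psi incompatibility one puncture} and~\ref{injectivity of Psi grafted incompatibility one puncture}, applied independently at each puncture: if two submodules coming from different $w_i$ landed on the same image, the obstructing simple at puncture $p$ (respectively $q$) would force a local top to be removed in the wrong order. Surjectivity then follows as before from the coefficient-free count of Remark~\ref{termsincoeffreecase}. The monomial identity $x(w_i)\cdot(\text{appropriate }y\text{-factor}) = x(\Psi(w_i))$ for the four reference submodules is checked by the same direct algebraic manipulation as in Lemma~\ref{xvariablesagree regular incompatibility at a puncture}, localized at each puncture, and Remark~\ref{removetoppreservesx} propagates the equality across the rest of each lattice. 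Packaging these four weighted bijections yields the stated skein relation, with the pair constraint ``one term in each pair is $1$'' coming from the fact that, at each puncture, exactly one of the two resolutions avoids picking up a local $y$-monomial, exactly as in the single-puncture theorem.

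The main obstacle, and where I expect the bulk of the casework to lie, will be giving a uniform formulation of the resolution covering simultaneously the four combinations (regular/grafting at $p$) $\times$ (regular/grafting at $q$), including degenerate sub-cases where $u_1$ or $u_2$ is empty, or where the two hooks $h_p, h_q$ share arcs of $T$ (small surfaces). A secondary obstacle is the exceptional configuration mentioned around Figure~\ref{fig:Resolution of double incomp resulting to a single band}, where the four resolved arcs degenerate to a single band: here one must verify that the loop graph / band-string expansion formula still produces the correct single-term right-hand side, which I intend to handle by directly checking that $\Psi$ collapses the relevant submodule lattices in that case. Finally, the equivalence of the two geometric situations --- one doubly notched arc plus one plain arc versus two singly notched arcs with opposite taggings --- is handled last, by observing that on the level of loopstrings the two configurations produce \emph{the same} pair $(w_1,w_2)$ up to swapping which hook decorates which endpoint, and that $\Psi$ is symmetric under this swap.
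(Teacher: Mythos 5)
Your overall strategy --- simultaneous resolution at both punctures, a bijection $\Psi$ between submodule lattices built by ``transport the sequence of removed tops,'' injectivity by local contradiction, surjectivity from the coefficient--free count of Remark~\ref{termsincoeffreecase}, then the monomial check --- is exactly the paper's strategy, so the skeleton is right. Two concrete gaps remain, however, and both are material.

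First, the four curves produced by smoothing are \emph{bands}, not loopstrings. When you smooth the incompatibility at $p$ you glue the endpoint of $\gamma_1$ at $p$ to the endpoint of $\gamma_2$ at $p$, and similarly at $q$; the result is a closed curve. The paper accordingly defines $w_3^{o},\dots,w_6^{o}$ as bandstrings in Definition~\ref{resolution of double incompatible taggings two punctures}, and the ``degenerate'' case you mention (Figure~\ref{fig:Resolution of double incomp resulting to a single band}) is a band collapsing to a single one, not the \emph{only} case producing a band. Your proposal treats the outputs as loopstrings and views the band case as exceptional, which is backwards. This matters because bands carry different module and expansion data than loopstrings (there is no hook, the lattice is cyclic, and the monomial convention for a band that degenerates to a pure hook has to be defined separately --- the paper introduces $x(w)=1+y_1\cdots y_n$ exactly for that purpose). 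As written, your monomial identity and your lattice bijection are set up for the wrong class of strings.

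Second, the reference submodules of $w_1\oplus w_2$ are specified incorrectly. Both hooks $h_p$ and $h_q$ decorate $w_1$ (since $\gamma_1$ is the doubly notched arc), so the four reference submodules all arise by deleting pieces from the $w_1$ factor and leaving $w_2$ intact: in the paper's notation they are $w_1\oplus w_2$, $(w_1\setminus m h_2)\oplus w_2$, $(w_1\setminus a_1 h_1')\oplus w_2$, and $((w_1\setminus a_1 h_1')\setminus m h_2)\oplus w_2$. Your list $(w_1\setminus s_2^{(p)})\oplus(w_2\setminus s_2^{(q)})$ (and the apparent typo $w_1\oplus(w_1\setminus s_2^{(q)})$) subtracts from $w_2$, which does not have the hook at $q$ --- with that choice the ``remove the same sequence of tops'' rule cannot reproduce the correct number of submodules and the injectivity argument breaks, because the part you remove at $q$ is not the one obstructing the contradiction. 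Once you correct the reference submodules to delete from $w_1$ at both punctures, the rest of your argument (the two-puncture independence of the removal sequence, the weighted count, the monomial bookkeeping, and the reduction of the ``two singly notched arcs'' case to the ``doubly notched plus plain'' case by a swap of endpoint decorations) proceeds as you describe and mirrors the paper's proof.
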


\subsection{ Resolution of double incompatibility of two arcs at a puncture}

As in all the previous cases discussed in this thesis, the resolution of the incompatibility at the punctures  depends on the local configuration of the given triangulation of a surface. In the case of a double incompatibility, since there are two punctures, the so called grafting incompatibility that was discussed in the previous section can occur in one, both or none of the punctures. Instead of splitting the cases of a regular incompatibility and a grafting incompatibility, we will work on both of them at the same time by assuming that each one of them occurs in exactly one of the punctures. The rest of the cases which consist of all the possible configuration that we can have locally are simple generalisations of the case that we will focus, so we will not discuss them any further.

Suppose that $(S, M, P)$ is a punctured surface and $T$ is a triangulation of the surface. Let $\gamma_1$ and $\gamma_2$ be two arcs which have two incompatible taggings at the punctures $p$ and $q$, and $w_1, w_2$ are the associated loopstring of each arc respectively. We will say that $\gamma_1$ and $\gamma_2$ have a \textit{regular incompatibility} at the puncture $p$ when the arcs cross the same arc $t_3$ of the triangulation $T$ before meeting at the puncture $p$.
We will also say that the arcs have a grafting incompatibility at the puncture $q$ when the arcs do not cross the same arcs $t_1$ and $t_2$ of the triangulation $T$ before meeting at the puncture $q$ (Figure \ref{fig:Two regular incompatibilities}).

\begin{figure}
    \centering
    \includegraphics[scale=0.7]{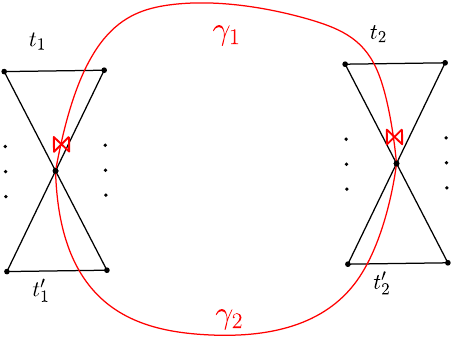}
    \caption{Regular incompatibilities at two punctures.}
    \label{fig:Two regular incompatibilities}
\end{figure}

\begin{definition}\label{resolution of double incompatible taggings two punctures}
 Suppose that $w_1$ and $w_2$ are two abstract loopstrings such that $w_1$ contains two hooks at its endpoints, while $w_2$ does not contain any hook at its endpoints. Assume that $w_1= h_1u_1mh_2$ and $w_2= u_2m$ where $u_i, m$ are substrings and $h_i$ are hooks, with $h_1, h_2\neq \emptyset$. Let also $w_2'= a_1u_2m$ be the extended loopstring of $w_2$ with $a_1\in h_2$ and $w_2''= a_2u_2m$ be the extended loopstring of the dual of $w_2$ with $a_1\in h_2$. We then say that $w_1$ and $w_2$ have a \textit{regular incompatibility} at $p$ and a \textit{grafting incompatibility} at $q$.\\

Since $a_1, a_2\in h_2$ we have that $h_1= h_1'a_1a_2h_1''$ where $h_1'$ and $h_1''$ are not necessarily non empty strings. 
The resolution of the incompatibilities of $w_1$ and $w_2$ are the bandstrings $w_3, w_4, w_5$ and $w_6$ which are defined as follows:
    \begin{enumerate}[(i)]
        \item  $w_3^{o} = a_1h_1'u_1mh_2mu_2$,
        \item  $w_4^{o} = a_1h_1'u_1$, 
        \item  $w_5^{o} = a_2h_1''u_1mh_2mu_2$,
        \item  $w_6^{o} = a_2h_1''u_1$.
    \end{enumerate}
\end{definition}

We will now proceed on constructing two examples that will help us understand how the resolution mentioned in the above definition is applied. In the first example, there will be a regular resolution where all the bandstrings created after the resolution are non empty, while the second one will deal with one of the extreme cases in which there is only one regular bandstring created.

\begin{example}
    Suppose that $(S,M,P,T)$ is the triangulated surface depicted on Figure \ref{fig:Double incompatibility at two punctures}.
    We then have that:
    \[
    w_1= 6 \rightarrow 5 \rightarrow 4 \rightarrow 3 \rightarrow 2 \rightarrow 1 \hookleftarrow 10 \rightarrow 11 \leftarrow 9 \leftarrow 12 \hookleftarrow 14 \rightarrow 13,
    \]
    \[
    w_2= 7\rightarrow 5 \rightarrow 8 \rightarrow 9 \leftarrow 12.
    \]
    Looking at the loopstrings we can notice that there is a regular incompatibility at the end of the end since and we have furthermore the following:
    \[
    h_1=  6 \rightarrow 5 \rightarrow 4 \rightarrow 3 \rightarrow 2 \rightarrow 1,
    \]
    \[
    h_2= 14 \rightarrow 13,
    \]
    \[
    u_1= 10 \rightarrow 11,
    \]
    \[
    u_2= 7\rightarrow 5 \rightarrow 8,
    \]
    \[
    m= 9 \leftarrow 12.
    \]
    In order to see if there is an incompatibility at the start of the loopstrings we need to construct the extended and reverse extended strings $w_2'$ and $w_2''$ respectively, which are as follows:
    \[
    w_2'= 4\leftarrow 7\rightarrow 5 \rightarrow 8 \rightarrow 9 \leftarrow 12,
    \]
    \[
    w_2''= 3\rightarrow 7\rightarrow 5 \rightarrow 8 \rightarrow 9 \leftarrow 12.
    \]
    Looking at the above extended strings we see that $a_1= 4$ and $a_2= 3$ which are indeed part of $h_1$. Therefore, we can rewrite $h_1= h_1'a_1a_2h_2''$ where:
    \[
    h_1'= 6 \rightarrow 5,
    \]
    \[
    h_1''= 2 \rightarrow 1.
    \]
    We now have all the necessary tools to resolve the incompatibilities. Using Definition \ref{resolution of double incompatible taggings two punctures} we take the following bandstrings:
    \[
    w_3^{o}= \leftarrow 3 \rightarrow 2 \rightarrow 1 \leftarrow 10 \rightarrow 11 \leftarrow 9 \leftarrow 12 \leftarrow 14 \rightarrow 13 \leftarrow 12 \rightarrow 9 \leftarrow 8\leftarrow 5\leftarrow 7 \leftarrow,
    \]
    \[
    w_4^{o}= \leftarrow 3 \rightarrow 2 \rightarrow 1 \leftarrow 10 \rightarrow 11 \rightarrow 8\leftarrow 5\leftarrow 7 \leftarrow,
    \]
    \[
    w_5^{o}= \rightarrow 4 \leftarrow 5 \leftarrow 6 \rightarrow 10 \rightarrow 11 \leftarrow 9 \leftarrow 12 \leftarrow 14 \rightarrow 13 \leftarrow 12 \rightarrow 9 \leftarrow 8\leftarrow 5\leftarrow 7 \rightarrow,
    \]
    \[
    w_6^{o}= \rightarrow 4 \leftarrow 5 \leftarrow 6 \rightarrow 10 \rightarrow 11 \rightarrow 8\leftarrow 5\leftarrow 7 \rightarrow,
    \]
    which correspond to the bands appearing in Figure\ref{fig:Resolution of double incompatibility}.
\end{example}

\begin{figure}
    \centering
    \includegraphics[scale=0.35]{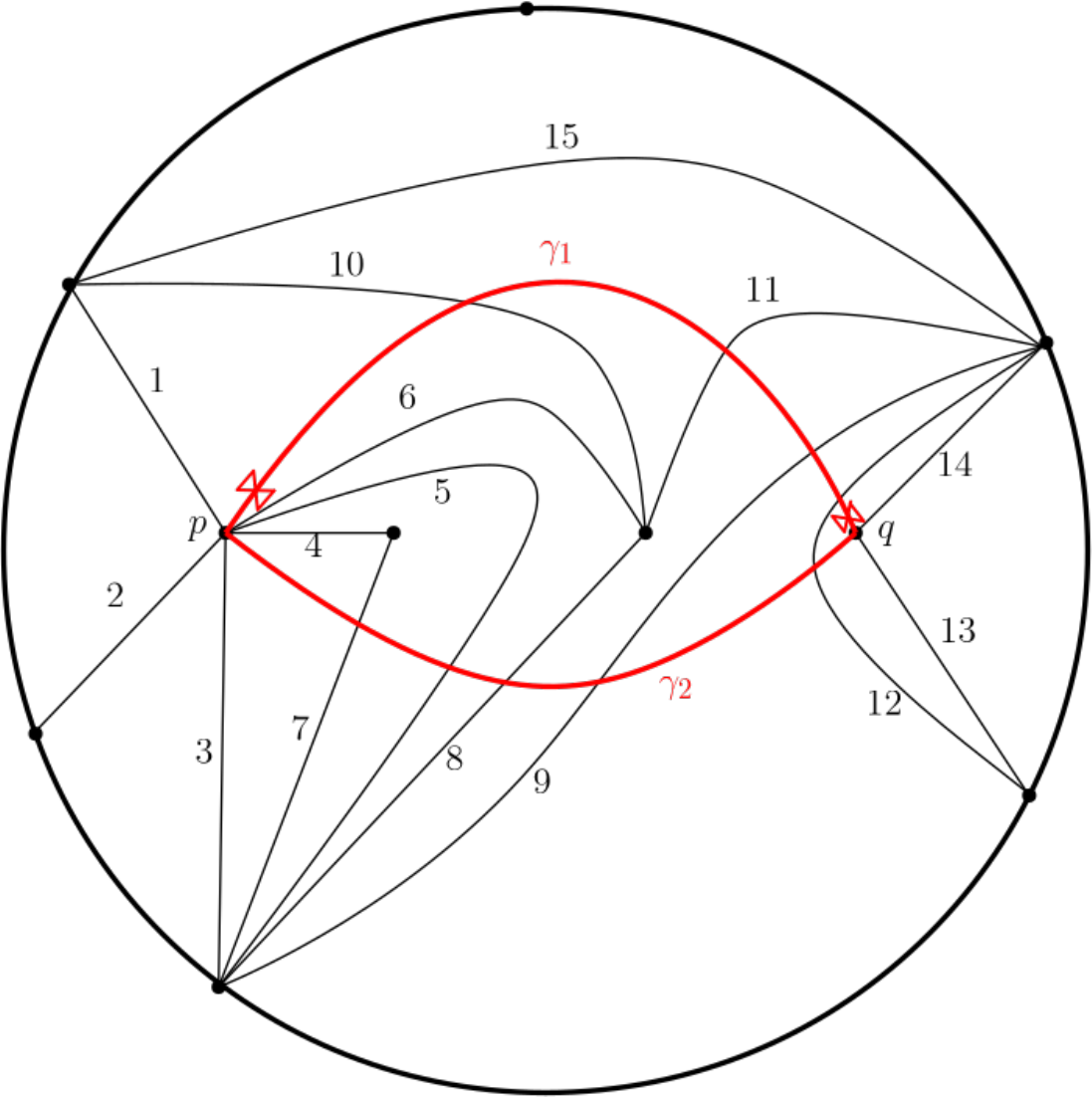}
    \caption{Double incompatibility at two punctures.}
    \label{fig:Double incompatibility at two punctures}
\end{figure}

\begin{figure}
    \centering
    \includegraphics[scale=0.35]{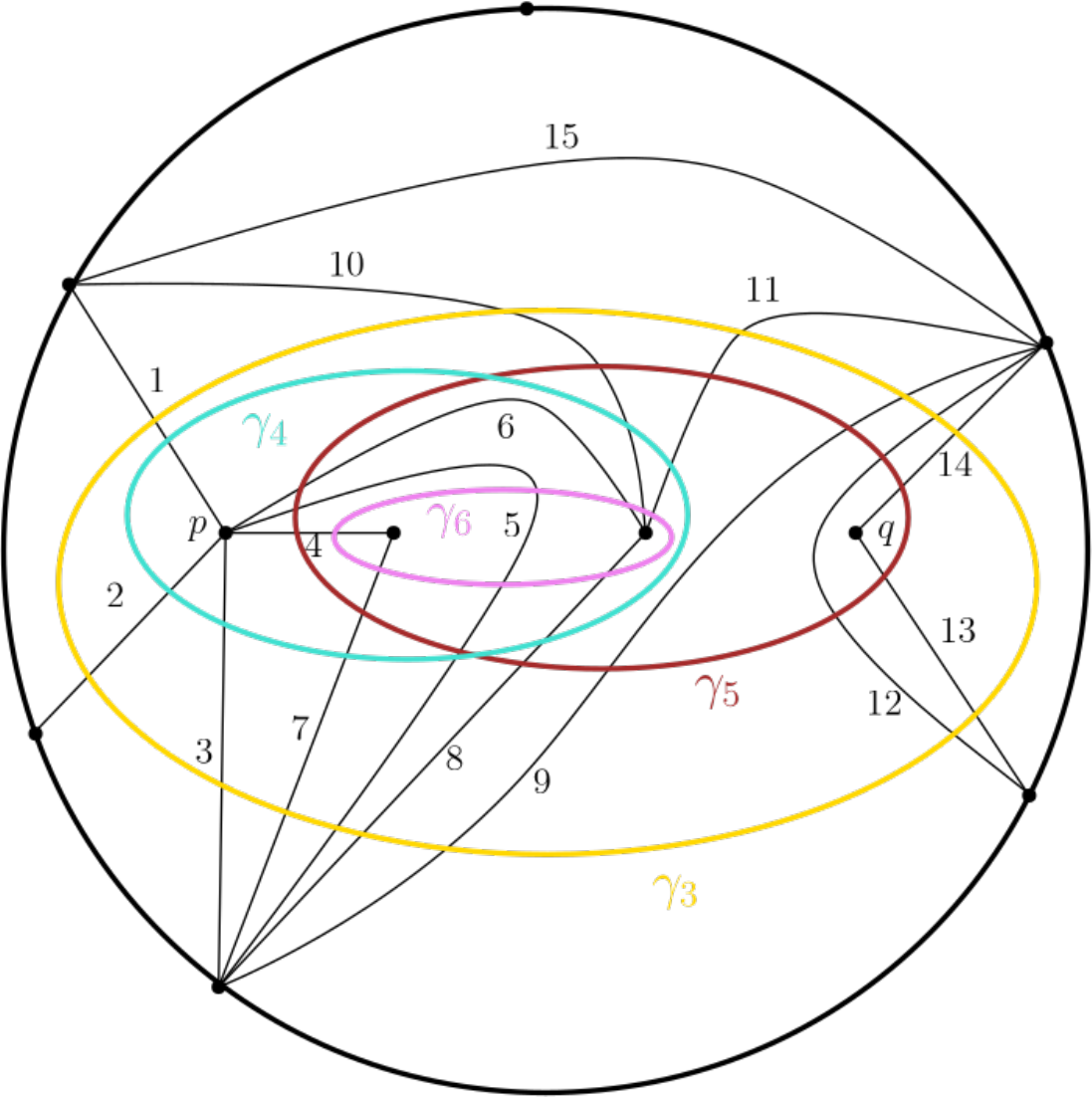}
    \caption{Resolution of the double incompatibility in Figure\ref{fig:Double incompatibility at two punctures}.}
    \label{fig:Resolution of double incompatibility}
\end{figure}

\begin{remark}
    Notice that the not all the bandstrings defined in the previous definition \ref{resolution of double incompatible taggings two punctures} are always non empty. In particular it may happen that $u_1$ and $u_2$ are empty and $a_1, a_2$ are located at the start or the end of the hook $h_2$. In such a situation at least a bandstring would be empty. To make things more clear, the following example is a case where such a resolution occurs.
\end{remark}

\begin{example}
    Suppose that $(S,M,P,T)$ is the triangulated surface depicted on Figure \ref{fig:Extreme case for double incompatibility}. We then have that:
    \[
    w_1= 3\rightarrow 2\rightarrow 1 \hookleftarrow 4 \rightarrow 5 \hookrightarrow 6 \leftarrow 7 \leftarrow 8,
    \]
    \[
    w_2= 4\rightarrow 5.
    \]
    It is easy to see that in this case we have that:
    \[
    h_1=  3\rightarrow 2\rightarrow 1,
    \]
    \[
    h_2= 6 \leftarrow 7 \leftarrow 8,
    \]
    \[
    m= 4\rightarrow 5.
    \]
    while $u_1$ and $u_2$ are empty! It is also easy to see that since both of these substrings are empty we have two regular incompatibilities, one at each of the two punctures. Following the resolution as it was defined earlier we take:
    \[
    w_3^{o}= \leftarrow3\rightarrow 2\rightarrow 1 \leftarrow 4 \rightarrow 5 \leftarrow 8 \rightarrow 7 \rightarrow 6\leftarrow 5\leftarrow 4\leftarrow,
    \]
    and $w_4= h_1$, $w_5= h_2$ and $w_6= \emptyset$.
\end{example}

\begin{figure}
    \centering
    \includegraphics[scale=0.35]{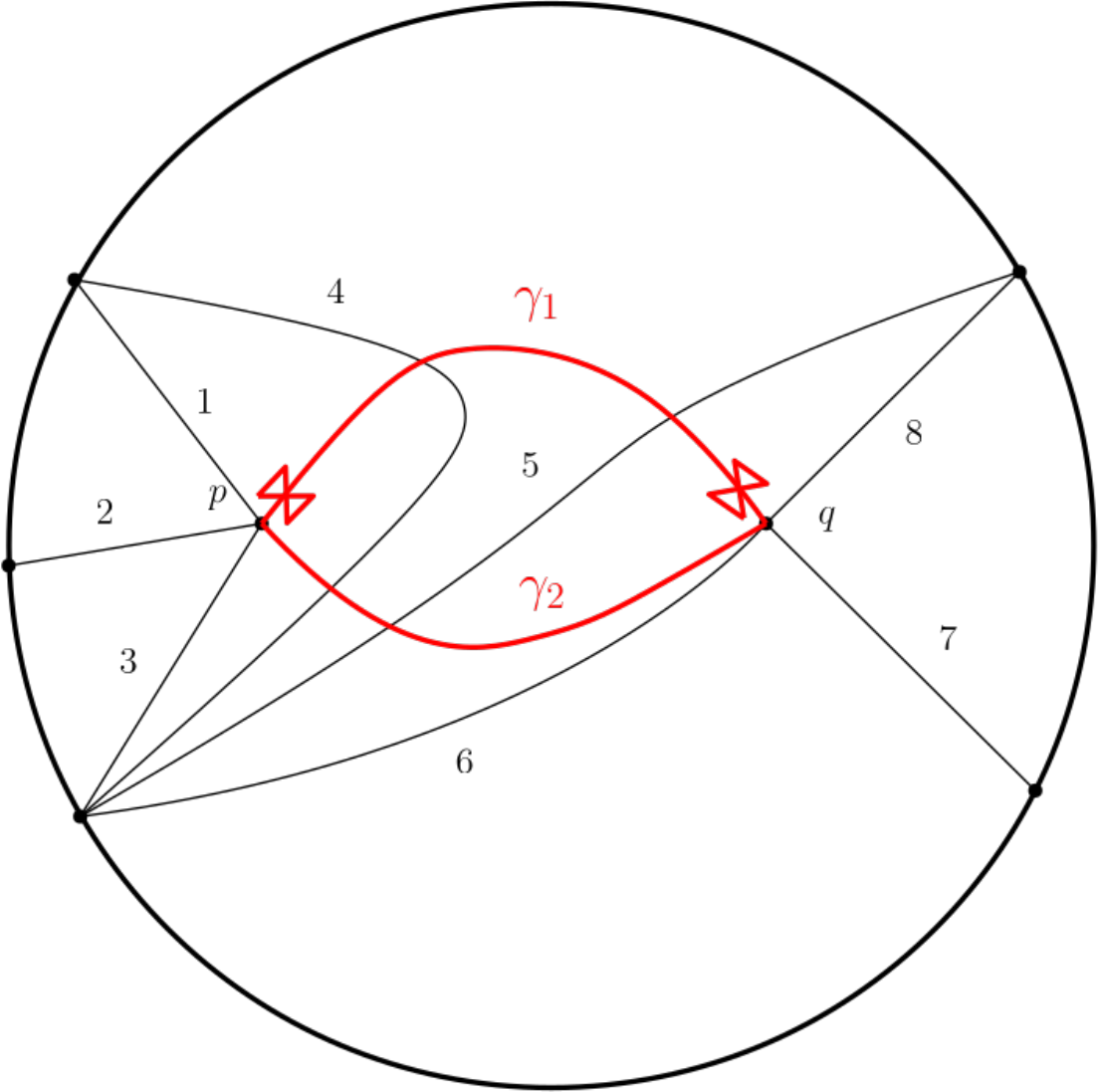}
    \caption{Extreme case for double incompatibility.}
    \label{fig:Extreme case for double incompatibility}
\end{figure}

\begin{figure}
    \centering
    \includegraphics[scale=0.35]{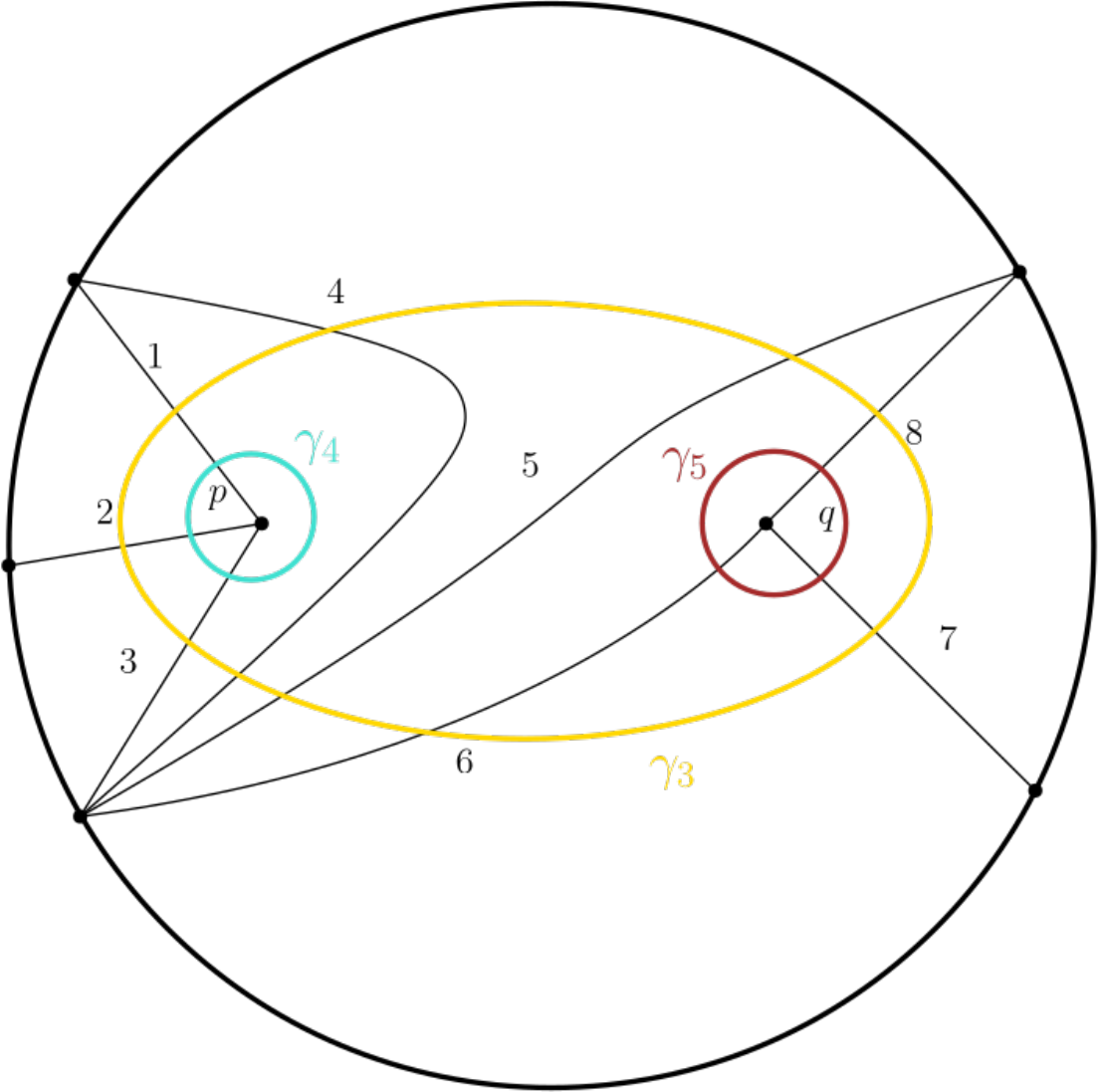}
    \caption{Resolution of extreme case for double incompatibility.}
    \label{fig:Extreme case for double incompatibility resolution}
\end{figure}

Having showcased how the construction of the four bandstrings works, we should now try to show that indeed under this definition there is a nice bijection between the submodules associated to the initial modules and the modules associated to the newly formed ones. \\

\begin{remark}
    Looking at how we obtained $a_1$ and $a_2$ in definition \ref{resolution of double incompatible taggings two punctures}, one can notice that a priori there is no way of telling how the local configuration in the hook $h_1$ looks, meaning we cannot know if there is an arrow from $a_1$ to $a_2$ or the other way around. Looking at how we have constructed the bijections in the previous chapters one can see that by taking cases we were trying to differentiate what happens when a simple is locally a socle or a top of the module. A similar obstruction appears here two. However instead of taking both of the aforementioned cases we will only focus on one of them. The reason for such an assumption is the fact that since there are two pairs of bandstrings in the resolution, the situation is very symmetrical and one could prove what happens in the other case, just by relabeling the two pairs. \\
    Starting from the next proposition which basically indicates how the bijection will be constructed, we will use the assumption that locally there is an arrow from $a_1$ to $a_2$ in the hook $h_1$.
\end{remark}

\begin{proposition}\label{unique remove tops incompatibility two punctures}
    Let $w_1, w_2, w_3, w_4, w_5$ and $w_6$ as they were defined in \ref{resolution of double incompatible taggings two punctures}. Suppose that $m_i$ is a proper submodule of  $w_i$ for $1\leq i\leq 4$, and without loss of generality assume that the local configuration of the hook $h_1$ is $a_1\leftarrow a_2$.
    If $m_3$ is reached from $w_3$  by the sequence $(b_i), 1\leq i\leq n$, then we can apply the same sequence $(a_i)$ to the module $w_1\oplus w_2$ to obtain a unique submodule $m_1'$ of it.\\
    The same statement is true by adjusting the following:\\
    The module $m_4$ is reached by applying the sequence to the module $(w_1\setminus mh_2)\oplus w_2$.\\
    The module $m_5$ is reached by applying the sequence to the module $(w_1\setminus a_1h_1')\oplus w_2$.\\
    The module $m_6$ is reached by applying the sequence to the module $((w_1\setminus a_1h_1')\setminus mh_2)\oplus w_2$.
    \begin{proof}
        The uniqueness of the modules follows from the construction of the modules in 
        the next Lemma \ref{removing tops for double puncture incompatibilities}.
    \end{proof}
\end{proposition}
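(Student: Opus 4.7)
The plan is to mirror the inductive argument of Proposition \ref{removertopwelldefined} and the one-puncture analogues in Lemmas \ref{removing tops for one puncture incompatibility}--\ref{removing tops for one puncture grafting incompatibility}, but now adapted to four resolved bandstrings rather than to two resolved loopstrings. The heart of the argument, which should be packaged into the deferred Lemma \ref{removing tops for double puncture incompatibilities}, is a tile-by-tile assignment that lifts each top-removal from $w_i$, for $3\le i\le 6$, to a uniquely determined top-removal in the prescribed target $(w_1\setminus X_i)\oplus w_2$, where $X_i$ is $\emptyset$, $mh_2$, $a_1h_1'$ or $a_1h_1'\cup mh_2$, according as $i=3,4,5,6$.

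First I would identify where substrings of $w_1\oplus w_2$ reappear after resolution. In each of $w_3$ and $w_5$ the overlap $m$ occurs twice and the hook $h_2$ once; the pieces $h_1',h_1'',u_1,u_2,a_1,a_2$ each occur with multiplicity one in any single $w_i$. Consequently, when removing a top $b$ from $w_i$ the only ambiguity in choosing a matching top of the source module concerns simples of $m$ and of $h_2$. I would fix this ambiguity by the same convention as in the one-puncture grafting case: remove from the copy sitting in $w_1$ whenever the local arrow configuration permits, and from the copy sitting in $w_2$ otherwise. The induction on $n$ then proceeds case by case on the location of $b_n$; the base case is immediate since every top of $w_i$ is visibly either a top of a piece that appears uniquely in the source module or a top of a duplicated piece whose image under the convention is again a top. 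In the inductive step, the hypothesis $a_1\leftarrow a_2$ inside $h_1$ ensures that the peeling of $h_1'$ (for $w_3,w_4$) or of $h_1''$ (for $w_5,w_6$) propagates consistently towards $u_1$, so the relevant tiles are genuinely tops at the moment they need to be removed.

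The hard part, exactly as in the earlier lemmas, will be the exception where the next tile to be removed is the last letter $t(m)$ of a copy of $m$, or an extremal letter of $h_2$, and the preferred copy fails to be a local top. The plan is to dispatch this by the retracing trick of Proposition \ref{removertopwelldefined}: we revisit the most recent removal inside the duplicated substring, swap its copy, and verify that the reassignment still yields a valid sequence leading to the same submodule. What will require genuine care, and is the real new phenomenon compared to the one-puncture case, is that in $w_3$ and $w_5$ the hook $h_2$ is flanked on both sides by copies of $m$, so the swap must be allowed to propagate through $h_2$; the assumption $a_1\leftarrow a_2$ is precisely what guarantees that exactly one of $a_1,a_2$ is a local top at the moment the propagation reaches the hook $h_1$, making the reassignment unambiguous. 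Uniqueness of the resulting submodule $m_i'$ will then follow because, after these exceptions are handled, the convention is dictated purely by the linear order of removals already performed and by the fixed arrow configuration, leaving no residual choice. The symmetric hypothesis $a_1\rightarrow a_2$ is handled by interchanging the roles of the pair $(w_3,w_4)$ with $(w_5,w_6)$, which is why it was harmless to fix one orientation from the outset.
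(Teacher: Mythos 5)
Your proposal is correct and tracks the paper's own argument closely: the paper also proves this Proposition purely by deferring uniqueness to the construction in Lemma \ref{removing tops for double puncture incompatibilities}, which (like your plan) proceeds tile-by-tile, separates the cases where the removed simple lies in the terminal hook piece $a_1h_1'$ (where the hypothesis $a_1\leftarrow a_2$ keeps that piece peelable), in $u_1,u_2$ (immediate), or in the duplicated $m\cdots h_2\cdots m$ block (handled by the same convention-plus-retracing scheme as Propositions \ref{removertopwelldefined} and \ref{remove tops from tagged and loop}), and treats the opposite orientation $a_1\rightarrow a_2$ by relabelling the pairs $(w_3,w_4)\leftrightarrow(w_5,w_6)$. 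The only substantive difference is that the paper records just one representative case ($m_1\subset w_3$) and refers back for the $m$/$h_2$ interaction, whereas you spell out the convention and the propagation of the swap through $h_2$ in more detail.
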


\begin{lemma}\label{removing tops for double puncture incompatibilities}
    Under the assumptions of Proposition \ref{unique remove tops incompatibility two punctures}, For each of the modules $m_i$ and sequences $(b_i), 1\leq i\leq n$, we can apply the same sequence in the associated submodule of $w_1\oplus w_2$ to obtain a new submodule $m_1'$
    \begin{proof}
        We will prove this statement indicatively only for one of the four cases. \\
        Let us assume that $m_1$ is a submodule of $w_3$ and is reached by the sequence $(b_i), 1\leq i\leq n$. We need to show that we can apply the same sequence to $w_1 \oplus w_2$ and obtain a submodule $m_1'$ of it.\\
        First of all, suppose that there are simples in $a_1h_1'$ that are part of the sequence. By construction since locally in $w_1$ we have $a_1\leftarrow a_2$, therefore the $h_1'$ contains the top of the hook. Therefore, by an iterative process of applying the sequence on $w_1\oplus w_2$ when we have to remove a simple from the hook, we can always do it. The above argument indicates that simples belonging to the hook cannot cause problems when we try to remove them as local tops from the module $w_1\oplus w_2$.\\
        The cases in which we have to remove a top from $u_1$ or $u_2$ are obviously straightforward too, since the local configuration in both modules $w_3$ and $w_1\oplus w_2$ is the same.\\
        The last case that we have to investigate is the case that we have to remove simples that belong to the common substring $m$ or in the hook $h_2$. However this part follows the same principles of the proofs of Propositions \ref{removertopwelldefined} and \ref{remove tops from tagged and loop} and so we omit the respective arguments. 
    \end{proof}
\end{lemma}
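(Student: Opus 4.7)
The plan is to prove each of the four cases $i\in\{3,4,5,6\}$ by induction on the length $n$ of the removal sequence $(b_j)_{j=1}^n$, using the same template that was already developed in Proposition~\ref{removertopwelldefined} and Proposition~\ref{remove tops from tagged and loop}. The base case is essentially the observation that each local top of $w_i$ corresponds, up to the canonical embedding into the target submodule of $w_1\oplus w_2$, to a local top of either $w_1$ or $w_2$; when this correspondence is ambiguous (i.e.\ when the letter being removed appears in more than one of the substrings that were concatenated to build the bandstring $w_i$), we fix a canonical choice once and for all.

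For the inductive step I would set up a removal convention tailored to each bandstring. For $w_3^o = a_1 h_1' u_1 m h_2 m u_2$, the substring $m$ appears twice, so whenever the sequence asks us to remove a letter of $m$ we remove it from the copy of $m$ sitting inside $w_1$ if the letter was removed from the first occurrence of $m$ in $w_3^o$, and from the copy sitting inside $w_2$ otherwise; letters of $a_1 h_1'$, $u_1$, $h_2$ and $u_2$ all lift uniquely. The same convention works for $w_5^o$ with $a_2 h_1''$ in place of $a_1 h_1'$. For $w_4^o$ and $w_6^o$ there is no duplicated substring, so the lift is forced, and the argument reduces to a routine induction carried out inside the respective target submodules $(w_1\setminus mh_2)\oplus w_2$ and $((w_1\setminus a_1 h_1')\setminus mh_2)\oplus w_2$.

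The main obstacle will be the exception in which the chosen convention fails because the letter requested by the sequence is not actually a local top in $w_1\oplus w_2$ after the previous removals have been applied. This is precisely the phenomenon encountered in the proof of Proposition~\ref{removertopwelldefined}, where the trailing letter of the overlap $m$ could not be removed from $w_1$ until some letter of $v_1$ or of the hook had been removed from $w_2$. Here the analogous conflict occurs at the interface between the second copy of $m$ and the hook $h_2$ in $w_3^o$ (and in $w_5^o$): removing the final letter of the first copy of $m$ inside the bandstring may become possible only once a specific letter of $h_2$ has been removed from the side originally paired with it in $w_1$. The resolution I would adopt is the same retracing trick used in the earlier proofs: when the conflict arises, swap the removal of the offending letter of $h_2$ between its two admissible positions, which is legitimate because $h_2$ is adjacent to $m$ in $w_1$ and appears between the two copies of $m$ in $w_3^o$, so after the swap both sides of the target again offer the requested letter as a local top.

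Once the convention and the exception mechanism are in place, the induction closes uniformly for all four cases, and the uniqueness claim needed in Proposition~\ref{unique remove tops incompatibility two punctures} follows for free: at every step the choice of which letter to remove from $w_1\oplus w_2$ is either forced by uniqueness (for letters lying in $h_1'$, $h_1''$, $u_1$, $u_2$) or fixed by the convention (for letters lying in the duplicated substring $m$ and, via the exception, in $h_2$). I would then remark that the symmetric case in which the local configuration of $h_1$ is $a_1 \rightarrow a_2$ follows by relabelling the two pairs $(w_3^o,w_4^o)$ and $(w_5^o,w_6^o)$, as indicated in the remark preceding Proposition~\ref{unique remove tops incompatibility two punctures}.
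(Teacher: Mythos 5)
Your proposal is correct and follows essentially the same approach as the paper's own proof: an induction on the removal sequence, handling letters of $a_1h_1'$ (resp.\ $a_2h_1''$) by observing that the configuration $a_1\leftarrow a_2$ forces the relevant part of the hook to contain the top, treating letters of $u_1,u_2$ as having unique lifts, and deferring the duplicated $m$ and $h_2$ interface to the retracing/convention mechanism of Propositions~\ref{removertopwelldefined} and~\ref{remove tops from tagged and loop}. The paper simply proves one of the four cases indicatively and cites those earlier propositions for the $m$/$h_2$ conflict rather than re-spelling the retracing trick, but the substance is identical to what you propose.
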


Let us now construct the bisection which will be used for the proof of the skein relation.

\begin{theorem}\label{bijection of modules incompatibilities two puncture}
    Let $w_1, w_2, w_3, w_4, w_5$ and $w_6$ as they were defined in \ref{resolution of double incompatible taggings two punctures}. Then there exists a bijection:
    \[
    \Psi \colon SM(w_3) \bigcup SM(w_4) \bigcup SM(w_5) \bigcup SM(w_6)\to SM(w_1\oplus w_2),
    \]
    which is defined as follows:
    \begin{gather*}
    \Psi(w_3) = w_1\oplus w_2, \\
    \Psi(w_4) = (w_1\setminus mh_2)\oplus w_2,  \\
    \Psi(w_5) = (w_1\setminus a_1h_1')\oplus w_2, \\
    \Psi(w_6) = ((w_1\setminus a_1h_1')\setminus mh_2)\oplus w_2.
    \end{gather*}
    If $m$ is a submodule of $w_3$  which is reached by the sequence $\{a_i\} 1\leq i\leq n$, and $m'$ the unique submodule of $(w_1\setminus s_2)\oplus w_2$ which is reached by the same sequence, then we define:
    \[
    \Psi(m) = m'.
    \]
    If $m$ is a submodule of $w_4, w_5$ or $w_6$, then $\Psi(m)$ is defined in a similar way.

    \begin{proof}
    The fact that $\Psi$ is a bijection follows from Proposition \ref{injectivity of Psi incompatibilities two punctures} and Remark \ref{termsincoeffreecase}.

\end{proof}
\end{theorem}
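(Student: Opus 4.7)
The plan is to follow the template of Theorems~\ref{bijofmod}, \ref{bij of mod arc and band}, \ref{bijection of modules incompatibility one puncture} and \ref{bijection of modules grafting incompatibility one puncture}: well-definedness of $\Psi$ on each piece $SM(w_i)$, $3\le i\le 6$, is supplied by Proposition~\ref{unique remove tops incompatibility two punctures}, and the assertion reduces to proving that $\Psi$ is injective, whereupon the cardinality equality
\[
|SM(w_3)|+|SM(w_4)|+|SM(w_5)|+|SM(w_6)|=|SM(w_1\oplus w_2)|
\]
extracted from Remark~\ref{termsincoeffreecase} upgrades injectivity to a bijection. Since each restriction $\Psi|_{SM(w_i)}$ is injective by construction (different top-removal sequences from a fixed $w_i$ give different submodules of $w_1\oplus w_2$ by Proposition~\ref{unique remove tops incompatibility two punctures}), the core of the work is to establish that the four images $\Psi(SM(w_i))$ are pairwise disjoint.

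I would treat the six cross cases $3\le i<j\le 6$ by contradiction, exactly as in Propositions~\ref{injofPsi}, \ref{inj for band and arc}, \ref{injectivity of Psi incompatibility one puncture} and \ref{injectivity of Psi grafted incompatibility one puncture}. If $\Psi(m_i)=\Psi(m_j)$, then the common image lies in $\Psi(w_i)\cap\Psi(w_j)$, which by inspecting the four formulas defining $\Psi$ on the generators is of the form $(w_1\setminus X_{ij})\oplus w_2$ for a specific substring $X_{ij}\subset w_1$. Combining this with Remark~\ref{backtrackinginlattice} and Proposition~\ref{unique remove tops incompatibility two punctures}, the sequence of tops realising $m_i$ inside $w_i$ must, when transported into $w_1\oplus w_2$, remove every simple of $X_{ij}$. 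The obstruction is that $X_{ij}$ always contains at least one simple which is not a local top anywhere in $w_i$: the resolution formulas glue two copies of $m$ through $h_2$, and the gluing endpoints are pinned by local socles just as in the proof of Proposition~\ref{injofPsi}.

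Unpacking the cases: for $(3,4)$ and $(5,6)$ the forbidden substring is $mh_2$, whose second occurrence in $w_3$ (respectively $w_5$) sits at a seam whose endpoints are local socles and hence cannot be removed as tops without first dismantling the rest of the band; for $(3,5)$ and $(4,6)$ the forbidden substring is $a_1h_1'$, which plays the symmetric role at the ``$p$-end'' of the bands; and for the mixed pairs $(3,6),(4,5)$ both $mh_2$ and $a_1h_1'$ must be removed simultaneously, which fails for the same local-socle reason on both flanks. The main obstacle I anticipate is not the logic of any single case but its bookkeeping: the two punctures behave asymmetrically (regular versus grafting incompatibility), and the splitting $h_1=h_1'a_1a_2h_1''$ depends on the local orientation $a_1\leftarrow a_2$ versus $a_1\rightarrow a_2$, so the argument must either fix this orientation once and for all (as is done in the paragraph preceding Proposition~\ref{unique remove tops incompatibility two punctures}) or treat the reverse orientation by an explicit symmetry. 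Once injectivity is in place, cardinality from Remark~\ref{termsincoeffreecase}, obtained by specialising $y_i=1$ and reducing to the classical unpunctured skein relation, closes the theorem.
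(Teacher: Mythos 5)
Your proposal tracks the paper's own argument exactly: the theorem is proven by establishing injectivity of $\Psi$ (Proposition~\ref{injectivity of Psi incompatibilities two punctures}) and then invoking the cardinality equality from Remark~\ref{termsincoeffreecase}, and your case-by-case identification of a ``forbidden substring'' $X_{ij}\subset w_1$ per pair $(i,j)$, pinned by a local-socle obstruction, is precisely the (omitted) content of that injectivity proposition, which the paper summarises by saying each $w_4,w_5,w_6$ removes a different combination of $a_1h_1'$ and $mh_2$. The one small slip is your parenthetical justification of injectivity on each piece $SM(w_i)$ — different top-removal sequences can land on the same submodule (Remark~\ref{backtrackinginlattice}), so the correct appeal is to well-definedness from Proposition~\ref{unique remove tops incompatibility two punctures} together with the straightforward observation that distinct submodules of a fixed $w_i$ map to distinct submodules of $w_1\oplus w_2$ — but this is exactly the step the paper also treats as immediate.
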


\begin{proposition}\label{injectivity of Psi incompatibilities two punctures}
    The map $\Psi$ as it was defined in Theorem \ref{bijection of modules incompatibilities two puncture} is an injection.
    \begin{proof}
        The proof of this proposition is omitted, since it follows the same logic as in previous chapters. \\
        The only difference here is that instead of having to compare submodules of two modules, we have to check all possible configurations between the four given modules $w_i$, $3\leq i\leq 4$. However this does not create any problem, since the gist of the argument given in previous proposition resolves around the fact that there is always a part of one module that is locally a top, while in the other module it is locally a socle. This principle is not violated in any of the possible cases since each one of the modules apart from $w_3$ takes out a different part of the hook $h_1$ or $mh_2$ or a combination of these two.\\
    \end{proof}
\end{proposition}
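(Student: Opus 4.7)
The plan is to reduce injectivity of $\Psi$ to a pairwise analysis: given $m_i\in SM(w_i)$ and $m_j\in SM(w_j)$ with $\Psi(m_i)=\Psi(m_j)$, I would show that $i=j$ and $m_i=m_j$. Within a single $SM(w_i)$, injectivity is immediate from Proposition~\ref{unique remove tops incompatibility two punctures}: the same sequence of top removals applied to a fixed source module produces a uniquely determined submodule, and distinct sequences that reach distinct submodules of $w_i$ will be transported to distinct submodules of the source. So the real content is ruling out the possibility that a submodule of $w_i$ and a submodule of $w_j$ with $i\neq j$ share an image.

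The key structural observation I would record first is that the four base images are distinguished by the presence or absence of two specific substrings of $w_1$: $\Psi(w_3)$ retains both $a_1h_1'$ and $mh_2$, $\Psi(w_4)$ retains $a_1h_1'$ but not $mh_2$, $\Psi(w_5)$ retains $mh_2$ but not $a_1h_1'$, and $\Psi(w_6)$ retains neither. Since every submodule $\Psi(m_i)$ is obtained from $\Psi(w_i)$ by further top removals (and the bandstrings $w_4,w_5,w_6$ do not contain the corresponding substrings at all), certain simples in the source module $w_1\oplus w_2$ either persist in $\Psi(m_i)$ or do not, in a way determined purely by $i$. Exploiting this $\{0,1\}$-signature on $(a_1h_1',mh_2)$ should be enough to separate images across the four cases.

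The main step is to mimic the contradiction strategy of Propositions~\ref{injectivity of Psi incompatibility one puncture} and \ref{injectivity of Psi grafted incompatibility one puncture}. Suppose $\Psi(m_i)=\Psi(m_j)$ with $i<j$. By Remark~\ref{backtrackinginlattice}, the sequence $(a_k)$ of tops removed from $w_1\oplus w_2$ to reach $\Psi(m_i)$ can be reordered so that it first removes all simples lying in the substring(s) $X\in\{a_1h_1',mh_2\}$ present in $\Psi(w_i)$ but absent from $\Psi(w_j)$, and only then continues. The same reordered sequence, applied to the bandstring $w_i$, is forced to remove the corresponding copy of $X$ first as well. Under the standing assumption that the local hook configuration is $a_1\leftarrow a_2$, neither $a_1h_1'$ nor $mh_2$ is a local top of $w_3$ (their interior letters sit next to socles of the zig-zag inside $w_3$, mirroring the role played by $m^\ell$ in the proof of Proposition~\ref{injectivity of Psi for band and arc}), so this initial segment of removals cannot be realised inside $w_i$, yielding the contradiction.

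The main obstacle, and the reason a case-check is unavoidable, is bookkeeping the six ordered pairs among $\{3,4,5,6\}$. I expect the symmetry between $mh_2$ and $a_1h_1'$ to collapse this down: the pairs $(3,4)$ and $(3,5)$ are archetypal and already essentially proven in the single-incompatibility sections; the pair $(4,5)$ forces both substrings into the discrepancy at once and requires a slightly more careful reordering argument, but the obstruction is again that a non-top simple of $w_4$ (respectively $w_5$) would have to be removed first; and the pairs involving $w_6$ follow from the others since $\Psi(w_6)$ is contained in both $\Psi(w_4)$ and $\Psi(w_5)$. Checking that the reordering in the $(4,5)$ case is actually compatible with the local zig-zag structure around the two punctures is the only genuinely new verification, and this is where the assumption on the orientation of the hook $h_1$ is used.
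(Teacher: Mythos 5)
Your argument matches the paper's (omitted) proof in both spirit and structure: the paper's sketch likewise reduces to the observation that each of $w_4,w_5,w_6$ discards a distinct subset of $\{a_1h_1',mh_2\}$ from the source, and that transporting a removal sequence back to $w_i$ forces the removal of a substring that is not a local top there, yielding a contradiction as in Propositions~\ref{injofPsi} and~\ref{injectivity of Psi incompatibility one puncture}. Your write-up is more detailed than the paper's (which is essentially deferred to earlier arguments), but it is the same pairwise-comparison, $\{0,1\}$-signature, reorder-and-contradict approach, and it is correct.
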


\subsection{ Skein relation for arcs with two incompatibilities at two puncture}

In this chapter we will prove skein relations when there are two arcs which have two different incompatibilities, one at each puncture. \\
The most interesting part, is the fact that out of the four terms appearing in the resolution, only one term does not contain any $y$-variables, which was partially explained at the start of this section earlier. \\
The bijection given in Theorem \ref{bijection of modules incompatibilities two puncture} as usual indicates to us, which monomials associated to each module on the left hand side of the bijection should be equal to the ones associated to modules on the right hand side of the bijection. We will indicatively prove some of these equalities first before going on the main statement of this subsection.

\begin{remark}
    Before going any further, we need to deal with an anomaly that may appear depending on the geometry of the surface. As it was showcased in the second example of the previous section the resolution of such incompatibilities may result in the loss of one of the four expected terms. However, what is equally important is the fact that sometimes an expected bandstring may end up being equal to a hook. A natural question would be, what does this even mean, since this cannot be a module. For this reason we will make a small abuse of notation and not indicate each time when this is not a module, but still refer to it like this. The propositions and the theorems in the previous sections still hold valid, under this abuse of notation by noticing one very important thing. \\
    Assuming that $w_4$ is equal to a hook , we will view this as a simple module. The idea behind it, is going back to the surface and the associated snake graphs, one should expect that the arc linked to $w_4$ is a band which just goes around a puncture. If one tries to construct the band graph of such an arc, it is easy to see that it would only have two perfect matchings. Since in our language perfect matchings are linked to submodules, it makes sense to view this $w_4$ as a simple module. Therefore the results of the previous section are still true under this assumption.\\
    One more thing that we need to do before going to the main theorem, is to define a monomial to such arcs. This is essential, since the definition given earlier cannot be used now. The reason for this, is that given a perfect matching of the graph associated to the such a bandstring, one cannot produce a new one by just flipping two edges in one of the  squares of the band graph. Therefore a new but intuitive definition is needed.
\end{remark}

\begin{definition}
    Assume that $w$ is a bandstring produced by a resolution of an incompatibility and is equal to the hook $h=\rightarrow a_1 \rightarrow \dots \rightarrow a_n\rightarrow$. Then \textit{the monomial associated to the bandstring $w$} is defined to be:
    \[
    x(w)= 1 + y_1 \dots y_n.
    \]
\end{definition}

\begin{example}
    Assume that $w= h$ where h is the hook around the puncture $p$ on the surface of Figure \ref{fig:Monomial of band loop}. Then we have that:
    \[
    x(w)= 1 + y_1y_2y_3y_4.
    \]
\end{example}

\begin{figure}
    \centering
    \includegraphics[scale=0.6]{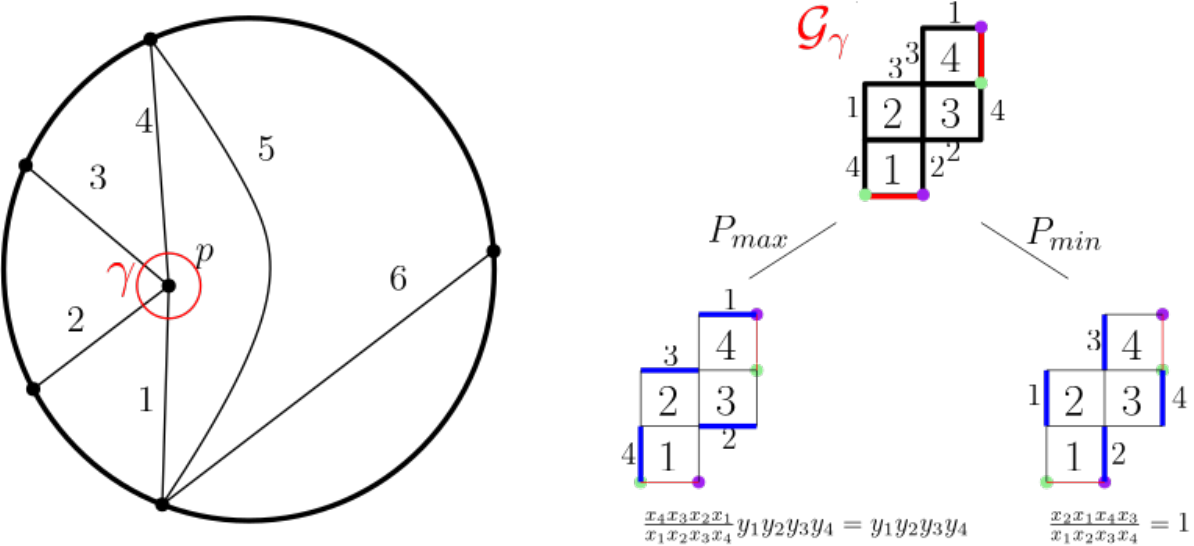}
    \caption{Band around puncture}
    \label{fig:Monomial of band loop}
\end{figure}

\begin{proposition}\label{xvariablesagree double incompatibility puncures}
     Let $w_1, w_2, w_3, w_4, w_5$ and $w_6$ as they were defined in \ref{resolution of double incompatible taggings two punctures}. Then:
     \[
     x(w_3)= x(w_1\oplus w_2),
     \]
     \[
     x(w_4)= x((w_1\setminus mh_2)\oplus w_2)),
     \]
     \[
     x(w_5)= x((w_1\setminus a_1h_1')\oplus w_2),
     \]
     \[
     x(w_6)= x((w_1\setminus a_1h_1')\setminus mh_2)\oplus w_2).
     \]
     \begin{proof}
     The proof here goes by induction as in the case of propositions \ref{removertopwelldefined} and \ref{remove tops from tagged and loop}.
     \end{proof}
\end{proposition}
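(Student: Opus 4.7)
The plan is to prove each of the four equalities by direct expansion via Definition~\ref{monomofstring}, following the pattern established in Lemma~\ref{xvariablesagree1} (crossing case) and Lemma~\ref{xvariablesagree regular incompatibility at a puncture} (single-puncture case). The proposition asserts four identities of monomials, one for each resolution band, and each one should reduce to a finite bookkeeping computation once we expand all strings in terms of their letters and apply the exponent rule from Definition~\ref{monomofstring}. The remark about induction ``as in Propositions~\ref{removertopwelldefined} and~\ref{remove tops from tagged and loop}'' I take to mean that the identity is stable under removing tops on both sides (this is Remark~\ref{removetoppreservesx}), so it suffices to verify the identity for the maximal perfect matching, i.e.\ at the level of the top elements $w_3, w_4, w_5, w_6$ themselves.

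\textbf{Key computation for $x(w_3) = x(w_1 \oplus w_2)$.} Using $h_1 = h_1' a_1 a_2 h_1''$, we have $w_1 = h_1' a_1 a_2 h_1'' u_1 m h_2$ and $w_2 = u_2 m$, so
\[
x(w_1)\,x(w_2) \;=\; x(h_1' a_1 a_2 h_1'' u_1 m h_2)\cdot x(u_2 m).
\]
On the other side, $w_3^o = a_1 h_1' u_1 m h_2 m u_2$ is a band in which the substring $m$ appears twice, exactly as happened for the band $l^o$ in Lemma~\ref{xvariablesagree}. The closure of the band contributes the extra junction factors $x_{s(m)}$ and $x_{t(m)}$ at the two copies of $m$, while the absence of the subpath $a_2 h_1''$ in $w_3$ is precisely compensated by the fact that $a_2 h_1''$ is part of the hook of $w_1$ and thus does not contribute to $x(w_1)$ in the open letters. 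Cross-multiplying as in the proof of Lemma~\ref{xvariablesagree}, one checks that both sides reduce to the same monomial $x(h_1'a_1) \cdot x(h_1''u_1 m h_2) \cdot x(m) \cdot x_{s(m)} x_{t(m)} \cdot x(u_2)$ up to the boundary adjustments dictated by the extended quiver $Q_T'$.

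\textbf{Remaining equalities.} For the second identity $x(w_4) = x((w_1 \setminus m h_2)\oplus w_2)$, note that $w_1 \setminus m h_2 = h_1' a_1 a_2 h_1'' u_1$, while $w_4^o = a_1 h_1' u_1$ is a short band which only retains the $a_1 h_1'$ part of the hook. Here the computation collapses since the piece $w_2$ appears unmodified on the right-hand side, and the difference between $a_1 h_1' u_1$ as a band and $h_1' a_1 a_2 h_1'' u_1$ as a loopstring is exactly the segment $a_2 h_1''$ of the hook, which contributes only through its $y$-weight and not through $x$-weight. The identities for $w_5$ and $w_6$ then follow by the obvious symmetry that swaps the roles of $(a_1, h_1')$ and $(a_2, h_1'')$, applied to Steps 2 and 3 respectively.

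\textbf{Main obstacle.} The delicate case is when $w_4$ or $w_6$ degenerates to a pure hook (as in the second example of the previous subsection, where $u_1 = \emptyset$ and the resolution produces a band that coincides with the hook around a single puncture). In that regime the generic definition of $x(w)$ breaks down and one must use the special formula $x(w) = 1 + y_1\cdots y_n$ introduced just before the statement. The difficulty is that on the right-hand side $x((w_1\setminus mh_2)\oplus w_2)$ is a genuine loop-module monomial, so one must verify that the extra $1$ coming from the band-loop formula matches the boundary contribution of the loop in $w_1$ around the puncture, and that the $y_1\cdots y_n$ factor matches the height difference between the two extreme perfect matchings of the corresponding band graph. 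I expect this to require a separate short argument for each of the degenerate subcases, but once handled, together with the generic computation above, it completes the proof.
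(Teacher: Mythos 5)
Your proposed framework (direct expansion via Definition~\ref{monomofstring}, mirroring Lemmas~\ref{xvariablesagree1} and~\ref{xvariablesagree regular incompatibility at a puncture}) is the natural one, and it is a better-fitting reading of the situation than the paper's one-line appeal to ``induction as in Propositions~\ref{removertopwelldefined} and~\ref{remove tops from tagged and loop}'' --- those propositions are about transporting top-removal sequences across the bijection~$\Psi$, not about weight monomials, so your reinterpretation through Remark~\ref{removetoppreservesx} is a reasonable repair of what is likely a mis-citation. That said, your write-up does not actually close the computation and there is a genuine gap.

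The problem is concentrated in your treatment of the hook segment $a_2 h_1''$. You claim that its absence in $w_3$ is ``precisely compensated by the fact that $a_2 h_1''$ is part of the hook of $w_1$ and thus does not contribute to $x(w_1)$ in the open letters.'' That is not what Definition~\ref{monomofstring} says. Hook letters contribute $x_{a_i}^{e_i}$ where $e_i = 1$ if $a_i$ is a socle letter inside the hook and otherwise $e_i$ is the number of incoming arrows; they only vanish from the monomial when $e_i = 0$, i.e.\ when they are tops. So the vanishing of the $a_2 h_1''$ contribution has to be argued from the specific arrow configuration of the hook under the ``w.l.o.g.\ $a_1 \leftarrow a_2$'' convention of the section, not asserted as a general property of hooks. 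Worse, $h_2$ appears as a \emph{hook} in $w_1$ (exponents $e_i = 1$ on hook socles) but as an ordinary band substring in $w_3^o$ (exponents $e_i$ = number of incoming arrows everywhere), so even the letters common to both sides are not weighted by the same rule; this discrepancy has to be chased through and does not obviously cancel. Your final assertion that both sides ``reduce to the same monomial~\dots\ up to the boundary adjustments dictated by the extended quiver $Q_T'$'' is a concession that the computation is not closed --- the blossoming contributions are supposed to be already built into $x(\cdot)$ via Definition~\ref{monomial associated to loopstring}, so an exact, not approximate, equality is required. Finally, you correctly flag the degenerate case where $w_4$ or $w_6$ collapses to a bare hook around a puncture (handled in the paper via the \emph{ad hoc} formula $x(w)=1+y_1\cdots y_n$), but you defer that check without giving it, leaving a second hole. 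To make this proof go through you would need to (i) write out the exponent vector of $x(w_1)$ and $x(w_3^o)$ letter by letter under the $a_1\leftarrow a_2$ convention, showing exactly which hook letters get exponent $0,1$ or $\geq 2$, and (ii) handle the bare-hook degeneration by matching $1+y_1\cdots y_n$ against the two extreme perfect matchings of the corresponding one-tile band graph.
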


\begin{theorem}\label{Skein relations for double incompatibilities at two punctures}
    Let $(S,M,P,T)$ be a triangulated punctured surface, $\gamma_1$ a double notched arc at the puncture $p$ and $q$ and $\gamma_2$ a plain arc which has its endpoints also at the punctures $p$ and $q$. Suppose that $\gamma_3,\gamma_4, \gamma_5$ and $\gamma_6$ are the bands obtained by resolving the incompatibilities at the punctures.
        Then:
        \[
        x_{\gamma_1} x_{\gamma_2} =Y(m)Y(a_2h_2'') x_{\gamma_3} + Y(a_2h_2'') x_{\gamma_4}  +  x_{\gamma_5} + Y(m) x_{\gamma_6}.
        \]

    \begin{proof}
        The proof of this theorem is a direct consequence of Theorem \ref{bijection of modules incompatibilities two puncture}, Proposition \ref{xvariablesagree double incompatibility puncures} and Remark \ref{removetoppreservesx}.
    \end{proof}
\end{theorem}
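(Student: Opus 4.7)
The plan is to apply the bijective correspondence $\Psi$ of Theorem~\ref{bijection of modules incompatibilities two puncture} to split the Laurent expansion of $x_{\gamma_1}x_{\gamma_2}$ into four pieces, each corresponding to one of the $x_{\gamma_i}$. First, I would use the expansion formula together with Theorem~\ref{submodule lattice bijective to perfect matchings lattice} to write $x_{\gamma_1}x_{\gamma_2}$ as a sum indexed by $SM(w_1 \oplus w_2)$, and similarly express each $x_{\gamma_i}$ for $3 \le i \le 6$ as a sum over $SM(w_i)$. The bijection $\Psi$ then partitions the index set of the left-hand sum into four classes, one for each $w_i$.

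Second, I would verify that $x$-monomials agree under $\Psi$. The initial $x$-weight equalities $x(w_i) = x(\Psi(w_i))$ are exactly the content of Proposition~\ref{xvariablesagree double incompatibility puncures}. To extend these equalities to arbitrary submodules, I would argue inductively on the sequence of top removals defining $\Psi$ and invoke Remark~\ref{removetoppreservesx}, which guarantees that removing the same top from both sides multiplies each $x$-monomial by the same local factor determined by the extended quiver $Q_T'$. Consequently, the $x$-weights of corresponding perfect matchings coincide in each of the four partition classes.

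Third, and most delicately, I would identify the constant $y$-monomial factor $Y_i$ that multiplies each class. The key observation is that, under $\Psi$, the minimum perfect matching of $G_{w_i}$ does \emph{not} in general correspond to the minimum perfect matching of $G_{w_1 \oplus w_2}$; rather, it corresponds to the perfect matching whose symmetric difference with $P_{\min}^{w_1 \oplus w_2}$ consists precisely of those tiles appearing in $w_1 \oplus w_2$ but not in $w_i$. Since $\Psi$ preserves the sequence of top removals, the ratio $y(P_{\Psi(N)})/y(P_N)$ is \emph{independent} of $N \in SM(w_i)$ and equals the height monomial of this fixed shift, which therefore factors out as a common coefficient of $x_{\gamma_i}$. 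A direct combinatorial comparison of the tiles of each $w_i$ with those of $w_1 \oplus w_2$ identifies these shifts as $Y(m)Y(a_2 h_1'')$, $Y(a_2 h_1'')$, $1$, and $Y(m)$ respectively, yielding the four terms of the stated skein relation.

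The main obstacle I anticipate is the bookkeeping for band graphs: unlike snake and loop graphs, band graphs arise from edge identifications, and the substring $m$ appears with multiplicity two in $w_3$ and $w_5$. This creates asymmetries in how vertex multiplicities are tracked across $\Psi$, and is the reason $Y(m)$ appears in front of two of the four terms but not the other two. Once the correct convention for the minimum perfect matching of a band graph is fixed and the multiplicity bookkeeping is carried out carefully, the identification of the $Y_i$ is forced by the combinatorics of the resolution in Definition~\ref{resolution of double incompatible taggings two punctures}.
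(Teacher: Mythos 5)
Your overall strategy — expand both sides over the perfect matching/submodule lattices, use the bijection $\Psi$ of Theorem~\ref{bijection of modules incompatibilities two puncture} to partition the index set, invoke Proposition~\ref{xvariablesagree double incompatibility puncures} for the initial $x$-monomial equalities and Remark~\ref{removetoppreservesx} to propagate them, and finally extract a constant $y$-shift per class — is exactly the argument the paper compresses into its one-line citation, so the approaches agree.

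There is, however, an imprecision in your Step 3 that would break the derivation if taken literally. You state that the minimum of $SM(w_i)$ maps under $\Psi$ to the matching whose symmetric difference with $P_{\min}^{w_1\oplus w_2}$ is exactly ``the tiles appearing in $w_1\oplus w_2$ but not in $w_i$.'' This is only correct for $i=3$, where $\Psi(w_3)=w_1\oplus w_2$ sends top to top. For the other three classes the image of the top of $SM(w_i)$ is a \emph{proper} submodule of $w_1\oplus w_2$ — namely $\Psi(w_4)=(w_1\setminus mh_2)\oplus w_2$, $\Psi(w_5)=(w_1\setminus a_1h_1')\oplus w_2$, $\Psi(w_6)=((w_1\setminus a_1h_1')\setminus mh_2)\oplus w_2$ — and the $y$-shift must be read off from $y(P_{\Psi(w_i)})/y(P^{\max}_{w_i})$, not from a naive multiset difference of tiles. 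Concretely, your principle applied to $w_5$ would yield $Y(a_1h_1')$, whereas $\Psi(w_5)$ and $w_5$ have identical tile supports, giving the ratio $1$ that appears in the theorem; the tile-difference heuristic is off precisely because $\Psi(w_5)\neq w_1\oplus w_2$. The correct bookkeeping is forced by the explicit $\Psi(w_i)$ assignments (and by the band-graph normalization of $P_{\min}$, which you rightly flag as a subtlety), not by comparing $w_i$ against the full $w_1\oplus w_2$. You do land on the same coefficients $Y(m)Y(a_2h_1''),\,Y(a_2h_1''),\,1,\,Y(m)$ as the theorem (note the paper's $h_2''$ is a typo for $h_1''$ — the definition only introduces $h_1',h_1''$), but the stated general principle would not produce them; it should be replaced by the ratio of $y$-weights at the images of the tops under $\Psi$.
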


\begin{figure}
    \centering
    \includegraphics[scale=0.8]{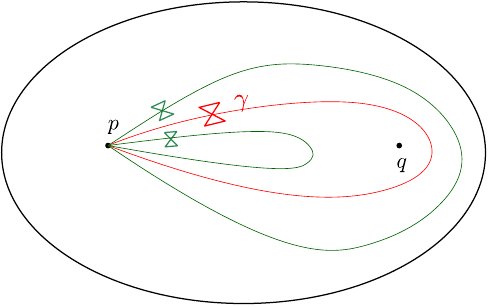}
    \caption{Self incompatible arc.}
    \label{fig:Self incompatible arc}
\end{figure}

\begin{remark}
    By taking a look at the $y$-variables appearing on the terms of Theorem \ref{Skein relations for double incompatibilities at two punctures} one can notice the intricacies of this case and how one could possibly follow a different root on the proof of this statement. For this, we should pay attention on the possible grouping of the terms that was mentioned at the start of this section. One can see that each puncture "generates" its own terms on the $y$-variables, which is a strong indication that this resolution could be viewed as a two step procedure. \\
    It can also indicate that an arc which has a self incompatibility at a puncture could be viewed under some assumptions as an element of the Cluster algebra, since this is a term appearing after the first step of such a procedure (Figure \ref{fig:Self incompatible arc}).  
\end{remark}

\section{ Skein relations for self crossing tagged arcs}

In this section we will work on the final cases of possible incompatibilities of tagged arcs on punctured surfaces. These two cases are namely the following:

\begin{itemize}
    \item a single notched generalized arc with a self-crossing,
    \item a doubly notched generalized arc with a self crossing.
\end{itemize}

These two cases end up being extremely similar to the standard cases of self crossing untagged generalized arcs, that were studied in \cite{canakci2}. The reason for these similarities is the same as that observed in the regular crossing of tagged arcs. Since the crossing takes place away from the puncture, the hook has minimal impact on the resolution.\\

As we will see later on this section, one can apply the resolution of arcs as it was described in \cite{canakci2} and just add the hook where it is necessary. However, we still need to do some work, since a priori the skein relations are not guaranteed to be the same as in the regular set up of generalized plain self crossing arcs. Nonetheless going through all the cases that were described in their paper adding possible hooks to the arcs would not be mathematically interesting, since one can just recycle all of the arguments combining the ideas of their paper and the suitable generalization on how someone deals with the hooks. This is why we will only focus on some of the more interesting cases that can be encountered on this set up.\\
The two cases that we will focus on will be about self crossing doubly notched arcs that have both of their endpoints at the same puncture. On these two cases one can encounter either a \textit{self-grafting incompatibility} (Figure \ref{fig:Self crossing grafting} ) or a \textit{regular self crossing incompatibility} (Figure \ref{fig:Self crossing regular}). The purpose of this distinction is the same as in the previous intersections. Depending on the triangulation there may be a common overlap locally at the intersection or not.\\
The final result of this chapter which will combine both of the previous cases (Theorem~\ref{skein for self crossing of same and opposite incompatibility}, Theorem~\ref{skein for self crossing of grafting incompatibility}) that will be explored in detail, as well as the cases that we will not be fully explored due to their similarities will be summed up in the following theorem:

\begin{theorem}\label{skein relation for generalized tagged self crossing arcs.}
        Let $(S,M,P,T)$ be a triangulated punctured surface and $\gamma_1$ a self intersecting arc. Let $c_1$ and $c_2$ be the two multicurves obtained by smoothing the crossing.
        Then we have that:
        \[
        x_{\gamma_1} = x_{c_1}  + Y^{+} x_{c_2},
        \]
        where $Y^{+}$ is a monomial on $y$-variables.
\end{theorem}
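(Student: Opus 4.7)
The plan is to adapt the strategy from the preceding sections to the case where the two crossing strands belong to the \emph{same} arc. First I would express $\gamma_1$ by its loopstring $w_{\gamma_1}$; since $\gamma_1$ self-intersects, $w_{\gamma_1}$ contains a substring $m$ that occurs twice, traversed in opposite directions and joined to the remainder of the word via arrows of opposite sign on either end. Depending on the local geometry at the self-intersection, two situations arise: a \emph{regular self-crossing}, modeled exactly as in Definition~\ref{resofloopstrings} where the two copies of $m$ share a common overlap, and a \emph{self-grafting incompatibility}, modeled as in Definition~\ref{grafofloopstrings} where the two copies share no interior diagonal. In both situations the smoothing of the crossing produces two multicurves $c_1, c_2$ whose loopstring or bandstring descriptions are obtained by re-splicing $w_{\gamma_1}$ through the double occurrence of $m$: one splicing yields a pair of arcs (possibly including a closed curve) and the other yields a bandstring (a closed curve together with at most one arc).

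Next, I would construct a bijection
\[
\Psi\colon SM(w_{c_1}) \sqcup SM(w_{c_2}) \longrightarrow SM(w_{\gamma_1})
\]
following the template used in Theorems~\ref{bijofmod}, \ref{bijofmodgrafting}, and \ref{bij of mod arc and band}: send each submodule of $w_{c_i}$ reached by a sequence of top removals $(a_j)$ to the submodule of $w_{\gamma_1}$ obtained by the same sequence. The heart of the construction is choosing consistently, when the removed top belongs to $m$, which of the two copies in $w_{\gamma_1}$ to remove it from. I would reuse the exception rule of Proposition~\ref{removertopwelldefined} essentially verbatim, since the rule is local and hence insensitive to whether the two halves of the crossing belong to one arc or two. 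Injectivity of $\Psi$ then follows as in Proposition~\ref{injofPsi}: two submodules mapping to the same target would force the removal of an entire copy of $m$ (or of $r, r'$ in the grafting case) as a first block, contradicting the local arrow configuration in $w_{\gamma_1}$. Surjectivity follows from Remark~\ref{termsincoeffreecase}, since the plain self-crossing skein relation is classical and the coefficient-free count of terms is already known to agree.

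The conversion from the submodule bijection to the stated monomial identity would proceed by an analogue of Lemmas~\ref{xvariablesagree1} and \ref{xvariablesagree}. I would verify by direct expansion using Definition~\ref{monomofstring} that $x(w_{\gamma_1})$ factors correctly through the overlap $m$, so that $x(w_{c_1})$ matches the $c_1$-side contribution and $x(w_{c_2})$ matches the $c_2$-side contribution up to the cancellation of the $m$-factor encoded in the extended-quiver gadget of Definition~\ref{monomial associated to loopstring}. The coefficient $Y^{+}$ would then be read off as the height monomial of the symmetric difference corresponding to the embedding $SM(w_{c_2})\hookrightarrow SM(w_{\gamma_1})$, just as in the previous sections.

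The main obstacle will be the bandstring side: at least one of $c_1, c_2$ is generically a closed curve, so I must check that the weighted sum over perfect matchings of the associated band graph agrees with the weighted sum over canonical submodules of the associated band module; this is available through Wilson's expansion formula once the hook is placed correctly, but it requires carrying the constructions of Section~3 through for band modules as well, which has only been sketched in the paper so far. A secondary technical difficulty is the degenerate case in which the self-intersection nests close to a puncture at which $\gamma_1$ is already notched: there the hook $h$ and the overlap $m$ may share vertices, and one must re-run the careful local analysis of Remark~\ref{hook is also part of the overlap} and Proposition~\ref{remove tops from tagged and loop} to guarantee that the top-removal correspondence remains uniquely determined.
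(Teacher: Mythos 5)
Your overall strategy mirrors the paper's exactly: express $\gamma_1$ as a loopstring, split into regular versus grafting self-crossing, build a bijection $\Psi\colon SM(w_{c_1})\sqcup SM(w_{c_2})\to SM(w_{\gamma_1})$ by removing the same sequence of tops, prove injectivity as in Proposition~\ref{injofPsi}, deduce surjectivity from Remark~\ref{termsincoeffreecase}, and translate to the monomial identity via the $x(\cdot)$ computation. However, there is one concrete gap. You assert that the self-intersection forces $w_{\gamma_1}$ to ``contain a substring $m$ that occurs twice, traversed in opposite directions.'' That is only half of the regular case. Definition~\ref{self intersection opp and same direction} distinguishes two sub-cases for a regular self-crossing: the opposite-direction overlap $w = hu_1ambu_2cm^{-1}du_3h$, which you describe, and the same-direction overlap $w = hu_1ambu_2cmdu_3h$, which you do not. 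These are not cosmetic variants. The resolutions are structurally different: in the opposite-direction case the ``no coefficient'' side is a single loopstring $w_{34}$ and the $Y^+$-side is a loopstring $w_5$ together with a band $w_6^o$, whereas in the same-direction case the roles and shapes flip — the clean side is the pair $(w_3, w_4^o)$ (a loopstring and a band) and the coefficient side is a single band $w_{56}^o$ obtained by a genuine subtraction of strings $(u_3-u_1)'\rightarrow (u_2^{-1})'$, an operation that has no analogue in your description. The bijection and the weight computation in that case (Proposition~\ref{unique remove tops self incompatibility same direction}, Theorem~\ref{bijection of modules self incompatibility same and opp direction}, and part (ii) of Theorem~\ref{skein for self crossing of same and opposite incompatibility}, where $Y^+ = Y(m)Y(u_2\cap u_3)$) cannot be read off ``verbatim'' from Proposition~\ref{removertopwelldefined}; you would have to redo the construction with the two copies of $m$ glued in the same orientation.

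A smaller remark: the paper deliberately restricts the detailed analysis to doubly notched arcs whose two endpoints coincide at a single puncture, and it handles the band-module side without the separate worry you raise — the machinery of Sections~3 is applied to bands by the same Wilson-type expansion, with the band-around-a-puncture degeneracy absorbed by a direct definition of its associated monomial. Your flagged concern about the overlap $m$ sharing vertices with the hook $h$ is real, but the paper addresses it through exactly the device you name (Remark~\ref{hook is also part of the overlap}), so on that point your proposal is aligned.
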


\begin{figure}
    \centering
    \includegraphics[scale=0.8]{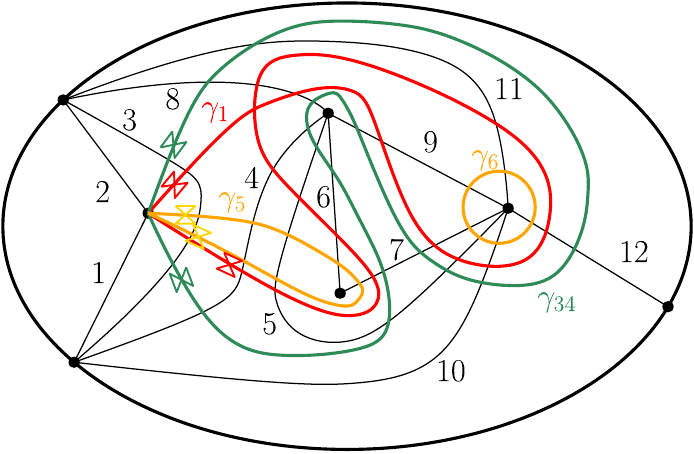}
    \caption{Regular self crossing of a doubly notched arc in the opposite direction.}
    \label{fig:Self crossing regular}
\end{figure}

\subsection{Resolution of regular self incompatibility of a doubly notched arc}

Suppose that $(S, M, P, T)$ is a triangulated punctured surface and let $\gamma$ be an arc which has both of its endpoints at the same puncture and has a tagging at both of them. Assume that there is a common overlap $m$ locally, where the self-intersection occurs. Then we say that there is a \textit{regular self-incompatibility}.\\
We will interpret the conditions under which this incompatibility occurs by analyzing the corresponding loop string associated with the arc $\gamma$. The main distinction is about the overlap and if it occurs on the same or on the opposite direction.

\begin{definition}\label{self intersection opp and same direction}
    Suppose that $w$ is an abstract loopstring with the same hook at both of its ends. \\
    Assume that $w= hu_1ambu_2cm^{-1}du_3h$ where $u_i$ and $m$ are substrings, $a$ and $d$ are inverse arrows, $b$ and $c$ are inverse arrows and $h$ ais a hook. We then say that $w$ has a \textit{regular self-crossing in the opposite direction} in $m$.\\
    The resolution of the self crossing of $w$ with respect to $m$ are the loopstrings $w_5, w_{34}$ and the band $w_6^{o}$ which are defined as follows:
    \begin{itemize}
        \item $w_{34}= hu_1amc^{-1}u_2^{-1}b^{-1}m^{-1}du_3h$,
        \item $w_5= hu_1eu_3h,$ where $e$ is a direct arrow,
        \item $w_6^{o}= u_2^{o}$.
    \end{itemize}
    Assume now that $w= hu_1ambu_2cmdu_3h$ where $u_i, m, a, b, c, d$ as they were defined earlier. We then say that $w$ has a \textit{regular self-crossing in the same direction} in $m$.
    The resolution of the self crossing of $w$ with respect to $m$ is the loopstring $w_{34}$ and the bands $w_5^{o}, w_6^{o}$ which are defined as follows:
    \begin{itemize}
        \item $w_{3}= hu_1amdu_3h$,
        \item $w_4^{o}= cmbu_2c$,
        \item $w_{56}^{o}= (u_3-u_1)'\rightarrow (u_2^{-1})'$, where $(u_3-u_1)'$ is the string $u_3$ subtracting the string $u_1$ and the intersection of $u_2$ and $u_3$, while $(u_2^{-1})'$ is the inverse of the string $u_2$ subtracting the intersection of $u_2$ and $u_3$.
    \end{itemize}
\end{definition}

We will now present one example where there is a self intersection in the opposite direction so that Definition~\ref{self intersection opp and same direction} becomes clearer.

\begin{example}
    Suppose that $(S,M,P,T)$ is the triangulated surface appearing in Figure~\ref{fig:Self crossing regular}. Let $\gamma_1$ be the red arc appearing in the same figure. Then the string $w_1$ corresponding to $\gamma_1$ is the following:
    \begin{gather*}
    w_1 = 1\rightarrow 2 \hookleftarrow 3 \leftarrow 8 \rightarrow 9 \leftarrow 7\leftarrow 5\leftarrow 10\leftarrow 12\leftarrow 11\rightarrow 8 \leftarrow\\
    \leftarrow 4\leftarrow5\leftarrow6\rightarrow 7\leftarrow5 \rightarrow 4\leftarrow 3\hookleftarrow 1 \rightarrow 2.
    \end{gather*}
    Following Definition~\ref{self intersection opp and same direction} we see that there is a self intersection in the opposite direction with overlap $m=8$. We can additionally break the string $w_1$ in the following substrings:
    \begin{gather*}
        h= 1\leftarrow 2,\\
        u_1 = 3, \\
        u_2 = 9 \leftarrow 7\leftarrow 5\leftarrow 10\leftarrow 12\leftarrow 11, \\
        u_3 = 4\leftarrow5\leftarrow6\rightarrow 7\leftarrow5 \rightarrow 4\leftarrow 3.
    \end{gather*}
    We therefore get the following resolution of $w_1$ with respect to $m$.
    \begin{gather*}
        w_{34} = 1\rightarrow 2 \hookleftarrow 3 \leftarrow 8 \leftarrow 11 \rightarrow 12\rightarrow 10\rightarrow 5\rightarrow 7\rightarrow 9 \\
        \leftarrow 8 \leftarrow 4 \leftarrow 5 \leftarrow 6 \rightarrow 7 \leftarrow 5 \rightarrow 4 \leftarrow 3\hookleftarrow 1 \rightarrow 2, \\
        w_5 = 1\rightarrow 2 \hookleftarrow 3 \rightarrow 4\leftarrow5\leftarrow6\rightarrow 7\leftarrow5 \rightarrow 4\leftarrow 3 \hookleftarrow 1 \rightarrow 2, \\
        w_6^{o} = \leftarrow 9 \leftarrow 7\leftarrow 5\leftarrow 10\leftarrow 12\leftarrow 11 \leftarrow.
    \end{gather*}
    It is easy to see that the loopstrings/bands $w_{34}, w_5. w_6^{o}$ in the resolution correspond to the respective arcs/loops $\gamma_{34}, \gamma_5, \gamma_6^{o}$ appearing in Figure~\ref{fig:Self crossing regular} as it should be expected.
\end{example}

The following two propositions, each deal with a separate case of opposite and same direction self crossing incompatibility respectively.

\begin{proposition}\label{unique remove tops self incompatibility opp direction}
    Let $w_1, w_{34}, w_5$ and $w_6^{o}$ as they were defined in Definition~\ref{self intersection opp and same direction}, where $w_1$ has a self intersection $m$ in the opposite direction. Suppose that $m_1$ is a proper submodule of $w_{34}$. If $m_1$ is reached from $w_{34}$  by the sequence $(b_i), 1\leq i\leq n$, then we can apply the same sequence $(a_i)$ to the module $w_1$ to obtain a unique submodule $m_1'$ of it.\\
    Similarly, assuming that $m_2$ is a proper submodule of $w_5\oplus w_6^{o}$ and is reached from $w_5\oplus w_6^{o}$ by the sequence $(b_i), 1\leq i\leq n$, we can apply the same sequence $(a_i)$ to the module $w_1-m =  hu_1\oplus u_2cm^{-1}du_3h$ to obtain a unique submodule $m_1'$ of it.
    
    \begin{proof}
        The uniqueness of the modules follows from the construction of the modules in 
        the next Lemma~\ref{removing tops for self incompatibility opp direction}.
    \end{proof}
\end{proposition}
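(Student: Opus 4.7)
The plan is to mirror the inductive top-removal strategy used in Propositions~\ref{removertopwelldefined} and~\ref{remove tops from tagged and loop}, carrying out the construction in parallel on $w_{34}$ and $w_1$ (respectively on $w_5 \oplus w_6^{o}$ and $w_1-m$). The key preliminary observation is that the vertex sets of $w_{34} = hu_1amc^{-1}u_2^{-1}b^{-1}m^{-1}du_3h$ and of $w_1 = hu_1ambu_2cm^{-1}du_3h$ agree with multiplicity: each of $h$, $u_1$, $u_2$, $u_3$ appears once in both strings, while $m$ appears twice in each (once as $m$ and once as $m^{-1}$). The arrow configurations in $w_{34}$ and $w_1$ differ only in a neighbourhood of the four gluing arrows $a,b,c,d$; outside of this local region the two strings are locally isomorphic, so a top of $w_{34}$ that is not adjacent to a gluing arrow is automatically a top of $w_1$.

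The first statement is proved by induction on the length $n$ of the sequence $(b_i)$. For the base case I verify directly that if $b_1$ is a top of $w_{34}$ then the corresponding simple is a top of $w_1$. When $b_1$ lies in $h$, $u_1$, or (the unique copy of) $u_3$, the two local configurations coincide; when $b_1$ lies in $u_2$, I need only observe that being a top in $u_2 \subset w_1$ and in $u_2^{-1} \subset w_{34}$ amount to the same condition after reversing orientation; and when $b_1$ is one of the endpoints of a copy of $m$, being a top forces either $b$, $c$, or one of the adjacent arrows of $u_2$ to be ``blocking,'' and in each subcase the corresponding letter of $m$ (or $m^{-1}$) in $w_1$ is unblocked. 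For the inductive step I adopt the same choice rule as in Proposition~\ref{removertopwelldefined}: if $b_i$ lies in one of $h,u_1,u_2,u_3$, remove the unique corresponding simple in $w_1$; if $b_i$ lies in the first copy of $m$ in $w_{34}$, remove it from the first copy of $m$ in $w_1$, and analogously for the second copy. The construction of $m_1'$ is then forced, which gives the required uniqueness.

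For the second statement, the decomposition $w_1 - m = hu_1 \oplus u_2 cm^{-1}du_3 h$ is already a direct sum, and the module $w_5 \oplus w_6^{o}$ is built from the pieces $hu_1$, $u_3h$, and $u_2$ in a way that assigns each simple a unique counterpart in $w_1 - m$. Consequently, any sequence $(b_i)$ removing tops from $w_5 \oplus w_6^{o}$ can be transported vertex-by-vertex to $w_1 - m$ with no choice involved, and the uniqueness of $m_1'$ is immediate.

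The main obstacle, as in the earlier analogous propositions, will be the overlap $m$, which appears twice in each of $w_1$ and $w_{34}$: I must check that the default ``remove from the matching copy'' rule is always legal, and that when it is not (precisely when the last letter of one copy of $m$ in $w_1$ is still blocked by an unremoved simple of $u_2$), the backtracking exception used in the proof of Proposition~\ref{removertopwelldefined} can again be invoked to swap the removal into the other copy without breaking uniqueness. Once this bookkeeping is handled, uniqueness falls out of the rigidity of the choice rule, which is exactly what Lemma~\ref{removing tops for self incompatibility opp direction} will formalise.
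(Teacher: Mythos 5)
The proposal is correct and follows essentially the same route as the paper: the paper's own proof defers entirely to Lemma~\ref{removing tops for self incompatibility opp direction}, which in turn simply invokes ``the same arguments from previous chapters'' (i.e.\ the inductive top-removal construction with a matching-copy rule and backtracking exception from Proposition~\ref{removertopwelldefined}), and you spell out exactly that induction, identifying the two copies of $m$ and the default/exception rules for transporting removals between them. Your treatment of the second statement --- observing that $w_5\oplus w_6^{o}$ has a unique, choice-free vertex correspondence into $w_1-m$ --- likewise matches the one-line remark in the paper's Lemma.
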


\begin{proposition}\label{unique remove tops self incompatibility same direction}
    Let $w_1, w_{3}, w_4^{o}$ and $w_{56}^{o}$ as they were defined in Definition~\ref{self intersection opp and same direction}, where $w_1$ has a self intersection $m$ in the same direction. Suppose that $m_1$ is a proper submodule of $w_{3}\oplus w_4^{o}$. If $m_1$ is reached from $w_{3}\oplus w_4^{o}$  by the sequence $(b_i), 1\leq i\leq n$, then we can apply the same sequence $(a_i)$ to the module $w_1$ to obtain a unique submodule $m_1'$ of it.\\
    Similarly, assuming that $m_2$ is a proper submodule of $w_{56}^{o}$ and is reached from $w_{56}^{o}$ by the sequence $(b_i), 1\leq i\leq n$, we can apply the same sequence $(a_i)$ to the module $u_2cmd(u_3-u_1)'$, to obtain a unique submodule $m_1'$ of it.
    
    \begin{proof}
        The uniqueness of the modules follows from the construction of the modules in 
        the next Lemma~\ref{removing tops for self incompatibility opp direction}.
    \end{proof}
\end{proposition}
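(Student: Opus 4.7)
The plan is to imitate the inductive construction used in the proof of Proposition~\ref{removertopwelldefined}, suitably modified for the fact that in the same-direction self-crossing the loopstring $w_1 = hu_1ambu_2cmdu_3h$ contains two copies of the overlap $m$ oriented in the same way. I would proceed by induction on the length $n$ of the removal sequence $(b_i)$. For the base case, a single top of $w_3 \oplus w_4^{o}$ lies either in the substrings $h, u_1, u_3$ (unique counterpart in $w_1$), in the band substring $u_2$ of $w_4^{o}$ (which appears uniquely in $w_1$ between the two copies of $m$), or in one of the two copies of $m$ (one inside $w_3$, one inside $w_4^{o}$). In each case the local arrow configuration around that vertex in $w_3 \oplus w_4^{o}$ matches, by the same-direction hypothesis, the configuration at the designated copy in $w_1$, so the element is also a local top there.

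For the induction step I would define, exactly as in Proposition~\ref{removertopwelldefined}, a construction assigning to each $b_i$ a canonical destination in $w_1$: if $b_i$ is a vertex of $m$ read from $w_3$, remove it from the first copy of $m$ in $w_1$, i.e.\ the one between $u_1$ and $u_2$; if $b_i$ is read from the $w_4^{o}$ copy of $m$, remove it from the second copy, between $u_2$ and $u_3$. All other assignments are forced up to embedding of submodules, which is exactly what yields the uniqueness claim. The main obstacle, as in Proposition~\ref{removertopwelldefined}, will be an exceptional case where at some step the designated copy of $m$ fails to admit the prescribed vertex as a local top because a flanking letter of $u_2$ (for the first copy) or of $u_3$ (for the second copy) has not yet been removed on that side; I would resolve it by retracing the construction and rerouting the pending removals in $m$ to the opposite copy, which by the same-direction alignment of the two copies of $m$ becomes a local top once $u_2$ (respectively $u_1$) has been processed. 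These exceptions are finite in number and produce the same resulting submodule, so $m_1'$ is well-defined and unique.

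For the second half involving $m_2 \subseteq w_{56}^{o}$, the situation is noticeably simpler, because $w_{56}^{o} = (u_3 - u_1)' \rightarrow (u_2^{-1})'$ is a single connected band whose letters form a sub-collection of the vertices appearing in the substring $u_2 c m d (u_3 - u_1)'$ of $w_1$. I would verify by a direct induction that every local top of $w_{56}^{o}$ is a local top of $u_2 c m d (u_3 - u_1)'$, using that the linking arrows at $c$ and $d$ are present in both words so that no new unexpected neighbours appear. Uniqueness here is immediate since no duplicated substring is involved, and the argument closes without invoking any exception. Combining the two halves yields the full proposition, after which the associated skein relation will follow by the same $x$-monomial bookkeeping used in Lemma~\ref{xvariablesagree} together with Remark~\ref{removetoppreservesx}.
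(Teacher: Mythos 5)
Your proposal follows the same broad outline as the paper's proof, which is extremely terse: the paper's Proposition simply defers to Lemma~\ref{removing tops for self incompatibility opp direction}, whose treatment of the same-direction case is two sentences. Like the paper, you handle the $w_3\oplus w_4^{o}$ half by re-running the inductive construction of Proposition~\ref{removertopwelldefined} (this is exactly what the lemma means by ``follows the same arguments from previous chapters''), and you treat the $w_{56}^{o}$ half directly and observe it requires no exception handling, which matches the paper's ``even this case is straightforward.''

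One inaccuracy worth flagging, though it does not break the argument: you justify the $w_{56}^{o}$ half by saying ``the linking arrows at $c$ and $d$ are present in both words.'' They are not: by Definition~\ref{self intersection opp and same direction}, the band $w_{56}^{o}=(u_3-u_1)'\rightarrow(u_2^{-1})'$ is formed precisely by discarding $cmd$, so the arrows $c,d$ do not occur in $w_{56}^{o}$ at all, and the arrow gluing $(u_3-u_1)'$ to $(u_2^{-1})'$ (as well as the cyclic return arrow of the band) is new. The reason the argument still works is the one the paper actually gives: with $c$ a direct arrow and $d$ an inverse arrow, the word $u_2\,c\,m\,d\,(u_3-u_1)'$ has the local configuration $u_2\rightarrow m\leftarrow (u_3-u_1)'$, so $m$ sits in the socle and the adjacent ends of $u_2$ and $(u_3-u_1)'$ are local tops. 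Hence passing from $w_{56}^{o}$ to $u_2cmd(u_3-u_1)'$ introduces no new \emph{incoming} arrows to the vertices of $w_{56}^{o}$, which is the property you actually need for tops to remain tops. You should also note that $w_{56}^{o}$ carries a cyclic band arrow not present in the linear string $u_2cmd(u_3-u_1)'$; this works in the right direction (it can only make fewer things tops of $w_{56}^{o}$, not more), but it is worth stating explicitly rather than asserting the neighbourhoods agree.
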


\begin{lemma}\label{removing tops for self incompatibility opp direction}
    Under the assumptions of Proposition~\ref{unique remove tops self incompatibility opp direction} and Proposition~\ref{unique remove tops self incompatibility same direction}, for each of the modules $m_i$ and sequences $(b_i), 1\leq i\leq n$, we can apply the same sequence in the associated submodule of $w_1$ to obtain a new submodule $m_1'$
    \begin{proof}
    Let us assume first that the self intersection is in the opposite direction.\\
    If $m_1$ is a submodule of $w_{34}$ it is easy to see that the statement is true since it follows the same arguments from previous chapters and the local position of the substrings $u_1h, u_2, u_3h$ is the same.\\
    If $m_2$ is a submodule of $w_5\oplus w_6^{o}$, having removed the substring $m$ from the module $w_1$ we can again easily see that each top removed from $w_5\oplus w_6^{o}$ has an obvious counterpart from $hu_1\oplus u_2cm^{-1}du_3h$.\\
    Suppose now that the self intersection is in the same direction.
    The only interesting case is when we consider $m_1$ to be a submodule of $w_{56}^{o}$ when $w$ has a self intersection in the same direction $m$. However, even this case is straightforward since $u_2$ and $u_3'$ are local tops in $u_2cmd(u_3-u_1)'$, due to the $c$ being a direct arrow and $d$ an inverse arrow.
    \end{proof}
\end{lemma}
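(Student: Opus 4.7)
The plan is to prove the lemma by induction on the length $n$ of the sequence $(b_i)$, handling the four situations (submodules of $w_{34}$, of $w_5\oplus w_6^{o}$, of $w_3\oplus w_4^{o}$, and of $w_{56}^{o}$) in parallel since the inductive step has the same flavour in each. The base case $n=0$ is trivial: the source module itself is declared to be the image. For the inductive step, I would assume that after removing $b_1,\dots,b_{n-1}$ we have obtained a submodule $m'_{n-1}$ of the corresponding module on the $w_1$-side with the same underlying sequence of deleted vertices, and I must show that $b_n$, which is a top of the current submodule of the resolved module, remains a top of the corresponding submodule on the $w_1$-side.

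The key structural observation I would invoke is that the resolutions in Definition~\ref{self intersection opp and same direction} are built by locally re-splicing the arrows $a,b,c,d$ around the overlap $m$, while leaving the substrings $hu_1$, $u_2$, $u_3h$ untouched. Therefore, outside of the vertices that sit on the splice (the endpoints $s(m),t(m)$ of $m$ and their immediate neighbours through $a,b,c,d$), the local quiver neighbourhood around every vertex in the resolved module coincides with its neighbourhood in $w_1$ up to the canonical inclusion. In particular, removing a vertex $b_n$ which belongs to $hu_1$, to $u_2$, to $u_3h$, or to an interior vertex of $m$ causes no difficulty: it is a top on one side iff it is a top on the other. The only vertices for which I must argue carefully are the two copies of the endpoints of $m$ and those vertices where $u_i$ substrings meet the hook $h$ (which can be identified on the $w_1$-side because they appear inside the same hook).

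For the endpoints of $m$, I would use exactly the same construction/convention as in Proposition~\ref{removertopwelldefined}: when a vertex of $m$ appears twice in $w_1$ but only once in the resolved module (or vice versa), I fix a canonical choice of which copy to remove on the $w_1$-side, and then observe that the arrow configurations $a,b,c,d$ force the chosen copy to actually be a local top when its image was a top in the resolved module. In the opposite-direction case this is immediate because after the re-splicing in $w_{34}$ the endpoints $s(m),t(m)$ sit between pairs of oppositely oriented arrows exactly as they do in $w_1$, while in $w_5\oplus w_6^{o}$ the entire overlap $m$ has been removed so only boundary-of-$m$ vertices need to be compared, and these are local tops in $hu_1\oplus u_2 c m^{-1}d u_3 h$ precisely because $a,d$ are inverse and $b,c$ are direct. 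In the same-direction case a similar but cleaner analysis works, and as the proof sketch of the lemma already signals, the extremal fact is that $c$ direct and $d$ inverse make $u_2$ and $(u_3-u_1)'$ local tops of $u_2 c m d (u_3-u_1)'$, which immediately propagates the correspondence.

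The main obstacle I expect is not the general inductive mechanism but the bookkeeping at the splice: I need to guarantee that the \emph{canonical choice} of copy on the $w_1$-side is consistent across the whole sequence, i.e.\ that it never forces me into the situation where $b_n$ fails to be a local top on the $w_1$-side because an earlier choice removed the wrong copy. I will handle this by following the exception mechanism of Proposition~\ref{removertopwelldefined} (''retrace and reassign''): if the naive removal from the designated copy of $m$ fails, one shows that the symmetric copy is still available and that retracing the previous removals to use the other copy yields a valid sequence on the $w_1$-side ending at the same submodule, because the arrow directions at $a,b,c,d$ guarantee that once the overlap on one side is consumed, the overlap on the other side automatically becomes a local top. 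This retracing argument, uniformised across the four cases, is what makes the lemma work and is exactly the non-routine point of the proof.
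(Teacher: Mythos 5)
Your proposal follows essentially the same route as the paper's proof: both reduce the lemma to the inductive top-removal mechanism of Proposition~\ref{removertopwelldefined} (canonical choice of copy at the overlap plus the retrace/reassign exception), both note that the substrings $hu_1$, $u_2$, $u_3h$ are untouched by the resolution so only the splice points need care, and both single out the orientation of $c$ and $d$ as the reason $u_2$ and $(u_3-u_1)'$ are local tops in the same-direction case. Your write-up is simply a more detailed version of the paper's (rather terse) argument, so no substantive difference to report.
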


\begin{theorem}\label{bijection of modules self incompatibility same and opp direction}
    Let $w_1, w_{34}, w_5$ and $w_6^{o}$ as they were defined in Definition~\ref{self intersection opp and same direction}, where $w_1$ has a self intersection $m$ in the opposite direction. Then there exists a bijection:
    \[
    \Psi \colon SM(w_{34}) \bigcup SM(w_5\oplus w_6^{o})\to SM(w_1),
    \]
    which is defined as follows:
    \begin{gather*}
    \Psi(w_{34}) = w_1, \\
    \Psi(w_5\oplus w_6^{o}) = hu_1\oplus u_2cm^{-1}du_3h.
    \end{gather*}
    If $m$ is a submodule of $w_{34}$  which is reached by the sequence $\{a_i\} 1\leq i\leq n$, and $m'$ the unique submodule of $w_1$ which is reached by the same sequence, then we define:
    \[
    \Psi(m) = m'.
    \]
    If $m$ is a submodule of $w_5\oplus w_6^{o}$, then $\Psi(m)$ is defined in a similar way.\\

    Let also  $w_1, w_{3}, w_4^{o}$ and $w_{56}^{o}$ as they were defined in Definition~\ref{self intersection opp and same direction}, where $w_1$ has a self intersection $m$ in the same direction. Then there exists a bijection:
    \[
    \Phi \colon SM(w_{3}\oplus w_4^{o}) \bigcup SM(w_{56}^{o})\to SM(w_1),
    \]
    which is defined as follows:
    \begin{gather*}
    \Psi(w_{3}\oplus w_4^{o}) = w_1, \\
    \Psi(w_{56}^{o}) = u_2m(u_3-u_1)'.
    \end{gather*}

    \begin{proof}
    The fact that $\Psi$  and $
    Phi$ are bijections follows from Proposition~\ref{injectivity of Psi self incompatibility same and opp direction} and Remark~\ref{termsincoeffreecase}.

\end{proof}
\end{theorem}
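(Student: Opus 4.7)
The strategy is to follow the two-step template established in every earlier bijection theorem of this thesis (Theorem~\ref{bijofmod}, Theorem~\ref{bij of mod arc and band}, Theorem~\ref{bijection of modules incompatibility one puncture}, Theorem~\ref{bijection of modules grafting incompatibility one puncture}, Theorem~\ref{bijection of modules incompatibilities two puncture}): first show that $\Psi$ (and $\Phi$) is well-defined and injective, and then upgrade injectivity to bijectivity by a cardinality argument based on the coefficient-free case. Well-definedness is already guaranteed by the preceding Propositions~\ref{unique remove tops self incompatibility opp direction} and~\ref{unique remove tops self incompatibility same direction}, which state that any sequence of top-removals applied to a resolution module can be lifted uniquely to a sequence of top-removals on $w_1$ (respectively on its canonical submodule after deletion of the overlap $m$). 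So the image of $\Psi$ and $\Phi$ on an arbitrary submodule is unambiguous, and the two maps really do land inside $SM(w_1)$.

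For injectivity of $\Psi$, if $m_1,m_2$ lie in the same set ($SM(w_{34})$ or $SM(w_5\oplus w_6^o)$) then Propositions~\ref{unique remove tops self incompatibility opp direction} forces $m_1=m_2$, so the only interesting case is $m_1\in SM(w_{34})$, $m_2\in SM(w_5\oplus w_6^o)$ with $\Psi(m_1)=\Psi(m_2)$. Here I would argue as in Proposition~\ref{injofPsi} and its descendants: the target $\Psi(w_5\oplus w_6^o)=hu_1\oplus u_2 c m^{-1} d u_3 h$ is $w_1$ with the first copy of the overlap $m$ deleted, so if $(a_i)$ is a removal sequence taking $w_{34}$ to $m_1$ with the same image, then every letter of that copy of $m$ must appear in $(a_i)$. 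Reading the loopstring $w_{34}=hu_1 a m c^{-1} u_2^{-1} b^{-1} m^{-1} d u_3 h$, however, the first letter of $m$ is neither a local top nor becomes one after any admissible sequence of removals that leaves the adjacent substring $u_2^{-1}$ in place; the only way to free it would be to first remove the entire $u_2^{-1}$ segment together with the second copy of $m$, and then the image would necessarily lose pieces that are present in $\Psi(m_2)$. The injectivity of $\Phi$ is analogous, with $w_{56}^o=u_2mc(u_3-u_1)'$ playing the role that $w_5\oplus w_6^o$ played before and with the gluing letter of $(u_3-u_1)'$ providing the obstruction.

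For bijectivity I would then invoke Remark~\ref{termsincoeffreecase}: specialising all $y$-variables to $1$, the skein identities claimed on both sides reduce to the classical coefficient-free skein relations for self-crossing plain arcs, which are known from \cite{canakci2}. Since under this specialisation each summand contributes a single term counted by a submodule on each side, this forces the equalities
\[
|SM(w_{34})|+|SM(w_5\oplus w_6^o)|=|SM(w_1)|,\qquad |SM(w_3\oplus w_4^o)|+|SM(w_{56}^o)|=|SM(w_1)|.
\]
An injection between finite sets of equal cardinality is a bijection, so both $\Psi$ and $\Phi$ are bijections.

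The main obstacle I expect is the same-direction case handled by $\Phi$. There the resolution produces two bands ($w_4^o$ and $w_{56}^o$) together with the loopstring $w_3$, and the overlap $m$ appears inside both $w_3$ and $w_4^o$; tracking which copy of $m$ gets assigned where, and excluding the degenerate mixed case in which a removal sequence from $SM(w_{56}^o)$ ``crosses'' the splicing between $u_2$ and $(u_3-u_1)'$ and thereby mimics a removal sequence on $SM(w_3\oplus w_4^o)$, is exactly the place where the local top/socle analysis of Lemma~\ref{removing tops for self incompatibility opp direction} is essential. I would therefore spend most of the effort there, and treat the opposite-direction case as the slightly easier warm-up handled by the analogous argument for $\Psi$.
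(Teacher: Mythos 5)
Your proposal follows exactly the route the paper takes: well-definedness from Propositions~\ref{unique remove tops self incompatibility opp direction} and~\ref{unique remove tops self incompatibility same direction}, injectivity by the top-removal argument modeled on Proposition~\ref{injofPsi} (which the paper delegates to Proposition~\ref{injectivity of Psi self incompatibility same and opp direction}), and bijectivity from the coefficient-free cardinality count of Remark~\ref{termsincoeffreecase}. The paper's proof is a one-line citation of those two results, so your expanded sketch is in the same spirit; just note the small slip where you write $w_{56}^{o}=u_2mc(u_3-u_1)'$, which is actually the image $\Psi(w_{56}^{o})$ rather than the resolution band $w_{56}^{o}=(u_3-u_1)'\rightarrow(u_2^{-1})'$ itself.
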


\begin{proposition}\label{injectivity of Psi self incompatibility same and opp direction}
    The maps $\Psi$ and $Phi$ as they were defined in Theorem~\ref{bijection of modules self incompatibility same and opp direction} are injections.
    \begin{proof}
        The proof of this proposition is similar to the one given in Proposition~\ref{injofPsi} and is omitted.
    \end{proof}
\end{proposition}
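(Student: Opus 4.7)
The plan is to follow the template of the earlier injectivity proofs in the thesis, most directly Propositions~\ref{injofPsi} and \ref{injectivity of Psi grafted incompatibility one puncture}. For each of $\Psi$ and $\Phi$, injectivity will be verified in two stages. First, when both $m_1$ and $m_2$ lie in the same component of the domain, equality of their images forces equality of the underlying top-removal sequences up to reordering, and Proposition~\ref{unique remove tops self incompatibility opp direction} (respectively Proposition~\ref{unique remove tops self incompatibility same direction}) then pins down the source module uniquely. The substantive case is when $m_1$ and $m_2$ come from different components, and the goal is to derive a contradiction from the assumption $\Psi(m_1) = \Psi(m_2)$ (respectively $\Phi(m_1) = \Phi(m_2)$).

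For $\Psi$, I would take $m_1 \in SM(w_{34})$ and $m_2 \in SM(w_5 \oplus w_6^o)$, with sequences $(a_i)$ and $(b_j)$ witnessing the descents from $w_{34}$ and from $w_5 \oplus w_6^o$ respectively. Since $\Psi(w_5 \oplus w_6^o) = hu_1 \oplus u_2cm^{-1}du_3h$ is obtained from $w_1 = \Psi(w_{34})$ by deleting the block corresponding to $amb$ (the connecting arrows together with the first copy of the overlap), any top-removal path from $w_1$ down to the common image $\Psi(m_1)=\Psi(m_2)$ must include every simple of that block. Using Remark~\ref{backtrackinginlattice} to rearrange the order of removals, I would produce a combined sequence of the form $(amb, b_1, \dots, b_k)$ leading from $w_1$ to $\Psi(m_2)$, and then exhibit the contradiction by showing that the same sequence cannot be applied to $w_{34}$: inside $w_{34} = hu_1amc^{-1}u_2^{-1}b^{-1}m^{-1}du_3h$, the first copy of $m$ is flanked by $a$ and $c^{-1}$ rather than by $a$ and $b$, so the simples of $amb$ cannot be removed as an initial block of local tops before substantial simples of $u_2^{-1}$ or $m^{-1}du_3h$ have been taken out. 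This is the precise analogue of the obstruction used in the proof of Proposition~\ref{injofPsi}.

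For $\Phi$, the argument is cleaner. Here $\Phi(w_{56}^o) = u_2 m (u_3-u_1)'$ contains none of the simples of $h$, $u_1$, or the outer portion of $u_3 h$ not lying in $(u_3-u_1)'$, whereas $\Phi(w_3 \oplus w_4^o) = w_1$ contains all of them. Under the assumption $\Phi(m_1)=\Phi(m_2)$ with $m_1 \in SM(w_3 \oplus w_4^o)$ and $m_2 \in SM(w_{56}^o)$, I would fix any simple $s$ in the boundary between the deleted and the retained regions and observe that $s$ must appear in the removal sequence from $w_3 \oplus w_4^o$ to $m_1$ but cannot appear among those taken from $w_{56}^o$ (which does not contain $s$ at all). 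Reordering as above, this pushes $s$ to the position of a purported top of $w_{56}^o$ on the common image, which is impossible by the construction of $w_{56}^o$ as $u_2 m (u_3-u_1)'$.

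The main obstacle I expect is the contradiction step for $\Psi$ in the opposite-direction case, because in $w_{34}$ the two copies of the overlap $m$ are threaded through $c^{-1}u_2^{-1}b^{-1}$ and the directions of $a, b, c, d$ determine which simples are locally tops at each stage. I would handle this by splitting the rearranged sequence $(amb, b_1, \dots, b_k)$ at the first moment it attempts to remove a simple of the first copy of $m$ inside $w_{34}$, and showing that the neighboring simple across $c^{-1}$ becomes a new top only after $u_2^{-1}$ has been sufficiently reduced; a short induction on the length of $u_2$ then forces at least one simple of $u_3h$ or of the second copy of $m$ to have been removed first, contradicting the form of the rearranged sequence. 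Once this key combinatorial obstruction is in place, the rest of the proof mirrors the earlier injectivity arguments almost verbatim.
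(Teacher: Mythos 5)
Your proposal is correct in approach and matches what the paper intends: the paper omits this proof entirely, deferring to Proposition~\ref{injofPsi}, and your argument is exactly the elaboration of that template (trivial same-component case; cross-component contradiction via Remark~\ref{backtrackinginlattice} to reorder the removal sequence so the deleted block of simples comes first, then showing that block is not an initial run of local tops in the resolved module). The only detail to tighten is in the $\Phi$ case, where the contradiction should land on $w_3\oplus w_4^o$ (the module mapping onto all of $w_1$), not on $w_{56}^o$; otherwise this is the same proof the paper gestures at, written out.
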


We will proceed by giving the theorem describing the skein relations for both cases of same and opposite self crossing without explicitly proving it. The only argument missing is the proof that the $x$-variables of the corresponding whole modules agree in each case, but this has been done multiple times already and the proof technique is exactly the same.

\begin{theorem}\label{skein for self crossing of same and opposite incompatibility}
    Let $(S,M,P,T)$ be a triangulated punctured surface, $\gamma_1$ a doubly notched arc at a puncture $p$ which has a self crossing. Then:
    \begin{itemize}
        \item[(i)] 
        If the overlap $m$ in $\gamma_1$ occurs in the opposite direction, then:
        \[
        x_{\gamma_1} = x_{\gamma_{34}}  + Y^{+} x_{\gamma_5} x_{\gamma_6^{o}},
        \]
        where $Y^{+} =  Y(m)$.
        \item[(ii)]
        If the overlap $m$ in $\gamma_1$ occurs in the same direction, then:
        \[
        x_{\gamma_1} = x_{\gamma_{3}} x_{\gamma_4^{o}} + Y^{+} x_{\gamma_{56}^{o}},
        \]
        where $Y^{+} =  Y(m)Y(u_2\cap u_3)$.
    \end{itemize}
\end{theorem}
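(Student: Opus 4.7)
The plan is to proceed exactly as in the proofs of the previous skein relation theorems, combining the submodule bijection of Theorem~\ref{bijection of modules self incompatibility same and opp direction} with a weight-monomial identity on the whole modules, and then propagating this identity across the entire submodule lattice via Remark~\ref{removetoppreservesx}. First I would invoke the bijection $\Psi$ (for the opposite-direction case) and $\Phi$ (for the same-direction case), which partition the set of submodules of $M(w_1)$ into the images of the two right-hand-side pieces. Since each submodule corresponds, via Theorem~\ref{submodule lattice bijective to perfect matchings lattice}, to a perfect matching of $\mathcal{G}^\bowtie_{\gamma_1}$ and hence to a summand of $x_{\gamma_1}$ through the expansion formula, it suffices to verify that the weight monomials match up summand by summand.

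For case (i), I would first establish the baseline identities $x(w_{34}) = x(w_1)$ and $Y(m)\cdot x(w_5 \oplus w_6^{o}) = x(hu_1 \oplus u_2cm^{-1}du_3h)$ by a direct computation analogous to Lemma~\ref{xvariablesagree1}, isolating the overlap $m$ on both sides and reconciling the factors $x_{s(m)}, x_{t(m)}$ contributed by the arrows $a,b,c,d$ adjacent to the two copies of $m$ in $w_1$. The factor $Y(m)$ in the skein relation arises precisely because the difference $w_1 \setminus (hu_1 \oplus u_2cm^{-1}du_3h)$ is the substring $m$, whose height monomial is $Y(m)$. Once the baseline identity is established, Remark~\ref{removetoppreservesx} propagates equality to all submodules simultaneously, because removing the same local top on both sides scales the weight monomial by a factor that depends only on the local configuration of the quiver $Q_T'$, which is shared between the two sides of the bijection.

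For case (ii), the argument is structurally identical, applied to $\Phi$. The baseline identities to check become $x(w_3)\cdot x(w_4^{o}) = x(w_1)$ and $Y(m)Y(u_2 \cap u_3)\cdot x(w_{56}^{o}) = x(u_2 m (u_3 - u_1)')$. The additional factor $Y(u_2 \cap u_3)$ accounts for the repeated substring that is removed in the construction of $w_{56}^{o}$ via the subtraction $(u_3-u_1)'$, and matches the height monomial associated with the tiles that disappear when passing from the loopstring to the band presentation. Again, once this holds on whole modules, Remark~\ref{removetoppreservesx} closes the argument across the whole lattice.

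The main obstacle I anticipate lies not in the algebraic bookkeeping but in handling the band pieces $w_6^{o}$ and $w_{56}^{o}$ correctly: since these are band strings rather than loopstrings, the monomial convention and the enumeration of perfect matchings is slightly different from the plain loop case, and one must be careful that $x_{c}$ for the multicurves involving these bands is computed by the correct expansion formula. In particular, one should double-check that the image $\Psi(w_5 \oplus w_6^{o}) = hu_1 \oplus u_2 c m^{-1} d u_3 h$ really captures every submodule counted on the right-hand side and not merely a subset; this is guaranteed in principle by the coefficient-free counting argument of Remark~\ref{termsincoeffreecase}, but in the self-crossing setting the two copies of $m$ living inside a single loopstring $w_1$ make it worthwhile to spell out the induction explicitly, mirroring the construction given in Lemma~\ref{removing tops for self incompatibility opp direction}.
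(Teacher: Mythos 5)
Your proposal matches the paper's own route almost exactly: the paper states that the theorem follows from the bijections $\Psi,\Phi$ of Theorem~\ref{bijection of modules self incompatibility same and opp direction} combined with Remark~\ref{removetoppreservesx}, and that the only omitted step is an $x$-variable agreement lemma on whole modules analogous to Lemma~\ref{xvariablesagree1}, which you have spelled out. The only thing to tidy is a small notational slip where you wrote $Y(m)\cdot x(w_5\oplus w_6^{o}) = x(hu_1\oplus u_2cm^{-1}du_3h)$ mixing $y$- and $x$-monomials: the $x$-weights themselves should be equal, while $Y(m)$ appears separately as the height-monomial offset of the base point $\Psi(w_5\oplus w_6^{o})$ inside the lattice of $w_1$, as your surrounding prose correctly explains.
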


\subsection{Resolution of grafting self incompatibility of a doubly notched arc}

We will now deal with the case where a tagged arc $\gamma$ in a triangulated punctured surface $(S, M, P, T)$ has a self intersection but the overlap is empty. In other words we will resolve a grafting self incompatibility of a doubly notched arc. In this section, as it was the case with the previous one, we will limit ourselves to the case where the given arc $\gamma$ has both of its endpoints at the same puncture. \\
Contrary to the classical grafting, this case is unique since even though the overlap is empty we can still recognize the grafting case given one abstract loopstring.

\begin{definition}\label{self intersection grafting}
    Suppose that $w$ is an abstract loopstring with the same hook $h=\{x_1,\dots,x_n\}, 2\leq n$, at both of its endpoints. \\
    Assume that $w_1= hu_1x_kax_{k+1}u_2h$ where $1\leq k \leq n-1$ and $a=\leftarrow$. Then, we say that $w$ has a \emph{grafting incompatibility at $x_k,x_{k+1}$}.\\
    The \emph{resolution of the grafting incompatibility at $x_kx_{k+1}$} is defined to be the loopstrings $\gamma_{34}, \gamma_5$ and the bandstring $\gamma_6^{o}$ which are defined as follows:
    \begin{itemize}
        \item $w_{34}= h u_1^{-1} x_{k+1} u_2h$,
        \item $w_5= h(u_2)'h$, where $(u_2)' = u_2 - (u_2\cap h)$, 
        \item $w_6^{o}= (x_k u_1^{-1})^{o}$.
    \end{itemize}    
\end{definition}

\begin{remark}
In Definition~\ref{self intersection grafting} the string $(u_2)'$ is basically defined to be the string $u_2$ if we subtract the common elements of $u_2$ and $h$. \\
Additionally as a reminder of the notation used in the above definition, if $u$ is a string then we denote by $u^{o}$ the bandstring which can be created by adding the appropriate arrow at the start and end of $u$. Of course this cannot be done on every string, but in our case this is possible.
\end{remark}

\begin{example}
    Suppose that $(S,M,P,T)$ is the triangulated surface appearing in Figure~\ref{fig:Self crossing grafting}. Let $\gamma_1$ be the red arc appearing in the same figure. Then the string $w_1$ corresponding to $\gamma_1$ is the following:
    \begin{gather*}
    w_1 = 3\rightarrow 2\rightarrow 1\rightarrow 7\rightarrow 6\rightarrow 5\rightarrow 4 \hookleftarrow 11 \rightarrow 12\rightarrow 3 \leftarrow 4 \\ 
    \leftarrow 5\rightarrow 10 \leftarrow 9 \leftarrow 8 \hookrightarrow 1 \leftarrow 2 \leftarrow 3 \leftarrow 4 \leftarrow 5 \leftarrow 6\leftarrow 7.
    \end{gather*}
    Following Definition~\ref{self intersection grafting} we see that there is a grafting incompatibility at $3,4$. Breaking down the string $w_1$ we also see the following:
    \begin{gather*}
        h= 1 \leftarrow 2 \leftarrow 3 \leftarrow 4 \leftarrow 5 \leftarrow 6\leftarrow 7, \\
        u_1 = 12 \leftarrow 11 ,\\
        u_2 =  5\rightarrow 10 \leftarrow 9 \leftarrow 8. 
    \end{gather*}
    Our goal is to resolve this grafting incompatibility using Definition~\ref{self intersection grafting}. For that, let us first compute $u_2'$. According to our definition, $u_2'$ should not contain the common elements of $u_2$ and $h$, so in our case this is just $5$. Therefore:
    \[
    u_2' = 10 \leftarrow 9\leftarrow 8.
    \]
    We are now ready to compute the resolution:
    \begin{gather*}
        w_{34} = 2\rightarrow 1\rightarrow 7\rightarrow 6\rightarrow 5\rightarrow 4 \rightarrow 3 \hookleftarrow 12\leftarrow 11 \rightarrow 4 \leftarrow 5\\
        \rightarrow 10 \leftarrow 9 \leftarrow 8 \hookrightarrow 1 \leftarrow 2 \leftarrow 3 \leftarrow 4 \leftarrow 5 \leftarrow 6\leftarrow 7,\\
        w_5 = 5\rightarrow 4 \rightarrow 3\rightarrow 2 \rightarrow 1\rightarrow 7 \rightarrow 6 \hookleftarrow 10 \leftarrow 9\\
        \leftarrow 8 \hookrightarrow 1\leftarrow 2\leftarrow 3\leftarrow 4\leftarrow 5 \leftarrow 6 \leftarrow 7,\\
        w_6^{o} =  \leftarrow 3 \leftarrow 12 \leftarrow 11 \leftarrow.
    \end{gather*}
    It is now easy to see that the loopstrings/bandstrings $w_{34}, w_5. w_6^{o}$ in the resolution correspond to the respective arcs/loops $\gamma_{34}, \gamma_5, \gamma_6^{o}$ appearing in Figure~\ref{fig:Self crossing grafting}.
\end{example}

\begin{figure}
    \centering
    \includegraphics[scale=0.8]{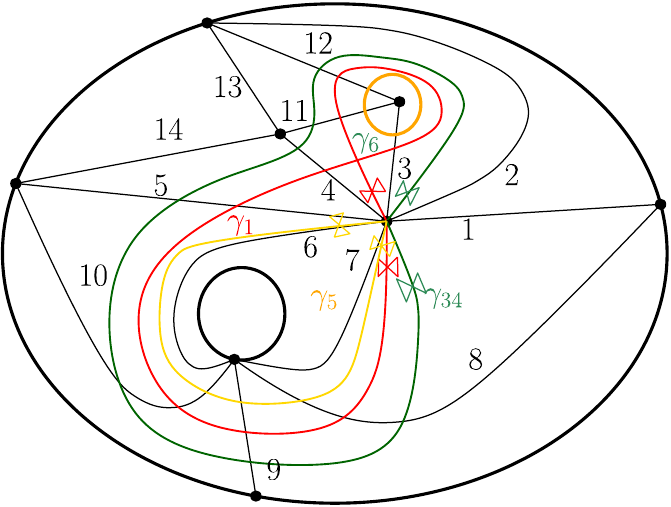}
    \caption{Grafting self crossing of a doubly notched arc.}
    \label{fig:Self crossing grafting}
\end{figure}

In this last section we will omit the propositions and lemmas leading to the final theorem which showcases how the bijection between the submodules of $w_1$ and on the other hand the submodules of $w_{34}$ and $w_5\oplus w_6^{o}$ is built, since all the arguments and ideas are following preexisting results.

\begin{theorem}\label{bijection of modules self crossing grafting}
    Let $w_1, w_{34}, w_5$ and $w_6^{o}$ as they were defined in Definition~\ref{self intersection grafting}, where $w_1$ has a grafting incompatibility at $x_k,x_{k+1}$. Then there exists a bijection:
    \[
    \Psi \colon SM(w_{34}) \bigcup SM(w_5\oplus w_6^{o})\to SM(w_1),
    \]
    which is defined as follows:
    \begin{gather*}
    \Psi(w_{34}) = (h-x_k)bu_1x_kax_{k+1}u_2h, \\
    \Psi(w_5\oplus w_6^{o}) = w_1,
    \end{gather*}
    where $(h-x_k)$ is the substring $x_{k-1}\rightarrow \dots \rightarrow x_{k+1}$ and $b=\leftarrow$.
    If $m$ is a submodule of $w_{34}$  which is reached by the sequence $\{a_i\} 1\leq i\leq n$, and $m'$ the unique submodule of $w_1$ which is reached by the same sequence, then we define:
    \[
    \Psi(m) = m'.
    \]
    If $m$ is a submodule of $w_5\oplus w_6^{o}$, then $\Psi(m)$ is defined in a similar way.\\
\end{theorem}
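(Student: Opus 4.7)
The plan is to mirror the architecture used for the earlier bijection theorems in the thesis, most directly Theorem~\ref{bijection of modules self incompatibility same and opp direction}. Three ingredients are needed: a top-removal compatibility lemma, an injectivity argument, and a cardinality argument via Remark~\ref{termsincoeffreecase}.

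First I would establish the analogue of Lemma~\ref{removing tops for self incompatibility opp direction}: if $(a_i)_{1\le i\le n}$ is a sequence of top-removals turning $w_{34}$ into a proper submodule $m_1$, then the same sequence can be applied to the submodule $N_1:=(h-x_k)\,b\,u_1\,x_k\,a\,x_{k+1}\,u_2\,h$ of $w_1$ to produce a unique submodule $m_1'$; and similarly, if $m_2\subset w_5\oplus w_6^{o}$ is reached by a sequence of top-removals, the same sequence applied to $w_1$ yields a unique submodule. The key observation is that the bandstring $w_6^{o}=(x_k u_1^{-1})^{o}$ is obtained by closing up the segment whose linear straightening is a substring of $w_1$, so every simple occurring in $w_6^{o}$ appears in exactly one position of $w_1$; hence the correspondence of top-removals in $w_5\oplus w_6^{o}$ with top-removals of $w_1$ is forced and unique. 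For $w_{34}$, the reversal of $u_1$ and the fact that $b=\leftarrow$ create the subtle point: when we follow $w_{34}$ past $x_{k+1}$ into $u_2h$, the local arrow configuration agrees with $w_1$ once $x_k$ is deleted from $h$, which is why the image lies in $N_1$ rather than in $w_1$ itself.

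Next I would prove injectivity of $\Psi$, in the style of Proposition~\ref{injofPsi} and Proposition~\ref{injectivity of Psi self incompatibility same and opp direction}. If $m_1\in SM(w_{34})$ and $m_2\in SM(w_5\oplus w_6^{o})$ had $\Psi(m_1)=\Psi(m_2)$, then because $\Psi(m_1)\subset N_1=w_1\setminus\{x_k\}$, the vertex $x_k$ must appear in the top-removal sequence $(b_i)$ defining $\Psi(m_2)$ from $w_1$. Using Remark~\ref{backtrackinginlattice}, reorder $(b_i)$ so that the $x_k$-removal appears as late as possible; this is only legal once a contiguous sub-hook containing $x_k$ together with the endpoint of $u_1^{-1}$ attached to it has been stripped, which forces the entire bandstring $w_6^{o}$ to collapse. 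The resulting submodule of $w_1$ cannot simultaneously be obtained from $w_{34}$ by top-removals, because $w_{34}$ has no vertex in the position of $x_k$ available to be removed independently — contradiction.

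Finally, invoke Remark~\ref{termsincoeffreecase}: in the coefficient-free specialization the skein relation reduces to the untagged self-crossing case already proven in \cite{canakci2}, so the number of terms on both sides agrees, giving
\[
|SM(w_{34})|+|SM(w_5\oplus w_6^{o})|=|SM(w_1)|.
\]
Combined with injectivity this forces $\Psi$ to be a bijection. The main obstacle I anticipate is in the injectivity step: unlike the opposite/same-direction regular self-crossing, here the grafting identifies a pair $(x_k,x_{k+1})$ lying \emph{inside} the shared hook $h$, so the same simples appear in several positions of $w_1$ (within $h$, within $u_2\cap h$, and within the bandstring $w_6^{o}$). Careful bookkeeping, in the spirit of the construction in Proposition~\ref{removertopwelldefined}, will be required to show that the choice of which copy to remove is in fact forced by the ordering constraints, so that $\Psi$ genuinely separates the two classes of submodules.
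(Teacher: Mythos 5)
The paper explicitly omits the proof of this theorem, stating only that ``all the arguments and ideas are following preexisting results.'' Your reconstruction correctly reproduces the omitted argument using the same three-step architecture (top-removal compatibility lemma, injectivity, then cardinality from Remark~\ref{termsincoeffreecase}) that the paper uses for Theorems~\ref{bijofmod}, \ref{bij of mod arc and band}, \ref{bijection of modules incompatibility one puncture}, and \ref{bijection of modules self incompatibility same and opp direction}, so your approach is the intended one.

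One small point worth tightening: you write $N_1 = w_1\setminus\{x_k\}$, but the stated image $\Psi(w_{34}) = (h-x_k)\,b\,u_1\,x_k\,a\,x_{k+1}\,u_2\,h$ still contains the middle copy of $x_k$ (the one in $u_1x_kax_{k+1}$); what is removed is only the occurrence of $x_k$ sitting inside the leading hook $h$. Your injectivity argument survives this correction — the relevant constraint is that submodules of $w_{34}$ under $\Psi$ never contain that specific hook-copy of $x_k$, which is all you use — but you should phrase the containment accordingly, since $x_k$ can appear in several positions of $w_1$ (in each copy of $h$, in $u_2\cap h$, and in $w_6^{o}$), and the bookkeeping in Proposition~\ref{removertopwelldefined} style must track which occurrence is meant.
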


As in the previous section we  proceed on by finally stating the last theorem dealing with the skein relations without explicitly working the details of the proof.

\begin{theorem}\label{skein for self crossing of grafting incompatibility}
    Let $(S,M,P,T)$ be a triangulated punctured surface, $\gamma_1$ a doubly notched arc at a puncture $p$ which has a self crossing at the first triangle of $T$ that $\gamma_1$ passes through. Then there is a grafting incompatibility and we the smoothing of the intersection is the following:
    \[
    x_{\gamma_1} = x_{\gamma_{34}}  + Y^{+} x_{\gamma_5} x_{\gamma_6^{o}},
    \]
    where $Y^{+} =  Y(x_kx_{k+1})Y(u_2\cap h)$.
\end{theorem}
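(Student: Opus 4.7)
The plan is to mirror exactly the three-step template that was used to prove Theorem~\ref{skein relations regular crossing}, Theorem~\ref{skein for loop and arc}, and Theorem~\ref{skein for self crossing of same and opposite incompatibility}: combine the module-level bijection from Theorem~\ref{bijection of modules self crossing grafting} with an $x$-monomial agreement on the distinguished whole modules (the analog of Lemma~\ref{xvariablesagree1}), and then propagate this equality to every pair of corresponding submodules via iterated application of Remark~\ref{removetoppreservesx}. Since Theorem~\ref{bijection of modules self crossing grafting} has already identified $\mathcal{SM}(w_{34})\cup\mathcal{SM}(w_5\oplus w_6^{o})$ with $\mathcal{SM}(w_1)$ and pinned down the four distinguished images on the whole modules, the only new work is verifying that $x$-weights match on these images and then tracking the $y$-monomial.

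First I would prove the analog of Lemma~\ref{xvariablesagree1} in this setting, namely the two identities
\[
x(w_{34}) = x\bigl((h-x_k)\,b\,u_1\,x_k\,a\,x_{k+1}\,u_2\,h\bigr),\qquad x(w_5\oplus w_6^{o}) = x(w_1).
\]
Both are direct computations from Definition~\ref{monomofstring} once one writes out the incoming-arrow exponents at each vertex of the extended quiver $Q_T'$. The key bookkeeping is that the bandstring $w_6^{o}=(x_k u_1^{-1})^{o}$ is closed up, so that each of its two ``endpoints'' contributes an additional factor that exactly compensates for the $x_k$ stripped from the hook in the passage $h\to(h-x_k)$, as well as for the flipped arrow $a\leftrightarrow b$ at the grafting site. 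This is structurally identical to the computation carried out for Lemma~\ref{xvariablesagree}.

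Next I would invoke Remark~\ref{removetoppreservesx}: removing a top $t$ from a loopstring modifies its $x$-weight by a purely local factor determined by the neighborhood of $t$ in $Q_T'$, so equality of $x$-weights on whole modules is preserved under simultaneous removal of identical sequences of tops on both sides. Because the bijection $\Psi$ of Theorem~\ref{bijection of modules self crossing grafting} is defined precisely so that corresponding submodules on the two sides are reached by the same top-removal sequence, summing the matched $x$-monomials over $\mathcal{L}(\mathcal{G}_{w_{34}})$ and $\mathcal{L}(\mathcal{G}_{w_5}\sqcup\mathcal{G}_{w_6^{o}})$ yields an identity of the desired shape. The coefficient $Y^{+}$ is then extracted exactly as in the earlier theorems: it is the $y$-monomial recording the tiles in the symmetric difference between the minimal perfect matching of $w_5\oplus w_6^{o}$ and the minimal perfect matching of its image $w_1$ under $\Psi$. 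Inspection of Definition~\ref{self intersection grafting} shows these are precisely the tiles indexed by $x_k x_{k+1}$ together with those of $u_2\cap h$, giving $Y^{+}=Y(x_k x_{k+1})\,Y(u_2\cap h)$.

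The main obstacle, as in Theorem~\ref{skein for self crossing of same and opposite incompatibility}, is not conceptual but bookkeeping: one must verify case-by-case that the identifications implicit in the construction of $w_5=h(u_2)'h$ (where $(u_2)'$ strips the overlap $u_2\cap h$) and of the closed band $w_6^{o}$ introduce no hidden mismatch of $y$-contributions at the junction where $u_2$ re-enters the hook $h$, and also that the passage from $w_1$ to $\Psi(w_{34})=(h-x_k)bu_1x_kax_{k+1}u_2h$ is compatible with the height-monomial calculation on the tile corresponding to $x_k$. Once these local checks are carried out the argument collapses to the same formal machinery applied throughout Section~4, and the theorem follows.
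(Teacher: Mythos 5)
Your proposal is correct and follows exactly the same template that the paper itself invokes (the paper deliberately omits the detailed proof of this theorem, stating that the only missing ingredient is the $x$-variable agreement on whole modules, which is supplied ``in exactly the same way'' as in the earlier sections): establish the $x$-monomial identities for the two distinguished whole modules, propagate by Remark~\ref{removetoppreservesx} along the top-removal sequences matched by the bijection of Theorem~\ref{bijection of modules self crossing grafting}, and read off $Y^{+}$ from the $y$-weight of the basepoint shift. One small clarification is worth making about your phrasing of the $Y^{+}$ extraction: the expression ``the symmetric difference between the minimal perfect matching of $w_5\oplus w_6^{o}$ and the minimal perfect matching of its image $w_1$'' compares perfect matchings of two \emph{different} graphs; what is actually being computed is the height monomial of the perfect matching of $\mathcal{G}_{w_1}$ that the bijection $\Psi$ assigns to the minimal perfect matching of $\mathcal{G}_{w_5}\sqcup\mathcal{G}_{w_6^{o}}$ — that is, the $y$-weight of the submodule of $w_1$ obtained by removing from $w_1$ the full top-removal sequence that exhausts $w_5\oplus w_6^{o}$.
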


\printbibliography

\backmatter

\end{document}